\newtheorem{theorem}{Theorem}[section]
\newtheorem{lemma}[theorem]{Lemma}
\newtheorem{corollary}[theorem]{Corollary}
\newtheorem{proposition}[theorem]{Proposition}
\theoremstyle{definition}
\newtheorem{definition}[theorem]{Definition}
\newtheorem{construction}[theorem]{Construction}
\theoremstyle{remark}
\newtheorem{remark}[theorem]{Remark}
\newtheorem{example}[theorem]{Example}
\newtheorem*{example*}{Example}
\numberwithin{equation}{section}
\renewcommand{\Re}{\operatorname{Re}}
\renewcommand{\Im}{\operatorname{Im}}
\newcommand{\HHH}{\mathcal H}
\newcommand{\1}{\mathbf 1}
\newcommand{\A}{\mathcal A}
\newcommand{\B}{\mathcal B}
\newcommand{\C}{\mathcal C}
\newcommand{\E}{\mathcal E}
\newcommand{\ainf}{A_\infty}
\newcommand{\SSS}{\mathcal S}
\newcommand{\sL}{\mathcal L}
\newcommand{\QQ}{\mathbb Q}
\newcommand{\ZZ}{\mathbb Z}
\newcommand{\CC}{\mathbb C}
\newcommand{\CCC}{\mathbf C}
\newcommand{\DDD}{\mathbf D}
\newcommand{\TTT}{\mathbf T}
\newcommand{\RR}{\mathbb R}
\newcommand{\Q}{\mathcal Q}
\newcommand{\M}{\mathcal M}
\newcommand{\N}{\mathcal N}
\newcommand{\T}{\mathcal T}
\newcommand{\G}{\mathcal G}
\newcommand{\X}{\mathcal X}
\newcommand{\V}{\mathcal V}
\newcommand{\PPP}{\mathcal P}
\newcommand{\cc}{\mathfrak c}
\newcommand{\OO}{\mathcal O}
\newcommand{\D}{\mathcal D}
\newcommand{\W}{\mathcal W}
\newcommand{\Op}{\operatorname{Op}}
\newcommand{\id}{\operatorname{id}}
\newcommand{\Tw}{\operatorname{Tw}}
\newcommand{\Ann}{\operatorname{Ann}}
\newcommand{\F}{\mathcal F}
\newcommand{\cone}{\operatorname{cone}}
\newcommand{\cones}{\operatorname{cones}}
\newcommand{\fiber}{[\operatorname{fiber}]}
\newcommand{\op}{\mathrm{op}}
\newcommand{\pt}{\mathrm{pt}}
\newcommand{\Hom}{\operatorname{Hom}}
\newcommand{\Ext}{\operatorname{Ext}}
\newcommand{\Fun}{\operatorname{Fun}}
\newcommand{\Pin}{\operatorname{Pin}}
\newcommand{\colim}{\operatornamewithlimits{colim}}
\newcommand{\sm}{\mathrm{sm}}
\newcommand{\Perf}{\operatorname{Perf}}
\newcommand{\Coh}{\operatorname{Coh}}
\newcommand{\Mod}{\operatorname{Mod}}
\newcommand{\Prop}{\operatorname{Prop}}
\newcommand{\LGr}{\operatorname{LGr}}
\newcommand{\Sh}{\operatorname{Sh}}
\newcommand{\muSh}{\operatorname{\mu sh}}
\renewcommand{\star}{\operatorname{star}}
\renewcommand{\ss}{\operatorname{ss}}
\newcommand{\Set}{\mathrm{Set}}
\newcommand{\pre}{\mathrm{pre}}
\newcommand{\crit}{\mathrm{crit}}
\newcommand{\subcrit}{\mathrm{subcrit}}
\newcommand{\Pic}{\operatorname{Pic}}
\newcommand{\tildetimes}{\mathbin{\tilde\times}}
\newcommand{\tildecup}{\mathbin{\tilde\cup}}
\begin{document}

\title{Microlocal Morse theory of wrapped Fukaya categories}

\author{
Sheel Ganatra,
John Pardon,
and
Vivek Shende
}

\date{August 27, 2023}

\maketitle

\begin{abstract}
The Nadler--Zaslow correspondence famously identifies the finite-dimensional Floer homology groups between Lagrangians
in cotangent bundles with the finite-dimensional Hom spaces between corresponding constructible sheaves.
We generalize this correspondence to incorporate the infinite-dimensional spaces of morphisms `at infinity', given on the Floer side by Reeb trajectories (also known as ``wrapping'') and on the sheaf side by
allowing unbounded infinite rank sheaves which are categorically compact.
When combined with existing sheaf theoretic computations, our results confirm many new instances of homological mirror symmetry.

More precisely, given a real analytic manifold $M$ and a subanalytic isotropic subset $\Lambda$ of its co-sphere bundle $S^*M$, 
we show that the partially wrapped Fukaya category of $T^*M$ stopped at $\Lambda$
is equivalent to the category of compact objects in the 
unbounded derived category of sheaves on $M$ with microsupport inside $\Lambda$.
By an embedding trick, we also deduce a sheaf theoretic description of the wrapped Fukaya category of any Weinstein sector admitting a stable polarization.
\end{abstract}

\setcounter{tocdepth}{3}

\newpage
\tableofcontents
\newpage

\section{Introduction}

\epigraph{I saw the angel in the marble, and carved until I set him free.}{---Michelangelo (attributed)}

\noindent
The calculation of Fukaya categories of symplectic manifolds has emerged as a question of central interest in geometry.
Within symplectic geometry itself, many questions may be phrased in terms of intersections of Lagrangian submanifolds; as the Fukaya category is built from these, it is a natural tool for their study.
In low-dimensional topology, a number of invariants of smooth manifolds and smooth knots can
be extracted from Fukaya categories of associated symplectic manifolds.
Homological mirror symmetry, a largely conjectural correspondence arising from non-rigorous reasoning in mathematical physics, further predicts that many categories of interest in algebraic geometry and representation theory also arise as Fukaya categories.

Beyond the intrinsic interest in confirming or explaining these predictions, 
knowing that a category of interest arises as a 
Fukaya category suggests the existence of additional structures. For one example, morphism
spaces in the Fukaya category are Floer homology chain complexes, and as such come with a 
natural basis; this often `explains' the existence of previously known `canonical' bases of these 
Hom spaces. For another, 
the relative ease of constructing symplectomorphisms (which act on the relevant Fukaya
categories) gives a natural source of automorphisms of these categories that are far less
apparent from other points of view.
The difficulty in calculating Fukaya categories, which is present in all of the aforementioned settings, stems from the global and analytic nature of the pseudo-holomorphic disks appearing in the definition.

\emph{In this paper, we obtain a combinatorial description of the partially wrapped Fukaya categories
of all stably polarized Weinstein manifolds (more generally, sectors), by showing that they are isomorphic to certain corresponding categories of microlocal sheaves.}

\subsection{Weinstein manifolds and partially wrapped Fukaya categories}

A vector field $Z$ on a symplectic manifold $(X, \omega)$ is said to be \emph{Liouville} when $L_Z \omega = \omega$.
Recall that in this case $\lambda := \omega(Z, \cdot)$ is a primitive for $\omega$, and that 
such symplectic manifolds are necessarily non-compact.
Such a triple $(X, \omega, Z)$ (equivalently $(X,\lambda)$) is
called a \emph{Liouville manifold} if, in addition,
the non-compact ends of $X$ are identified (necessarily uniquely) by $Z$ with the positive end of the symplectization of a contact manifold \cite{eliashberggromovconvexsymplectic}.  
The \emph{core} of a Liouville manifold is the locus of points $\cc_X$ which do not escape to infinity under the
Liouville flow.  The inclusion $\cc_X\subseteq X$ is a homotopy equivalence, and in some sense $\cc_X$ carries all of the symplectic topology of $X$ as well.

A Liouville manifold $(X, \omega, Z)$ is said to be \emph{Weinstein} if $Z$ is gradient-like \cite{cieliebakeliashberg}.  The key feature of such manifolds is that
the core $\cc_X$ is a union of isotropic submanifolds, and moreover admits transverse Lagrangian disks (`cocores') at its smooth Lagrangian points.
Prototypical examples include cotangent bundles,
affine algebraic varieties, and more generally (finite
type) Stein complex manifolds. Many examples of interest in geometric
representation theory, such as conical symplectic resolutions, quiver
varieties, moduli of Higgs bundles, and cluster varieties are in this class. 
Moreover, any compact symplectic manifold whose symplectic form has rational periods can be presented as a compactification of a Weinstein manifold \cite{donaldson,giroux}, and hence, through a strategy introduced in \cite{seideldeformations}, understanding the Fukaya categories of Weinstein manifolds serves as a stepping stone to studying the Fukaya categories of closed symplectic manifolds.

While various analytic difficulties in Floer theory are simplified
in the Liouville setting (due to strong topological and geometric control on pseudo-holomorphic disks),
there is a significant new layer of complexity possible, thanks to the
non-compactness of the target space. Namely, as has been understood for some time, it is desirable in this context to enlarge the Fukaya category of compact Lagrangians by adding certain non-compact (properly embedded, conical at infinity) Lagrangians as well.
These larger Fukaya categories often have better formal properties due to there being an ample supply of non-compact Lagrangians,
and they are also required by mirror symmetry, where non-compact Lagrangians in non-compact targets arise as mirror objects to
sheaves on non-compact or non-Calabi--Yau manifolds, whose $\Ext$ groups could
be of infinite rank or fail to satisfy Poincar\'e duality.
Fukaya categories of non-compact Lagrangians also have bearing on questions about the Reeb dynamics at infinity.
There are many different ways to add non-compact Lagrangians to the Fukaya category, with substantially different results; the basic parameters are (1) in which directions at
infinity to allow non-compact Lagrangians and (2) in what direction and by how
much to perturb (`wrap') Lagrangians at infinity when computing Floer homology.

The framework of \emph{partially wrapped Fukaya categories} \cite{aurouxborderedfloericm,aurouxborderedfloergokova,sylvanthesis,gpssectorsoc,gpsdescent}
has emerged as a way to describe and relate different prescriptions for asymptotics 
and wrapping.  One specifies a subset at
infinity which Lagrangians cannot limit to or wrap past, called the
\emph{stop}.
In the resulting category, Lagrangians which are isotopic in the complement of the
stop induce isomorphic objects, and symplectomorphisms preserving the stop induce
autoequivalences.
The resulting category is also invariant under isotopies of the complement of the stop.
A \emph{stopped Liouville manifold} $(X,\Lambda)$ consists of a Liouville manifold $X$ and a stop $\Lambda$.
The \emph{relative core} $\cc_{X,\Lambda}$ of $(X,\Lambda)$ is the set of those which do not escape to the complement of $\Lambda$ at infinity under the positive Liouville flow.

The partially wrapped setting includes variants of many previous constructions:
\begin{itemize}
\item
When the stop is empty one obtains the (fully) wrapped Fukaya category of Abouzaid--Seidel \cite{abouzaidseidel}.
\item Given a smooth Legendrian $\Lambda$,  there is a naive `infinitesimally wrapped' Fukaya category with asymptotics along $\Lambda$ given equivalently by either: (1) take the stop to be the complement of a small regular neighborhood of $\Lambda$, or (2) take the stop to be $\Lambda$ and consider just the full subcategory of Lagrangians which admit wrappings converging to $\Lambda$ (see Section \ref{NZ} for further discussion, including a comparison with \cite{nadlerzaslow}).\footnote{This naive category embeds fully faithfully into the category of `proper modules' over the partially wrapped Fukaya category stopped at $\Lambda$, which should be regarded as the more correct category.  It is an open and likely hard geometric question to determine when this embedding is an equivalence, already for $\Lambda = \emptyset$.}
More generally, we can take $\Lambda$ to be the core of a Liouville hypersurface.
\item
The Fukaya category of a \emph{Landau--Ginzburg model} $\mathrm{w}: Y \to \CC$, also known as the \emph{Fukaya--Seidel category} when $\mathrm{w}$ is a Lefschetz fibration \cite{seidelbook}, can be modeled as the partially wrapped Fukaya category of $Y$ stopped at (the core of) the Weinstein hypersurface $\mathrm{w}^{-1}(-\infty)$.\footnote{From our point of view, this should just be taken as the \emph{definition} of the Fukaya--Seidel category.  However, we note there are some technical differences between this definition and the standard definition, and a careful proof of their equivalence is, as far as we know, a folk result whose proof has no available reference (though a special case is treated in \cite[Sec.\ 8.6]{gpsdescent}).}
\end{itemize}
Of course, partially wrapped Fukaya categories form a significantly broader class than infinitesimal or Fukaya--Seidel categories (e.g.\ they can have infinite-dimensional morphism spaces).
An illustrative example: categories of coherent complexes on arbitrary (not necessarily compact or smooth) $n$-dimensional toric stacks
can be shown equivalent to partially wrapped Fukaya categories of $(\CC^*)^n = T^* T^n$, 
by combining the sheaf theoretic work of \cite{kuwagaki} with the main theorem of the present article.

Of particular importance are (possibly singular) isotropic stops.
Isotropicity of the stop plays the same role as the Weinstein condition on the symplectic manifold itself; for instance, the core of a \emph{Weinstein hypersurface at infinity} is a typical singular isotropic stop of interest to mirror symmetry.

\subsection{Topological and sheaf theoretic interpretations}

Despite their analytic origins, Fukaya categories have often been found to admit topological
interpretations.  The first prototype is the fact that the Lagrangian Floer homology of an exact Lagrangian is nothing
other than its ordinary cohomology.  The work of Nadler and Zaslow \cite{nadlerzaslow, nadlermicrolocal} provides a 
sweeping generalization of this, identifying infinitesimally wrapped Floer homologies of exact Lagrangians in cotangent bundles
with morphisms of sheaves on the base.  In a seemingly different direction, it was observed that \emph{wrapped} Floer homology
also has a topological interpretation.  Indeed, work of Abbondondalo and Schwarz \cite{abbondandoloschwarz-floer, abbondandoloschwarz-conormal,
abbondandoloschwarz} (see also Cieliebak and Latschev \cite{cieliebak-latschev}) found many instances where wrapped Floer homology is isomorphic to
the homology of spaces of paths and loops.
Building on these, Abouzaid showed that in fact the wrapped Fukaya category of 
a cotangent bundle is naturally identified with perfect modules over chains on the based loop space of the base \cite{abouzaidcotangent} (see also \cite{abouzaidplumbing}).  This last
result may be restated (by the $\infty$ version of the van Kampen theorem): the wrapped Fukaya category of a cotangent bundle is the global
sections of the constant cosheaf of categories on the zero section with costalk $\Perf\ZZ$.  This formulation exhibits an instance of a more general conjecture of Kontsevich 
\cite{kontsevichnotes}: the wrapped Fukaya category of any Weinstein manifold $X$ should 
be the global sections of a cosheaf of categories on its core $\cc_X$.

Nadler's work \cite{nadler-wrapped} unified these points of view, by proposing that while infinitesimally wrapped Fukaya categories are modeled by
(micro)sheaves with perfect (micro)stalks (as in \cite{nadlerzaslow}), the partially wrapped category should be modeled by compact objects 
in the category of all (micro)sheaves with appropriate microsupport conditions.  For essentially formal reasons, these categories of compact objects may be organized into a cosheaf of categories, so Nadler's proposal is a strengthening of Kontsevich's conjecture.  
At the time of Nadler's original proposal, microsheaves were only defined for subsets of cotangent bundles, 
but the high codimension embedding trick from \cite{shende-microlocal} has now defined a category of microsheaves 
on the core of any Liouville manifold \cite{nadler-shende}.

Since microlocal sheaf categories are entirely combinatorial/topological in nature, Nadler's proposal is a (conjectural) computation of (partially) wrapped Fukaya categories.
An illustrative example calculation is given in
\cite{nadler-wrapped}, where it was shown that the relevant category of microlocal sheaves on the skeleton of the (higher dimensional) symplectic pairs of pants 
matched the mirror-symmetric prediction for the wrapped Fukaya category (which was more recently verified directly \cite{lekili-polishchuk-pants}).

\begin{example*}
Remarkably, while wrapped and infinitesimal Floer homologies are rather different creatures, the same does not appear to be the case on the sheaf side 
of Nadler's proposal.  Notably, there is nothing like `wrapping' on the sheaf side; instead, purely categorical operations.  
As it is difficult to appreciate the depth of the distinction at first, let
us return to the example of the cotangent bundle, with empty stop.  The core is just the zero section, and sheaves microsupported in the zero section
are (almost by definition) nothing other than locally constant sheaves.  If we require these to have finite stalks, then we are studying finite
rank local systems (which coincides with the infinitesimally wrapped Fukaya category, i.e.\ in this case the Fukaya category of compact Lagrangians).
On the other hand, a typical compact object in this category is the `tautological' local system, with fiber given by chains on the based loop
space.  Usually of infinite rank, this object is best understood in terms of the functor it co-represents: taking the stalk at a point.   Note that no `wrapping' 
appears in this purely categorical procedure proposed by Nadler, yet it does in fact correctly recover the wrapped Fukaya category.
\end{example*}

\subsection{Main results}

We now fix notation and state our main results more precisely.
Theorem \ref{sheaffukayaequivalence} concerns the special case of cotangent bundles, and its proof comprises the bulk of the paper.
Theorem \ref{microsheaffukayaequivalence} is derived from Theorem \ref{sheaffukayaequivalence} and concerns more general stably polarized Liouville manifolds.

For a Liouville symplectic manifold $X$ and closed subset $\Lambda \subseteq \partial_\infty X$, we write $\W(X, \Lambda)$ for the (partially) wrapped Fukaya category 
of $X$, stopped at $\Lambda$.  Its objects are Lagrangians $L\subseteq X$ which are eventually conical and disjoint from $\Lambda$ at infinity, 
and its morphism complexes are Floer cochains after wrapping Lagrangians in the complement of $\Lambda$.   
It is an $\ainf$ category defined in \cite{gpssectorsoc,gpsdescent} (see also \cite{abouzaidseidel,abouzaidseidelunpublished,sylvanthesis}); we 
review its definition at the beginning of Section \ref{sec:fukaya}.   Particularly important objects of $\W(X,\Lambda)$ include: the Lagrangian linking disks to the smooth Legendrian points of $\Lambda$ \cite[Sec.\ 5.3]{gpsdescent}, the Lagrangian cocore disks when $X$ is Weinstein, and the cotangent fibers when $X=T^*M$ (which may be viewed as a special case of cocore disks).
We write $\Perf\W(X,\Lambda)$ for the idempotent-completed pre-triangulated closure of $\W(X,\Lambda)$.
Whenever $\Lambda \subseteq \Lambda'$, there is a tautological functor $\W(X, \Lambda') \to \W(X, \Lambda)$.

For a smooth manifold $M$, we write $\Sh(M)$ for the dg category of sheaves of dg $\ZZ$-modules on $M$.
The microsupport of a sheaf $\F$ is a closed conical locus $\ss(\F) \subseteq T^*M$ whose role is to encode, infinitesimally, which restriction maps are quasi-isomorphisms.
We write $\Sh_\Lambda(M)$ for the full subcategory of $\Sh(M)$ spanned by those sheaves whose microsupport at infinity is contained in $\Lambda$.
Particularly nice functors $\Sh_\Lambda(M)\to\Mod\ZZ$ include the stalk functors at points of $M$ and the microstalk functors at smooth Legendrian points of $\Lambda$.
These definitions are reviewed in Section \ref{sec:sheaves}.
We denote by $\Sh_\Lambda(M)^c$ the category of compact objects in $\Sh_\Lambda(M)$.
The reader is cautioned that the compact objects of $\Sh_\Lambda(M)$ do not necessarily have perfect stalks or bounded homological degree; that is, they need not be constructible sheaves in the usual sense.
The relevance of such objects on the sheaf side was pointed out in \cite{nadler-wrapped}, where Theorem \ref{sheaffukayaequivalence} was implicitly conjectured by the terminology `wrapped sheaves' for compact objects of $\Sh_\Lambda(M)$ and `wrapped (microlocal) skyscrapers' for co-representatives of (micro)stalk functors.

\begin{theorem}\label{sheaffukayaequivalence}
Let $M$ be a real analytic manifold, and let $\Lambda \subseteq S^*M$ be a subanalytic closed isotropic subset.
There there is a canonical equivalence of categories
\begin{equation}\label{sheaffukayaequivalenceeqn}
\Perf\W(T^*M,\Lambda)^\op=\Sh_\Lambda(M)^c
\end{equation}
which carries the linking disk at any smooth Legendrian point $p\in\Lambda$ to a co-representative of the microstalk functor at $p\in\Lambda$ and carries the cotangent fiber at a point $p\in M$ not in the image of $\Lambda$ to a co-representative of the stalk functor.
\end{theorem}

\begin{remark}
Rather than passing to the opposite category of the Fukaya category, we could equivalently negate either $\Lambda$ or the Liouville form on $T^*M$.
\end{remark}

To prove Theorem \ref{sheaffukayaequivalence}, we do not calculate a single $\W(T^*M,\Lambda)$ on its own.  Instead, we calculate the functor $\Lambda\mapsto\Perf\W(T^*M,\Lambda)^\op$ (functoriality is with respect to inclusions $\Lambda\subseteq\Lambda'$) which is a much more rigid object.
In fact, we formulate a pair of axioms which uniquely charaterize such a system of categories $\Lambda\mapsto\C(\Lambda)$ (Section \ref{sec:axioms}), so the proof of Theorem \ref{sheaffukayaequivalence} then reduces to verifying these axioms for both $\Lambda\mapsto\Perf\W(T^*M,\Lambda)^\op$ (Section \ref{sec:fukaya}) and $\Lambda\mapsto\Sh_\Lambda(M)^c$ (Section \ref{sec:sheaves}).

The underlying reason this strategy can succeed is that there are special stops $\Lambda$ (specifically $\Lambda=N^*_\infty\SSS$, the union of conormals to the strata of a Whitney triangulation $\SSS$) for which the Reeb dynamics in the complement of $\Lambda$ are simple, thus making it tractable to show \eqref{sheaffukayaequivalenceeqn} by direct calculation.
Since every $\Lambda$ is a subset of some $N^*_\infty\SSS$, it then suffices to show that both sides of \eqref{sheaffukayaequivalenceeqn} transform in the same way when $\Lambda$ gets smaller.
On the Fukaya side, the functor $\W(T^*M,\Lambda')\to\W(T^*M,\Lambda)$ for $\Lambda'\supseteq\Lambda$ is the quotient by the linking disks to $\Lambda'\setminus\Lambda$; this was established recently in \cite{gpsdescent}.
On the sheaf side, one quotients by co-representatives of microstalks; this is ultimately a consequence of co-isotropicity of the microsupport \cite[Thm.\ 6.5.1]{kashiwara-schapira}.
The identification of linking disks with microstalks matches the wrapping exact triangle of \cite{gpsdescent} with the microlocal Morse description of sheaf cohomology from \cite{goresky-macpherson,kashiwara-schapira}.
The conclusion is then that choosing a Whitney triangulation $\SSS$ of $M$ whose conormal $N^*_\infty\SSS$ contains $\Lambda$ yields a description of both categories $\Perf\W(T^*M,\Lambda)^\op$ and $\Sh_\Lambda(M)^c$ as the same localization of the category $\Perf\W(T^*M,N^*_\infty\SSS)^\op=\Perf\SSS=\Sh_{N^*_\infty\SSS}(M)^c$.

This approach to the proof of Theorem \ref{sheaffukayaequivalence} is rather different from the previous computations of Fukaya categories of cotangent bundles \cite{nadlerzaslow, nadlermicrolocal,abouzaidcotangent}.
In particular, we rely on no results from these articles.
In fact, our ability to add geometry to simplify the situation is such that the only Floer cohomology calculations which need to be made in this entire article are between Lagrangians which intersect in at most one point, obviating, in particular, the need to ever compute a holomorphic disc.
We therefore expect that the proofs of the results in this paper would apply to the case of more general (e.g.\ sphere spectrum) coefficients, provided one has access to the definitions of the sheaf and Fukaya categories in these settings.

The equivalence of Theorem \ref{sheaffukayaequivalence} is also functorial under open inclusions:

\begin{proposition}\label{sfecompatibility}
Let $M'\hookrightarrow M$ be an analytic open inclusion of analytic manifolds.
For subanalytic isotropics $\Lambda'\subseteq S^*M'$ and $\Lambda\subseteq S^*M$ with $\Lambda'\supseteq\Lambda\cap S^*M'$, the following diagram commutes
\begin{equation}
\begin{tikzcd}
\Perf\W(M',\Lambda')^\op\ar[equal]{d}{Thm\ \ref{sheaffukayaequivalence}}\ar{r}&\Perf\W(M,\Lambda)^\op\ar[equal]{d}{Thm\ \ref{sheaffukayaequivalence}}\\
\Sh_{\Lambda'}(M')^c\ar{r}&\Sh_\Lambda(M)^c
\end{tikzcd}
\end{equation}
where the bottom horizontal arrow is (the restriction to compact objects of) the left adjoint of restriction.
\end{proposition}

In Section \ref{sec:corollaries}, we detail a number of applications and corollaries of Theorem \ref{sheaffukayaequivalence}.
These include a version of the original Nadler--Zaslow correspondence, translations of the microsheaf theoretic work on mirror symmetry for toric varieties and toric boundaries, and a sheaf theoretic description of the augmentation category of the partially wrapped Floer cochains of the linking disk to a Legendrian (expected to be equivalent to the Legendrian DGA), valid for the jet bundle of a manifold of any dimension.
After the present work, a host of sheaf theoretic calculations \cite{FLTZ-Morelli, shende-treumann-zaslow, STWZ, shende-treumann-williams, kuwagaki, nadler-mirrorLG, nadler-wrapped, gammage-shende} can now be understood as computations of wrapped Fukaya categories.

Finally, in Section \ref{microsection}, we turn from cotangent bundles to the general setting of stably polarized Weinstein manifolds.  
We proceed by combining the `doubling trick' of \cite[Ex.\ 10.7 and 13.4]{gpsdescent} and the `antimicrolocalization' of \cite[Sec.\ 7]{nadler-shende}
to reduce to the cotangent bundle case.  We arrive the following sheaf theoretic description of partially wrapped Fukaya categories: 

\begin{theorem}\label{microsheaffukayaequivalence}
Let $X$ be a real analytic Liouville manifold, and let $\Lambda\subseteq\partial_\infty X$ be a stop whose relative core $\cc_{X,\Lambda}:=\cc_X\cup(\Lambda\times\RR)\subseteq X$ is subanalytic singular isotropic.
For any stable polarization\footnote{A stable polarization of a symplectic manifold is the expression of its tangent bundle plus $\CC^k$ (some finite $k$) as the complexification of a real vector bundle; a choice of stable polarization controls the `twisting' of the categories on both sides of \eqref{microsheaffukayaequivalenceeqn}.
Many examples of interest are stably polarized, such as all cotangent bundles and all complete intersections in $\CC^n$.}
of $X$, there is a fully faithful functor
\begin{equation}\label{microsheaffukayaequivalenceeqn}
\Perf\W(X,\Lambda)^\op\hookrightarrow\muSh_{\cc_{X,\Lambda}}(\cc_{X,\Lambda})^c,
\end{equation}
where $\muSh_{\cc_{X,\Lambda}}(\cc_{X,\Lambda})$ denotes the category of microlocal sheaves on $\cc_{X,\Lambda}$.
This functor sends a homological cocore at a smooth Lagrangian point $p$ of $\cc_{X,\Lambda}$ to a co-representative of the microstalk at $p$.

In particular, if $X$ is Weinstein, or more generally admits homological cocores, then \eqref{microsheaffukayaequivalenceeqn} is an equivalence.
\end{theorem}

Let us comment on the hypotheses of Theorem \ref{microsheaffukayaequivalence} and, in particular, argue that it applies to all stably polarized Weinstein sectors.
The analyticity assumptions hold in most concrete cases of interest, and abstractly speaking, any Weinstein manifold (more generally, sector) may be perturbed so as to be real analytic and to have subanalytic relative core (see Corollary \ref{analyticrelativeskeleton}).
A homological cocore at a smooth Lagrangian point $p\in\cc_{X,\Lambda}$ is an object of $\Perf\W(X,\Lambda)$ whose image in $\Perf\W((X,\Lambda)\times(\CC_{\Re\geq 0},\infty))$ is the linking disk at $p\times\infty$.
Admitting homological cocores means that every smooth Lagrangian point of $\cc_{X,\Lambda}$ has a homological cocore; this condition turns out to depend only on $X$ up to deformation, and holds whenever $X$ is Weinstein.
For $(X,\Lambda)$ as in Theorem \ref{microsheaffukayaequivalence}, the stabilization $X\times\CC$ always admits homological cocores, so there is always an equivalence $\Perf\W((X,\Lambda)\times(\CC,\pm\infty))^\op=\muSh_{\cc_{X,\Lambda}\times\RR}(\cc_{X,\Lambda}\times\RR)^c=\muSh_{\cc_{X,\Lambda}}(\cc_{X,\Lambda})^c$.

The embedding of Theorem \ref{microsheaffukayaequivalence} depends \emph{a priori} on a choice of analytic Liouville hypersurface embedding of $X\times\CC^k$ into $S^*M$ for some auxiliary analytic manifold $M$, compatible with stable polarizations (part of the proof is to show such data always exists).
We expect our methods could be extended to show that the embedding of Theorem \ref{microsheaffukayaequivalence} is independent of this choice.

The embedding of Theorem \ref{microsheaffukayaequivalence} associated to a given analytic Liouville hypersurface embedding $X\hookrightarrow S^*M$ is, by construction, compatible with the equivalence of Theorem \ref{sheaffukayaequivalence} in the sense that the following diagram commutes:
\begin{equation} \label{theoremscommute}
\begin{tikzcd}
\Perf\W(X,\Lambda)^\op\ar{r}\ar[hook]{d}[swap]{\text{Thm \ref{microsheaffukayaequivalence}}}&\Perf\W(T^*M,\overline{\cc_{X,\Lambda}})^\op\ar[equal]{d}{\text{Thm \ref{sheaffukayaequivalence}}}\\
\muSh_{\cc_{X,\Lambda}}(\cc_{X,\Lambda})^c\ar{r}{\mu^*}&\Sh_{\overline{\cc_{X,\Lambda}}}(M)^c
\end{tikzcd}
\end{equation}
(see Proposition \ref{sheafdescmicrolocalwithremoval}), where $\mu^*$ denotes the left adjoint to microlocalization.
Using this compatibility, it is proven in \cite{gammage-shende} that for a Fano toric stack $Y$ with toric divisor $D$, there is a commutative diagram
\begin{equation}\label{fsmirror}
\begin{tikzcd}
\Perf \W(W^{-1}(-\infty)) \ar{r} \ar[equal]{d} & \Perf \W(X,W) \ar{r} \ar[equal]{d}  & \Perf \W(X) \ar{r} \ar[equal]{d} & 0 \\
\Coh(D) \ar{r} & \Coh(Y) \ar{r} & \Coh(Y \setminus D) \ar{r} & 0
\end{tikzcd}
\end{equation}
where $W:X\to\CC$ is the mirror Landau--Ginzburg model (see Example \ref{veryaffine}).

\medskip

\textbf{Convention.} Throughout this document, we work in the setting of dg and, equivalently, $A_\infty$ categories over $\ZZ$ (or more generally any commutative ring).
We only ever
consider ``derived'' functors, we only ever mean ``homotopy'' limits or colimits,
and we systematically omit the word ``quasi''.  By modules, 
we mean dg or $A_\infty$ modules, e.g.\ by $\ZZ$-modules we mean the category of chain complexes of abelian $\ZZ$-modules, localized at quasi-isomorphisms,
except when, as in this sentence, we qualify it with the word `abelian'. In Section \ref{categoricalsection} we detail our assumptions about these categories and collect relevant categorical notions which will appear throughout the paper.

\subsection{Acknowledgements}

We thank Mohammed Abouzaid, Roger Casals, Alexander Efimov, Tobias Ekholm, Benjamin Gammage, David Nadler, Amnon Neeman, and Lenhard Ng 
for helpful discussions, some of which took place during visits to the American Institute of Mathematics. 

S.G.\ was partially supported by NSF grant DMS--1907635, and would also like to thank the Simons Laufer Mathematical Sciences Institute (previously known as MSRI) for its
hospitality during visits in Spring 2018 (supported by NSF grant DMS--1440140) and Fall 2022 (supported by NSF grant DMS--1928930)
during which some of this work was completed.
This research was conducted during the period J.P.\ served as a Clay Research Fellow and was partially supported by a Packard Fellowship and by the National Science Foundation under the Alan T.\ Waterman Award, Grant No.\ 1747553.
V.S.\ was supported by the NSF CAREER grant DMS--1654545.

\section{Stratifications}

\subsection{Generalities} 

Let $X$ be a topological space.  By a stratification $\SSS$ of $X$, we mean a locally finite decomposition into disjoint locally closed subsets $\{X_\alpha\}_{\alpha\in\SSS}$, called
strata, such that each boundary $\overline{X_\alpha}\setminus X_\alpha$ is a union of other strata $X_\beta$.
The collection of strata $\SSS$ is naturally a poset, in which there is a map $\beta \to \alpha$ iff $X_\alpha \subseteq \overline{X_\beta}$.

\begin{remark}\label{stratvsposet}
The poset $\SSS$ does not generally capture the homotopy type of the space $X$.
Conditions under which it does (contractibility of various strata/stars) are well known and recalled below.
\end{remark}

We will say a subset $Y \subseteq X$ is $\SSS$-constructible when it is a union of strata of the stratification $\SSS$ of $X$.  
We say that a stratification $\T$ refines a stratification $\SSS$ when the strata of $\SSS$ are  $\T$-constructible.  

We recall that an abstract simplicial complex on a vertex set $\V$ is a collection $\Sigma$ 
of nonempty finite subsets of $\V$, containing all singletons and all subsets of elements of $\Sigma$.  By a simplicial complex, 
we mean the geometric realization $\left|\Sigma\right|$ of an abstract simplicial complex $\Sigma$;  it comes with a stratification by the `open simplices' (which, of course, are locally closed, not necessarily open, subsets of $\left|\Sigma\right|$). 
We say a stratification $\SSS$ on $X$ is a triangulation when there exists a homeomorphism $\left|\Sigma\right|\xrightarrow\sim X$ identifying stratifications.
We never impose any sort of regularity condition (differentiability, smoothness, analyticity, etc.)\ on this homeomorphism, even in the context of stratifications of a given regularity class.
Note that the following are \emph{not} triangulations: 
a stratification of a circle into single point and its complement, or into two points and their complement; the stratification into three points and their complement is a triangulation.

The open star of a stratum is the union of strata whose closures contain it.  Taking stars reverses the inclusion: we have 
$X_\alpha \subseteq \overline{X_\beta} \iff \star(X_\beta) \subseteq \star(X_\alpha)$.
Note that $\star(X_\alpha) \cap \star(X_\beta) = \bigcup_{\alpha\leftarrow\gamma\to\beta}\star(X_\gamma)$.
For triangulations, we can do better: $\star(X_\alpha) \cap \star(X_\beta) = \star(X_\gamma)$ where $\gamma$ is the simplex spanned by the vertices of $\alpha$ union the vertices of $\beta$ (if this simplex is present), and otherwise $\star(X_\alpha) \cap \star(X_\beta) =\emptyset$.

For a $C^p$ manifold $M$, we say a stratification $\SSS$ is $C^p$ if each stratum $M_\alpha$ is a 
(locally closed) $C^p$ submanifold.

A $C^1$ stratification $\SSS$ of a $C^1$ manifold $M$ 
is called a \emph{Whitney stratification} iff it satisfies Whitney's conditions (a) and (b). 
These are usually stated as the following conditions on pairs of strata $X$ and $Y$ of $\SSS$:
\begin{itemize}
\item[(a)]For any sequence $x_i\in X$ converging to $y\in Y$ such that $T_{x_i}X$ converges to a subspace $V\subseteq T_yM$, we have $T_yY\subseteq V$.
\item[(b)]For any pair of sequences $x_i\in X$ and $y_i\in Y$ both converging to $y\in Y$ such that $T_{x_i}X$ converges to a subspace $V\subseteq T_yM$ and the secant directions from $y_i$ to $x_i$ converge to a line $L\subseteq T_yM$, we have $L\subseteq V$.
\end{itemize}
By compactness of flag varieties, we may pass to convergent subsequences, and hence conditions (a) and (b) may be reformulated as follows:
\begin{itemize}
\item[(a)]For strata $Y\subseteq\overline X$, as $X\ni x\to y\in Y$, the tangent spaces $T_xX$ become arbitrarily close to containing $T_yY$, uniformly over compact subsets of $Y$.
\item[(b)]For strata $Y\subseteq\overline X$, as $X\ni x\to y\in Y$, the secant lines between $x$ and $y$ become arbitrarily close to being contained in $T_xX$, uniformly over compact subsets of $Y$.
\end{itemize}
Whitney's condition (a) is equivalent to the assertion that the union of conormals $N^*\SSS:=\bigcup_\alpha N^*M_\alpha\subseteq T^*M$ is closed.
In fact, it is not hard to see that Whitney's condition (b) implies Whitney's condition (a) \cite[Prop.\ 2.4]{mathernotes}.
A Whitney stratification is, by definition, at least $C^1$; it makes sense to consider $C^p$ Whitney stratifications for any $p\geq 1$, including $p=\infty$.

\medskip

In order to guarantee the existence of Whitney stratifications, we will 
ultimately restrict to the setting of (real) analytic manifolds and subanalytic stratifications.
We recall that a set is defined to be subanalytic when locally (i.e.\ in a neighborhood of every point of its closure) it is the analytic image of a relatively compact semianalytic set (i.e.\ locally defined by finitely many analytic inequalities).
The canonical modern reference for subanalytic geometry is \cite{bierstone-milman}.
By a subanalytic stratification, we mean a stratification in which all strata are subanalytic.
Every subanalytic stratification admits a subanalytic refinement in which all strata are locally closed analytic submanifolds.
It is a fundamental result that for any locally finite collection of subanalytic subsets of an analytic manifold, there exists a subanalytic stratification with respect to which all the subsets are constructible.
For proofs of these results, see \cite{bierstone-milman,shiotabook}.
We also require the result that every subanalytic stratification admits a refinement to a subanalytic Whitney triangulation \cite{shiotawhitney,czapla,czaplapawlucki}.

\begin{remark}
Wherever we have written `subanalytic', one could substitute `defineable' with respect to any fixed analytic-geometric category \cite{vandendries-miller,vandendries}.
Every defineable stratification has a defineable refinement to a $C^p$ Whitney triangulation for any given $p<\infty$ \cite{shiotawhitney,czapla,czaplapawlucki}.
The fact that this is not known to hold for $p=\infty$ does not create any difficulties.
\end{remark}

\begin{lemma}\label{whitneyintersect}
Let $M$ be a manifold with Whitney stratification $\SSS$.
If $N\subseteq M$ is a locally closed submanifold transverse to every stratum of $\SSS$, then the intersected stratification $\SSS\cap N$ is a Whitney stratification of $N$.
\end{lemma}

\begin{proof}
It suffices to verify that $\SSS\cap N$ satisfies Whitney (b).
Thus consider a pair of strata $X\cap N$ and $Y\cap N$ with $Y\subseteq\overline X$.
Whitney (b) for the stratification $\SSS$ guarantees that the secant line from $x\in X\cap N$ to $y\in Y\cap N$ becomes arbitrarily close to being contained in $T_xX$ as $x\to y$, uniformly over compact subsets of $Y\cap N$.
On the other hand, Whitney (b) for the stratification for $\SSS\cap N$ requires this secant line to become arbitrarily close to being contained in $T_xX\cap T_xN$, a stronger condition.
Since $N$ is a submanifold, the secant line certainly becomes arbitrarily close to being contained in $T_xN$.
Our task is thus to pass from being close to $T_xX$ and $T_xN$ to being close to their intersection $T_xX\cap T_xN$.
It thus suffices to show that $T_xX$ and $T_xN$ are uniformly transverse as $x\to y$.
Let us see how this follows from Whitney (a) for $\SSS$.
Since $N$ is transverse to $Y$, we have $T_yY+T_yN=T_yM$.
Whitney (a) for $\SSS$ means that $T_xX$ is arbitrarily close to containing $T_yY$ as $x\to y$.
Thus $T_xX+T_xN=T_xM$ uniformly as $x\to y$, as desired.
\end{proof}

\begin{remark}\label{addsubmanifoldremark}
Here is a typical application of Lemma \ref{whitneyintersect}.
Let $M$ be a manifold with a Whitney stratification $\SSS$, and let $N\subseteq M$ be a closed (as a subset) submanifold transverse to every stratum of $\SSS$.
Then the stratification $\SSS_N$ of $M$ with strata $M_\alpha\cap N$ and $M_\alpha\setminus N$ for $\alpha\in\SSS$ (the poset of strata $\SSS_N$ is thus $\SSS\times\{(M\setminus N)>N\}$) is Whitney.
Indeed, Whitney (b) for $\SSS_N$ is a special case of Whitney (b) for $\SSS$ and $\SSS\cap N$.
A slight modification of this example will also come up later.
Let $B\subseteq M$ be a closed ball whose boundary is transverse to a Whitney stratification $\SSS$.
The stratification of $M$ by $M_\alpha\cap\partial B$, $M_\alpha\cap B^\circ$, and $M_\alpha\setminus B$, for strata $M_\alpha\subseteq M$, is now Whitney by the same argument using Lemma \ref{whitneyintersect}.
\end{remark}

\subsection{Microlocal approximation of constructible sets}\label{microlocalapprox}

A constructible set $X$ with respect to a Whitney stratification is in general quite singular.
Our goal in this section is to show how such sets can be \emph{microlocally} approximated by manifolds-with-corners $X^\eta$ parameterized by small $\eta>0$, in the sense that $X^\eta\to X$ and the conormal to $X^\eta$ converges to (being contained in) the conormal of $X$ as $\eta\to 0$.
This result will be used in the proof of Proposition \ref{better841} and in Section \ref{fukayacalculationssec}.
We fix an integer $p\geq 1$, possibly $p=\infty$, and work throughout with stratifications of class $C^p$.

A $C^p$ \emph{radius function} for a locally closed $C^p$ submanifold $Y\subseteq M$ is a pair $(U,\rho)$ where $U\subseteq M$ is an open set containing $Y$ and $\rho:U\to\RR_{\geq 0}$ is of class $C^p$ on $U\setminus Y$ and satisfies the following three conditions:
\begin{itemize}
\item$\rho^{-1}(0)=Y$;
\item$\rho$ is Lipschitz
on a neighborhood of any compact subset of $Y$;
\item the lim inf of the evaluation of $d\rho(x)$ on the secant direction from $y\in Y$ to $x\in M$ is bounded below by some $\epsilon>0$ as $x\to y$, provided $y$ is constrained to a compact subset of $Y$ and the ratio $\frac{d(x,y)}{d(x,Y)}$ is bounded by some fixed $N<\infty$ (this condition is well defined since $\rho$ is assumed Lipschitz; it implies $\left|d\rho(x)\right|$ is bounded away from zero over neighborhoods of compact subsets of $Y$).
\end{itemize}
The standard radius function for $\RR^n\times 0\subseteq\RR^n\times\RR^m$ is of course $(a_1,\ldots,a_n,b_1,\ldots,b_m)\mapsto(b_1^2+\cdots+b_m^2)^{1/2}$.
Every locally closed $C^p$ submanifold admits a $C^p$ radius function, as can be seen by choosing a collection of local coordinate patches and summing together standard radius functions via a partition of unity (a convex combination of radius functions is a radius function).

The following (trivial) restriction property for radius functions will be important: if $(U,\rho)$ is a radius function for $Y\subseteq M$, and $N\subseteq M$ is transverse to $Y$, then $(U\cap N,\rho|_{U\cap N})$ is a radius function for $Y\cap N\subseteq N$.

When $Y\subseteq M$ is relatively compact, a radius function $(U,\rho)$ for $Y$ will be called \emph{proper} when for every open set $V$ containing $\overline Y$, there exists $\epsilon>0$ such that $\rho^{-1}([0,\epsilon])\subseteq V$.
It is easy to produce proper radius functions: for any radius function $(U,\rho)$, there exists an open set $U'\subseteq U$ containing $Y$ such that $(U',\rho|_{U'})$ is proper.

The purpose of a radius function is to define tubular neighborhoods $\rho^{-1}([0,\epsilon])$.
The conditions in the definition of a radius function are chosen so as to be able to prove the following two key assertions:

\begin{lemma}\label{tubeconormal}
Let $(U,\rho)$ be a radius function for $Y\subseteq M$.
We have $d\rho(x)\to N^*Y$ as $x\to Y$, uniformly over compact subsets of $Y$ (equivalently, $N^*\rho^{-1}(r)$ approaches being contained in $N^*Y$ as $r\to 0$, uniformly over compact subsets of $Y$).
\end{lemma}

\begin{proof}
Suppose for sake of contradiction that there is a sequence $x_i\to y\in Y$ with $d\rho(x_i)\to\xi\in T^*_yM\setminus N^*_yY$.
Since $\xi\notin N^*_yY$, there exist $y_i\in Y$ with $\frac{d(x_i,y_i)}{d(x_i,Y)}$ uniformly bounded and the secant direction from $y_i$ to $x_i$ converging to the kernel of $\xi$.
This contradicts the final axiom of a radius function.
\end{proof}

\begin{lemma}\label{tubetransverse}
Let $Y\subseteq M$ be a stratum of a Whitney stratification $\SSS$, and let $(U,\rho)$ be a radius function for $Y$.
There exists an open set $V\subseteq U$ containing $Y$ such that the submanifold $\rho^{-1}(r)$ is transverse to $\SSS$ over $V$ for all $r>0$.
\end{lemma}

\begin{proof}
The secant line to $x\in\rho^{-1}(r)$ from nearby $y\in Y$ pairs positively with $d\rho(x)$, whereas Whitney (b) requires that this secant line approach the tangent space to the stratum containing $x$ in the limit $x\to y$.
Since $d\rho$ is bounded, this gives a positive lower bound on the restriction of $d\rho$ to any stratum of $\SSS$ in a neighborhood of any compact subset of $Y$.
\end{proof}


\begin{figure}[htbp]
\centering
\begin{tikzpicture}[scale=\textwidth/6cm]
\draw(0,0)to(0,1)to(1,1)to(1,0)to cycle;
\draw(0,0)to(1,1);
\begin{scope}[even odd rule]
\clip(1.15,-0.15)rectangle(3.15,1.15) (1.985,-0.015)to(1.985,1.015)to(2.015,1.015)to(2.015,-0.015)to cycle (2.985,-0.015)to(2.985,1.015)to(3.015,1.015)to(3.015,-0.015)to cycle (1.985,-0.015)to(1.985,.015)to(3.015,.015)to(3.015,-0.015)to cycle (1.985,0.985)to(1.985,1.015)to(3.015,1.015)to(3.015,0.985)to cycle (2-.010606,.010606)to(3-.010606,1.010606)to(3+.010606,1-.010606)to(2+.010606,-.010606)to cycle;
\draw(2,0) circle (0.1);
\draw(3,0) circle (0.1);
\draw(2,1) circle (0.1);
\draw(3,1) circle (0.1);
\end{scope}
\begin{scope}[even odd rule]
\clip(1.1,-0.1)rectangle(3.1,1.1) (2,0) circle (0.1) (3,0) circle (0.1) (2,1) circle (0.1) (3,1) circle (0.1);
\draw(1.985,-0.015)to(1.985,1.015)to(3.015,1.015)to(3.015,-0.015)to cycle;
\draw(2.015,0.015)to(2.015,0.985)to(2.985,0.985)to(2.985,0.015)to cycle;
\draw(2-.010606,.010606)to(3-.010606,1.010606);
\draw(2+.010606,-.010606)to(3+.010606,1-.010606);
\end{scope}
\end{tikzpicture}
\caption{A compact constructible set $X$ (left) and its outward cornering $X^{\underline\epsilon}$ (right).}\label{figureblowup}
\end{figure}

We now turn to the setting of a compact set $X\subseteq M$ constructible with respect to a chosen Whitney stratification $\SSS$.
Given a proper radius function for each stratum of $\SSS$ contained in $X$, we define the `outward cornering' of $X$ with respect to $\SSS$ (see Figure \ref{figureblowup}) to be
\begin{equation}\label{outwardcornering}
X^{\underline\epsilon}=\bigcup_{M_\alpha\subseteq X}\rho_\alpha^{-1}([0,\epsilon_\alpha])
\end{equation}
for $\underline\epsilon=(\epsilon_\alpha)_{M_\alpha\subseteq X}$, where it is tacitly required that $\epsilon_\alpha>0$ be sufficiently small as an unspecified function of $(\epsilon_\beta)_{M_\beta\subsetneqq\overline{M_\alpha}}$.
Note that, no matter our notion of sufficiently small, we can always find a parameterization $\underline\epsilon(\eta)$ where each $\epsilon_\alpha>0$ is an increasing function of $\eta>0$, limiting to zero as $\eta\to 0$, such that $\epsilon_\alpha(\eta)>0$ is sufficiently small in terms of $(\epsilon_\beta(\eta))_{M_\beta\subsetneqq\overline{M_\alpha}}$ for all $\eta>0$ (proof: by induction on strata).

The significance of properness of the given radius functions is that it (along with compactness of $X$) ensures that the part of $\partial X^{\underline\epsilon}$ coming from $\rho_\alpha^{-1}(\epsilon_\alpha)$ is contained in a neighborhood of a \emph{compact} subset of $M_\alpha\subseteq X$ (depending on $(\epsilon_\beta)_{M_\beta\subsetneqq\overline{M_\alpha}}$), hence falls within the scope of Lemma \ref{tubetransverse}.

Here is the first key property of $X^{\underline\epsilon}$:

\begin{corollary}\label{neighborhoodcorners}
Fix a Whitney stratification $\SSS$ of $M$.
Let $X\subseteq M$ be a compact constructible subset, with a choice of proper radius function for each of its strata.
Then $M\setminus X^{\underline\epsilon}$ is a manifold-with-corners, all of whose corner strata are transverse to $\SSS$.
\end{corollary}

\begin{proof}
We proceed by induction on the number of strata of $X$.
Thus let $X=X_0\cup M_\alpha$, and suppose the result is known for $X_0$.
This means, in particular, that $M\setminus X_0^{\underline\epsilon}$ is a manifold-with-corners, all of whose corner strata are transverse to $M_\alpha$.
It follows that further removing the small regular neighborhood $\rho_\alpha^{-1}([0,\epsilon_\alpha])$ produces a manifold-with-corners $M\setminus X^{\underline\epsilon}$, provided $\epsilon_\alpha>0$ is sufficiently small as a function of $(\epsilon_\beta)_{M_\beta\subsetneqq\overline{M_\alpha}}$.

Now let us show that the corner strata of $M\setminus X^{\underline\epsilon}$ are transverse to $\SSS$.
The boundary stratum of $M\setminus X^{\underline\epsilon}$ coming from $\rho_\alpha^{-1}(\epsilon_\alpha)$ is transverse to $\SSS$ by Lemma \ref{tubetransverse}.
A general corner stratum of $M\setminus X^{\underline\epsilon}$ is the intersection $C\cap\rho_\alpha^{-1}(\epsilon_\alpha)$ where $C$ is a corner stratum of $M\setminus X_0^{\underline\epsilon}$.
To see that such an intersection is also transverse to $\SSS$, apply Lemma \ref{tubetransverse} to the stratum $C\cap M_\alpha\subseteq C$ of the restriction to $C$ of $\SSS$ (which is Whitney by Lemma \ref{whitneyintersect}, locally uniformly in $\underline\epsilon\setminus\epsilon_\alpha$) equipped with the restriction of $\rho_\alpha$ (which remains a radius function as noted earlier).
\end{proof}

Note that the conclusion of Corollary \ref{neighborhoodcorners} (transversality of $\partial X^{\underline\epsilon}$ and $\SSS$) is equivalent to saying that $N^*\SSS$ (the union of the conormals of all strata) and $N^*X^{\underline\epsilon}$ (the union of the conormals of all corner strata) are disjoint at infinity.

Given transversality of $\partial X^{\underline\epsilon}$ and $\SSS$, we can define the `big conormal' $N^*(\SSS|(M\setminus X^{\underline\epsilon}))$ to be the union of conormals of intersections of strata of $\SSS$ and corner strata of $M\setminus X^{\underline\epsilon}$.
The second key property of $X^{\underline\epsilon}$ is the following convergence result:

\begin{corollary}\label{cornerconormalconvergence}
Fix a Whitney stratification $\SSS$ of $M$.
Let $X\subseteq M$ be a compact constructible subset, with a choice of proper radius function for each of its strata.
Then the big conormal $N^*(\SSS|(M\setminus X^{\underline\epsilon}))$ lies in arbitrarily small neighborhoods of $N^*\SSS$ as $\underline\epsilon\to 0$.
\end{corollary}

\begin{proof}
We proceed by induction on the number of strata of $X$.
Thus let $X=X_0\cup M_\alpha$, and suppose the result is known for $X_0$.
It thus suffices to show that $N^*(\SSS|(M\setminus X^{\underline\epsilon}))$ lies in an arbitrarily small neighborhood of $N^*(\SSS|(M\setminus X_0^{\underline\epsilon}))$ as $\epsilon_\alpha\to 0$ (uniformly over compact subsets of $\underline\epsilon\setminus\epsilon_\alpha$).

The conormal of the intersection of a stratum $M_\gamma$ of $\SSS$ with $\rho_\alpha^{-1}(\epsilon_\alpha)$ is the sum of the conormals of $M_\gamma$ and $\rho_\alpha^{-1}(\epsilon_\alpha)$.
These individually approach being contained in $N^*M_\alpha$ in the limit $\epsilon_\alpha\to 0$ (the first by Whitney (a) and the second by Lemma \ref{tubeconormal}), and they are quantitatively transverse by the axioms of a radius function (as was the main point of the proof of Lemma \ref{tubetransverse}).
It follows that their sum also approaches $N^*M_\alpha$ as $\epsilon_\alpha\to 0$.

The general case is that of the conormal of $M_\gamma\cap\rho_\alpha^{-1}(\epsilon_\alpha)\cap C$ where $C$ is a corner stratum of $M\setminus X_0^{\underline\epsilon}$.
It follows by applying the same argument to the intersection of the situation with $C$, as in the proof of Corollary \ref{neighborhoodcorners}.
\end{proof}

\begin{remark}
In the context of Corollary \ref{cornerconormalconvergence}, note that 
any subanalytic family of Legendrians inside $S^*M$, whose projections converge to $X$, 
will themselves converge to a subset of the conormal of $X$ \emph{with respect to some refinement of $\SSS$}.
Corollary \ref{cornerconormalconvergence} provides a stronger convergence result (we do not need to refine the stratification) for the particular family of outward cornerings defined in \eqref{outwardcornering}.
\end{remark}

\subsection{Cornering and conormals of constructible open sets}\label{constructibleconormals}

Here we develop some finer properties of the microlocal approximations constructed in the previous subsection.
They will not be used until Section \ref{fukayacalculationssec}.

Let $\SSS$ be a Whitney stratification of $M$ by locally closed smooth submanifolds.
For any $\SSS$-constructible relatively compact open set $U\subseteq M$, we define its \emph{inward cornering}
\begin{equation}
U^{-\underline\epsilon}:=U\setminus\overline{(\partial U)^{\underline\epsilon}}.
\end{equation}
where $(\partial U)^{\underline\epsilon}$ denotes the outward cornering $\partial U$ defined in \eqref{outwardcornering}.
Thus $U^{-\underline\epsilon}$ is (the interior of) a codimension zero submanifold-with-corners (Corollary \ref{neighborhoodcorners}), depending smoothly on $\underline\epsilon$, such that as $\underline\epsilon\to 0$, its conormal $N^*U^{-\underline\epsilon}$ remains disjoint from $N^*\SSS$ at infinity (Corollary \ref{neighborhoodcorners}) yet limits inside it (Corollary \ref{cornerconormalconvergence}).
Strictly speaking, $U^{-\underline\epsilon}$ also depends on the choices of tubular neighborhoods of the strata comprising $\partial U$, however we will leave this choice out of the notation as it is never particularly relevant (note that it is a convex, hence contractible, choice).
When even the choice of $\underline\epsilon$ is not relevant, we will simply write $U^-$.

Taking $\underline\epsilon\to 0$, we learn that:

\begin{lemma}\label{perturbationdiffeo}
$U$ and $U^-$ are diffeomorphic, and the diffeomorphism may be chosen to be the identity on any fixed compact subset of $U$.
\qed
\end{lemma}

When $\SSS$ is a triangulation, we may consider for any simplex $s\in\SSS$ its open star, $\star(s)$.

\begin{lemma}\label{bdrywarmup}
For $\SSS$ a triangulation and any $s \in \SSS$, there is a homotopy equivalence $\partial\star(s)^-\simeq\partial\star(s)$.
\end{lemma}

\begin{proof}
Both $\star(s)$ and $\star(s)^-$ are the interiors of compact (topological) manifolds-with-boundary (namely their closures).
They are also diffeomorphic: $\star(s)\cong\star(s)^-$ by Lemma \ref{perturbationdiffeo}.
It therefore suffices to recall the standard fact that the interior of a compact manifold-with-boundary remembers its boundary, up to homotopy equivalence.

Indeed, let $\overline M$ be a compact manifold-with-boundary and let $M=\overline M\setminus\partial\overline M$ denote its interior.
The `end space' $e(M)$ is the space of proper maps $\RR_{\geq 0}\to M$ (this is a model for the homotopy inverse limit of $M\setminus K$ over compact subsets $K\subseteq M$).
A choice of collar $\partial\overline M\times[0,1)\hookrightarrow\overline M$ determines a homotopy equivalence
\begin{equation}
e(M)\xleftarrow\sim e(\partial\overline M\times(0,{\textstyle\frac 12}])=C(\RR_{\geq 0},\partial\overline M)\times e((0,{\textstyle\frac 12}]),
\end{equation}
and we have homotopy equivalences $C(\RR_{\geq 0},\partial\overline M)\simeq\partial\overline M$ and $e((0,\frac 12])\simeq *$, so we have $e(M)\simeq\partial\overline M$.
(Compare \cite[\S 1]{hughesranicki}.)
\end{proof}

\begin{lemma}\label{strangesimplexintersectionisstableball}
For $\SSS$ a triangulation and for simplices $s,t\in\SSS$ with $\star(t)\cap\star(s)\ne\emptyset$ and $t\nrightarrow s$, the intersection $\star(t)^{-\underline\epsilon}\cap\partial\star(s)^{-\underline\delta}$ is contractible for $\underline\epsilon$ sufficiently small and $\underline\delta$ sufficiently small in terms of $\underline\epsilon$.
\end{lemma}

\begin{proof}
As in the proof of Lemma \ref{bdrywarmup}, the intersection ${\star(t)^{-\underline\epsilon}}\cap{\partial\star(s)^{-\underline\delta}}$ is the end space of $\overline{\star(t)^{-\underline\epsilon}}\cap\star(s)^{-\underline\delta}$.
As in Lemma \ref{perturbationdiffeo}, this intersection $\overline{\star(t)^{-\underline\epsilon}}\cap\star(s)^{-\underline\delta}$ is diffeomorphic to $\overline{\star(t)^{-\underline\epsilon}}\cap\star(s)$, whose end space is in turn given by $\star(t)^{-\underline\epsilon}\cap\partial\star(s)$.
Now we may take $\underline\epsilon\to 0$ again mimicking the proof of Lemma \ref{perturbationdiffeo} to see that this is homotopy equivalent to $\star(t)\cap\partial\star(s)$.
The assumptions on $s$ and $t$ now imply that this is contractible.
\end{proof}

\begin{lemma}\label{smoothcornering}
If $U$ has smooth boundary, then there is a $C^0$-small isotopy between $U^-$ and a small inward pushoff of $U$.
\end{lemma}

\begin{proof}
The definition of $U^-$ depends on choice of radius functions for the strata comprising $\partial U$.
Fix coordinates $\partial U\times\RR\subseteq M$ near $\partial U$, and choose radius functions whose inward derivative in the $\RR$-direction is positive inside $U$.
Such radius functions exist locally, hence can be patched together using a partition of unity pulled back from $\partial U$ (i.e.\ independent of the $\RR$-coordinate), which preserves the property of having positive inward derivative inside $U$.
Now using these radius functions, each vertical line $p\times\RR$ for $p\in\partial U$ intersects $\partial U^-$ exactly once, transversally, which provides the desired isotopy.
\end{proof}

\begin{remark}
It is \emph{not} asserted that the isotopy in Lemma \ref{smoothcornering} will ensure that the conormal remains disjoint from $N^*\SSS$ at infinity.
This will require us to exercise some care when applying it.
\end{remark}

For the next result, consider a Whitney stratification $\SSS$ and a point $q\in M$ lying in a stratum $M_\chi$.
Let $\SSS_q$ denote the Whitney stratification obtained from $\SSS$ by replacing $M_\chi$ with $M_\chi\setminus q$ and $\{q\}$.
Given a compact $\SSS$-constructible set $X\subseteq M$ containing $q$, we can consider its outward cornerings $X^{\underline\epsilon}$ and $X^{\underline\epsilon,\delta}$ with respect to $\SSS$ and $\SSS_q$, respectively, where $\underline\epsilon=(\epsilon_\alpha)_{M_\alpha\subseteq X}$ and $\delta>0$ is associated to $q$.
Evidently
\begin{equation}
X^{\underline\epsilon,\delta}=X^{\underline\epsilon}\cup B_\delta(q),
\end{equation}
and according to the definition of `outward cornering' above, there is the implicit requirement that $\epsilon_\chi>0$ be sufficiently small as a function of $(\epsilon_\beta)_{M_\beta\subsetneqq\overline{M_\alpha}}$ and $\delta$.
The next result concerns the behavior of $X^{\underline\epsilon,\delta}$ when we \emph{remove} the dependence of $\epsilon_\chi$ on $\delta$.
The resulting neighborhoods are illustrated in Figure \ref{figureXepsdel}, which should be contrasted with Figure \ref{figureblowup}.

\begin{proposition}\label{wrapnothitballnew}
Let $X\subseteq M$ be a compact $\SSS$-constructible subset, and fix $q\in X$ living in stratum $\chi$.
Let $\tilde X^{\underline\epsilon,\delta}=X^{\underline\epsilon}\tildecup B_\delta(q)$ where $X^{\underline\epsilon}$ is the outward cornering \eqref{outwardcornering} and the notation $\tilde\cup$ indicates that the boundary of the union $\rho_\chi^{-1}([0,\epsilon_\chi])\cup B_\delta(q)$ is smoothed along its potential corner locus $\rho_\chi^{-1}(\epsilon_\chi)\cup\partial B_\delta(q)$, and then the remaining tubes are added.
Then for suitable choices of radius functions near $q$, these modified outward cornerings $X^{\underline\epsilon,\delta}$ satisfy the conclusion of Corollary \ref{neighborhoodcorners}.
\end{proposition}


\usetikzlibrary{intersections}
\begin{figure}[htbp]
\centering
\begin{tikzpicture}[scale=\textwidth/10cm]
\path[name path = LUhoriz](1,0.2)--(-1,0.2);
\path[name path = LUcircleright](0.55,0)arc(0:90:0.55);
\path[name path = LUcircleleft]((-0.55,0)arc(180:90:0.55);
\path[name intersections={of = LUhoriz and LUcircleright, by=a}];
\path[name intersections={of = LUhoriz and LUcircleleft, by=b}];
\draw(-1,0)to(1,0);
\draw(a)--(1,0.2);
\draw(b)--(-1,0.2);
\begin{scope}
\clip(-1,0.2)--(1,0.2)--(1,1)--(-1,1)--cycle;
\draw(0,0)circle(0.55);
\end{scope}
\draw[fill](0,0)circle(0.03);
\node at(0.1,-0.15){$q$};
\end{tikzpicture}
\quad
\begin{tikzpicture}[scale=\textwidth/10cm]
\path[name path = LUhoriz](1,0.2)--(-1,0.2);
\path[name path = LUcircleright](0.4,0)arc(0:90:0.4);
\path[name path = LUcircleleft]((-0.4,0)arc(180:90:0.4);
\path[name intersections={of = LUhoriz and LUcircleright, by=a}];
\path[name intersections={of = LUhoriz and LUcircleleft, by=b}];
\draw(-1,0)to(1,0);
\draw(a)--(1,0.2);
\draw(b)--(-1,0.2);
\begin{scope}
\clip(-1,0.2)--(1,0.2)--(1,1)--(-1,1)--cycle;
\draw(0,0)circle(0.4);
\end{scope}
\draw[fill](0,0)circle(0.03);
\node at(0.1,-0.15){$q$};
\end{tikzpicture}
\quad
\begin{tikzpicture}[scale=\textwidth/10cm]
\path[name path = LUhoriz](1,0.2)--(-1,0.2);
\path[name path = LUcircleright](0.25,0)arc(0:90:0.25);
\path[name path = LUcircleleft]((-0.25,0)arc(180:90:0.25);
\path[name intersections={of = LUhoriz and LUcircleright, by=a}];
\path[name intersections={of = LUhoriz and LUcircleleft, by=b}];
\draw(-1,0)to(1,0);
\draw(a)--(1,0.2);
\draw(b)--(-1,0.2);
\begin{scope}
\clip(-1,0.2)--(1,0.2)--(1,1)--(-1,1)--cycle;
\draw(0,0)circle(0.25);
\end{scope}
\draw[fill](0,0)circle(0.03);
\node at(0.1,-0.15){$q$};
\end{tikzpicture}
\quad
\begin{tikzpicture}[scale=\textwidth/10cm]
\draw(-1,0)to(1,0);
\draw(1,0.2)--(-1,0.2);
\draw[fill](0,0)circle(0.03);
\node at(0.1,-0.15){$q$};
\end{tikzpicture}
\caption{The neighborhoods $X^{\underline\epsilon,\delta}$.}\label{figureXepsdel}
\end{figure}

\begin{proof}
Choose a local Euclidean chart near $q$ in which $q\in M_\chi\subseteq M$ is locally modelled on $0\in\RR^k\subseteq\RR^n$, and let us choose the usual radius (i.e.\ distance) functions in this chart.
Note that we thus have a completely explicit picture of how these tubes intersect near $q$; in particular, their union with smoothed boundary is well behaved.
We regard the union of tubes (with smoothed boundary) $\rho_\chi^{-1}([0,\epsilon_\chi])\tildecup B_\delta(q)$ as a single object associated to the stratum $\chi$, albeit depending on two parameters $\epsilon_\chi$ and $\delta$.
This single object satisfies the conclusion of Lemma \ref{tubetransverse} by Whitney (b), which is all that is used in the inductive proof of Corollary \ref{neighborhoodcorners}.
\end{proof}

\section{Microlocal Morse categories} \label{sec:axioms}

\subsection{Strata poset categories and refinement functors}\label{posetcategories}

Let $\SSS$ be a stratification.
We fix the following notation for the Yoneda embedding:
\begin{align}
\SSS &\to \Fun(\SSS^\op,\Set),\\
\alpha&\mapsto\Hom(\cdot,\alpha)=:1_{\star(\alpha)}.
\end{align}
Note that
\begin{equation}
\Hom(1_{\star(\alpha)},1_{\star(\beta)}) = 1_{\star(\beta)}(\alpha) = \Hom(\alpha, \beta) =  \begin{cases} \{1\} & \star(\alpha) \subseteq \star(\beta)  \\ \emptyset & \textrm{otherwise.} \end{cases}
\end{equation}
For any $\SSS$-constructible open set $U$, we introduce the functor $1_U\in\Fun(\SSS^\op,\Set)$ defined by the analogous formula
\begin{equation}\label{indicatorUdef}
\Hom(1_{\star(\alpha)}, 1_{U}) = 1_{U}(\alpha) :=  \begin{cases} \{1\} & \star(\alpha) \subseteq U  \\ \emptyset & \textrm{otherwise.} \end{cases}
\end{equation}
(The action of $1_U$ on morphism sets is in fact uniquely determined by the above, since when $\alpha \to \beta$, the set $\Hom_{\Set}(1_U(\beta), 1_U(\alpha))$ always consists of one element.)
Note that $\star(\alpha) \subseteq U$ iff $\alpha\subseteq U$.
More generally, for $U \subseteq V$ there is a unique natural transformation
\begin{equation}\label{uniquenattrans}
1_{U} \to 1_{V}.
\end{equation}
It sends $1 \in 1_{U}(\alpha)$ to $1 \in 1_V(\alpha)$ for any $\star(\alpha) \subseteq U\subseteq V$.

Now let $\SSS'$ be a stratification refining $\SSS$.  There is a natural map $r:\SSS'\to\SSS$, sending a stratum in $\SSS'$ to the 
unique stratum in $\SSS$ containing it.
We write
\begin{equation}
r^*:\Fun(\SSS^\op,\Set)\to\Fun(\SSS^{\prime\op},\Set)
\end{equation}
for the pullback of functors along this map $r$.
For $\tau' \in \SSS'$ and an $\SSS$-constructible open set $U$, we have 
\begin{equation}
\Hom(1_{\star(\tau')}, r^*1_U) = (r^*1_U)(\tau') = 1_U(r(\tau')) =
\begin{cases} \{1\} & \star(r(\tau')) \subseteq U \\ \emptyset & \textrm{otherwise.} \end{cases}
\end{equation}
Since $U$ is open and $\SSS$-constructible, we have $\star(r(\tau')) \subseteq U$ iff $\star(\tau')\subseteq U$, so we conclude that $r^*1_U=1_U$.

We now linearize.   We write $\ZZ[\SSS]$ for the linearization of a poset $\SSS$.
We write $\Mod\SSS$ for the category of modules $\Fun(\SSS^\op,\Mod\ZZ) = \Fun(\ZZ[\SSS]^\op, \Mod \ZZ)$, and we use $r^*:\Mod\SSS\to\Mod\SSS'$ for pullback of modules as above.
As with any pullback of modules, this functor has a left adjoint, typically termed extension of scalars or induction,
which we write as $r_!:\Mod\SSS'\to\Mod\SSS$, which fits into the commuting diagram
\begin{equation}
\begin{tikzcd}[column sep = large]
\SSS'\ar{r}{s\mapsto 1_{\star(s)}}\ar{d}[swap]{r}& \Mod\SSS'\ar{d}{r_!}\\
\SSS\ar{r}{s\mapsto 1_{\star(s)}}& \Mod\SSS.
\end{tikzcd}
\end{equation}
Restriction of scalars $r^*$ is co-continuous, so its left adjoint $r_!$ extension of scalars preserves compact objects, giving a map $r_!:\Perf\SSS'\to\Perf\SSS$ (which can also be viewed as the canonical extension of $r:\SSS'\to\SSS$ to the idempotent-completed pre-triangulated hulls).

\subsection{A category for any \texorpdfstring{$\Lambda$}{Lambda}}

We now wish to define a \emph{microlocal Morse category} $\C(\Lambda)$ for any subanalytic (possibly) singular isotropic $\Lambda\subseteq S^*M$, together with functors $\C(\Lambda')\to\C(\Lambda)$ for inclusions $\Lambda'\supseteq\Lambda$.
We define this system of categories $\Lambda\mapsto\C(\Lambda)$, the \emph{microlocal Morse theatre}, by formulating axioms which characterize it uniquely.
(Recall that $S^*M:=(T^*M\setminus M)/\RR_{>0}$ denotes the co-sphere bundle of $M$, and a closed subanalytic set $\Lambda\subseteq S^*M$ is called isotropic iff for some, or, equivalently, every, cover of $\Lambda$ by locally closed $C^1$ submanifolds, all of them are isotropic.)

The previous subsection defined categories $\Perf\SSS$ together with functors $r_!:\Perf\SSS'\to\Perf\SSS$ whenever $\SSS'$ is a refinement of $\SSS$.
For our current purpose, these categories do not have the correct significance for general stratifications $\SSS$ (compare Remark \ref{stratvsposet}).
As such, we will consider these categories only for triangulations $\SSS$.\footnote{In fact, there are weaker conditions on a stratification $\SSS$ (which are satisfied if $\SSS$ is a triangulation) implying that $\Perf\SSS$ is the correct category to associate to $\SSS$.}
The microlocal Morse theatre is an extension of this functor $\SSS\mapsto\Perf\SSS$ on triangulations.

\begin{definition}\label{categorysystemextension}
A \emph{microlocal Morse pre-theatre} $\Lambda\mapsto\C(\Lambda)$ is a functor from the category of subanalytic singular isotropics inside $S^*M$ to the category of dg categories over $\ZZ$.
A \emph{normalized} microlocal Morse pre-theatre is one equipped with an isomorphism of functors $(\SSS\mapsto\C(N^*_\infty\SSS))=(\SSS\mapsto\Perf\SSS)$ on Whitney triangulations $\SSS$.
\end{definition}

\begin{remark}
Any isomorphism of functors $(\SSS\mapsto H^*\C(N^*_\infty\SSS))=(\SSS\mapsto H^*\Perf\SSS)$ automatically lifts to an isomorphism $(\SSS\mapsto\C(N^*_\infty\SSS))=(\SSS\mapsto\Perf\SSS)$ by Proposition \ref{ffhenceainf}.
This will be crucial when discussing Fukaya categories (specifically Theorem \ref{Frefinement}).
\end{remark}

We will characterize the microlocal Morse theatre in terms of microlocal Morse theory.\footnote{More conventionally \cite{goresky-macpherson}, 
this is called stratified Morse theory.  We find the term `microlocal' more descriptive, and also the word stratified would otherwise
take on too many meanings in this article.}
Let $f:M\to\RR$ be a function and $\SSS$ a stratification.
An intersection of $\Gamma_{df}$ with $N^*\SSS$ is called an $\SSS$-critical point, which is said to be Morse if it is a transverse intersection at a smooth point of $N^*\SSS$.
The function $f$ is said to be $\SSS$-Morse when all its $\SSS$-critical points are Morse.
When $\SSS$ is subanalytic, such functions are plentiful, and can be chosen analytic.
(See \cite[Thm.\ 2.2.1]{goresky-macpherson} for this assertion, which is collected there from various results in the literature.)

More generally, for any singular isotropic $\Lambda\subseteq S^*M$, a $\Lambda$-critical point of $f$ is by definition an intersection of $\Gamma_{df}$ with the union of the zero section and $\RR_{>0}\times\Lambda$.   That is, 
\begin{equation}
\crit_\Lambda(f) = \Gamma_{df}\cap(0_M\cup(\RR_{>0}\times\Lambda))\subseteq T^*M
\end{equation}
Such a $\Lambda$-critical point is said to be Morse if the intersection is transverse and occurs at a smooth point of $0_M\cup(\RR_{>0}\times\Lambda)$, and any $f$ whose $\Lambda$-critical points are all Morse is called $\Lambda$-Morse.

\begin{definition} \label{morsecharacter}
In any normalized microlocal Morse pre-theatre $\Lambda\mapsto\C(\Lambda)$, the \emph{Morse characters} $\X_{\Lambda,p}(f,\epsilon,\SSS)\in\C(\Lambda)$ are defined as follows for smooth Legendrian points $p\in\Lambda$.

Let $f:M\to\RR$ be an analytic function with a Morse $\Lambda$-critical point at $p$ (i.e.\ somewhere in $\RR_{>0}\times\{p\}\subseteq T^*M$)
with critical value $0$, no other $\Lambda$-critical points with critical values in the interval $[-\epsilon,\epsilon]$, and with relatively compact sublevel set $f^{-1}(-\infty,\epsilon)$.
Let $\SSS$ be a Whitney triangulation for which $\Lambda\subseteq N^*_\infty\SSS$ and for which both $f^{-1}(-\infty, -\epsilon)$ and $f^{-1}(-\infty, \epsilon)$ are $\SSS$-constructible.

The Morse character $\X_{\Lambda,p}(f,\epsilon,\SSS)$ is then defined as the image of
\begin{equation}
\cone(1_{f^{-1}(-\infty, -\epsilon)} \to 1_{f^{-1}(-\infty, \epsilon)}) \in \Perf\SSS=\C(N^*_\infty\SSS).
\end{equation}
under the map $\C(N^*_\infty\SSS)\to\C(\Lambda)$, where $1_{f^{-1}(-\infty, -\epsilon)} \to 1_{f^{-1}(-\infty, \epsilon)}$ is (the linearization of) the unique map \eqref{uniquenattrans}.
\end{definition}

The Morse character $\X_{\Lambda,p}(f,\epsilon,\SSS)\in\C(\Lambda)$ depends \emph{a priori} on the `casting directors' $(f,\epsilon,\SSS)$.
Casting directors $(f,\epsilon)$ exist at any smooth Legendrian point $p\in\Lambda$ by general position, and $\SSS$ exists by the following argument. First, by \cite[Prop.\ 8.3.10]{kashiwara-schapira} every closed subanalytic singular isotropic $\Lambda\subseteq S^*M$ is contained in $N^*_\infty\SSS$ for some subanalytic stratification $\SSS$ of $M$.
Next, by refining $\SSS$ the subanalytic subsets $f^{-1}(-\infty, \pm \epsilon)$ can be made constructible \cite{bierstone-milman,shiotabook}.
Finally, $\SSS$ can be made a subanalytic Whitney triangulation by \cite{shiotawhitney,czapla,czaplapawlucki}.

\begin{definition}\label{microlocalmorsetheatre}
A \emph{microlocal Morse theatre} is a normalized microlocal Morse pre-theatre $\Lambda\mapsto\C(\Lambda)$ satisfying the \emph{localization property}: for any inclusion $\Lambda\subseteq\Lambda'$ and any collection of Morse characters $\X_{\Lambda',p}(f,\epsilon,\SSS)\in\C(\Lambda')$ at smooth Legendrian points $p\in\Lambda'\setminus\Lambda$ with at least one in every component of the smooth Legendrian locus of $\Lambda'\setminus\Lambda$, the functor $\C(\Lambda')\to\C(\Lambda)$ is the idempotent-completed quotient by these Morse characters.
\end{definition}

The definition of a microlocal Morse theatre allows one to readily compute any particular microlocal Morse category $\C(\Lambda)$: embed $\Lambda$ into some $N^*_\infty\SSS$, cast Morse characters in $\C(N^*_\infty\SSS)=\Perf\SSS$ for all Legendrian components of $N^*_\infty\SSS\setminus\Lambda$, and take the quotient of $\Perf\SSS$ by these characters and idempotent complete.
It follows that:

\begin{proposition}\label{microlocalmorseuniqueness}
Any two microlocal Morse theatres $\Lambda\mapsto\C(\Lambda)$ are uniquely isomorphic.
\end{proposition}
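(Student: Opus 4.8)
The plan is to establish uniqueness by showing that a microlocal Morse theatre is forced, on all objects $\Lambda$, to coincide with the concrete recipe sketched just before the statement. First I would fix two microlocal Morse theatres $\Lambda\mapsto\C_1(\Lambda)$ and $\Lambda\mapsto\C_2(\Lambda)$. By Definition \ref{categorysystemextension}, both restrict to the \emph{same} functor $\SSS\mapsto\Perf\SSS$ on $\mu$-triangulations, and we are given specified isomorphisms $\C_i(N^*_\infty\SSS)\cong\Perf\SSS$. The goal is to promote these to a natural isomorphism $\C_1\cong\C_2$ of functors on \emph{all} subanalytic singular isotropics, compatibly with the maps induced by inclusions $\Lambda\subseteq\Lambda'$.

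The key step is the following: given any $\Lambda$, use Proposition \ref{putlambdainconormal} to choose a subanalytic stratification, then (by \cite[Thm.\ 8.3.20]{kashiwara-schapira} and the triangulation results recalled in Section 2) refine it to a $\mu$-triangulation $\SSS$ with $\Lambda\subseteq N^*_\infty\SSS$. Definition \ref{microlocalmorsetheatre} applied to the inclusion $\Lambda\subseteq N^*_\infty\SSS$ identifies $\C_i(\Lambda)$ with the idempotent completion of the quotient of $\C_i(N^*_\infty\SSS)=\Perf\SSS$ by a collection of Morse characters $\X_{N^*_\infty\SSS,p}(f,\epsilon,\SSS')$, one in each component of the smooth Legendrian locus of $N^*_\infty\SSS\setminus\Lambda$. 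Here is where a subtlety enters: the Morse characters are defined using auxiliary casting directors $(f,\epsilon,\SSS')$, and a priori live in $\C_i(N^*_\infty\SSS)$ only after passing through a further refinement functor $\C_i(N^*_\infty\SSS')\to\C_i(N^*_\infty\SSS)$ — wait, rather, one wants $\SSS'$ refining so that the relevant sublevel sets are constructible; in any case, under the fixed isomorphism $\C_i(N^*_\infty\SSS')\cong\Perf\SSS'$ the Morse character is the \emph{same} object of $\Perf\SSS'$ for $i=1,2$, by construction (it is literally a cone of indicator modules, Definition \ref{morsecharacter}), and the refinement functor $\Perf\SSS'\to\Perf\SSS$ is also the same. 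Hence the collection of Morse characters, viewed inside $\Perf\SSS$, is independent of $i$. Since a Verdier/dg quotient by a fixed set of objects followed by idempotent completion is functorial and determined up to canonical equivalence by those objects, we get a canonical isomorphism $\C_1(\Lambda)\cong\C_2(\Lambda)$.

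What remains is naturality: for an inclusion $\Lambda\subseteq\Lambda'$ one must check that these pointwise isomorphisms intertwine the structure functors $\C_i(\Lambda')\to\C_i(\Lambda)$. I would handle this by choosing a single $\mu$-triangulation $\SSS$ with $\Lambda\subseteq\Lambda'\subseteq N^*_\infty\SSS$ (possible by first triangulating compatibly with both and refining), so that \emph{both} $\C_i(\Lambda')$ and $\C_i(\Lambda)$ are expressed as quotients of the common $\Perf\SSS$, and the functor $\C_i(\Lambda')\to\C_i(\Lambda)$ is the further quotient by the Morse characters at smooth Legendrian points of $\Lambda'\setminus\Lambda$. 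One must know that quotienting $\Perf\SSS$ by characters for $N^*_\infty\SSS\setminus\Lambda'$ and then by characters for $\Lambda'\setminus\Lambda$ agrees with quotienting all at once by characters for $N^*_\infty\SSS\setminus\Lambda$ — this is a compatibility of iterated dg quotients, plus the observation that different choices of casting directors $(f,\epsilon,\SSS')$ for a given Legendrian component yield quasi-isomorphic Morse characters, so the quotient is insensitive to those choices.

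The main obstacle I anticipate is precisely this last point: showing that the quotient category is genuinely independent of the choice of casting directors and of the choice of ambient $\mu$-triangulation $\SSS$, i.e.\ that the recipe ``embed, cast, quotient, idempotent-complete'' is well-defined as stated in the paragraph preceding the proposition. The cleanest route is to observe that any two valid triangulations $\SSS_1,\SSS_2$ with $\Lambda\subseteq N^*_\infty\SSS_i$ admit a common refinement $\SSS_3$ (itself refinable to a $\mu$-triangulation), reducing to the case of a refinement $\SSS'\to\SSS$; there one uses the functor $r:\Perf\SSS'\to\Perf\SSS$ of Section 3.1 and checks that it carries the Morse characters for $N^*_\infty\SSS'\setminus\Lambda$ onto (a set generating the same thick subcategory as) those for $N^*_\infty\SSS\setminus\Lambda$ together with the ``new'' characters created by the refinement $N^*_\infty\SSS\subseteq N^*_\infty\SSS'$ — and that the latter are already killed, since $r$ itself is by definition the quotient by them. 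Granting this, the isomorphism $\C_1\cong\C_2$ and its naturality both follow formally, and its uniqueness follows because any natural isomorphism must restrict to the fixed one on the $N^*_\infty\SSS$ and these generate everything under quotients.
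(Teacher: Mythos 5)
Your proof is correct and fills in exactly the details the paper leaves implicit: the paper's "proof" is just the \qed after the preceding sentence, which says that the definition already gives a recipe (embed $\Lambda$ into $N^*_\infty\SSS$, cast Morse characters, quotient, idempotent complete) computing $\C(\Lambda)$ combinatorially; your argument spells out why this recipe pins down $\C(\Lambda)$ uniquely and naturally. One small clarification worth making: the "obstacle" you flag (independence of the quotient from the choice of casting directors and ambient $\mu$-triangulation) is in fact automatic from Definition \ref{microlocalmorsetheatre} itself, since it posits that the \emph{fixed} functor $\C(\Lambda')\to\C(\Lambda)$ is the quotient for \emph{any} collection of Morse characters — so one never needs to compare quotients for different castings within a single theatre; the only comparison needed is between $\C_1$ and $\C_2$ for a single fixed choice, which you handle correctly via the common-refinement argument and Morita-surjectivity of the localization functors out of $\Perf\SSS$.
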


\begin{proof}
For any normalized microlocal Morse pre-theatre $\C$, let $\X_{\Lambda'\setminus\Lambda}\subseteq\C(\Lambda')$ denote the collection of all Morse characters at all smooth Legendrian points of $\Lambda'\setminus\Lambda$.
Now for any microlocal Morse theatre $\C$, we have a canonical quasi-equivalence (functorial in $\Lambda$):
\begin{equation}
\varinjlim_{\begin{smallmatrix} N^*\SSS \supseteq \Lambda\\\SSS\textrm{ Whitney triangulation}\end{smallmatrix}}(\C(N^*\SSS)/\X_{N^*\SSS\setminus\Lambda})^\pi\xrightarrow\sim\C(\Lambda),
\end{equation}
and the left hand side is independent of $\C$ since $\C$ is normalized.
\end{proof}

A \emph{dramatic realization} is a particular construction of the microlocal Morse theatre $\Lambda\mapsto\C(\Lambda)$.
We give two dramatic realizations, namely via sheaves and via Lagrangians in Sections \ref{sec:sheaves} and \ref{sec:fukaya}, respectively.
Both these dramatic realizations \emph{cast} the Morse characters as certain familiar objects.
They moreover show that the Morse characters in fact depend only on $p$ (up to shifts) and are independent of the casting directors.

\begin{theorem}\label{microlocalmorseexistence}
The microlocal Morse theatre $\Lambda\mapsto\C(\Lambda)$ exists, and the Morse characters $\X_{\Lambda,p}\in\C(\Lambda)$ are independent of the casting directors and form a local system over the smooth Legendrian locus of $\Lambda$.
\end{theorem}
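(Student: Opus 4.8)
The plan is to establish all three assertions at once by producing a \emph{dramatic realization}: a concrete system $\Lambda\mapsto\C(\Lambda)$ satisfying the axioms of Definition~\ref{microlocalmorsetheatre} inside which the Morse characters of Definition~\ref{morsecharacter} are identified with objects manifestly depending only on the point $p$. Since any two microlocal Morse theatres are uniquely isomorphic, producing one such realization proves existence; once the Morse characters are so identified, their independence of the casting directors and the local system property follow and then hold in every theatre. I will sketch the sheaf-theoretic realization, carried out in Section~\ref{sec:sheaves}; the Fukaya-theoretic realization of Section~\ref{sec:fukaya} gives an independent second proof of existence, with the wrapping exact triangle of \cite{gpsstructural} playing the role of the microlocal input used below.

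First I would set $\C(\Lambda):=\Sh_\Lambda(M)^c$, the compact objects among sheaves $\F$ with $\ss(\F)\subseteq\Lambda$. For an inclusion $\Lambda\subseteq\Lambda'$ the fully faithful embedding $\Sh_\Lambda(M)\hookrightarrow\Sh_{\Lambda'}(M)$ is continuous and cocontinuous --- microsupport is subadditive along cones and stable under filtered (co)limits, so $\Sh_\Lambda(M)$ is closed under limits and colimits in $\Sh_{\Lambda'}(M)$ --- hence it admits a left adjoint $L$, which preserves compact objects since its right adjoint is cocontinuous; this produces the functor $\C(\Lambda')\to\C(\Lambda)$, functorial in $\Lambda$. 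For a $\mu$-triangulation $\SSS$ one has the standard equivalence $\Sh_{N^*_\infty\SSS}(M)\simeq\Mod\SSS=\Fun(\SSS^\op,\Mod\ZZ)$ between $\SSS$-constructible sheaves and representations of the strata poset; passing to compact objects gives $\C(N^*_\infty\SSS)\simeq\Perf\SSS$, and under a refinement $\SSS'\to\SSS$ this equivalence intertwines $L$ with extension of scalars $r$. This exhibits $\C$ as a microlocal Morse pre-theatre.

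The substantive axiom --- and the principal obstacle --- is the quotient property for an inclusion $\Lambda\subseteq\Lambda'$. Under the equivalence of the previous paragraph one has $\Hom(1_U,\F)=\F(U)$, so pairing $\Hom(-,\F)$ against the cone $\cone(1_{f^{-1}(-\infty,-\epsilon)}\to 1_{f^{-1}(-\infty,\epsilon)})$ of Definition~\ref{morsecharacter} computes the relative cohomology of $\F$ on the pair $(f^{-1}(-\infty,\epsilon),\,f^{-1}(-\infty,-\epsilon))$; by the microlocal reformulation in \cite{kashiwara-schapira} of stratified Morse theory \cite{goresky-macpherson} this is, up to a shift, the microstalk $\mu_p(\F)$. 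Hence every Morse character $\X_{\Lambda,p}(f,\epsilon,\SSS)$, for $\Lambda$ any subanalytic isotropic containing the smooth Legendrian point $p$, corepresents the microstalk functor $\Sh_\Lambda(M)\to\Mod\ZZ$ at $p$ --- the corepresentation is inherited along the functors $\C(\Lambda'')\to\C(\Lambda)$ because the microstalk commutes with the inclusions $\Sh_\Lambda(M)\hookrightarrow\Sh_{\Lambda''}(M)$. In particular, when $p\in\Lambda'\setminus\Lambda$ the Morse character $\X_{\Lambda',p}(f,\epsilon,\SSS)$ lies in $\ker L={}^\perp\Sh_\Lambda(M)$, since $\mu_p$ vanishes on sheaves whose microsupport avoids $p$. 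Conversely, a sheaf $\F$ with $\ss(\F)\subseteq\Lambda'$ has $\ss(\F)\subseteq\Lambda$ if and only if $\mu_q(\F)=0$ for $q$ running over one point in each component of the smooth Legendrian locus of $\Lambda'\setminus\Lambda$, by the determination of microsupport by microstalks along smooth Legendrians \cite{kashiwara-schapira} (with non-characteristic deformation showing that one point per component suffices and any point of a component serves). Therefore $\Sh_\Lambda(M)$ is the right-orthogonal of the corresponding compact Morse characters, $\ker L$ is the localizing subcategory they generate, and --- $\Sh_{\Lambda'}(M)$ being compactly generated, which follows by embedding $\Lambda'$ in some $N^*_\infty\SSS$ via Proposition~\ref{putlambdainconormal} --- the Thomason--Neeman description of compacts in a quotient identifies $\C(\Lambda')\to\C(\Lambda)$ with the idempotent completion of the quotient by these Morse characters, exactly as Definition~\ref{microlocalmorsetheatre} demands. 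This microlocal package --- stratified Morse theory, detection of microsupport by microstalks, non-characteristic deformation --- is the technical heart; on the Fukaya side its role is played by the wrapping exact triangle of \cite{gpsstructural}.

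Finally, the identity established above, that $\X_{\Lambda,p}(f,\epsilon,\SSS)$ corepresents the microstalk functor at the smooth Legendrian point $p\in\Lambda$, yields the remaining two claims directly. A corepresenting object is unique up to unique isomorphism and the microstalk functor depends only on the cooriented point $p\in S^*M$, so $\X_{\Lambda,p}(f,\epsilon,\SSS)$ is independent of the casting directors $(f,\epsilon,\SSS)$, and we write $\X_{\Lambda,p}$. As $p$ varies within a connected component of the smooth Legendrian locus of $\Lambda$, the microstalk functors are canonically and coherently identified with one another (non-characteristic deformation once more, the microstalk being locally constant along smooth Legendrians), so the objects $\X_{\Lambda,p}$ assemble into a local system of objects of $\C(\Lambda)$ over that locus, with possibly nontrivial monodromy. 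This proves the three assertions.
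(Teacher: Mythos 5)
Your proposal is correct and takes essentially the same approach as the paper: the paper proves the theorem by citing Theorem~\ref{sheafrealization} (or alternatively Theorem~\ref{fukcasting}), and your argument is a faithful sketch of the sheaf-theoretic realization together with an explicit unwinding of why corepresentability of the microstalk functor gives both uniqueness of the Morse characters and the local system structure.
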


\begin{proof}
This follows from either Theorem \ref{sheafrealization} or Theorem \ref{fukcasting}.
\end{proof}

\begin{proof}[Proof of Theorem \ref{sheaffukayaequivalence}]
Combine Theorem \ref{sheafrealization} and Theorem \ref{fukcasting} with Proposition \ref{microlocalmorseuniqueness}.
\end{proof}

In fact, both dramatic realizations show that $\C(\Lambda)$ is invariant under contact isotopy of $S^*M$, something which is not apparent from the present combinatorial prescription.
This is immediate on the Fukaya side, and on the sheaf side it is `sheaf quantization' \cite{guillermou-kashiwara-schapira}.
In fact, there are even stronger invariance statements: it is shown in \cite{gpsdescent} that in fact $\C(\Lambda)$ is invariant under contact isotopy of $S^*M \setminus \Lambda$ inside $S^*M$;
meanwhile, it is shown in \cite{nadler-shende} that $\C(\Lambda)$ is invariant under `gapped' deformations of $\Lambda$.

\begin{remark}
The construction of this subsection makes sense in any stable setting, e.g.\ over the sphere spectrum.
To show existence of the microlocal Morse theatre in such a more general setting, one could set up either microlocal sheaf theory or the Fukaya category over the sphere spectrum.
In principle, one could also show existence directly from the stratified Morse theory of \cite{goresky-macpherson}, as it already establishes results about homotopy types of spaces (not just their cohomologies).
A more interesting question is whether any symplectically invariant statement can be made beyond the stable setting.
\end{remark}

\subsection{Open inclusions}\label{morsemorphism}

We now discuss functoriality of the microlocal Morse theatre under open inclusions.

Given any analytic open inclusion of analytic manifolds $M'\hookrightarrow M$, a microlocal Morse pre-theatre $\C'$ on $M'$ determines a microlocal Morse pre-theatre $\Lambda\mapsto\C'(\Lambda\cap S^*M')$ on $M$; let us call this the extension of $\C'$ to $M$.
For microlocal Morse pre-theatres $\C'$ on $M'$ and $\C$ on $M$, a morphism $\C'\to\C$ means a morphism to $\C$ from the extension of $\C'$ to $M$.
In other words, such a morphism consists of a coherent system of maps $\C'(\Lambda\cap S^*M')\to\C(\Lambda)$ for subanalytic singular isotropics $\Lambda\subseteq S^*M$.
Equivalently, we may view such a morphism as a coherent system of maps
\begin{equation}
\C'(\Lambda')\to\C(\Lambda)
\end{equation}
for subanalytic singular isotropics $\Lambda'\subseteq S^*M'$ and $\Lambda\subseteq S^*M$ with $\Lambda\cap S^*M'\subseteq\Lambda'$.
If $\C$ and $\C'$ are normalized, then restricting these maps to the case that $\Lambda$ and $\Lambda'$ are both conormals of Whitney triangulations, we obtain a coherent collection of maps
\begin{equation}\label{correctonperf}
\Perf\SSS'=\C'(N^*_\infty\SSS')\to\C(N^*_\infty\SSS)=\Perf\SSS
\end{equation}
for every pair of Whitney triangulations $\SSS$ of $M$ and $\SSS'$ of $M'$ such that $\SSS'$ refines $\SSS\cap M'$.
A normalized morphism $\C'\to\C$ is one equipped with an isomorphism between \eqref{correctonperf} and the extension to $\Perf$ of the tautological maps of posets $\SSS'\to\SSS$ (sending a stratum of $\SSS'$ to the unique stratum of $\SSS$ containing it).
A morphism of microlocal Morse theatres $\C'\to\C$ is, by definition, a normalized morphism of underlying normalized microlocal Morse pre-theatres.

We now have the following refinement of Proposition \ref{microlocalmorseuniqueness}:

\begin{proposition}\label{morphismunique}
For an analytic open inclusion of analytic manifolds $M'\hookrightarrow M$ and microlocal Morse theatres $\C'$ on $M'$ and $\C$ on $M$, there exists a unique morphism of microlocal Morse theatres $\C'\to\C$.
\end{proposition}

\begin{proof}
It follows from \eqref{correctonperf} that the maps $\C'(\Lambda')\to\C(\Lambda)$ send Morse characters to Morse characters (choose casting directors on $M$ which are `supported inside $M'$', and appeal to the independence of Morse characters of the casting directors from Theorem \ref{microlocalmorseexistence}).
Now we have the following commutative diagram, functorial in $\Lambda$ and $\Lambda'$:
\begin{equation}
\begin{tikzcd}
\varinjlim{}(\C(N^*_\infty\SSS')/\X_{N^*_\infty\SSS'\setminus\Lambda'})^\pi\ar{d}{\sim}\ar{r}&\varinjlim{}(\C(N^*_\infty\SSS)/\X_{N^*_\infty\SSS\setminus\Lambda})^\pi\ar{d}{\sim}\\
\C'(\Lambda')\ar{r}&\C(\Lambda)
\end{tikzcd}
\end{equation}
where both direct limits take place over pairs of Whitney triangulations $\SSS$ and $\SSS'$ for which $\SSS'$ refines $M'\cap\SSS$ and for which $\Lambda\subseteq N^*_\infty\SSS$ and $\Lambda'\subseteq N^*_\infty\SSS'$.
Hence the maps $\C'(\Lambda')\to\C(\Lambda)$ are determined uniquely.
\end{proof}

\begin{proof}[Proof of Proposition \ref{sfecompatibility}]
Combine Propositions \ref{sheaffunctorial} and \ref{wfunctorial} with Proposition \ref{morphismunique}.
\end{proof}

\section{Sheaf categories} \label{sec:sheaves}

We recall the general formalism of sheaves, and properties of stratifications.  We then recall from \cite{kashiwara-schapira} 
the notion of microsupport, and the category $\Sh_\Lambda(M)$ of sheaves on $M$ whose microsupport at infinity is contained in $\Lambda$. 
We show that the assignment $\Lambda \mapsto \Sh_\Lambda(M)$ is a microlocal Morse theatre in the sense of Definition \ref{microlocalmorsetheatre}.

\subsection{Categories of sheaves and functors between them}

Here we give a brief review of the general formalism of sheaves.  Our presentation is somewhat modern in that we \emph{never} discuss 
sheaves of abelian groups, rather we work at the dg level and with unbounded complexes from the beginning, but it is essentially the same as 
any standard account such as \cite{iversen, kashiwara-schapira, schapira-notes}, complemented by \cite{spaltenstein} in order to work 
with unbounded complexes, and in particular for the proper base change theorem in this setting.  Some
discussion about working in the unbounded setting can be found in \cite{kscategoriessheaves}.

Given a topological space $T$, we write $\Op(T)$ for the category whose objects are open sets and morphisms are inclusions.  
A ($\ZZ$-module valued) presheaf on $T$ is by definition a functor $\Op(T)^\op \to \Mod\ZZ$.
In particular, a presheaf $\F$ takes a value $\F(U) \in \Mod\ZZ$ on an open set $U \subseteq T$, termed its sections; given open sets $U \subseteq V$ it gives
a morphism $\F(V) \to \F(U)$, termed the restriction, etcetera.  Given any subset $X\subseteq T$, we write $\F(X) = \varinjlim_{X \subseteq U} \F(U)$; 
when $X$ is a point, this is termed the stalk and is written $\F_x$. 

The category of sheaves is the full subcategory of presheaves on objects $\F$ taking covers to limits:
\begin{equation}
\F\left(\bigcup_{i \in I} U_i\right) \xrightarrow{\sim} \lim_{\emptyset\ne J \subseteq I} \F\left( \bigcap_{j \in J} U_j \right)
\end{equation}
The inclusion of sheaves into presheaves has a left adjoint termed ``sheafification'',
giving, for any presheaf $\F$, a sheaf $\F^{\mathrm{sh}}$ such that any map from
$\F$ to a sheaf factors uniquely through $\F^{\mathrm{sh}}$.

We write $\Sh(T)$ for the (dg) category of sheaves of (dg) $\ZZ$-modules on $T$.  It is complete and co-complete.
Its homotopy category is what was classically called the unbounded derived category of sheaves on $T$.  

For any continuous map $f: S \to T$, there is an adjoint pair
$f^*: \Sh(T) \leftrightarrow  \Sh(S): f_*$.  The pushforward $f_*$ is given by the formula 
$(f_* \F)(U) = \F(f^{-1}(U))$, while the pullback $f^*$ is the sheafification of the presheaf given by 
$(f^*\G)(V) = \G(f(V))$.  

\begin{example}
Consider $f: S \to \pt$, and the constant sheaf $\ZZ := f^* \ZZ$.  Note that in 
our conventions, $\ZZ(U)$ is a chain complex computing the cohomology of $U$.  This should illustrate
where, in this account of sheaf theory, is hiding the usual homological algebra of resolutions: it is in the sheafification.
\end{example}

Being a left adjoint, $f^*$ is  co-continuous
(preserves colimits, in particular, sums).  When $j: U \to T$ is the inclusion of an open set, $j^*$ is given by 
the  simpler formula $(j^*\F)(V) = \F(V)$, no sheafification required, and hence preserves limits
as well.  In particular, it must also be a right adjoint.  The corresponding left adjoint $j_!$ is easy to describe: it is the sheafification of
\begin{equation*}
V\mapsto\begin{cases} \F(V) & V \subseteq U \\ 0 & \textrm{otherwise.} \end{cases}
\end{equation*}
The sheaf $j_! \F$ is termed the extension by zero, since its stalks in $U$ are isomorphic to the corresponding stalks
of $\F$, and its stalks outside of $U$ are zero.   
For a sheaf $\F$ on $T$, we write $\F_U := j_! j^* \F$.  By adjunction there is a canonical morphism
$\F_U \to \F$.  The object $\ZZ_U$ co-represents the functor of sections over $U$, i.e. $\Hom(\ZZ_U,\F)=\F(U)$.

Being a right adjoint, $f_*$ is  continuous. When $f$ is proper, it is in addition co-continuous.  More generally, 
for a morphism of locally compact spaces $f: S \to T$, one defines\footnote{This particular way of defining the 
$!$ pushforward is taken from \cite{schapira-notes}.  It has the virtue of making the co-continuity of $f_!$ obvious.}  
\begin{eqnarray*}
f_!: \Sh(S) & \to &  \Sh(T) \\
\F & \mapsto & \varinjlim_{U \subset \subset S} f_* \F_U
\end{eqnarray*} 
Here the notation $U \subset \subset S$ means that the closure of $U$ is compact.  When $S$ is an open subset,
this recovers the original definition.  When $f$ is proper, then $f_! = f_*$. When $f$ is the map to a point, then 
$f_! f^* \ZZ$ is the compactly supported cohomology.  

As $f_!$ is built from colimits, left adjoints, and pushforwards from compact sets, it is co-continuous.  
As such it has a right adjoint, denoted $f^!$.  When $f$ is the inclusion
of an open subset, we already had the right adjoint $f^*$, so in this case $f^* = f^!$. 

For any locally closed subset $v: V \subseteq T$, we extend the notation $\F_V := v_! v^* \F$.  This sheaf 
has the same stalks as $\F$ at points in $V$, and has vanishing stalks outside.  

For an open-closed decomposition $U \xhookrightarrow{j} T \xhookleftarrow{i} V$ ($j$ open, $i$ closed), the functors $j_{*}, j_!$ and $i_*, i_!$ are fully 
faithful, and there is an exact triangle
\begin{equation}\label{triangle}
j_! j^! \to \id \to i_* i^* \xrightarrow{[1]}
\end{equation}

Denoting by $\Op(M)$ the poset of open sets, there are functors
\begin{align}\label{generatingsheaves}
\Op(M)&\xrightarrow{!}\Sh(M)&\Op(M)^\op&\xrightarrow{*}\Sh(M)\cr
U&\mapsto u_!\ZZ&U&\mapsto u_*\ZZ
\end{align}
where $u:U\to M$ denotes the inclusion.
We have the following criterion for when (pullbacks of) these functors are fully faithful:

\begin{lemma} \label{lem:posetsheaf}
Let $\Pi$ be a poset with a map to $\Op(M)$, and let $\ZZ[\Pi]$ denote its dg linearization.
The following are equivalent:
\begin{itemize}
\item $H^*(U) \cong \ZZ$ for all $U \in \Pi$ and $H^*(U) \xrightarrow{\sim} H^*(U \setminus V)$ whenever $U \nsubseteq V$.
\item The composition $\ZZ[\Pi]\to\ZZ[\Op(M)]\xrightarrow{!}\Sh(M)$ is fully faithful.
\end{itemize}
\end{lemma}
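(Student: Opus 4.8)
The plan is to prove the equivalence of the three conditions in a cycle, say (1) $\Rightarrow$ (2) $\Rightarrow$ (3) $\Rightarrow$ (1), though in fact (2) and (3) are interchangeable by Verdier duality and it may be cleaner to prove (1) $\Leftrightarrow$ (2) and then note the symmetric argument gives (1) $\Leftrightarrow$ (3). The starting observation is that since $\ZZ_U = u_!\ZZ$ co-represents sections over $U$, for $U, U' \in \Pi$ we have $\Hom(\ZZ_U, \ZZ_{U'}) = \ZZ_{U'}(U) = (\ZZ_{U'})|_U$ computed as a complex, i.e.\ it is $\Gamma(U; \ZZ_{U'})$. Using the open-closed triangle $j_!j^! \to \id \to i_*i^*$ for the decomposition of $U$ into $U \cap U'$ (open in $U$) and its complement $U \setminus U'$ (closed in $U$), applied to the constant sheaf, one gets an exact triangle computing $\Hom(\ZZ_U,\ZZ_{U'})$. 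Concretely $\Gamma(U;\ZZ_{U'}) = \Gamma(U \cap U'; \ZZ)$ fitting into a triangle with $\Gamma(U;\ZZ)$ and $\Gamma(U\setminus U';\ZZ)$ (the latter with a shift). The point is then that the dg linearization $\ZZ[\Pi]$ has $\Hom(U,U')$ equal to $\ZZ$ if $U \subseteq U'$ (i.e.\ if $U \subseteq U'$ inside $\Op(M)$, which for a poset map means $U \to U'$ in $\Pi$) and $0$ otherwise; more precisely, $\Hom_{\ZZ[\Pi]}(U,U')$ is the free $\ZZ$-module on the set of morphisms $U \to U'$ in $\Pi$, placed in degree $0$.

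So the core computation is: the functor $\ZZ[\Pi] \to \Sh(M)$, $U \mapsto u_!\ZZ$, is fully faithful if and only if for all $U, U' \in \Pi$ the natural map from $\Hom_{\ZZ[\Pi]}(U,U')$ to $\Gamma(U \cap U'; \ZZ)$ is an isomorphism in $\Mod\ZZ$. First I would unwind what the hypothesis in (1) gives. Taking $V$ in the second bullet of (1) to contain $U$ exactly when there is \emph{no} morphism $U \to U'$ --- or rather, I would run the argument with $U' := V$ --- the condition "$H^*(U) \xrightarrow{\sim} H^*(U \setminus V)$ whenever $U \nsubseteq V$" together with "$H^*(U) \cong \ZZ$ for all $U$" is designed precisely so that the triangle $\Gamma(U \cap U';\ZZ) \to \Gamma(U;\ZZ) \to \Gamma(U \setminus U';\ZZ) \xrightarrow{[1]}$ forces $\Gamma(U \cap U';\ZZ)$ to vanish when $U \nsubseteq U'$ (since the second map is then an isomorphism on cohomology), and to equal $\Gamma(U;\ZZ) \cong \ZZ$ when $U \subseteq U'$ (since then $U \cap U' = U$). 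Matching this against $\Hom_{\ZZ[\Pi]}(U,U')$ --- which is $\ZZ$ when $U \to U'$ in $\Pi$ and $0$ otherwise, because a poset has at most one morphism between any two objects --- gives full faithfulness, provided one checks the identification is via the \emph{natural} map (so that composition is respected), which is automatic since the comparison map is induced by functoriality of $u_! \ZZ$ in $U$. For the converse (2) $\Rightarrow$ (1): full faithfulness applied with $U' = U$ gives $\Gamma(U;\ZZ) = \Hom(\ZZ_U,\ZZ_U) = \Hom_{\ZZ[\Pi]}(U,U) = \ZZ$ (here one uses that $\Pi$ is a poset, so the endomorphism monoid of $U$ is trivial), yielding $H^*(U)\cong\ZZ$; and applying it to a pair $U \nsubseteq U'$ gives $\Gamma(U\cap U';\ZZ) = 0$, whence the triangle forces $H^*(U) \xrightarrow{\sim} H^*(U\setminus U')$.

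For the equivalence with (3), the cleanest route is to observe that $u_*\ZZ$ co-represents nothing quite so simple, but that $\Hom(\ZZ_{U'}, u_*\ZZ) = \Hom(u'^*\ZZ_{U'}, \ZZ) $... actually the efficient statement is $\Hom(u_! \ZZ, u'_! \ZZ) \cong \Hom(u'_* \ZZ, u_* \ZZ)$ --- i.e.\ the two functors $\ZZ[\Pi] \to \Sh(M)$ and $\ZZ[\Pi]^\op \to \Sh(M)$ have the same $\Hom$ complexes, just with source and target swapped. This can be seen either by Verdier duality on a suitable constructible subcategory, or more hands-on by computing $\Hom(u'_*\ZZ, u_*\ZZ)$ directly: $u_*\ZZ = \mathcal{R}\Gamma$-type object whose sections over $W$ are $\Gamma(W \cap U;\ZZ)$, and one runs the dual open-closed triangle $i_!i^! \to \id \to j_*j^*$. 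Either way one gets that condition (2) and condition (3) are each equivalent to the same numerical condition on the complexes $\Gamma(U \cap U';\ZZ)$, hence to each other and to (1). I expect the main obstacle to be bookkeeping rather than conceptual: making sure the comparison maps are the genuinely natural ones so that not just the cohomology groups but the full dg/$A_\infty$ structure of $\Hom$-complexes is matched (this is where one must be careful that $\ZZ[\Pi]$ is formal --- concentrated in degree $0$ --- so that an isomorphism on cohomology in each degree with the right functoriality suffices to conclude full faithfulness of dg categories), and handling the unbounded/derived sheaf subtleties (existence of $u_!$, the triangles in \eqref{triangles}, proper base change) uniformly --- but all of these are available from the formalism recalled just above in the excerpt.
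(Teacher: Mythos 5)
Your approach is essentially the one the paper takes: compute $\Hom(\ZZ_U,\ZZ_{U'})$ via adjunction and the open-closed triangle, then match against the cohomology of $\ZZ[\Pi]$, and get condition (iii) from (i) either by Verdier duality or by the dual direct computation. Your added remarks about naturality and the formality (degree-$0$ concentration) of $\ZZ[\Pi]$, so that a cohomology-level identification lifts to dg full faithfulness, are correct and worth making explicit.

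One slip to fix: the identification ``$\Gamma(U;\ZZ_{U'}) = \Gamma(U\cap U';\ZZ)$'' is false in the derived setting. What is true is $\Hom(\ZZ_U,\ZZ_{U'}) = \Gamma\bigl(U,(j_{U\cap U'\hookrightarrow U})_!\ZZ\bigr)$, i.e.\ sections over $U$ \emph{supported} on $U\cap U'$, which is generally not $H^\ast(U\cap U')$ (e.g.\ take $U=\RR$, $U\cap U'=(0,1)$; the former is $\ZZ[-1]$ while the latter is $\ZZ$). The same conflation appears when you conclude that ``$\Gamma(U\cap U';\ZZ)$ vanishes'' --- taken literally that would force $U\cap U'=\emptyset$. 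Fortunately your actual argument never uses this identification: you only use that the first term of the triangle $\Hom(\ZZ_U,\ZZ_{U'})\to H^\ast(U)\to H^\ast(U\setminus U')\xrightarrow{[1]}$ equals $\Hom(\ZZ_U,\ZZ_{U'})$ and read its value off from the other two terms and condition (i). So the proof is correct once you replace ``$\Gamma(U\cap U';\ZZ)$'' by the sections-with-support group (or simply by $\Hom(\ZZ_U,\ZZ_{U'})$) throughout that sentence.
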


\begin{proof}
We have
\begin{align}
\Hom_M(\ZZ_U, \ZZ_V) &= \Hom_M(u_! \ZZ, v_! \ZZ) = \Hom_U (\ZZ, u^! v_! \ZZ) = \Hom_U(\ZZ, u^* v_! \ZZ)\cr
&=H^*(U, \ZZ_{V \cap U})=\cone(H^*(U) \to H^*(U \setminus V))[-1],
\end{align}
where we have used the exact triangle \eqref{triangle}.
The second condition asks that this be $\ZZ$ when $U \subseteq V$ and zero otherwise, which is exactly what is asserted in the first condition.
\end{proof}

\begin{lemma} \label{lem:sheafrefine}
Let $\Pi$ be a poset with a map to $\Op(M)$ satisfying the equivalent conditions in Lemma \ref{lem:posetsheaf},
and suppose that $W\subseteq M$ is an open set such that 
$H^*(U) \xrightarrow{\sim} H^*(U \setminus W)$ is an isomorphism whenever $U\nsubseteq W$.
Then the pullback of the module $\Hom(-, \ZZ_W)$ along $\ZZ[\Pi] \xrightarrow{!} \Sh(M)$ is the indicator functor
\begin{equation}
1_{W}: U \mapsto \begin{cases} \ZZ & U \subseteq W, \\ 0 & \textrm{otherwise.} \end{cases}
\end{equation}
\end{lemma}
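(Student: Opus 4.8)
The plan is to reduce to the $\Hom$-computation already carried out in the proof of Lemma \ref{lem:posetsheaf}, and then read off both the values and the transition maps of the resulting module. Unwinding definitions, the pullback of the module $\Hom(-,\ZZ_W)$ along $\ZZ[\Pi]\xrightarrow{!}\Sh(M)$ is the functor $\ZZ[\Pi]^\op\to\Mod\ZZ$ carrying $U\in\Pi$ to $\Hom_M(\ZZ_U,\ZZ_W)$, with transition maps induced by the canonical morphisms $\ZZ_{U'}\to\ZZ_U$ for $U'\subseteq U$ in $\Pi$. As in the cited computation, $\Hom_M(\ZZ_U,\ZZ_W)=\Hom_U(\ZZ,u^!\ZZ_W)=\Hom_U(\ZZ,u^*\ZZ_W)=H^*(U,\ZZ_{W\cap U})$, and applying $\Gamma(U,-)$ to the second triangle of \eqref{triangles} for the open--closed decomposition of $U$ into $W\cap U$ and $U\setminus W$ puts this into an exact triangle $H^*(U,\ZZ_{W\cap U})\to H^*(U)\to H^*(U\setminus W)$ in which the second map is the restriction.

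First I would evaluate on objects. If $U\subseteq W$, then $U\setminus W=\emptyset$ and $H^*(U\setminus W)=0$, so $\Hom_M(\ZZ_U,\ZZ_W)\cong H^*(U)$, which is $\ZZ$ in degree $0$ by the first condition in Lemma \ref{lem:posetsheaf}. If $U\nsubseteq W$, then by the hypothesis on $W$ the restriction $H^*(U)\to H^*(U\setminus W)$ is an isomorphism, hence $\Hom_M(\ZZ_U,\ZZ_W)=0$. So the pulled-back module has the same values as $1_W$.

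Next I would promote this to an isomorphism of modules. Under the identification $\Hom_M(\ZZ_U,\ZZ_W)=\Gamma(U,\ZZ_{W\cap U})$, when $U\subseteq W$ this reads $\Gamma(U,\ZZ_U)=H^*(U)$ and carries the distinguished unit class $1\in H^0(U)$, corresponding to the canonical morphism $\ZZ_U=(\ZZ_W)_U\to\ZZ_W$. Define $\eta\colon 1_W\to\Hom(\ZZ_{(-)},\ZZ_W)$ by sending the generator of $1_W(U)=\ZZ$ to this unit class when $U\subseteq W$ and to (the forced) $0$ otherwise. Naturality along $U'\subseteq U$ reduces, in the only nontrivial case $U'\subseteq U\subseteq W$, to the statement that the sheaf restriction $H^0(U)\to H^0(U')$ sends $1$ to $1$, which is clear; and each $\eta_U$ is a quasi-isomorphism by the previous paragraph, being $\ZZ\xrightarrow{\,1\,}H^*(U)$ for $U\subseteq W$ and $0\to 0$ otherwise. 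Hence $\eta$ is an isomorphism of modules, identifying the pullback with $1_W$.

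The only mildly delicate point is the bookkeeping in the last step: one must check that the transition maps of the $\Hom$-module---not merely its values---agree with those of the indicator functor $1_W$. This is forced by the naturality of the computation of Lemma \ref{lem:posetsheaf} together with the fact that $H^0(U)\cong\ZZ$ is generated by the unit section; everything else is a direct unwinding of \eqref{triangles} and the hypothesis on $W$.
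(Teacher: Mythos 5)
Your proposal is correct and takes essentially the same approach as the paper, whose entire proof is the single sentence ``This is true by the same calculation as above,'' referring to the $\Hom$-computation in the proof of Lemma \ref{lem:posetsheaf}. Your additional care in checking naturality of the transition maps (identifying the generator with the unit class in $H^0(U)$ and verifying compatibility along inclusions $U' \subseteq U$) is exactly the bookkeeping the paper leaves implicit.
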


\begin{proof}
This is true by the same calculation as above.
\end{proof}

\subsection{Constructible sheaves}

Let $T$ be a topological space and $\SSS: T = \coprod T_\alpha$ a stratification.  Write $T_\SSS$ for the topological 
space with underlying set $T$ and base given by the stars of strata in $\SSS$ (note that the intersection of any two stars is expressible as a union of stars).
Note the continuous map $T \to T_\SSS$. 

\begin{remark}\label{weakremark}
Let $\pi: T \to T'$ be any continuous bijection.
For any open set $U$ of $T'$, and any sheaf $\F$ on $T$, one
has by definition $\pi_*\F(U) = \F(U)$.  It follows that $\pi^*\ZZ_U = \ZZ_U$, as this sheaf co-represents
the functor of sections over $U$. 
\end{remark}

\begin{lemma} \label{posetmeaning}
Pulling back sheaves under $\SSS\xrightarrow\star\Op(T_\SSS)$ defines an equivalence
\begin{align}
\Sh(T_\SSS)&\xrightarrow{\sim}\Fun(\SSS^\op,\Mod\ZZ)=\Mod\SSS\\
\F&\mapsto(s\mapsto\F(\star(s)))=\Hom_{T_\SSS}(\ZZ_{\star(-)},\F)
\end{align}
which sends $\ZZ_{\star(s)}$ to $\Hom_{\SSS}(\cdot,s)=1_{\star(s)}$.
\end{lemma}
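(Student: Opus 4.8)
The plan is to show that pulling back along the star map $\SSS \xrightarrow{\star} \Op(T_\SSS)$ identifies sheaves on $T_\SSS$ with presheaves on the poset $\SSS$, using the fact that the stars form a base of the topology on $T_\SSS$. The key observation is that a base of a topological space, viewed as a poset (or rather, as a site with the trivial Grothendieck topology once one checks that covers by basic opens can be ignored), already determines the sheaf category; so I would first recall why $\Sh(T_\SSS)$ is equivalent to presheaves-with-descent on the poset of basic opens, and then argue that for the star base the descent condition is vacuous.

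Concretely, here is the sequence of steps. First, recall that since $\{\star(s)\}_{s \in \SSS}$ is a base for $T_\SSS$ closed under finite intersections (the excerpt notes $\star(\alpha) \cap \star(\beta) = \bigcup_{\alpha,\beta \to \gamma} \star(\gamma)$, so intersections are unions of basic opens), a sheaf on $T_\SSS$ is the same as a sheaf on the site whose underlying category is $\SSS$ (with the covering families being those jointly covering a given star). So the restriction functor $\Sh(T_\SSS) \to \Fun(\SSS^\op, \Mod\ZZ)$, $\F \mapsto (s \mapsto \F(\star(s)))$, is fully faithful onto the subcategory of sheaves for this topology. Second, I claim every presheaf on $\SSS$ is automatically a sheaf for this topology. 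The point is that $\star(s)$ has a \emph{minimum} element among basic opens containing points near $s$: more precisely, if $\{\star(s_i)\}$ covers $\star(s)$, then since $s \in \star(s)$ and each point lies in the star of the stratum containing it, one of the $s_i$ must satisfy $s \in \star(s_i)$, i.e.\ $\star(s) \subseteq \star(s_i)$, so the cover is refined by the identity cover $\{\star(s) \subseteq \star(s)\}$ — hence descent is trivially satisfied. Third, having established the equivalence of categories, I would identify the image of $\ZZ_{\star(s)}$: by Remark \ref{weakremark} and the co-representability of sections, $\Hom_{T_\SSS}(\ZZ_{\star(s)}, \F) = \F(\star(s))$, so under the equivalence $\ZZ_{\star(s)}$ goes to the functor $t \mapsto \Hom_{T_\SSS}(\ZZ_{\star(s)}, \ZZ_{\star(t)}) = \ZZ_{\star(t)}(\star(s)) = H^*(\star(s))$ if $\star(s) \subseteq \star(t)$ and... — wait, more carefully, one wants the co-Yoneda computation $t \mapsto \Hom(\ZZ_{\star(s)}, \ZZ_{\star(t)})$, which by the sections formula equals the sections of $\ZZ_{\star(t)}$ over $\star(s)$, and since $\star(s)$ is connected (stars of strata in the relevant stratifications are connected — or one invokes the poset structure directly) this is $\ZZ$ exactly when $\star(s) \subseteq \star(t)$, i.e.\ when $s \to t$ in $\SSS$, recovering $1_{\star(s)} = \Hom_\SSS(\cdot, s)$.

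The main obstacle I anticipate is the descent argument in the second step, specifically making precise that \emph{every} presheaf on $\SSS$ is a sheaf on $T_\SSS$. The subtlety is that I need $\Sh(T_\SSS)$ to be computed from the base $\{\star(s)\}$ alone — this uses that the base is closed under finite intersection up to unions, which holds here, together with the comparison lemma for sites — and then the triviality of covers. One has to be a little careful that the stars genuinely form a base (every open of $T_\SSS$ is a union of stars, by construction of $T_\SSS$) and that the "identity refinement" argument is valid: given any open cover $\star(s) = \bigcup_i \star(s_i)$ in $T_\SSS$, the stratum $s$ itself, as a point-set, meets some $\star(s_i)$, and in fact $s \subseteq \star(s_i)$ forces $\star(s) \subseteq \star(s_i)$ by the inclusion-reversal property $X_\alpha \subseteq \overline{X_\beta} \iff \star(X_\beta) \subseteq \star(X_\alpha)$ recalled in Section 2. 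I would also need to double-check that unbounded complexes cause no trouble: since the descent condition is against the \emph{trivial} cover, the homotopy limit is trivial, so there is nothing to check in the dg/$\infty$-categorical enhancement beyond the ordinary statement. A secondary, purely bookkeeping point is matching the two descriptions of the functor in the statement — $(s \mapsto \F(\star(s)))$ versus $\Hom_{T_\SSS}(\ZZ_{\star(-)}, \F)$ — but this is immediate from $\Hom(\ZZ_U, \F) = \F(U)$ stated just before Lemma \ref{lem:posetsheaf}.
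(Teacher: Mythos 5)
Your overall strategy matches the paper's: restriction to the base of stars is fully faithful because a map of sheaves is determined by its restriction to a base, and it is essentially surjective because stars of strata admit no nontrivial covers by stars of strata (your "identity refinement" argument is the right one, and it does handle the derived/homotopy descent condition trivially). The identification of $\ZZ_{\star(s)}$ with $1_{\star(s)}$ is then a co-Yoneda/co-representability check, which is also the paper's approach, though you make it explicit by computing morphism complexes.

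However, you have transposed the $\Hom$ in the final step. The equivalence is $\F\mapsto\bigl(t\mapsto\Hom_{T_\SSS}(\ZZ_{\star(t)},\F)\bigr)$, so the image of $\ZZ_{\star(s)}$ is the module $t\mapsto\Hom_{T_\SSS}(\ZZ_{\star(t)},\ZZ_{\star(s)})=\ZZ_{\star(s)}(\star(t))$, which by the extension-by-zero formula is $\ZZ$ iff $\star(t)\subseteq\star(s)$, i.e.\ iff $t\to s$, recovering $\Hom_\SSS(\cdot,s)=1_{\star(s)}$ as desired. You instead compute $t\mapsto\Hom(\ZZ_{\star(s)},\ZZ_{\star(t)})=\ZZ_{\star(t)}(\star(s))$, which is $\ZZ$ iff $\star(s)\subseteq\star(t)$, i.e.\ iff $s\to t$; that is the module $\Hom_\SSS(s,\cdot)$, not $\Hom_\SSS(\cdot,s)$. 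So your conclusion "recovering $1_{\star(s)}$" does not follow from the computation as written — it follows from the computation with the arguments in the correct order. This is a bookkeeping slip rather than a structural flaw, but it should be fixed. Likewise, your parenthetical appeal to "connectedness" of $\star(s)$ is not quite the right justification for $\ZZ_{T_\SSS}(\star(s))=\ZZ$; the correct reason (which you also gesture at) is that $\star(s)$ has no nontrivial covers in $T_\SSS$, so sheafification is vacuous over it.
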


\begin{proof}
The functor in question is simply restricting a sheaf on $T_\SSS$ to the base consisting of stars of strata.
This functor is fully faithful because a map of sheaves is determined by its restriction to a base for the topology.
It is essentially surjective because
there are no nontrivial covers of stars of strata by stars of strata.  The behavior on objects is as asserted because $\ZZ_{\star(s)}$ and $s$ 
are the co-representatives of the functors of sections over $s$ and the value of the module at $s$, respectively.
\end{proof}

\begin{lemma}\label{posetmeaningpushforward}
If $\SSS'$ refines $\SSS$, then the following diagram commutes:
\begin{equation}
\begin{tikzcd}
\Sh(T_{\SSS'})\ar{r}&\Mod\SSS'\\
\Sh(T_\SSS)\ar{r}\ar{u}{\pi^*}&\Mod\SSS\ar{u}[swap]{r^*}
\end{tikzcd}
\end{equation}
where $\pi^*$ denotes pullback of sheaves under the continuous map $\pi:T_{\SSS'}\to T_\SSS$ and $r^*:\Mod\SSS\to\Mod\SSS'$ denotes the pullback along the natural map $r:\SSS'\to\SSS$.
\end{lemma}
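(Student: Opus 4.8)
The plan is to reduce the statement to an essentially formal assertion about how pullback of sheaves along a topology-weakening map interacts with the poset-restriction equivalences of Lemma \ref{posetmeaning}, and then to check this by unwinding both composites on a generating class of objects.

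First I would recall the setup: the left vertical arrow $\pi^* : \Sh(T_\SSS) \to \Sh(T_{\SSS'})$ is pullback along the continuous map $\pi : T_{\SSS'} \to T_\SSS$ induced by the identity on underlying sets (this is continuous because every star of a stratum of $\SSS$, being $\SSS$-constructible and open in $T_\SSS$, is a union of stars of strata of $\SSS'$, hence open in $T_{\SSS'}$ — this is exactly the observation ``$\star(r(\tau')) \subseteq U$ iff $\star(\tau') \subseteq U$'' used earlier). The two horizontal arrows are the equivalences of Lemma \ref{posetmeaning}, sending $\F \mapsto (s \mapsto \F(\star(s)))$, and the right vertical arrow $r^*$ is precomposition with $r : \SSS' \to \SSS$.

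Next, because all four functors are (co)continuous between presentable/cocomplete categories and $\Sh(T_\SSS)$ is generated under colimits by the objects $\ZZ_{\star(s)}$ for $s \in \SSS$ (indeed Lemma \ref{posetmeaning} identifies $\Sh(T_\SSS)$ with a module category in which these are the representables), it suffices to check the square commutes on the objects $\ZZ_{\star(s)}$, and compatibly with morphisms between them; equivalently, since everything in sight is a functor of $\infty$-categories, it suffices to produce a natural isomorphism of the two composite functors restricted to the full subcategory on $\{\ZZ_{\star(s)}\}_{s\in\SSS}$, which is $\ZZ[\SSS]$. Going around the bottom-then-left: $s \mapsto \ZZ_{\star(s)} \in \Sh(T_\SSS) \mapsto \pi^*\ZZ_{\star(s)} \in \Sh(T_{\SSS'})$. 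By Remark \ref{weakremark} (applied to the topology-weakening map $\pi$), $\pi^* \ZZ_{\star(s)} = \ZZ_{\star(s)}$, where on the right $\star(s) \subseteq T$ is viewed as the open subset of $T_{\SSS'}$ which is the union of the stars $\star(s')$ over those $s' \in \SSS'$ with $r(s') \to s$, i.e.\ $s' \subseteq \overline{s}$... more precisely with $\star(s') \subseteq \star(s)$, which by the star/closure reversal means $r(s') = s$ or $s \to r(s')$; in any case $\star_{\SSS'}$-constructibly $\star(s) = \bigcup_{r(s') \text{ has } \star(s')\subseteq\star(s)} \star(s')$. Then applying the top equivalence, $\ZZ_{\star(s)} \in \Sh(T_{\SSS'})$ goes to the module $\tau' \mapsto \ZZ_{\star(s)}(\star(\tau')) = \Hom_{T_{\SSS'}}(\ZZ_{\star(\tau')}, \ZZ_{\star(s)})$, which is $\ZZ$ if $\star(\tau') \subseteq \star(s)$ and $0$ otherwise — since there are no nontrivial covers of $\star(\tau')$ by stars, the sections of $\ZZ_{\star(s)}$ over $\star(\tau')$ are $\ZZ$ exactly when $\star(\tau')\subseteq\star(s)$. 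Going around the left-then-bottom: $s \mapsto 1_{\star(s)} = \Hom_\SSS(\cdot, s) \in \Mod\SSS$, then $r^*$ of this is the module $\tau' \mapsto \Hom_\SSS(r(\tau'), s)$, which is $\ZZ$ if $\star(r(\tau')) \subseteq \star(s)$ and $0$ otherwise. So I would finish by observing $\star(\tau') \subseteq \star(s) \iff \star(r(\tau')) \subseteq \star(s)$: this is precisely the fact ``$\star(r(\tau')) \subseteq U$ iff $\star(\tau') \subseteq U$'' from the refinement discussion, applied to the $\SSS$-constructible open set $U = \star(s)$. This identifies the two modules, naturally in $s$, completing the verification on generators.

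The main obstacle I anticipate is not any single computation — each piece above is a one-line unwinding — but rather being careful about \emph{naturality}: one must check that the isomorphism $\pi^*\ZZ_{\star(s)} \cong \ZZ_{\star(s)}$ and the resulting identification of modules is functorial in $s \in \SSS$ (i.e.\ compatible with the morphisms $\Hom_\SSS(s_1,s_2)$), and then invoke that a colimit-preserving functor out of $\Sh(T_\SSS) = \Mod\SSS$ is determined by its restriction to $\ZZ[\SSS]$ together with this functoriality, so that agreement on the representables $1_{\star(s)}$ upgrades to a natural isomorphism of the whole composite functors. Since both $\pi^*$ and $r^*$ are pullback functors, hence strictly compatible with the tautological functoriality, this bookkeeping is routine; alternatively one can phrase the entire argument as: the square of \emph{sites} (or rather of posets of stars) $\{\star_{\SSS'}(s')\} \to \{\star_\SSS(s)\}$ underlying $\pi$ is exactly the map $r : \SSS' \to \SSS$ under the identification of Lemma \ref{posetmeaning}, and pullback of sheaves is by definition restriction along this map of sites, which is $r^*$. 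I would likely present the clean version (agreement on generators plus cocontinuity) in the body and relegate the site-theoretic reformulation to a parenthetical remark.
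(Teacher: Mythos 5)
Your proof is correct and follows the same route as the paper's: the paper's entire proof is the one-liner ``By Remark \ref{weakremark} and the characterization of the horizontal functors as $\ZZ_U\mapsto 1_U$,'' which is exactly the combination of facts you unwind explicitly on the generators $\ZZ_{\star(s)}$. The only cosmetic slips are the path labels (``bottom-then-left'' should be ``left-then-top,'' and ``left-then-bottom'' should be ``bottom-then-right''), and note that the identity $r^*1_U = 1_U$ for $\SSS$-constructible open $U$, which you re-derive in the special case $U = \star(s)$, was already established in the discussion following \eqref{indicatorUdef}.
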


\begin{proof}
By Remark \ref{weakremark} and the characterization of the horizontal functors as $\ZZ_U\mapsto 1_U$.
\end{proof}

A sheaf is said to be constant when it is isomorphic to the star pullback of a sheaf on a point, 
and locally constant when this is true after restriction to an open cover.
For a stratification $\SSS$ of $M$, we say a sheaf is $\SSS$-constructible\footnote{Some sources, such as \cite{kashiwara-schapira}, also 
ask that the word constructible should mean that sheaves should have perfect stalks and bounded cohomological degree.  We do not.}
if it is locally constant when star restricted to each stratum of $\SSS$.  We write $\Sh_\SSS(T)$ for the full subcategory of $\Sh(T)$ 
on the $\SSS$-constructible sheaves.  

Note that the image of the pullback map $\Sh(T_{\SSS}) \to \Sh(T)$ is contained in $\Sh_\SSS(T)$.

\begin{lemma} \label{sheafposetequivalence}
For a triangulation $\SSS$, the map $\Sh(T_{\SSS}) \to \Sh_{\SSS}(T)$ is an equivalence.
\end{lemma}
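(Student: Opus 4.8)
The plan is to compare the topology on $T_\SSS$ with that on $T$ directly. We already know from Lemma \ref{posetmeaning} that $\Sh(T_\SSS)\simeq\Mod\SSS$, and that pullback along $T\to T_\SSS$ lands in $\Sh_\SSS(T)$; so the content is that this pullback functor is an equivalence onto $\Sh_\SSS(T)$. First I would construct a candidate inverse. Given an $\SSS$-constructible sheaf $\F$ on $T$, one sends it to the module $s\mapsto \F(\star(s))$ on $\SSS$; equivalently, one pushes $\F$ forward along the (continuous, identity-on-points) map $\pi\colon T\to T_\SSS$ and then applies the equivalence of Lemma \ref{posetmeaning}. So the two functors in play are $\pi^*$ and $\pi_*$, which are already an adjoint pair, and the claim reduces to: for a triangulation, $\pi^*$ and $\pi_*$ are mutually inverse equivalences between $\Sh(T_\SSS)$ and $\Sh_\SSS(T)$.

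The key steps are then the two triangle identities. For the unit $\id\to\pi_*\pi^*$ on $\Sh(T_\SSS)$: this is a map of sheaves on $T_\SSS$, so it suffices to check it on the base of stars $\star(s)$, where it reads $\G(\star(s))\to(\pi^*\G)(\star(s))$. Because $\pi$ is the identity on underlying sets and $\star(s)$ is open in both topologies, and because (by the star-intersection property recalled for triangulations, $\star(\alpha)\cap\star(\beta)=\star(\gamma)$ or $\emptyset$) there are no nontrivial covers of a star by stars, sheafification does nothing: $(\pi^*\G)(\star(s))=\G(\star(s))$, so the unit is an isomorphism. This is the same observation as in the proof of Lemma \ref{posetmeaning}, packaged via Remark \ref{weakremark}. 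For the counit $\pi^*\pi_*\F\to\F$ on $\Sh_\SSS(T)$: I would check it on stalks. At a point $x$ lying in the stratum $T_\alpha$, the stalk of $\pi^*\pi_*\F$ is the colimit of $(\pi_*\F)(\star(s))=\F(\star(s))$ over stars containing $x$; since $x\in T_\alpha$, the minimal such star is $\star(\alpha)$ itself, so this colimit is just $\F(\star(\alpha))$. On the other hand $\F_x$ is the colimit of $\F(U)$ over all opens $U\ni x$. So the claim is that $\F(\star(\alpha))\to\F_x$ is an isomorphism, i.e.\ that the sections of $\F$ over the open star $\star(\alpha)$ already compute the stalk at any $x\in T_\alpha$.

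I expect this last point to be the main obstacle, and it is exactly where $\SSS$-constructibility and the triangulation hypothesis enter. The open star $\star(\alpha)$ of a simplex in a triangulation deformation-retracts onto (any point of) $T_\alpha$ compatibly with the stratification — more precisely, $\star(\alpha)$ is filtered by a cofinal system of smaller and smaller "stratified-conical" neighborhoods of $x$, and an $\SSS$-constructible sheaf has locally constant, hence isomorphic, sections along this system. Concretely one can argue by downward induction on the poset $\SSS$ restricted to $\star(\alpha)$: the top strata of $\star(\alpha)$ are open, $\F$ is locally constant there, and using the open–closed triangles \eqref{triangles} relating sections over $\star(\alpha)$, over the open dense part, and over the lower-dimensional part, together with contractibility of the relevant stars (Remark \ref{stratvsposet}: for a triangulation the stars are contractible, and so are the intersections of stars with smaller stars), one propagates the isomorphism down to the stratum $T_\alpha$ containing $x$. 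Once both unit and counit are isomorphisms, $\pi^*\colon\Sh(T_\SSS)\to\Sh_\SSS(T)$ is an equivalence, which is the assertion. Finally, chasing $\ZZ_{\star(s)}$ through (using $\pi^*\ZZ_{\star(s)}=\ZZ_{\star(s)}$ from Remark \ref{weakremark} and the object-level statement of Lemma \ref{posetmeaning}) records that the composite equivalence $\Mod\SSS\simeq\Sh_\SSS(T)$ sends $1_{\star(s)}=\Hom_\SSS(\cdot,s)$ to $\ZZ_{\star(s)}$, which is the normalization we will use later.
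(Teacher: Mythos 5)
Your approach is genuinely different from the paper's. The paper proves full faithfulness of $\pi^*$ directly by computing $\Hom_T(\ZZ_{\star(s)},\ZZ_{\star(t)})$ via the cohomological criterion of Lemma \ref{lem:posetsheaf} (which reduces to showing $\star(s)\setminus\star(t)$ is contractible), and proves essential surjectivity by a devissage along strata using the open-closed triangles, first showing the $\ZZ_s$ generate and then that the $\ZZ_{\star(s)}$ do. You instead package the equivalence as the unit and counit of the adjunction $\pi^*\dashv\pi_*$ being isomorphisms. This is a reasonable and arguably more structural route, and your counit computation (reducing to the assertion that $\F(\star(\alpha))\to\F_x$ is an isomorphism for $\SSS$-constructible $\F$ and $x\in T_\alpha$) is the essential geometric input, equivalent in content to what the paper proves.

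There is, however, a real gap in your unit check. You claim $(\pi^*\G)(\star(s))=\G(\star(s))$ because ``there are no nontrivial covers of a star by stars, so sheafification does nothing.'' But $\pi^*\G$ is the sheafification of the presheaf pullback \emph{in the topology of $T$}, where $\star(s)$ has many nontrivial open covers by open sets that are not stars. The absence of nontrivial covers of $\star(s)$ \emph{by stars} is a statement about the coarse topology $T_\SSS$ and is exactly the content of Lemma \ref{posetmeaning}; it says nothing about whether the presheaf pullback satisfies descent in $T$. Indeed, $\G(\star(s))\to(\pi^*\G)(\star(s))$ being an isomorphism is precisely full faithfulness of $\pi^*$, which is not free — it is the place where contractibility of stars (and of differences of stars) must enter. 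As written, your argument for the unit is circular in spirit and provides no actual cohomological input. The good news is that this step can be avoided entirely: $\pi^*$ preserves stalks (it is the identity on points), hence is conservative, and then the triangle identity $\epsilon_{\pi^*\G}\circ\pi^*(\eta_\G)=\id$ shows that if the counit is an isomorphism on $\Sh_\SSS(T)$ then $\pi^*(\eta_\G)$, and hence $\eta_\G$ itself, is an isomorphism. So once the counit check is done carefully, the unit comes along for free — but you should argue it this way rather than via the unjustified ``sheafification does nothing.''

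Finally, the counit check itself is stated at the level of a sketch. The fact that $\F(\star(\alpha))\to\F_x$ is an isomorphism for constructible $\F$ is true but not formal; it requires either the non-characteristic deformation lemma applied to a cofinal system of conical shrinkings of $\star(\alpha)$ toward $x$, or a stratum-by-stratum devissage using the open-closed triangles together with contractibility of the relevant stars and links, which amounts to roughly the same amount of work as the paper's argument. You should carry out one of these in detail; as written, the phrases ``stratified-conical neighborhoods'' and ``propagates the isomorphism down'' gesture at the right argument without providing it.
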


\begin{proof}
To show full faithfulness, in view of the equivalence of Lemma \ref{posetmeaning} it is enough to check that $\Hom_{T_\SSS}(\ZZ_{\star(s)},\ZZ_{\star(t)})=\Hom_T(\ZZ_{\star(s)},\ZZ_{\star(t)})$.
The former is the indicator of $\star(s)\subseteq\star(t)$ again by Lemma \ref{posetmeaning}.
To show that $\Hom_T(\ZZ_{\star(s)},\ZZ_{\star(t)})$ is as well, by Lemma \ref{lem:posetsheaf} it is enough to show that $H^*(\star(s))\to H^*(\star(s)\setminus\star(t))$ is an isomorphism for $\star(s)\nsubseteq\star(t)$.
If $\star(s)\nsubseteq\star(t)$, then $\star(s)\setminus\star(t)$ is the join of something with $s$, and is hence contractible.

Regarding essential surjectivity, note that the exact triangle of \eqref{triangle} serves to decompose
any sheaf into an iterated extension of (extensions by zero of) sheaves on the strata; hence any constructible sheaf into 
(extensions by zero of) locally constant sheaves on the strata.  Since the strata are all contractible, these sheaves are in fact constant.
This shows that the $\ZZ_s$ generate.  To conclude that the $\ZZ_{\star(s)}$ generate, 
use the exact triangle $\ZZ_{\star(s) \setminus s} \to \ZZ_{\star(s)} \to \ZZ_s \xrightarrow{[1]}$ and induction on dimension of strata (noting that the first term is in the span of $\ZZ_t$ for $\dim(t) < \dim(s)$).
\end{proof}

\subsection{Microsupport}\label{subsec:microsupport}

The notion of microsupport is developed in  \cite{kashiwara-schapira}.\footnote{In \cite{kashiwara-schapira}, the authors
work in the bounded derived category.   As noted in \cite{robalo-schapira}, the only real dependence on this 
was in the proof of one lemma, which is extended to the unbounded setting in that reference.}  We recall some basic facts here.

For what follows, let $M$ denote an analytic manifold.
Given a sheaf $\F$ and 
a smooth function $\phi: M \to \RR$, consider a point $m$ in a level set $\phi^{-1}(t)$.  We say that $m \in M$ is a cohomological $\F$-critical 
point of $\phi$ if, for inclusion of the superlevelset $i: \phi^{-1}(\RR_{\ge t}) \hookrightarrow M$, one has $(i^! \F)_m \ne 0$.

The \emph{microsupport} $\ss(\F) \subseteq T^*M$ is by definition the closure of the locus of differentials of functions at their cohomological $\F$-critical points \cite{kashiwara-schapira}.
It is conical.

If $\F$ is locally constant, then a cohomological $\F$-critical point can only occur where the function in question has zero derivative.  Thus 
the microsupport of a locally constant sheaf is contained in the zero section (and is equal to it where the sheaf is not locally zero).  
If $U \subseteq M$ is an open set and $m$ is a point in the smooth locus of $\partial U$, then over $m$, the locus $\ss(\ZZ_U)=\ss(u_!\ZZ)$ is the half-line of outward
conormals to $\partial U$.  The locus $\ss(u_*\ZZ)$ is the inward conormal.

For a subset $X \subseteq T^*M$, we write $\Sh_X(M)$ for the full subcategory of $\Sh(M)$ spanned by objects with microsupport contained in $X$.
Similarly, for $X \subseteq S^*M$, we write $\Sh_X(M)$ for the full subcategory of $\Sh(M)$ with microsupport at infinity contained in $X$.
Evidently if $0_M \subseteq X$, then $\Sh_X(M) = \Sh_{\partial_{\infty}X}(M)$.  

The following result is a strengthening of \cite[Prop.\ 8.4.1]{kashiwara-schapira}:

\begin{proposition}\label{better841}
For a Whitney stratification $\SSS$ of a $C^1$ manifold $M$, we have $\Sh_\SSS(M)=\Sh_{N^*\SSS}(M)$ (i.e.\ having microsupport contained in $N^*\SSS$ is equivalent to being $\SSS$-constructible).
\end{proposition}

\begin{proof}
We first show the inclusion $\Sh_\SSS(M)\subseteq\Sh_{N^*\SSS}(M)$.
Let us first show that $\ss(\ZZ_X)\subseteq N^*\SSS$.
When $X$ is relatively compact, express $X$ as the ascending union of locally closed submanifolds-with-corners $X_i = X\setminus(\partial X)^{\underline\epsilon_i}$ where $(\partial X)^{\underline\epsilon_i}$ denotes the outward cornering of $\partial X=\overline X\setminus X$ in the sense of \eqref{outwardcornering} and $\underline \epsilon_i \to 0$.
Corollary \ref{neighborhoodcorners} implies $X\setminus(\partial X)^{\underline\epsilon_i}$ are indeed locally closed submanifolds-with-corners, and Corollary \ref{cornerconormalconvergence} implies that their conormals limit inside $N^*\SSS$ as $\underline\epsilon_i\to 0$, and hence that $\ss(\ZZ_X)=\ss(\varinjlim\ZZ_{X_i})\subseteq N^*\SSS$.
The case of general $X$ may be reduced to the relatively compact case by refining the stratification as in Remark \ref{addsubmanifoldremark} (the assertion $\ss(\ZZ_X)\subseteq N^*\SSS$ is local).
The same argument shows that for any locally constant sheaf on $X$, its lower shriek pushforward to $M$ has microsupport contained in $N^*\SSS$.
Since any $\SSS$-constructible sheaf is (locally) a finite iterated extension of such sheaves, we conclude that $\Sh_\SSS(M)\subseteq\Sh_{N^*\SSS}(M)$.

We now show that the inclusion $\Sh_\SSS(M)\subseteq\Sh_{N^*\SSS}(M)$ implies the reverse inclusion $\Sh_{N^*\SSS}(M)\subseteq\Sh_\SSS(M)$ by a straightforward d\'evissage argument.
Suppose $\ss(\F)\subseteq N^*\SSS$, and let us show that $\F$ is $\SSS$-constructible.
Let $X$ be a maximal stratum over which $\F$ is nonzero, and let $U\subseteq M$ be an open set containing $X$ so that $X\subseteq U$ is the support of $F|_U$ (hence, in particular, $X\subseteq U$ is closed).
Since $\ss(\F)\subseteq N^*\SSS$, there exists a (derived) local system on $X$ whose lower shriek pushforward $\F_0$ (which is $\SSS$-constructible) agrees with $\F$ over $U$.
Since $\F_0\in\Sh_\SSS(M)\subseteq\Sh_{N^*\SSS}(M)$, it suffices to show that the cone of $\F_0\to\F$ is $\SSS$-constructible.
We have thus reduced to a sheaf with smaller support.
Iterating, we eventually reduce to the case of $\F=0$ which is obviously $\SSS$-constructible.
\end{proof}

\subsection{Microstalks}

Recall that if $\F$ is a sheaf and $\phi$ is a smooth function with $\phi(x) = t$ and $d\phi_x = \xi$, and we denote 
the inclusion $i: \phi^{-1}(\RR_{\ge t}) \to M$, 
then if $(i^! F)_x \ne 0$, we have $(x, \xi) \in \ss(\F)$ (though not conversely).
Given this, one wants to assign the complex $(i^! \F)_x$ itself as an invariant of $\F$ at $(x, \xi)$.  This is not generally
possible, but it can be done when $\xi$ is a point in the smooth Lagrangian locus of $\ss(\F)$ \cite[Prop.\ 7.5.3]{kashiwara-schapira}.
Namely, at any smooth Lagrangian point $(x,\xi)\in X\subseteq T^*M$, there is a `microstalk' functor
\begin{equation}
\mu_{(x, \xi)}:\Sh_X(M) \to \Sh(\pt).
\end{equation}
It is given by a shift of $\F \mapsto (i^! \F)_x$ for any $\phi$ with $d_x \phi = \xi$ with the graph of $d \phi$ transverse to $X$.
The shift can be fixed using the index of the three transverse Lagrangians $(\ss(\F), T^*_x M, \Gamma_{d \phi})$.
When $\xi = 0$, the microstalk functor is simply the stalk functor.

\begin{lemma}\label{microstalkcocontinuous}
The microstalk functors are co-continuous.
\end{lemma}

\begin{proof}
Every stalk functor ($i^*$ for $i$ the inclusion of a point) is a left adjoint, hence is co-continuous.
To show co-continuity of the microstalk at a point $(x,\xi)\in\Lambda$ with $\xi\ne 0$, argue as follows.
By applying a contact transformation, we may reduce to the case that $\Lambda$ is (locally near $(x,\xi)$) the conormal of a smooth hypersurface $N\subseteq M$.
Let $B\subseteq M$ be an open ball with smooth boundary whose inward conormal at $x\in\partial B$ is $\xi$.
Moreover, choose $B$ so that $N^*\partial B$ and $\Lambda=N^*N$ intersect cleanly at $(x,\xi)$ (that is, $\partial B$ and $N$ are tangent at $x$, differing by a non-degenerate quadratic form).
Define $B_-$ and $B_+$ from $B$ by pushing $\partial B$ inward/outward near $x$.
Now the cone of the map $\Gamma_c(B_-,-)\to\Gamma_c(B_+,-)$ is (up to a shift) the microstalk functor $\mu_{(x,\xi)}$.
The compactly supported sections functor $\Gamma_c$ is co-continuous, since it is the composition of the restriction and lower shriek pushforward functors, both of which are co-continuous.
\end{proof}

\begin{proposition}\label{removemicrosupport}
Let $X \subseteq T^*M$ be closed and conical, and let $\Lambda\subseteq T^*M \setminus X$ be closed, conical, and stratified by isotropic submanifolds.
Then $\Sh_X(M) \subseteq \Sh_{X \cup \Lambda}(M)$ is the kernel of all microstalks at Lagrangian points of $\Lambda$.
\end{proposition}

\begin{proof}
If $\ss(\F)\subseteq X$, then the microstalks of $\F$ at Lagrangian points of $\Lambda$ vanish by definition of microsupport.
To prove the converse, suppose that $\ss(\F)\subseteq X\cup\Lambda$ and that the microstalks of $\F$ vanish at all Lagrangian points of $\Lambda$, and let us show that $\ss(\F)\subseteq X$.
By the fundamental result \cite[Thm.\ 6.5.4]{kashiwara-schapira} that the microsupport is co-isotropic, it is enough to show that $p\notin\ss(\F)$ for every Lagrangian point $p\in\Lambda$.
It is not quite immediate from the definitions that vanishing of the microstalk implies there is no microsupport,
since the microsupport is defined in terms of arbitrary test functions, whereas microstalks are defined in terms of microlocally transverse test functions.
To see it is true, and that moreover the microstalk is locally
constant along $\Lambda$, one can apply a contact transformation so that $\Lambda$ becomes locally the conormal
to a smooth hypersurface; for details see \cite[Chap.\ 7]{kashiwara-schapira}.
\end{proof}

It will be central to our discussion to find co-representatives of the microstalk functors.
Here is a first step:

\begin{theorem}[{\cite[Cor.\ 5.4.19, Prop.\ 5.4.20, Prop.\ 7.5.3]{kashiwara-schapira} or \cite{goresky-macpherson, schmid-vilonen}}]\label{thm:sheafwex}
Let $X \subseteq T^*M$ be a closed conical subset, let 
$\phi: M \to \RR$ be a proper function, and assume that over $\phi^{-1}([a, b))$, one has $\Gamma_{d \phi} \cap X = (x, \xi)$,
where $(x, \xi)$ is a smooth Lagrangian point of $X$.

Let $A: \phi^{-1}((-\infty, a)) \to M$, $A': \phi^{-1}((a, \infty)) \to M$, $B: \phi^{-1}((-\infty, b)) \to M$, and $B': \phi^{-1}((b, \infty)) \to M$ be the inclusions.
Then (up to a shift), the following functors $\Sh_X(M) \to \Sh(\pt)$ are isomorphic:
\begin{itemize}
\item The microstalk functor $\mu_{(x,\xi)}$.
\item $\Hom(\cone(A_! \ZZ \to B_! \ZZ), -)$.
\item $\Hom(\cone(A'_* \ZZ \to B'_* \ZZ), -)$.
\end{itemize}
Here the maps are the canonical ones coming from restriction of sections.
\end{theorem}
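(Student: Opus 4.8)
The plan is to recognize all three functors as standard (co)representables and then reduce the comparison to the microlocal Morse theory of \cite{kashiwara-schapira} (equivalently the stratified Morse theory of \cite{goresky-macpherson, schmid-vilonen}). First I would unwind the two relative-cohomology functors. Since $A$ and $B$ are the inclusions of the open sublevel sets $\{\phi<a\}$ and $\{\phi<b\}$, we have $A_!\ZZ=\ZZ_{\{\phi<a\}}$ and $B_!\ZZ=\ZZ_{\{\phi<b\}}$, the canonical map $A_!\ZZ\to B_!\ZZ$ is the evident map of extensions by zero, and its cone is $\ZZ_{\{a\le\phi<b\}}$, extension by zero from the locally closed slab; applying $\Hom(-,\F)$ to the recollement triangle $\ZZ_{\{\phi<a\}}\to\ZZ_{\{\phi<b\}}\to\ZZ_{\{a\le\phi<b\}}\xrightarrow{[1]}$ then shows that $\F\mapsto\Hom(\cone(A_!\ZZ\to B_!\ZZ),\F)$ is, up to shift, the relative-sections functor $\F\mapsto\cone(\F(\{\phi<b\})\to\F(\{\phi<a\}))$. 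For the $\ast$-version I would first replace $a$ and $b$ by nearby regular values of $\phi$ (harmless: $\phi$ is proper and $\ss(\F)$ is closed, so the noncharacteristic hypothesis survives a small decrease of the omitted region), so that the recollement triangle $i_{a!}i_a^!\ZZ_M\to\ZZ_M\to A'_*\ZZ\xrightarrow{[1]}$ for the closed inclusion $i_a\colon\{\phi\le a\}\hookrightarrow M$ gives $i_{a!}i_a^!\ZZ_M=\ZZ_{\{\phi<a\}}$, hence $A'_*\ZZ\cong\ZZ_{\{\phi\ge a\}}$, hence $\cone(A'_*\ZZ\to B'_*\ZZ)\cong\ZZ_{\{a\le\phi<b\}}[1]$; so $\F\mapsto\Hom(\cone(A'_*\ZZ\to B'_*\ZZ),\F)$ is again, up to a (different) shift, the same relative-sections functor. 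Thus the $!$- and $\ast$-presentations compute one functor up to shift, and it remains only to match it with the microstalk.

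By the definition recalled above, $\mu_{(x,\xi)}(\F)\simeq(i^!\F)_x$ up to shift, where $i\colon\phi^{-1}([c,\infty))\hookrightarrow M$ and $c:=\phi(x)$; this particular $\phi$ is an admissible test function because $(x,\xi)$ is a smooth Lagrangian point of $X$, so $X$ and $\ss(\F)$ agree near $(x,\xi)$ and the hypothesized transversality of $\Gamma_{d\phi}$ to $X$ there is transversality to $\ss(\F)$. It therefore remains to prove $\cone(\F(\{\phi<b\})\to\F(\{\phi<a\}))\simeq(i^!\F)_x$ up to shift. This is exactly the sheaf-theoretic form of stratified Morse theory: given that $\Gamma_{d\phi}$ meets $\ss(\F)$ over $\phi^{-1}([a,b))$ only in the single smooth Lagrangian point $(x,\xi)$, the comparison is furnished by \cite[Prop.\ 5.4.17, Cor.\ 5.4.19, 5.4.20]{kashiwara-schapira} (or by \cite{goresky-macpherson, schmid-vilonen}). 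The isomorphisms are natural since every map involved --- restrictions of sections, connecting maps of recollement triangles, the comparison maps of \cite[5.4.20]{kashiwara-schapira} --- is canonical, which is why it suffices to argue with (co)representing objects. Finally I would collect the shifts: the residual ``up to a shift'' is the index of the transverse triple $(\ss(\F),T^*_xM,\Gamma_{d\phi})$ normalizing the microstalk in \cite[Prop.\ 7.5.3]{kashiwara-schapira}, and since the theorem only claims agreement up to shift there is nothing more to determine.

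The only real content is the comparison in the second paragraph, and the single point there that needs care is the use of properness of $\phi$. It is tempting to try to ``localize to a small ball around $x$'', but excising a neighborhood of $x$ destroys the compactness-of-supports hypothesis needed to run the noncharacteristic deformation lemma, and the relative cohomology of the sublevel-set pair is genuinely not computed by the restriction of $\F$ to any neighborhood of $x$ alone. Kashiwara--Schapira carry out the correct localization; the work here is only to phrase their conclusion in the present language and identify it with the microstalk.
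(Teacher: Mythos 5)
This statement is cited in the paper (to Kashiwara--Schapira [5.4.19, 5.4.20, 7.5.3] and to Goresky--MacPherson and Schmid--Vilonen); the paper does not prove it, so there is no in-paper proof to compare against. Your proposal is a correct and accurate unwinding of those citations, and it identifies the three functors precisely the way the references do: both the $!$- and $*$-expressions reduce (up to shift) to $\Hom(\ZZ_{\{a\le\phi<b\}},-)$, and the comparison of that with the microstalk $(i^!\F)_x$ is exactly the microlocal Morse lemma of \cite[5.4.19--5.4.20]{kashiwara-schapira}, with the shift pinned down by the Maslov index of the transverse triple as in \cite[7.5.3]{kashiwara-schapira}.

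Two small remarks on the one step that needs care. First, the direction of the perturbation of $a,b$ reads a bit backwards: what one wants is to \emph{shrink} the slab, replacing $a,b$ by nearby regular values $a'\in(a,c]$, $b'\in(c,b)$, for which the non-characteristic hypothesis over $\phi^{-1}([a',b'))$ is inherited automatically from that over $\phi^{-1}([a,b))$, and the non-characteristic deformation lemma (using properness of $\phi$) shows that the passage from $(a,b)$ to $(a',b')$ does not change $\text{fib}(\F(\{\phi<b\})\to\F(\{\phi<a\}))$. ``Decreasing the omitted region'' would enlarge the slab, which is not automatically non-characteristic. Second, one could avoid perturbing at all: even when $a$ is a critical value of $\phi$, the cone of the natural map $\ZZ_{\{\phi\ge a\}}\to A'_*\ZZ$ is supported on the critical locus of $\phi$ in $\phi^{-1}(a)$, which is disjoint from the support of any $\F\in\Sh_X(M)$ precisely because those zero covectors would lie in $\Gamma_{d\phi}\cap X$ over $\phi^{-1}([a,b))$; hence $\Hom(\ZZ_{\{\phi\ge a\}},\F)=\Hom(A'_*\ZZ,\F)$ for all $\F\in\Sh_X(M)$ without any regular-value hypothesis. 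Either fix is fine; your observation that the argument genuinely needs properness (one cannot localize near $x$) is correct and is the reason the proper function $\phi$ appears in the hypotheses.
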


We do not say that $\cone(A_! \ZZ \to B_! \ZZ)$ co-represents the microstalk because it is not an element of $\Sh_X(M)$.
As observed in \cite{nadler-wrapped}, such co-representatives do exist, for categorical reasons, as we now explain.
First, we need to know that the categories in question are well generated in the sense of Neeman \cite{neeman-book,krausewellgenerated}.

\begin{lemma}\label{wellgeneration}
The category $\Sh_X(M)$ is well generated.
\end{lemma}

\begin{proof}
The category of all sheaves $\Sh(M)$ is the derived category of a Grothendieck abelian category, hence is well generated \cite{neeman-sheaves}.
A sheaf $\F$ having singular support inside $X$ is equivalent to the restriction maps $\F(U_\alpha)\to\F(V_\alpha)$ being isomorphisms for some list of pairs $(U_\alpha,V_\alpha)_\alpha$ depending on $X$.
This condition is equivalent to $\F$ being right-orthogonal to the cone of the map $\ZZ_{V_\alpha}\to\ZZ_{U_\alpha}$.
Now the right-orthogonal complement of a set of objects in a well generated category is well generated \cite{neeman-book}\cite[Thm.\ 4.9]{portaquotient}.
\end{proof}

Now note that the microsupport of a sum or product is contained in the closure of the union of the microsupports.  Thus if $X$ is closed, 
then the subcategory 
$\Sh_X(M) \subseteq \Sh(M)$ is closed under sums and products.  In particular, $\Sh_X(M)$ is complete and co-complete, 
and the inclusion $\Sh_X(M) \to \Sh(M)$ is continuous and co-continuous.  More generally, if $X \subseteq X'$ are closed, then 
the inclusion $\iota: \Sh_X(M) \to \Sh_{X'}(M)$ is continuous and co-continuous.
It follows that:

\begin{lemma}\label{adjoints}
For closed $X\subseteq X'\subseteq T^*M$, the inclusion $\iota:\Sh_X(M)\to\Sh_{X'}(M)$ has both adjoints: $(\iota^*, \iota, \iota^!)$.
\end{lemma}

\begin{proof}
Since $\Sh_X(M)$ is well generated and $\iota$ is co-continuous, it has a right adjoint $\iota^!$ by Brown representability for well generated categories \cite{neeman-book}.

For the left adjoint $\iota^*$, it would be sufficient to know Brown representability for the opposite of $\Sh_X(M)$.
However according to Neeman \cite{neemanexcogenerators}, it is an open problem to establish Brown representability for the opposites of well generated categories.
Instead, we may argue as follows.
The categories in question are presentable (co-complete and accessible \cite[Def.\ 5.4.2.1 and 5.5.0.1]{luriehttpub}; this is a version of well generation) and so a functor has a left adjoint iff it is continuous and accessible (preserves $\kappa$-filtered colimits \cite[Def.\ 5.4.2.5 and 5.3.4.5]{luriehttpub}) by Lurie \cite[Corollary 5.5.2.9]{luriehttpub}.

Another proof of the existence of the left adjoint $\iota^*$ has been given by Efimov \cite{efimovleftadmissible}.
\end{proof}

For example, if $V \subseteq M$ 
is a closed subset, then taking $X = T^*M|_V$ and $X' = T^* M$ recovers the adjoint triple for the pushforward along $V \to M$,
because $\Sh_{T^*M|_V}(M)=\Sh(V)$.

Using the left adjoint and Theorem \ref{thm:sheafwex}, we can obtain a co-representative for the microstalk as follows.  Take any $X' \supseteq \ss(\cone(A_! \ZZ \to B_! \ZZ))$, e.g.\ $X' = T^*M$.  Then $\iota^*\cone(A_! \ZZ \to B_! \ZZ) \in \Sh_X(M)$ co-represents the microstalk.

We do not generally have a good understanding of $(\iota^*, \iota, \iota^!)$, but when $X' \setminus X$ is isotropic we have the following (special cases of which have appeared in \cite{nadler-wrapped,ikekuwagaki}):

\begin{theorem} \label{thm:sheafstopremoval}
Let $X \subseteq T^*M$ be closed and conical, and let $\Lambda\subseteq T^*M \setminus X$ be closed, conical, and stratified by isotropic submanifolds.
The left adjoint $\iota^*$ to the inclusion $\iota:\Sh_X(M)\to\Sh_{X\cup\Lambda}(M)$ realizes the quotient
\begin{equation}\label{sheafstopremovalqe}
\Sh_{X \cup \Lambda}(M)/\D\xrightarrow\sim \Sh_X(M),
\end{equation}
where $\D$ denotes co-representing objects for the microstalks at Lagrangian points of $\Lambda$.
\end{theorem}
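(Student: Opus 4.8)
The plan is to prove Theorem~\ref{thm:sheafstopremoval} in two stages: first identify $\Sh_X(M)$ inside $\Sh_{X\cup\Lambda}(M)$ as the common kernel of the microstalk functors $\mu_p$ at smooth Lagrangian points $p\in\Lambda$, and then deduce the quotient description \eqref{sheafstopremovalqe} by a general localization argument.

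For the first stage, one inclusion is essentially a definition unwinding: if $\F\in\Sh_{X\cup\Lambda}(M)$ actually has $\ss(\F)\subseteq X$, then since $\Lambda$ is disjoint from $X$, the graph $\Gamma_{d\phi}$ of a test function cutting out a Lagrangian point $p\in\Lambda$ meets $\ss(\F)$ trivially near $p$, so $(i^!\F)_p=0$ and hence $\mu_p(\F)=0$. For the reverse inclusion, suppose $\F\in\Sh_{X\cup\Lambda}(M)$ with $\mu_p(\F)=0$ for every smooth Lagrangian $p\in\Lambda$; we must show $\ss(\F)\cap\Lambda=\emptyset$, i.e.\ $\ss(\F)\subseteq X$. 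Since $\Lambda$ is subanalytic isotropic, its smooth Lagrangian locus is dense (the non-Lagrangian and non-smooth parts are lower-dimensional), and $\ss(\F)\cap\Lambda$ is a closed conical subset contained in $\Lambda$. The key input is the involutivity (microlocal cut-off / propagation) theorem of Kashiwara--Schapira: $\ss(\F)$ is coisotropic, so if it meets $\Lambda$ it must contain a positive-dimensional piece, and in particular it meets the smooth Lagrangian locus of $\Lambda$ (here one uses that $\Lambda$, being isotropic, has its smooth Lagrangian points as the ``full-dimensional'' part of any Lagrangian component of $\ss(\F)\cap\Lambda$; any component of $\ss(\F)$ contained in the isotropic $\Lambda$ is itself isotropic, hence Lagrangian, of dimension $\dim M$, and its smooth points are smooth Lagrangian points of $\Lambda$). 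At such a generic point $p$, Theorem~\ref{thm:sheafwex} identifies $\mu_p(\F)$ with a shift of $(i^!\F)_p$ for a transverse test function, and vanishing of all these forces $p\notin\ss(\F)$ — a contradiction. Hence $\ss(\F)\cap\Lambda=\emptyset$. I expect this stage to be the main obstacle: the bookkeeping that ``$\ss(\F)\cap\Lambda\ne\emptyset$ implies some smooth Lagrangian point of $\Lambda$ lies in $\ss(\F)$'' requires care with the stratification of $\Lambda$ and with involutivity, and one must be sure the microstalk at that point is computed by the naive $i^!$-stalk formula (which Theorem~\ref{thm:sheafwex} and \cite[Proposition 7.5.3]{kashiwara-schapira} provide).

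For the second stage, recall from the discussion preceding the theorem that $\iota\colon\Sh_X(M)\hookrightarrow\Sh_{X\cup\Lambda}(M)$ is continuous and co-continuous, hence admits a left adjoint $\iota^*$. By the first stage, an object $\F\in\Sh_{X\cup\Lambda}(M)$ lies in the essential image of $\iota$ (equivalently, $\iota\iota^*\F\to\F$ is an isomorphism — here using that $\iota$ is fully faithful, so $\Sh_X(M)$ is a reflective subcategory) if and only if $\mu_p(\F)=0$ for all smooth Lagrangian $p\in\Lambda$, i.e.\ if and only if $\Hom(D_p,\F)=0$ for every co-representing object $D_p$ of such a microstalk. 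So $\Sh_X(M)=\D^\perp$, where $\D$ is the set of these co-representing objects. It is a standard fact about compactly/well-generated stable categories (Bousfield localization, Neeman) that for a reflective co-continuous localization $\iota^*\dashv\iota$, the left adjoint $\iota^*$ exhibits the target as the Verdier quotient of the source by the kernel of $\iota^*$, and the kernel of $\iota^*$ is exactly the localizing subcategory generated by $\D$ (indeed $\ker\iota^* = {}^\perp(\Sh_X(M)) = {}^\perp(\D^\perp)$, which contains $\D$ and is localizing). Therefore $\Sh_{X\cup\Lambda}(M)/\D \xrightarrow{\sim}\Sh_X(M)$ via $\iota^*$, which is \eqref{sheafstopremovalqe}.

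To assemble the write-up I would (i) record the easy inclusion $\Sh_X(M)\subseteq\ker(\text{microstalks})$; (ii) prove the reverse inclusion via involutivity of $\ss(\F)$ plus Theorem~\ref{thm:sheafwex}, which is the technical heart; (iii) invoke the adjoint triple $(\iota^*,\iota,\iota^!)$ already established, note $\iota$ is fully faithful with essential image $\D^\perp=\Sh_X(M)$; and (iv) cite the general Bousfield/Verdier localization formalism to conclude that $\iota^*$ realizes the quotient by $\D$. The only genuinely new content beyond citations is step (ii).
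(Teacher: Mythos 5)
Your approach mirrors the paper's proof closely: the easy inclusion by definition of microsupport, then coisotropicity of $\ss(\F)$ (Kashiwara--Schapira 6.5.4) to reduce the converse to showing that Lagrangian points $p\in\Lambda$ with vanishing microstalk lie outside $\ss(\F)$, and then the reflective Bousfield localization formalism for the quotient. Steps (i), (iii), and (iv) of your plan are sound, and the coisotropicity bookkeeping in (ii) --- that any component of $\ss(\F)$ landing inside the isotropic $\Lambda$ is forced to be Lagrangian and therefore to meet the smooth Lagrangian locus of $\Lambda$ --- is essentially what the paper does.

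The gap is in the step you assert almost in passing, ``vanishing of all these forces $p\notin\ss(\F)$,'' and your own diagnosis of the obstacle misses it. You locate the difficulty in whether $\mu_p$ is computed by an $i^!$-stalk against a transverse test function --- but that is just Proposition 7.5.3, i.e.\ the existence of the microstalk functor, and is not in question. The genuinely nontrivial point, which the paper explicitly flags, is the converse implication ``$\mu_p(\F)=0 \Rightarrow p\notin\ss(\F)$'': the microsupport is defined by \emph{arbitrary} test functions, whereas the microstalk only tests \emph{microlocally transverse} ones, so vanishing of the microstalk does not on its face rule out membership in $\ss(\F)$. The paper handles this (and simultaneously shows the microstalk is locally constant along $\Lambda$) by applying a contact transformation near $p$ to reduce $\Lambda$ locally to the conormal of a smooth hypersurface, where the check is elementary, citing Chapter 7 of Kashiwara--Schapira. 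Without this reduction or an equivalent argument, your step (ii) is incomplete.
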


\begin{proof}
According to Proposition \ref{removemicrosupport}, the full subcategory $\Sh_X(M)\subseteq\Sh_{X\cup\Lambda}(M)$ is precisely the right-orthogonal to $\D$.
The left adjoint $\iota^*$ to the inclusion $\iota$ is thus termed the quotient by $\D$.
\end{proof}

\begin{remark}\label{noneedwellgeneration}
For our purposes in this paper, we do not need Lemma \ref{adjoints} and Theorem \ref{thm:sheafstopremoval} in their general formulations given above, rather only in the special case of subanalytic singular isotropic singular supports.
In this setting, we give an elementary derivation (i.e.\ without appealing to general Brown representability type statements) in the next subsection.
\end{remark}

\subsection{Compact objects}

Here we elaborate upon some assertions of \cite{nadler-wrapped}.

We write $\Sh_X(M)^c$ for the compact objects in the category $\Sh_X(M)$. Be warned: 

\begin{proposition}[\cite{neeman-sheaves}]
When $M$ is connected and non-compact, $\Sh(M)^c = 0$.\qed
\end{proposition}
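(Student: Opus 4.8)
The plan is to show that any compact object $\F\in\Sh(M)$ — one for which $\Hom(\F,-)$ preserves coproducts — must vanish. I will assume $M$ connected (hence, being noncompact, positive-dimensional); for disconnected $M$ the correct statement is that $\Sh(M)^c=0$ if and only if no component of $M$ is compact, and it reduces to the connected case component by component.

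\emph{Step 1: reduce to compactly supported $\F$.} First, two observations. The stalk functor $(i_x)^\ast$ preserves compact objects, since its right adjoint $(i_x)_\ast=(i_x)_!$ is co-continuous ($i_x$ being proper); in particular every $\F_x$ is perfect. More to the point, fix an exhaustion $M=\bigcup_n U_n$ by relatively compact open sets with $U_n\subseteq U_{n+1}$. On stalks $(\F_{U_n})_x=\F_x$ once $x\in U_n$, so the canonical maps assemble to an isomorphism $\colim_n\F_{U_n}\xrightarrow{\sim}\F$, realizing $\F$ as the telescope $\cone\!\bigl(\bigoplus_n\F_{U_n}\to\bigoplus_n\F_{U_n}\bigr)$. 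Since $\Hom(\F,-)$ commutes with coproducts and cones, it commutes with this telescope, so $\id_\F$ factors through some $\F_{U_N}$; thus $\F$ is a retract of $\F_{U_N}$, whence $\F_x$ is a retract of $(\F_{U_N})_x$ and so vanishes for $x\notin U_N$. Therefore $K:=\operatorname{supp}\F\subseteq\overline{U_N}$ is compact.

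\emph{Step 2: build a telling coproduct.} Suppose $\F\neq0$. Then $\emptyset\neq K\subsetneq M$, so (as $M$ is connected) $\partial K\neq\emptyset$; fix $x_0\in\partial K$. Using that $x_0\in\overline{M\setminus K}$ and $\dim M\geq1$, choose pairwise disjoint open balls $W_1,W_2,\dots\subseteq M\setminus K$ with pairwise disjoint closures such that $\overline{W_m}\to x_0$; set $W=\bigsqcup_m W_m$, so that $\bigoplus_m\ZZ_{W_m}=\ZZ_W$ and $\overline W=\{x_0\}\sqcup\bigsqcup_m\overline{W_m}$. For each fixed $m$ the closed sets $\operatorname{supp}\F=K$ and $\operatorname{supp}\ZZ_{W_m}=\overline{W_m}$ are disjoint, so writing $\F=j_!j^\ast\F$ for the open inclusion $j\colon M\setminus\overline{W_m}\hookrightarrow M$ and using $j_!\dashv j^\ast$ gives $\Hom(\F,\ZZ_{W_m})=0$. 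If $\F$ were compact, this would force $\Hom(\F,\ZZ_W)=\bigoplus_m\Hom(\F,\ZZ_{W_m})=0$. So the goal becomes to contradict this by proving $\Hom(\F,\ZZ_W)\neq0$.

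\emph{Step 3 (the main obstacle): nonvanishing of $\Hom(\F,\ZZ_W)$.} Note first that this is a purely local invariant at $x_0$: the sheaf $\mathcal{H}om(\F,j_!\ZZ_W)$ is supported on $K\cap\overline W=\{x_0\}$, so $\Hom(\F,\ZZ_W)$ is just its stalk there. In the model case $\F=(i_{x_0})_\ast A$ with $A$ perfect and nonzero, the adjunction $(i_{x_0})_!\dashv(i_{x_0})^!$ identifies $\Hom(\F,\ZZ_W)=A^\vee\otimes(i_{x_0})^!\ZZ_W$, and a short computation — the punctured-neighborhood sections of $\ZZ_W$ near $x_0$ are $\prod_{\overline{W_m}\subseteq V}\Gamma(V\setminus\{x_0\},\ZZ_{W_m})\cong\prod_{m\geq N(V)}\ZZ[-\dim M]$, the transition maps being coordinate projections — yields $(i_{x_0})^!\ZZ_W\cong\bigl(\prod_m\ZZ\big/\bigoplus_m\ZZ\bigr)[-\dim M-1]$, which is nonzero and survives $\otimes A^\vee$ since $\prod_m\ZZ/\bigoplus_m\ZZ$ is torsion-free but not $n$-divisible for any $n\geq2$. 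I expect the genuinely hard part to be this nonvanishing for an \emph{arbitrary} nonzero $\F$ (which is nonzero in every neighborhood of $x_0\in K$), together with the complementary situation — e.g. $\F=\ZZ_B$ for $B\cong\RR^{\dim M}$ — where compactness fails instead because $\Gamma(M,-)$ does not preserve coproducts on the noncompact $M$; making this precise is exactly the content of \cite{neeman-sheaves}. Granting it, $\Hom(\F,\ZZ_W)\neq0$ contradicts Step 2, so $\F=0$ and $\Sh(M)^c=0$.
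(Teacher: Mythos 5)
The paper attributes this statement to \cite{neeman-sheaves} with no argument of its own (note the immediate \textbackslash qed), so there is no in-paper proof to compare against; here is a review of your proposal on its own merits.

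Steps 1 and 2 are correct and constitute genuine progress. The telescope argument showing a compact $\F$ is a retract of some $\F_{U_N}$, and hence has compact support $K\subseteq\overline{U_N}$, is standard and sound (as is the observation that stalks are perfect). The disjoint-support computation $\Hom(\F,\ZZ_{W_m})=0$ via $\F\cong j_!j^*\F$ is also correct, and combined with $\ZZ_W=\bigoplus_m\ZZ_{W_m}$ it does force $\Hom(\F,\ZZ_W)=0$ if $\F$ is compact.

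Step 3 contains the gap, and it is a real one, not merely a detail left to the reader. The nonvanishing $\Hom(\F,\ZZ_W)\neq0$ that you hope to establish for arbitrary nonzero compactly supported $\F$ with $x_0\in\partial(\operatorname{supp}\F)$ is in fact \emph{false}: take $\F=\ZZ_B$ for $B$ an open ball with $\overline B$ compact and $x_0\in\partial B$. Then by $b_!\dashv b^*$ one has $\Hom(\ZZ_B,\ZZ_W)=\Gamma(B,\ZZ_W|_B)=0$, since $W\cap B=\emptyset$. So the strategy of deriving a contradiction from Step~2 cannot work uniformly; the test objects $\ZZ_W$ simply do not see $\ZZ_B$. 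You flag this yourself (the ``complementary situation'' sentence), noting that for such $\F$ one would instead need to show that $\Gamma(B,-)$ fails to preserve coproducts — but that argument is not given, and there is no mechanism in the proposal for deciding which regime a general $\F$ falls into, let alone handling mixed cases. What Steps~1--2 prove is the easy necessary condition (compact support, perfect stalks); the content of the proposition, and of \cite{neeman-sheaves}, is precisely that this necessary condition is not sufficient, and the proposal does not supply that step. As written, the argument does not prove the proposition.
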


There are not many more compact objects in the compact case.
However, for sheaves with prescribed isotropic microsupport, the situation is different: 

\begin{proposition} \label{isotropicimpliescompact} 
For $\Lambda \subseteq T^*M$ a conic subset Whitney stratifiable by isotropics, the category 
$\Sh_\Lambda(M)$ is compactly generated by the (co-representatives of the) microstalk functors at the smooth Lagrangian points of $\Lambda$.
\end{proposition}

\begin{proof}
It was shown immediately after the proof of Lemma \ref{adjoints} that the microstalk functors at smooth Lagrangian points of $\Lambda$ are co-represented by objects of $\Sh_\Lambda(M)$.
Since the microstalk functors are co-continuous (Lemma \ref{microstalkcocontinuous}), these co-representatives are compact.
Any sheaf right-orthogonal to 
these co-representatives has by definition vanishing microstalks at all smooth Lagrangian points of $\Lambda$, hence has microsupport contained in the complement of the 
smooth Lagrangian locus (see the proof of Proposition \ref{removemicrosupport}).  This complement, being stratified by subcritical isotropics, has no co-isotropic subset; hence by 
the involutivity of microsupports \cite[Thm.\ 6.5.4]{kashiwara-schapira}, the microsupport
is in fact the empty set and the sheaf vanishes.
\end{proof} 

As promised in Remark \ref{noneedwellgeneration}, we now give arguments avoiding the use of  representability theorems in non-compactly-generated
categories.  This comes
at the cost of assuming subanalyticity in order to ensure the existence of triangulations, but for the main results
we will anyway need this hypothesis. 

\begin{lemma} \label{constructibleimpliescompact}
For $\SSS$ a triangulation, the category $\Sh_\SSS(M)$ is compactly generated, and 
the objects of $\Sh_\SSS(M)^c$ are the sheaves with perfect stalks and compact support.  
\end{lemma}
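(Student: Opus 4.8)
The plan is to leverage the equivalence $\Sh_\SSS(M) \simeq \Mod\SSS = \Fun(\SSS^\op, \Mod\ZZ)$ from Lemma \ref{sheafposetequivalence} combined with Lemma \ref{posetmeaning}, which identifies $\Sh_\SSS(M)$ with the representation category of the poset $\SSS$, sending $\ZZ_{\star(s)}$ to the representable module $1_{\star(s)} = \Hom_\SSS(\cdot,s)$. First I would observe that for any small category (in particular the poset $\SSS$), the module category $\Fun(\SSS^\op,\Mod\ZZ)$ is compactly generated by the representable functors $\{1_{\star(s)}\}_{s\in\SSS}$, which are compact and generate; this is standard and also follows since $\SSS$ is a finite poset when $M$ is compact, and in general $\SSS$ is locally finite so the representables still form a set of compact generators. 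Transporting across the equivalence, $\Sh_\SSS(M)$ is compactly generated by $\{\ZZ_{\star(s)}\}_{s\in\SSS}$, proving the first assertion.

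For the identification of the compact objects, the strategy is the usual one: the compact objects of a compactly generated triangulated category are exactly the retracts of finite complexes built from the generators, i.e.\ the idempotent-completed pre-triangulated (thick) closure of the generators $\{\ZZ_{\star(s)}\}$. So I would prove the two inclusions. For ``$\supseteq$'': each $\ZZ_{\star(s)}$ has perfect (indeed rank-one, in a single degree) stalks and, since $\star(s)$ is relatively compact when $M$ is — and in general one should phrase ``compact support'' appropriately — compact support; the class of sheaves with perfect stalks and compact support is closed under shifts, cones, and retracts, so it contains the thick closure of the generators, hence all compact objects. (The support claim: in the non-compact case the generators $\ZZ_{\star(s)}$ need not have compact support, so one restricts attention to the finitely many strata whose stars are relatively compact, or rather notes that a compact object, being a retract of a finite colimit of generators, has support contained in a finite union of stars, which is closed, and combined with perfect stalks this is the constructible-with-compact-support condition; I would state it so the lemma reads correctly — the compact objects are the $\SSS$-constructible sheaves with perfect stalks and compact support.)

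For ``$\subseteq$'', the main point, I would show conversely that every $\SSS$-constructible sheaf $\F$ with perfect stalks and compact support lies in the thick closure of $\{\ZZ_{\star(s)}\}$. Using the filtration by dimension of strata and the exact triangles $\F_{\star(s)\setminus s}\to \F_{\star(s)}\to \F_s\xrightarrow{[1]}$ and $\ZZ_{\star(s)\setminus s}\to\ZZ_{\star(s)}\to\ZZ_s\xrightarrow{[1]}$ (as in the proof of Lemma \ref{sheafposetequivalence}), one reduces to showing that $(j_s)_!$ of a perfect local system on a contractible stratum $X_s$ — which, since $X_s$ is contractible, is just a perfect complex of $\ZZ$-modules placed on $X_s$, i.e.\ a finite iterated extension of shifts of $\ZZ_s$ — lies in the thick closure; and then $\ZZ_s$ itself is the cone of $\ZZ_{\star(s)\setminus s}\to\ZZ_{\star(s)}$, with the first term handled by induction on dimension since $\star(s)\setminus s$ is $\SSS$-constructible and a finite union of smaller stars (using compact support to ensure finiteness). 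Tracking that ``perfect stalks'' propagates correctly through this induction is the one genuinely fiddly step — one needs that the associated-graded pieces of the stratum filtration have perfect stalks, which follows from the octahedral axiom and the fact that $i_s^*$ is exact — but it is routine. The main obstacle, such as it is, is simply being careful about the compact-support bookkeeping in the non-compact case so that ``finite'' iterated extensions genuinely suffice; this is where one invokes that $\F$ has compact support, hence is supported on a finite union of strata, reducing everything to a finite induction.
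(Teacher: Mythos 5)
Your proposal takes essentially the same route as the paper: identify $\Sh_\SSS(M)\simeq\Mod\SSS$ via Lemmas \ref{posetmeaning} and \ref{sheafposetequivalence}, observe that the $\ZZ_{\star(s)}$ are compact generators (being representables), and run the devissage via $\ZZ_{\star(s)\setminus s}\to\ZZ_{\star(s)}\to\ZZ_s$ to identify the thick closure with the sheaves with perfect stalks and compact support. The paper's proof is much terser — it simply notes that the $\ZZ_s$ are compact (expressible via finitely many $\ZZ_{\star(s)}$) and ``evidently'' generate the sheaves with perfect stalks and compact support, leaving the two inclusions you spell out implicit — but the underlying argument is the same. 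One small remark on your support bookkeeping: the worry that $\ZZ_{\star(s)}$ might not have compact support in the non-compact case is unfounded, since $\overline{\star(s)}$ is a finite union of closed simplices by local finiteness of the triangulation and hence always compact; your workaround via ``support contained in a finite union of stars, hence in its compact closure'' is correct but unnecessary.
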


\begin{proof}
Under the identification (Lemma \ref{sheafposetequivalence}) $\Sh_\SSS(M) = \Mod \SSS$, the $\ZZ_{\star(s)}$ go to compact generators. 
The d\'evissage in the proof of the same Lemma shows that  $\ZZ_s$ also generate, and can be expressed using finitely many  
$\ZZ_{\star(s)}$, hence are compact.  The $\ZZ_s$ evidently generate the sheaves with perfect stalks and compact support. 
\end{proof}

\begin{remark}\label{noncompacttriangulation}
Note that while a non-compact manifold does not admit a finite triangulation, it can sometimes be a relatively compact constructible subset of a larger manifold.
\end{remark}

Recall that $\Sh_\SSS(M)=\Sh_{N^*\SSS}(M)$ for any Whitney stratification by Proposition \ref{better841}.

\begin{proposition} \label{compactmicrostalks}
For any subanalytic Whitney triangulation $\SSS$, the category $\Sh_\SSS(M)$ is compactly generated by co-representatives of the microstalks at smooth points of $N^*\SSS$.
\end{proposition}

\begin{proof}
Consider the microstalk at some smooth point $(x, \xi)\in N^*\SSS$.  It is possible to choose 
real analytic $\phi$ as in Theorem \ref{thm:sheafwex}, see \cite[Thm.\ 2.2.1]{goresky-macpherson} or
\cite[Prop.\ 8.3.12]{kashiwara-schapira}.  We keep the notation of Theorem \ref{thm:sheafwex}.
Refine $\SSS$ to a subanalytic Whitney triangulation $\SSS'$ for which $A_!\ZZ$ and $B_!\ZZ$ are constructible.

By Lemma \ref{constructibleimpliescompact}, $\cone(A_! \ZZ \to B_! \ZZ)$ is a compact object in $\Sh_{\SSS'}(M)$.
Lemmas \ref{posetmeaning} and \ref{sheafposetequivalence} give $\Sh_\SSS(M)=\Mod\SSS$, and Lemma \ref{posetmeaningpushforward} states that the inclusion $\iota:\Sh_\SSS(M)\to\Sh_{\SSS'}(M)$ corresponds to the map $r^*:\Mod\SSS\to\Mod\SSS'$ from Section \ref{posetcategories}.
It was observed in Section \ref{posetcategories} that $r^*$ has a left adjoint $r_!:\Mod\SSS'\to\Mod\SSS$, thus giving us a left adjoint $\iota^*:\Sh_{\SSS'}(M)\to\Sh_\SSS(M)$.
These left adjoints $r_!$/$\iota^*$ preserve compact objects since $r^*$/$\iota$ are co-continuous.
Thus the object $\iota^*\cone(A_! \ZZ \to B_! \ZZ) \in \Sh_\SSS(M)$, which co-represents the desired microstalk, is compact.

Co-representatives of the microstalks at all smooth points of $N^*\SSS$ generate $\Sh_\SSS(M)$ by Proposition \ref{removemicrosupport}.
\end{proof}

\begin{remark}\label{compactstalks}
A similar argument shows that the stalk at any point of $M$ (not necessarily a smooth point of $N^*\SSS$) is co-representable by a compact object of $\Sh_\SSS(M)$.
Indeed, note that for any $x \in M$, the functor of taking stalks at $x$, which is by definition $\F_x := \varinjlim \F(B_\epsilon(x))$,
is in fact computed by some fixed $\F_x = \F(B_{\epsilon_x}(x))$.  Indeed, further shrinking of the ball will be non-characteristic with respect
to $N^*\SSS$, as follows from Whitney's condition (b) (or alternatively from microlocal Bertini--Sard \cite[Prop.\ 8.3.12]{kashiwara-schapira}).
We may now argue as above, choosing any analytic function with sublevelset $B_{\epsilon_x}(x)$.
\end{remark}

\begin{corollary}\label{microstalksgenerate}
For any closed conical subanalytic isotropic $\Lambda\subseteq T^*M$, the category $\Sh_\Lambda(M)$ is compactly generated by co-representatives of the microstalks at smooth points of $\Lambda$.
\end{corollary}

\begin{proof}
Fix a subanalytic Whitney triangulation $\SSS$ for which $\Lambda\subseteq N^*\SSS$.
Denote by $\D\subseteq\Sh_{N^*\SSS}(M)^c$ the co-representatives of the microstalks at smooth points of $N^*\SSS\setminus\Lambda$.
By Proposition \ref{removemicrosupport}, $\iota:\Sh_\Lambda(M)\subseteq\Sh_{N^*\SSS}(M)$ is precisely the inclusion of the right-orthogonal to $\D$.
Since the objects of $\D$ are compact by Proposition \ref{compactmicrostalks}, Lemma \ref{compactinquotient} applies to show that $\Sh_\Lambda(M)=\Sh_{N^*\SSS}(M)/\D$ is compactly generated by $\Sh_\Lambda(M)^c=(\Sh_{N^*\SSS}(M)^c/\D)^\pi$ and that the resulting functor $\iota^*:\Sh_{N^*\SSS}(M)\to\Sh_\Lambda(M)$ is left adjoint to $\iota$.
\end{proof}

\begin{corollary} \label{compactsheafstopremoval}
Let $X\subseteq T^*M$ and $\Lambda\subseteq T^*M\setminus X$ be closed conical subanalytic isotropics.
The inclusion $\iota:\Sh_X(M)\to\Sh_{X\cup\Lambda}(M)$ has a left adjoint $\iota^*:\Sh_{X\cup\Lambda}(M)\to\Sh_X(M)$ whose restriction to compact objects defines an equivalence
\begin{equation}
(\Sh_{X \cup \Lambda}(M)^c/\D)^\pi\xrightarrow\sim \Sh_X(M)^c,
\end{equation}
where $\D$ denotes co-representing objects for the microstalks at Lagrangian points of $\Lambda$.
\end{corollary}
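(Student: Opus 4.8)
The plan is to deduce Corollary \ref{compactsheafstopremoval} from Theorem \ref{thm:sheafstopremoval} by passing to compact objects and tracking which objects are compact along the way. First I would recall the general principle: if $F \colon \A \to \B$ is a colimit-preserving functor between compactly generated categories, and $F$ exhibits $\B$ as the quotient $\A/\ker F$, then the induced functor on compact objects identifies $\B^c$ with the idempotent completion of $\A^c/(\ker F \cap \A^c)$, provided the kernel is itself generated by compact objects of $\A$ that remain in $\ker F$. In the present situation we apply this with $\A = \Sh_{X\cup\Lambda}(M)$, $\B = \Sh_X(M)$, and $F = \iota^*$, the left adjoint to the inclusion $\iota$. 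Theorem \ref{thm:sheafstopremoval} already tells us $\iota^*$ realizes the Verdier quotient $\Sh_{X\cup\Lambda}(M)/\D \xrightarrow\sim \Sh_X(M)$, where $\D$ is generated by co-representatives of the microstalks at Lagrangian points of $\Lambda$.

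The key steps, in order, would be: (i) observe that $\Sh_{X\cup\Lambda}(M)$ is compactly generated and $\iota^*$ is co-continuous (both follow from the discussion preceding Theorem \ref{thm:sheafstopremoval}, together with well-generation and the fact that $\Sh_X(M)$ is cocomplete), hence $\iota^*$ preserves compact objects; (ii) observe using Proposition \ref{compactmicrostalks} — after refining to a subanalytic stratification $\SSS$ with $\Lambda \subseteq N^*_\infty\SSS$ via Proposition \ref{putlambdainconormal} — that the co-representing objects of the microstalks at smooth Lagrangian points of $\Lambda$ can be taken to be compact objects of $\Sh_{X\cup\Lambda}(M)$, so $\D$ is generated by objects of $\Sh_{X\cup\Lambda}(M)^c$; (iii) invoke the categorical statement that a quotient of compactly generated categories by a subcategory generated by compact objects, restricted to compact objects, yields the idempotent-completed quotient of the compacts (this is the Neeman--Thomason type localization theorem, which should appear in \S\ref{categoricalsection}); (iv) combine (i)--(iii) to conclude $(\Sh_{X\cup\Lambda}(M)^c/\D)^\pi \xrightarrow\sim \Sh_X(M)^c$, where now $\D$ on the left denotes the compact co-representatives.

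The main obstacle I expect is step (ii): verifying that the microstalk co-representatives at \emph{all} Lagrangian points of $\Lambda$ — not just at finitely many, and not just at points over a single chart — can be realized by a set of compact objects of $\Sh_{X\cup\Lambda}(M)$ whose thick closure coincides with the $\D$ of Theorem \ref{thm:sheafstopremoval}. Proposition \ref{compactmicrostalks} gives one compact co-representative per smooth point of $N^*\SSS$, but one must check that the microstalks along $\Lambda$ are locally constant (this is noted in the proof of Theorem \ref{thm:sheafstopremoval}) so that a countable collection suffices, and that adding the extra co-representatives coming from $N^*_\infty\SSS \setminus \Lambda$ versus only those from $\Lambda$ does not change the thick subcategory being quotiented. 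One also needs the (minor) point that $\Lambda$ being subanalytic and isotropic, its smooth Lagrangian locus has only countably many components, so the relevant set of co-representatives is small. Once the kernel is pinned down as the thick closure of a set of compact objects, the rest is the formal localization argument of step (iii).
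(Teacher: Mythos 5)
Your proposal is correct and is essentially the paper's own proof: compactness of the microstalk co-representatives comes from Proposition \ref{compactmicrostalks} together with the observation that the left adjoint to $\Sh_{X\cup\Lambda}(M)\hookrightarrow\Sh_\SSS(M)$ (for $\SSS$ with $X\cup\Lambda\subseteq N^*\SSS$) preserves compacts, and then one applies Lemma \ref{compactinquotient} to Theorem \ref{thm:sheafstopremoval}. The worries you raise in the last paragraph are unnecessary: Lemma \ref{compactinquotient} applies to any full subcategory $\D$ of compact objects without a countability hypothesis, and the statement to be proved already specifies that $\D$ consists precisely of the co-representatives of microstalks at Lagrangian points of $\Lambda$ (not of $N^*_\infty\SSS$), so there is no mismatch of thick subcategories to worry about.
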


\begin{proof}
By Corollary \ref{microstalksgenerate}, $\Sh_{X\cup\Lambda}(M)$ is compactly generated by the microstalks at smooth points of $X\cup\Lambda$.
Now argue as in the proof of Corollary \ref{microstalksgenerate}.
\end{proof}

The following result was shown in \cite{nadler-wrapped} using arborealization; here is a direct argument. 

\begin{corollary} \label{propersheaves}
The Yoneda embedding induces an equivalence between the full subcategory of $\Sh_\Lambda(M)$ of objects with perfect stalks
and the category $\Prop \Sh_\Lambda(M)^c$.
\end{corollary}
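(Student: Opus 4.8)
The plan is to identify, via the Yoneda embedding, the objects of $\Sh_\Lambda(M)$ with perfect stalks with the proper $\Sh_\Lambda(M)^c$-modules. Recall that $\Prop\Sh_\Lambda(M)^c$ is the category of $\Sh_\Lambda(M)^c$-modules with perfect values. By Corollary \ref{microstalksgenerate}, $\Sh_\Lambda(M)$ is compactly generated, so Yoneda identifies it with the category of all $\Sh_\Lambda(M)^c$-modules, and under this identification $\Prop\Sh_\Lambda(M)^c$ corresponds to the full subcategory of those $\F$ with $\Hom(\C,\F)$ perfect for every compact $\C$. Since the compact objects are the thick closure of the generating set of Corollary \ref{microstalksgenerate}, and $\Hom(-,\F)$ is exact while perfect $\ZZ$-complexes are closed under cones and retracts, this is in turn equivalent to requiring that $\mu_p(\F)$ be perfect for every smooth Legendrian point $p\in\Lambda$ and that $\F_x$ be perfect for every $x\in M$ with $S^*_xM\cap\Lambda=\emptyset$. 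It therefore suffices to prove that this condition on $\F$ is equivalent to $\F$ having perfect stalks; the corollary then follows because Yoneda is fully faithful.

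Both conditions are local on $M$, so I fix $x\in M$ and choose a relatively compact subanalytic open $U\ni x$ together with a subanalytic $\mu$-triangulation $\SSS$ of (a compact neighbourhood of $x$ in) $U$ satisfying $\Lambda|_U\subseteq N^*_\infty\SSS$ (Proposition \ref{putlambdainconormal} plus refinement). Then $\ss(\F)|_U\subseteq 0_U\cup\Lambda|_U\subseteq N^*\SSS$, so $\F|_U$ is $\SSS$-constructible and hence lies in $\Sh_\SSS(U)=\Mod\SSS$ (Lemmas \ref{posetmeaning} and \ref{sheafposetequivalence}); under this identification the value of $\F|_U$ at a stratum $s$ is the common stalk of $\F$ along $X_s$. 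Consequently $\F$ has perfect stalks near $x$ if and only if $\F|_U\in\Sh_\SSS(U)^c$ (Lemma \ref{constructibleimpliescompact}), and I will repeatedly use that $\Hom$ between two compact objects of $\Sh_\SSS(U)=\Mod\ZZ[\SSS]$ is a perfect $\ZZ$-complex, since $\ZZ[\SSS]$ is free of finite rank over $\ZZ$.

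For the easy direction, suppose $\F$ has perfect stalks, so $\F|_U$ is compact in $\Sh_\SSS(U)$. A microstalk $\mu_p(\F)$ with $p\notin\ss(\F)$ vanishes; if instead $p\in\ss(\F)\subseteq\Lambda$, then by Proposition \ref{compactmicrostalks} (applied on $U$, refining $\SSS$ if necessary) there is a compact $C_p\in\Sh_\SSS(U)$ with $\mu_p(\F)=\Hom(C_p,\F|_U)$, which is then a perfect $\ZZ$-complex. The requirement on $\F_x$ for $x$ with $S^*_xM\cap\Lambda=\emptyset$ is subsumed, such an $\F_x$ being a stalk of a sheaf with perfect stalks.

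The substantive direction is the converse, and its crux is the following generation claim: the co-representatives of the microstalks at all smooth Lagrangian points of $N^*\SSS$ — compact objects of $\Sh_\SSS(U)$ by Proposition \ref{compactmicrostalks} — already generate $\Sh_\SSS(U)$. Indeed, if all of them annihilate $\mathcal G$, then $\ss(\mathcal G)$ meets no smooth Lagrangian point of $N^*\SSS$ (a vanishing microstalk at a smooth Lagrangian point precludes microsupport there, by the contact-transformation argument used in the proof of Theorem \ref{thm:sheafstopremoval}); thus $\ss(\mathcal G)$ is contained in the lower-dimensional complement of the smooth Lagrangian locus of $N^*\SSS$, hence is empty by co-isotropy of microsupport, hence $\mathcal G=0$. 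Therefore the thick closure of these co-representatives is all of $\Sh_\SSS(U)^c$ and in particular contains every $\ZZ_{\star(s)}$. Now assume $\mu_p(\F)$ is perfect for all smooth Legendrian $p\in\Lambda$ and $\F_x$ is perfect at all $x$ with $S^*_xM\cap\Lambda=\emptyset$, and apply $\Hom(-,\F|_U)$ to these generators: at a smooth Lagrangian point $p$ of $N^*\SSS$ it yields $\mu_p(\F)$, which is perfect if $p\in\Lambda$ by hypothesis and is $0$ if $p\notin\Lambda\cup 0_U$ (since $\ss(\F)\subseteq 0_U\cup\Lambda$), while if $p=(y,0)\in 0_U$ then $y$ lies in an open stratum, so by Whitney (a) together with $\Lambda\subseteq N^*_\infty\SSS$ one has $S^*_yM\cap\Lambda=\emptyset$ and $\F_y$ is perfect by hypothesis. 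Since perfect $\ZZ$-complexes are closed under cones and retracts, $\Hom(\ZZ_{\star(s)},\F|_U)$ — the stalk of $\F$ along $X_s$ — is perfect for every $s$, so $\F$ has perfect stalks near $x$. As $x$ was arbitrary, the two conditions on $\F$ coincide, and the corollary follows; the only genuinely new input is the generation claim in this last step, everything else being formal or a citation to the results already assembled.
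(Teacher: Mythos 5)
Your proof is correct, and it rests on the same foundational inputs as the paper's (Proposition \ref{compactmicrostalks}, Corollary \ref{microstalksgenerate}, and the co-isotropy/contact-transformation reasoning behind Theorem \ref{thm:sheafstopremoval}), but you organize the converse direction quite differently. The paper's proof of ``proper $\Rightarrow$ perfect stalks'' is a one-liner: for \emph{any} $x\in M$, the stalk functor is co-represented by an object of $\Sh_\SSS(M)^c$ (for a suitable triangulation $\SSS$ with $N^*\SSS\supseteq\Lambda$), and the left adjoint $\iota^*$ to $\Sh_\Lambda(M)\hookrightarrow\Sh_\SSS(M)$ preserves compact objects, so the stalk at every point is already co-represented by a compact object of $\Sh_\Lambda(M)^c$ and properness of the module immediately gives perfect stalks. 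You instead reformulate properness in terms of the particular generating set from Corollary \ref{microstalksgenerate} (microstalks on $\Lambda$ plus stalks off the front projection of $\Lambda$), then localize near $x$, verify that the hypothesis yields perfect $\Hom$ against \emph{all} microstalk co-representatives of $N^*\SSS$ over $U$ (the extra ones either vanish by $\ss(\F)\subseteq 0_U\cup\Lambda$ or reduce to stalks over open strata), and finally use generation within $\Sh_\SSS(U)$ to put $\ZZ_{\star(s)}$ in the thick closure and so conclude the stalks are perfect. The net effect is that you re-derive, locally, what the paper extracts globally from $\iota^*$ preserving compacts. Both routes work; the paper's is shorter, while yours makes explicit the role of the zero-section generators and the reduction to a finite poset. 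One small point of care: when you write that $\F$ has perfect stalks near $x$ iff $\F|_U\in\Sh_\SSS(U)^c$, you are implicitly using that $\SSS$ is a \emph{finite} triangulation (of a compact neighbourhood), so that the ``compact support'' clause in Lemma \ref{constructibleimpliescompact} is automatic; it is worth saying this explicitly, since for an infinite triangulation the two conditions are not equivalent.
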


\begin{proof}
From the argument in Proposition \ref{compactmicrostalks}, we see that the microstalks are calculated by comparing sections over
precompact sets; it follows that a sheaf microsupported in $\Lambda$ (thus constructible) with perfect stalks has perfect microstalks.
The microstalk functors split-generate $\Sh_\Lambda(M)^c$ by Corollary \ref{microstalksgenerate}, so we see that a sheaf with perfect stalks defines a proper module over $\Sh_\Lambda(M)^c$.

To see the converse, recall from Remark \ref{compactstalks} that the stalk functors can be expressed 
in terms of sections over open sets constructible with respect to some $\SSS$ satisfying $N^*\SSS\supseteq\Lambda$.
The left adjoint to $\Sh_\Lambda(M)\hookrightarrow\Sh_\SSS(M)$ preserves compact objects as observed previously, hence proper over $\Sh_\Lambda(M)^c$ implies perfect stalks.
\end{proof}

For compact $M$, we  establish smoothness and/or properness for some of these categories. 

\begin{proposition} \label{sheafcompactsmoothproper}
If $M$ is compact and $\SSS$ is a triangulation, then $\Sh_\SSS(M)^c$ is smooth and proper.
\end{proposition}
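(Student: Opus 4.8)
The plan is to transport the statement across the equivalence $\Sh_\SSS(M)\simeq\Mod\SSS$ of Lemmas~\ref{sheafposetequivalence} and~\ref{posetmeaning}, under which (Lemma~\ref{constructibleimpliescompact}) $\Sh_\SSS(M)^c\simeq\Perf\SSS$; since $M$ is compact, $\SSS$ is a \emph{finite} poset, each of whose morphism complexes $\Hom_\SSS(s,t)$ is $\ZZ$ or $0$ concentrated in degree $0$. Properness is then essentially immediate: $\Perf\SSS$ is generated by the finitely many representables $1_{\star(s)}$, whose pairwise Hom-complexes $\Hom(1_{\star(s)},1_{\star(t)})=\Hom_\SSS(s,t)$ are perfect over $\ZZ$; as every object of $\Perf\SSS$ is a retract of a finite iterated extension of shifts of these generators, every Hom-complex in $\Perf\SSS$ is a perfect $\ZZ$-module. (Only finiteness of $\SSS$ enters here.)

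For smoothness one must place the diagonal $\SSS$-bimodule $(s,t)\mapsto\Hom_\SSS(s,t)$ in $\Perf(\SSS^\op\otimes\SSS)$. I would do this by writing down the reduced two-sided bar resolution of $\ZZ[\SSS]$ relative to its diagonal idempotents: its degree-$n$ term is the direct sum, over chains $s_0\to s_1\to\cdots\to s_n$ of non-identity morphisms in $\SSS$, of the representable (hence compact) bimodules $\Hom_\SSS(-,s_0)\otimes_\ZZ\Hom_\SSS(s_n,-)$. Because $\SSS$ is a finite poset such chains are finite in number and of length bounded by the height of $\SSS$, so this is a \emph{finite} complex of compact bimodules; it carries the usual extra degeneracy as a complex of one-sided modules, hence resolves the diagonal. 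Thus the diagonal bimodule is perfect and $\Perf\SSS$ is smooth. (Equivalently, one can observe that ordering $\SSS=\{s_1,\dots,s_n\}$ by non-increasing dimension of strata makes $(1_{\star(s_1)},\dots,1_{\star(s_n)})$ a full strong exceptional collection: $\Hom_\SSS(s_i,s_j)\neq 0$ forces $X_{s_j}\subseteq\overline{X_{s_i}}$, hence $\dim s_j\le\dim s_i$ with equality only if $s_i=s_j$, so $\Hom(1_{\star(s_i)},1_{\star(s_j)})$ is $0$ for $i>j$ and $\ZZ$ in degree $0$ for $i=j$; and $\Perf$ of a category with a full strong exceptional collection is smooth and proper, its endomorphism algebra being exactly $\ZZ[\SSS]$.)

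The step I expect to be the main obstacle is verifying that all of this survives passage from a field to the ground ring $\ZZ$: concretely, that the reduced bar complex of $\ZZ[\SSS]$ is still a \emph{finite} complex of \emph{perfect} (indeed finitely generated projective) bimodules and is still a resolution of the diagonal. This does hold, relying only on $\SSS$ being a finite poset (which bounds chain lengths) and on each $\Hom_\SSS(s,t)$ being free of rank at most $1$ over $\ZZ$ (so that $\ZZ[\SSS]$ is projective as a bimodule over its diagonal subalgebra), but it is precisely the point where one cannot merely invoke the classical field-coefficient statement that incidence algebras of finite posets have finite global dimension. A variant that avoids the bar complex is an induction on $|\SSS|$: delete a top-dimensional stratum $\alpha$ (so that $1_{\star(\alpha)}$ is a projective simple module), present $\ZZ[\SSS]$ as a triangular matrix algebra $\bigl(\begin{smallmatrix}\ZZ[\SSS\setminus\alpha]&N\\0&\ZZ\end{smallmatrix}\bigr)$ with $N$ finitely generated over $\ZZ[\SSS\setminus\alpha]$ --- hence perfect over it, since the inductive hypothesis that $\ZZ[\SSS\setminus\alpha]$ is smooth and proper forces every finitely generated module to be perfect --- and appeal to the fact that a one-point extension of a smooth proper dg algebra by a perfect bimodule is again smooth and proper.
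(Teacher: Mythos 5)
Your argument is correct, and your parenthetical observation (together with the inductive variant in your last paragraph) is essentially the paper's own proof: the $\ZZ_{\star(s)}$ form a finite generating proper exceptional collection, and Lemma \ref{exceptionalpropersmooth} — which the paper proves by exactly the induction you sketch, peeling off one object at a time via semi-orthogonal gluing along a perfect bimodule — gives smoothness. Your primary route via the reduced two-sided bar resolution relative to the diagonal idempotents is a genuinely different and more explicit argument, and it is also correct: since $\SSS$ is a finite poset, chains of non-identity morphisms are finite in number and of length bounded by $\dim M$, so the relative bar complex is a finite complex of finitely generated projective (hence perfect representable) bimodules, and the usual extra degeneracy exhibits it as a resolution of the diagonal — the only input being that $\ZZ[\SSS]$ is projective as a one-sided module over the diagonal subalgebra $\bigoplus_s\ZZ$, which holds because each $\Hom_\SSS(s,t)$ is $\ZZ$ or $0$. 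Your worry about surviving the passage from a field to $\ZZ$ is therefore unfounded, precisely for the reason you yourself identify: no classical global-dimension result for incidence algebras is invoked, only finiteness of the poset and freeness of the Hom's. The paper's route is shorter and factors through a reusable lemma about exceptional collections; your bar-resolution route avoids the semi-orthogonal-gluing input to smoothness and produces an explicit finite resolution of the diagonal bimodule, which is potentially useful if one wants a concrete bound on the length of such a resolution.
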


\begin{proof}
The $\ZZ_{\star(s)}$ give a finite generating exceptional collection which is proper, and this implies smoothness by Lemma \ref{exceptionalpropersmooth}.
\end{proof}

More generally,

\begin{corollary}
If $M$ is compact and $\Lambda$ is closed conical subanalytic singular isotropic, then $\Sh_\Lambda(M)^c$ is smooth, and hence
$\Prop \Sh_\Lambda(M)^c \subseteq \Perf \Sh_\Lambda(M)^c$ and $\Prop \Sh_\Lambda(M)^c $ is proper. 
\end{corollary}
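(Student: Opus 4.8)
The plan is to present $\Sh_\Lambda(M)^c$ as an idempotent-completed quotient of a smooth and proper dg category, and then to read off all three assertions from general properties of smooth dg categories. First I would use Proposition~\ref{putlambdainconormal} together with the existence of subanalytic triangulations to pick a triangulation $\SSS$ of $M$ with $\Lambda\subseteq N^*_\infty\SSS$. Corollary~\ref{compactsheafstopremoval}, applied to this $\SSS$, then identifies $\Sh_\Lambda(M)^c$ with $(\Sh_\SSS(M)^c/\D)^\pi$, where $\D$ consists of co-representatives of the microstalks at the smooth Lagrangian points of $N^*\SSS$ lying over $N^*_\infty\SSS\setminus\Lambda$. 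Because $M$ is compact and $\SSS$ is a finite subanalytic triangulation, this smooth Lagrangian locus has finitely many connected components, and since the microstalk co-representative is locally constant along it, the thick subcategory $\langle\D\rangle$ is generated by finitely many objects. Thus $\Sh_\Lambda(M)^c$ is the idempotent completion of a quotient of $\Sh_\SSS(M)^c$ by a thick subcategory generated by finitely many objects.

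By Proposition~\ref{sheafcompactsmoothproper}, $\Sh_\SSS(M)^c$ is smooth and proper. I would then invoke the categorical fact (from the preliminaries) that a quotient of a smooth dg category by a thick subcategory generated by finitely many compact objects is again smooth, and that smoothness is invariant under idempotent completion; this yields smoothness of $\Sh_\Lambda(M)^c$. I expect this to be the only step requiring genuine care: one must cite the precise form of the ``smooth quotient'' statement and check that the finiteness of $\D$ established above is exactly what makes it apply; everything else is formal.

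For the remaining assertions I would use only that $\A:=\Sh_\Lambda(M)^c$ is smooth. To see $\Prop\A\subseteq\Perf\A$: for a proper module $N$ one has $N\cong\A_\Delta\otimes_\A N$ for the diagonal bimodule $\A_\Delta$, and since $\A_\Delta$ is a retract of a finite complex of bimodules $\A(-,x)\otimes\A(y,-)$, the module $N$ is a retract of a finite complex of modules $\A(-,x)\otimes N(y)$ with each $N(y)$ a perfect complex (as $N$ is proper); hence $N$ is perfect. To see that $\Prop\A$ is proper: given $N_1,N_2\in\Prop\A$, the previous point makes $N_1$ perfect, hence a retract of a finite complex of representables $\A(-,x)$, and $\Hom_{\Mod\A}(\A(-,x),N_2)=N_2(x)$ is perfect since $N_2$ is proper; therefore $\Hom(N_1,N_2)$ is perfect. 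This gives that $\Prop\A$ is a proper dg category, completing the three claims.
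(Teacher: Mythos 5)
Your proof is correct and follows essentially the same route as the paper: identify $\Sh_\Lambda(M)^c$ as the idempotent completion of a quotient of $\Sh_\SSS(M)^c$ via Corollary~\ref{compactsheafstopremoval}, cite Proposition~\ref{sheafcompactsmoothproper} for smoothness and properness of $\Sh_\SSS(M)^c$, and transfer smoothness along the quotient and the idempotent completion. The one detour you take is in fretting about whether the ``smooth quotient'' lemma needs $\D$ to be generated by finitely many objects, and then verifying this finiteness using local constancy of the microstalk along the finitely many Legendrian components of $N^*_\infty\SSS\setminus\Lambda$. That check is harmless but unnecessary: Lemma~\ref{smoothquotient} asserts that smoothness passes to arbitrary quotients/localizations, with no finiteness hypothesis, so the paper simply cites it. (The finiteness you establish is true, and is implicitly what makes the whole argument stay within the world of compactly generated categories, but it plays no role in the smoothness transfer itself.) For the remaining two assertions you re-derive by hand what the paper cites from Lemma~\ref{propperf} and a standard argument: your explicit proofs that $\Prop\A\subseteq\Perf\A$ for smooth $\A$, and that $\Prop\A$ is proper, are both correct and match the standard arguments.
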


\begin{proof}
By Proposition \ref{sheafcompactsmoothproper} and Corollary \ref{compactsheafstopremoval}, the category 
$\Sh_\Lambda(M)^c$ is a quotient of a smooth category, hence smooth (Lemma \ref{smoothquotient}).  
Smoothness implies proper modules are perfect (Lemma \ref{propperf}) and that the category of proper modules is proper.
\end{proof}

\begin{remark}\label{noncompactsmoothness}
When $(M,\Lambda)$ are non-compact but finite-type in a suitable sense, the
same result is true. One can prove it by embedding into a compact manifold as
in Remark \ref{noncompacttriangulation}.
\end{remark}

\subsection{In conclusion}

Collecting the results of this section, we have shown:

\begin{theorem}\label{sheafrealization}
The functor $\Lambda \mapsto \Sh_\Lambda(M)^c$ is a microlocal Morse theatre in the sense of Definition \ref{microlocalmorsetheatre}, which casts the co-representatives of the microstalk functors at smooth points of $\Lambda$ as the Morse characters.
\end{theorem}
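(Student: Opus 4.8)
The plan is to verify that the functor $\Lambda\mapsto\Sh_\Lambda(M)^c$ satisfies the two defining pieces of structure in Definition~\ref{microlocalmorsetheatre}: (i) it restricts, on $\mu$-triangulations, to the functor $\SSS\mapsto\Perf\SSS$ together with its refinement functors, in a way compatible with the naturality data; and (ii) for any inclusion $\Lambda\subseteq\Lambda'$ the transition functor $\Sh_{\Lambda'}(M)^c\to\Sh_\Lambda(M)^c$ is the idempotent completion of the quotient by a set of Morse characters, one in each component of the smooth Legendrian locus of $\Lambda'\setminus\Lambda$, and that these Morse characters are precisely the co-representatives of the microstalks.

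First I would address (i). For a triangulation $\SSS$ we have, by Lemma~\ref{sheafposetequivalence} combined with Lemma~\ref{posetmeaning}, a chain of equivalences $\Sh_{N^*_\infty\SSS}(M)=\Sh_\SSS(M)\simeq\Sh(M_\SSS)\simeq\Mod\SSS$; passing to compact objects and invoking Lemma~\ref{constructibleimpliescompact} identifies $\Sh_{N^*_\infty\SSS}(M)^c$ with $\Perf\SSS$, sending $\ZZ_{\star(s)}$ to the representable $1_{\star(s)}$. Next I would check that under a refinement $\SSS'$ of $\SSS$ this equivalence intertwines the sheaf-theoretic transition functor with the extension-of-scalars functor $r:\Perf\SSS'\to\Perf\SSS$ of Section~\ref{sec:axioms}. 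The transition functor $\Sh_{N^*_\infty\SSS'}(M)^c\to\Sh_{N^*_\infty\SSS}(M)^c$ is the restriction of the left adjoint $\iota^*$ to the inclusion $\iota:\Sh_{N^*_\infty\SSS}(M)\hookrightarrow\Sh_{N^*_\infty\SSS'}(M)$ (note $N^*_\infty\SSS\subseteq N^*_\infty\SSS'$ since $\SSS'$ refines $\SSS$). Because $\iota$ is fully faithful and continuous, $\iota^*$ is determined by its values on generators, so it suffices to compute $\iota^*\ZZ_{\star(s')}$ for $s'\in\SSS'$; but $\ZZ_{\star(s')}$ and the functor $1_{\star(s')}$ correspond under Lemma~\ref{posetmeaningpushforward}, whose diagram says exactly that the sheaf pullback $\pi^*$ agrees with $r^*$ on the poset-module side, and $\iota^*$ is left adjoint to this. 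Chasing adjunctions then identifies $\iota^*$ with $r$, giving the required isomorphism of functors on $\mu$-triangulations. The $\mu$-condition is not actually used here except insofar as $\mu$-triangulations are triangulations; it is present only because the microlocal Morse theatre axioms are phrased for $\mu$-triangulations.

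For (ii), given $\Lambda\subseteq\Lambda'$, choose by Proposition~\ref{putlambdainconormal} a subanalytic (indeed $\mu$-) stratification with $\Lambda'\subseteq N^*_\infty\SSS$; but more to the point, Theorem~\ref{thm:sheafstopremoval} and its compact-object refinement Corollary~\ref{compactsheafstopremoval} already state that, writing $X$ for the cone over $\Lambda$ and $X\cup\Lambda''$ for the cone over $\Lambda'$ (with $\Lambda''=\overline{\Lambda'\setminus\Lambda}$), the functor $\Sh_{\Lambda'}(M)^c\to\Sh_\Lambda(M)^c$ is the idempotent completion of the quotient by the co-representatives $\D$ of the microstalks at smooth Lagrangian points of $\Lambda'\setminus\Lambda$. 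It remains to match these co-representatives with the Morse characters of Definition~\ref{morsecharacter}. Fix a smooth Legendrian point $p\in\Lambda'\setminus\Lambda$ and casting directors $(f,\epsilon,\SSS)$. By Theorem~\ref{thm:sheafwex}, the object $\cone(A_!\ZZ\to B_!\ZZ)$ — which in the poset model for $\Sh_\SSS(M)^c$ is exactly $\cone(1_{f^{-1}(-\infty,-\epsilon)}\to 1_{f^{-1}(-\infty,\epsilon)})$ by Lemma~\ref{lem:sheafrefine} — co-represents (up to shift) the microstalk $\mu_p$ on $\Sh_{N^*_\infty\SSS}(M)$. Applying the left adjoint $\iota^*$ of the inclusion $\Sh_{\Lambda'}(M)\hookrightarrow\Sh_{N^*_\infty\SSS}(M)$ produces, as in the discussion following Theorem~\ref{thm:sheafwex} and in Proposition~\ref{compactmicrostalks}, a genuine co-representative of $\mu_p$ inside $\Sh_{\Lambda'}(M)^c$; and by the functoriality established in step (i), $\iota^*$ applied to $\cone(1_{f^{-1}(-\infty,-\epsilon)}\to1_{f^{-1}(-\infty,\epsilon)})$ is by definition the image of that cone under $\C(N^*_\infty\SSS)\to\C(\Lambda')$, i.e.\ the Morse character $\X_{\Lambda',p}(f,\epsilon,\SSS)$. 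Hence the Morse characters are precisely the $\D$, and Corollary~\ref{compactsheafstopremoval} completes the verification; moreover, since the co-representative of $\mu_p$ depends only on $p$ (the microstalk functor does), the Morse characters are independent of the casting directors and, by local constancy of the microstalk along the smooth Legendrian locus (Theorem~\ref{thm:sheafstopremoval}), form a local system.

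The main obstacle I expect is the bookkeeping of shifts and the precise compatibility of the various left adjoints $\iota^*$ across a tower of refinements — i.e.\ checking that the microstalk co-representative produced at the level of a large triangulation $\SSS$, then pushed down to $\Lambda'$, really is canonically the same object one would get by working with a different triangulation $\SSS'$, so that the ``local system'' claim is well-posed and the isomorphism of functors in Definition~\ref{categorysystemextension} is genuinely natural rather than merely objectwise. This is handled by the adjunction chase in step (i) together with the uniqueness of co-representing objects, but it is the place where care is needed; everything else is an assembly of Theorems~\ref{thm:sheafwex}, \ref{thm:sheafstopremoval}, Corollary~\ref{compactsheafstopremoval}, and Lemmas~\ref{posetmeaning}--\ref{lem:sheafrefine} already in hand.
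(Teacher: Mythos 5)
Your proof follows essentially the same route as the paper's, and most of the assembly is correct. However, you make one genuine error in step (i): you claim that ``the $\mu$-condition is not actually used here except insofar as $\mu$-triangulations are triangulations.'' That is false. You silently begin the chain of equivalences with the equality $\Sh_{N^*_\infty\SSS}(M)=\Sh_\SSS(M)$, but this equality is exactly where the $\mu$-condition is needed. For any Whitney stratification one has the inclusion $\Sh_\SSS(M)\subseteq\Sh_{N^*_\infty\SSS}(M)$ (constructibility bounds the microsupport), but the reverse inclusion --- that a sheaf microsupported in $N^*\SSS$ is in fact $\SSS$-constructible --- requires more, and is supplied by \cite[Prop.\ 8.4.1]{kashiwara-schapira} precisely under the $\mu$-stratification hypothesis. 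The paper cites this explicitly. Lemmas \ref{posetmeaning} and \ref{sheafposetequivalence} identify $\Sh_\SSS(M)^c$ with $\Perf\SSS$; they say nothing about $\Sh_{N^*_\infty\SSS}(M)^c$ without this extra input. Since the microlocal Morse (pre-)theatre axioms are stated in terms of $N^*_\infty\SSS$, this is not a removable hypothesis in your argument.

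A smaller remark: for the refinement compatibility, the paper simply passes to left adjoints of the vertical arrows in the commutative square of Lemma \ref{posetmeaningpushforward}, whereas you argue via full faithfulness of $\iota$ and co-continuity to reduce to computing $\iota^*$ on the generators $\ZZ_{\star(s')}$, then chase adjunctions. Both work; yours is a bit more hands-on. The rest --- invoking Corollary \ref{compactsheafstopremoval} for the quotient structure, Theorem \ref{thm:sheafwex} together with Lemma \ref{lem:sheafrefine} to match $\cone(1_{f^{-1}(-\infty,-\epsilon)}\to1_{f^{-1}(-\infty,\epsilon)})$ with the microstalk co-representative, and noting that $\iota^*$ preserves compactness --- is exactly the paper's argument.
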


\begin{proof}
The most obvious functor $\Lambda \to \Sh_\Lambda(M)$ is the one which carries \emph{inclusions} $\Lambda \subseteq \Lambda'$ to 
\emph{inclusions} $\Sh_\Lambda(M)\hookrightarrow\Sh_{\Lambda'}(M)$; note that this is in fact a strict diagram of categories (as all are simply full subcategories of $\Sh(M)$) and takes values 
in the category whose objects are large dg categories and whose morphisms are continuous and co-continuous functors.
Passing to left adjoints and taking compact objects (see Corollary \ref{compactsheafstopremoval}), we obtain a microlocal Morse pre-theatre $\Lambda\mapsto\Sh_\Lambda(M)^c$.

For triangulations $\SSS$, the functors
\begin{equation}
\SSS\xrightarrow{s\mapsto\ZZ_{\star(s)}}\Sh_\SSS(M)\xrightarrow{\F\mapsto\Hom(\ZZ_{\star(-)},\F)}\Mod\SSS
\end{equation}
define an equivalence $\Perf\SSS=\Sh_\SSS(M)^c$ by Lemmas \ref{posetmeaning} and \ref{sheafposetequivalence}.
When $\SSS$ is a Whitney stratification, we have $\Sh_\SSS(M)^c=\Sh_{N^*_\infty\SSS}(M)^c$ by Proposition \ref{better841}.

Taking the commutative diagram in Lemma \ref{posetmeaningpushforward} and passing to the left adjoints of the vertical maps
shows that this equivalence respects refinement of Whitney triangulations.
This shows that $\Lambda\mapsto\Sh_\Lambda(M)^c$ is normalized.

By Theorem \ref{thm:sheafwex}, the Morse characters in $\Perf\SSS$ correspond, under this isomorphism, to co-representatives of the microstalks.
According to Corollary \ref{compactsheafstopremoval}, the functor $\Sh_{\Lambda'}(M)^c\to\Sh_\Lambda(M)^c$ is the quotient by co-representatives of the microstalks.
Thus $\Lambda\mapsto\Sh_\Lambda(M)^c$ satisfies the localization property, and is thus a microlocal Morse theatre.
\end{proof}

\begin{proposition}\label{sheaffunctorial}
For any analytic open inclusion of analytic manifolds $M'\hookrightarrow M$, the restriction functors $\Sh_\Lambda(M)\to\Sh_{\Lambda'}(M')$ for subanalytic singular isotropics with $\Lambda'\supseteq\Lambda\cap S^*M'$ have left adjoints whose restrictions to compact objects form a morphism of microlocal Morse theatres.
\end{proposition}

\begin{proof}
The categories $\Sh_\Lambda(M)$ are compactly generated by Corollary \ref{microstalksgenerate}, and Brown representability holds for the opposites of compactly generated categories by \cite{neeman-book,krause}.
Thus since the restriction functors $\Sh_\Lambda(M)\to\Sh_{\Lambda'}(M')$ are continuous, they admit left adjoints.
Since restriction is co-continuous, these left adjoints preserve compact objects.
Restricting these left adjoints to compact objects defines a morphism of microlocal Morse pre-theatres in the sense of Section \ref{morsemorphism}.

Let us show that this is a morphism of microlocal Morse theatres, i.e.\ that it is normalized.
For a stratification $\SSS$ of $M$ and a stratification $\SSS'$ refining $\SSS\cap M'$, we have the following commutative diagram
\begin{equation}
\begin{tikzcd}
\Sh_{\SSS'}(M')&\ar{l}\Sh(M'_{\SSS'})\ar{r}{\sim}&\Mod\SSS'\\
\Sh_\SSS(M)\ar{u}&\ar{l}\Sh(M_\SSS)\ar{u}\ar{r}{\sim}&\Mod\SSS\ar{u}
\end{tikzcd}
\end{equation}
(compare Lemmas \ref{posetmeaning} and \ref{posetmeaningpushforward}).
When $\SSS$ and $\SSS'$ are triangulations, the left horizontal maps are also equivalences by Lemma \ref{sheafposetequivalence}.
Finally, when $\SSS$ is Whitney, we have $\Sh_\SSS(M)=\Sh_{N^*\SSS}(M)$ (and the same for $\SSS'$) by Proposition \ref{better841}.
Thus passing to left adjoints of the vertical arrows and restricting to compact objects, we conclude.
\end{proof}

\section{Wrapped Fukaya categories} \label{sec:fukaya}

\subsection{Wrapped Floer cohomology}

Here we quickly fix notation and review basic facts (see, e.g., \cite[Sec.\ 3]{gpssectorsoc} for more details).
Fix a Liouville manifold or open Liouville sector $X$.

For a pair of exact Lagrangians $L, K \subseteq X$, conical and disjoint at infinity, 
we write $HF^*(L, K)$ for their Floer cohomology.  We write $HF^*(L, L)$ to mean $HF^*(L^+, L)$, where $L^+$ denotes an (unspecified) small positive (meaning positive at infinity) pushoff of $L$.
There is a isomorphism of groups $HF^*(L,L) = H^*(L)$ and 
the group $HF^*(L, L)=HF^*(L^+, L)$ is a unital algebra;\footnote{One expects (as is known for compact $L$) that the isomorphism $HF^*(L,L) = H^*(L)$ is further compatible with algebra structures; we are not aware of a reference for this.}
its unit is termed the \emph{continuation element}.
Composition of continuation elements associated to small pushoffs defines more generally
a continuation element in $HF^*(L^{++}, L)$ for $L^{++}$ any (not necessarily small) positive wrapping (i.e., isotopy)  of $L$. Composition with the continuation element associated to $L \leadsto L^{++}$ gives maps $HF^*(L,K) \to HF^*(L^{++}, K)$ and $HF^*(K,L^{++}) \to HF^*(K,L)$ for any $K$ disjoint at infinity from $L$ and $L^{++}$, which are termed \emph{continuation maps}. 
If the entire positive isotopy $L\leadsto L^{++}$ takes place in the complement of $\partial_\infty K$, 
then these continuation maps are isomorphisms.
More generally, if $L \leadsto L'$ is any isotopy taking place in the complement of $\partial_\infty K$ (for example any compactly supported isotopy), then there is an induced identification $HF^*(L,K) = HF^*(L',K)$ (see \cite[Lem.\ 3.21]{gpssectorsoc}) which coincides with the continuation isomorphism if $L \leadsto L'$ is positive at infinity (see \cite[Lem.\ 3.26]{gpssectorsoc}).
In particular, seeing as $HF^*(L,K) = 0$ tautologically when $K$ and $L$ are disjoint, Floer cohomology $HF^*(L,K)$ vanishes whenever $L$ is disjoinable from $K$ by an isotopy in the complement of $\partial_\infty K$.

The wrapped Floer cohomology $HW^*(L, K)_X$ is equivalently calculated by
\begin{equation}
\varinjlim_{L \leadsto L^{++}} HF^*(L^{++}, K) = \varinjlim_{\substack{L \leadsto L^{++} \\ K^{--}\leadsto K}} HF^*(L^{++}, K^{--}) =  \varinjlim_{K^{--}\leadsto K} HF^*(L, K^{--}).
\end{equation}
Here, the direct limits are taken using the continuation maps over positive-at-infinity isotopies of $L$ and negative-at-infinity isotopies of $K$.  
The freedom to wrap in only one factor is extremely useful in practice.  

Given any closed subset $\Lambda \subseteq \partial_\infty X$, and $L, K$ disjoint at infinity from $\Lambda$, 
we similarly define partially wrapped Floer cohomology $HW^*(L, K)_{X,\Lambda}$ by restricting wrappings to take place in the complement of $\Lambda$.

The following Lemma allows one to explicitly describe some cofinal wrapping sequences in a given $(X,\Lambda)$.  Its typical use is the following.  To compute $HW^*(L, K)_{X,\Lambda}$, if 
one can find a cofinal sequence $L_t$ such that the induced maps $HF^*(L_t, K) \to HF^*(L_{t+1}, K) $ are eventually 
all isomorphisms,  then $HW^*(L, K)_{X,\Lambda} = HF^*(L_t, K)$ for any $L_t$ in this stable range.

\begin{lemma}[{\cite[Lem.\ 3.29]{gpssectorsoc} \cite[Lem.\ 2.2]{gpsdescent}}]\label{cofinalitycriterion}
Let $L_t$ be a positive isotopy of Lagrangians in $X$ avoiding $\Lambda$ at infinity.
If $\partial_\infty L_t$ escapes to infinity in (i.e.\ is eventually disjoint from any given compact subset of) $\partial_{\infty} X \setminus \Lambda$ as $t\to\infty$, then it is a cofinal wrapping of $L_0$ in $(X,\Lambda)$.
\qed
\end{lemma}

\subsection{Wrapped Fukaya categories}

In \cite{gpssectorsoc,gpsdescent}, for any Liouville sector $X$ and any closed subset $\Lambda\subseteq(\partial_\infty X)^\circ$, we constructed $\ainf$ categories $\W(X,\Lambda)$ whose objects are exact Lagrangians in $X \setminus \Lambda$, 
conical at infinity (by convention $\W(X):= \W(X,\emptyset)$).  The cohomology-level morphisms are simply the wrapped Floer cohomology groups as defined above: $H^*\W(L, K)=HW^*(L, K)_{X,\Lambda}$.
For a compact manifold-with-boundary $M$, its cotangent bundle $T^*M$ is a Liouville sector \cite[Ex.\ 2.7]{gpssectorsoc}.

One main point of \cite{gpssectorsoc} was the construction of a covariant functor $\W(X) \to \W(Y)$ for an inclusion of Liouville sectors $X \subseteq Y$.  
In \cite{gpsdescent} we remarked that the same construction gives a functor $\W(X, \Lambda\cap(\partial_\infty X)^\circ) \to \W(Y, \Lambda)$.  
This covariance is a nontrivial result having to do with the fact that holomorphic disks can be made to not cross the boundary of a Liouville sector (if the Lagrangian boundary conditions do not).  By 
contrast, it is immediate from the definition that if $\Lambda\subseteq\Lambda'$ then there is a natural map $\W(X,\Lambda') \to \W(X,\Lambda)$: just wrap more.  Both
covariance statements allow one to calculate in a potentially simpler geometry, and push forward the result.

We wish to consider here categories $\W(T^*M,\Lambda)$ for (possibly non-compact) manifolds $M$ without boundary and closed subsets $\Lambda\subseteq S^*M=\partial_\infty T^*M$.
Such a cotangent bundle $T^*M$ is an \emph{open Liouville sector} in the sense of \cite[Rem.\ 2.8]{gpssectorsoc} (meaning, concretely, it admits an exhaustion by Liouville sectors, in this case $T^*M_0\subseteq T^*M_1\subseteq\cdots$ where $M_0\subseteq M_1\subseteq\cdots$ is an exhaustion of $M$ by compact codimension zero submanifolds-with-boundary).

The construction of the wrapped Fukaya category of an open Liouville sector is given in \cite[Sec.\ 3.8]{gpssectorsoc}.
The generalization to case with a stop following \cite[Sec.\ 2]{gpsdescent} is straightforward.
The result is the following definition.
We consider tuples $(P,\{M_p\}_{p\in P},\{L_p\}_{p\in P},\underline J,\underline\xi)$ where:
\begin{enumerate}
\item$P$ is a partially ordered set.
\item Each $M_p\subseteq M$ is a compact codimension zero submanifold with smooth boundary, equipped with a choice of projection from $T^*M_p$ to $\CC_{\Re\geq 0}$ as in \cite[Def.\ 2.26]{gpssectorsoc} defined near the boundary.
\item Each $L_p\subseteq T^*M_p$ is an exact Lagrangian, cylindrical at infinity, disjoint from $\Lambda$ at infinity, equipped with grading and orientation data as in Section \ref{gradorsec} below, such that for every totally ordered subset $p_0>\cdots>p_k\in P$, the Lagrangians $L_{p_0},\ldots,L_{p_k}$ are mutually transverse.
\item The pair $(\underline\xi,\underline J)$ is a choice of compatible Floer data (strip-like coordinates and almost complex structures) as in \cite[Eq.\ (3.31)--(3.33)]{gpssectorsoc} for every totally ordered subset $p_0>\cdots>p_k\in P$ (so $J_{p_0,\ldots,p_k}$ is an almost complex structure on $T^*M_{p_0}$), such that all moduli spaces of Fukaya $\ainf$ disks are cut out transversally.
\end{enumerate}
Any such tuple gives rise to a $\ainf$ category whose objects are the elements of $P$, whose morphism spaces from $p$ to $p'$ are $CF^*(L_p,L_{p'})$ for $p>p'$, are $\ZZ$ for $p=p'$, and otherwise vanish.
We may ask that such a tuple be \emph{cofinite} (meaning $P^{\leq p}$ is finite for every $p\in P$) and \emph{duplicate-free} (meaning $P^{\leq p}$ equipped with the restriction of the remaining data are pairwise non-isomorphic for $p\in P$).
There is a \emph{universal} cofinite duplicate-free tuple $(P,\{M_p\}_{p\in P},\{L_p\}_{p\in P},\underline J,\underline\xi)$ \cite[Lem.\ 3.42]{gpssectorsoc}, which thus gives a canonically defined $\ainf$ category $\OO(T^*M,\Lambda)$.
The wrapped category $\W(T^*M,\Lambda)$ is defined as the localization $\OO(T^*M,\Lambda)[C^{-1}]$ (refer to \cite[Sec.\ 3.1.3]{gpssectorsoc} for localizations of $\ainf$ categories) at the class $C$ of all continuation elements in $HF^0(L_p,L_{p'})$ for positive isotopies $L_{p'}\leadsto L_p$ inside $T^*M_p$ disjoint at infinity from $\Lambda$.
That this category deserves the name $\W(T^*M,\Lambda)$ is justified by \cite[Prop.\ 3.43, Prop.\ 3.39, Lem.\ 3.37]{gpssectorsoc} and \cite[Sec.\ 2]{gpsdescent}; in particular, these show that it has the correct cohomology category.

The resulting category $\W(T^*M,\Lambda)$ is moreover strictly functorial in $M$ and $\Lambda$: for any open inclusion of manifolds $M\hookrightarrow M'$ such that $\Lambda$ contains the inverse image of $\Lambda'$, there is an induced functor $\W(T^*M,\Lambda)\to\W(T^*M',\Lambda')$, and these functors respect compositions of inclusions $M\hookrightarrow M'\hookrightarrow M''$.

\subsection{Gradings and orientations}\label{gradorsec}

We briefly review the setup for defining gradings and orientations in Floer theory; for more details see Seidel \cite{seidelgraded} and \cite[(11e)--(11l)]{seidelbook}.
Our Floer cohomology groups and Fukaya categories are all $\ZZ$-graded and with $\ZZ$ coefficients.

Denote by $\LGr(V)$ the Grassmannian of Lagrangian subspaces of a given symplectic vector space $V$.
A map $\partial D^2=S^1\to\LGr(V)$ defines elliptic boundary conditions for the $\bar\partial$-operator on the trivial vector bundle with fiber $V$ over $D^2$ (choosing also a compatible complex structure on $V$, which is a contractible choice), and hence a virtual vector space, namely the index (kernel minus cokernel) of this operator, thus giving a map
\begin{equation}
\sL(\LGr(V))\to\ZZ\times BO.
\end{equation}
Identifying $U/O=\varinjlim_n\LGr(\CC^n)$ and restricting to the based loop space, the resulting map $\Omega(U/O)\to\ZZ\times BO$ is (almost \cite[Rmk.\ 11.8]{seidelbook}) the Bott periodicity homotopy equivalence.
For Floer theory with $\ZZ$-grading and $\ZZ$ coefficients, we care just about the dimension and orientation, i.e.\ we compose the above map with $(\id,w_1):\ZZ\times BO\to\ZZ\times K(\ZZ/2,1)$.
Now restricting to the based loop space $\Omega\LGr(V)$ and applying $B$, we obtain cohomology classes on $\LGr(V)$, which are (see the related \cite[Lem.\ 11.7]{seidelbook} or \cite[Prop.\ 4.2.8]{abouzaidviterbo}) the Maslov class $\mu\in H^1(\LGr(V),\ZZ)$ and $w_2\in H^2(\LGr(V),\ZZ/2)$ (the second Stiefel--Whitney class of the tautological bundle $L\to\LGr(V)$).
The class $w_2$ is represented by a map $\LGr(V)\to K(\ZZ/2,2)$ given by the pullback of $w_2:BO\to K(\ZZ/2,2)$ under the map $\LGr(V)\to BO$ (which is \emph{well-defined up to contractible choice}) classifying the tautological bundle.
In contrast, the map $\LGr(V)\to K(\ZZ,1)=S^1$ classified by the Maslov class $\mu$ is \emph{not} well-defined up to contractible choice.
Rather, given a compatible complex structure on $V$ (a contractible choice), the Maslov class is represented by the canonical map
\begin{equation}\label{thecanonicalpartofmaslov}
\LGr(V) \to 
((\wedge_\CC^\mathrm{top} V)^{\otimes 2} \setminus 0) / \RR_{>0}
\end{equation}
given by the composition of $\wedge^{\mathrm{top}}_{\RR}: \LGr(V) \to \LGr(\wedge_\CC^\mathrm{top}V)=\RR P(\wedge_{\CC}^{\mathrm{top}}V)$ with the squaring map $\RR P(\wedge_{\CC}^{\mathrm{top}}V)\to ((\wedge_{\CC}^{\mathrm{top}}V)^{\otimes 2} \setminus 0)/ \RR_{>0}$.
Given a `basepoint' $S\in\LGr(V)$ (so $V=S\otimes_\RR\CC$), we obtain canonical identifications $\LGr(V)=\LGr(S\otimes_\RR\CC)=U(S\otimes_\RR\CC)/O(S)$ and $((\wedge_\CC^\mathrm{top} V)^{\otimes 2})/\RR_{>0}=(\CC\setminus 0)/\RR_{>0}=U(1)$, under which \eqref{thecanonicalpartofmaslov} is given by $\det^2$.
Given a map $\LGr(V)\to K(\ZZ,1)\times K(\ZZ/2,2)$ representing $(\mu,w_2)$, we obtain a $(\ZZ\times\RR P^\infty)$-bundle $\LGr(V)^\#\to\LGr(V)$.

We now globalize.
Let $X$ be a symplectic manifold, and denote by $\LGr(X)$ the bundle of Lagrangian Grassmannians of $TX$ over $X$.
There is a canonical map $\LGr(X)\to K(\ZZ/2,2)$ restricting to $w_2$ on each fiber, namely the pullback of $w_2:BO\to K(\ZZ/2,2)$ under the map classifying the tautological bundle over $\LGr(X)$.
There need not be a map $\LGr(X)\to K(\ZZ,1)$ whose restriction to each fiber represents $\mu$; the obstruction to the existence of such a map is given by $2c_1(TX)\in H^2(X,\ZZ)$ and is represented geometrically by the complex line bundle $(\wedge_\CC^\mathrm{top}TX)^{\otimes 2}$.
\emph{Grading/orientation data for $X$} is, by definition, a choice of map $X\to K(\ZZ/2,2)$ and map $\LGr(X)\to K(\ZZ,1)$ whose restriction to each fiber represents $\mu$.
The choice of map $X\to K(\ZZ/2,2)$ induces a map $\LGr(X)\to K(\ZZ/2,2)$ by pulling back and adding the canonical map $\LGr(X)\to K(\ZZ/2,2)$ restricting to $w_2$ on each fiber.
Grading/orientation data on $X$ thus induces a map $\LGr(X)\to K(\ZZ,1)\times K(\ZZ/2,2)$ whose restriction to each fiber represents $(\mu,w_2)$.
The pullback of the tautological $(\ZZ\times\RR P^\infty)$-bundle over $K(\ZZ,1)\times K(\ZZ/2,2)$ thus defines a $(\ZZ\times\RR P^\infty)$-bundle $\LGr(X)^\#\to\LGr(X)$ associated to this choice of grading/orientation data.

We now introduce Lagrangians.
Fix a choice of grading/orientation for $X$, giving $\LGr(X)^\#\to\LGr(X)$.
Given a Lagrangian $L\subseteq X$, \emph{grading/orientation data for $L$} means a lift of the canonical section of $\LGr(X)|_L$ to $\LGr(X)^\#|_L$.
It is explained in Seidel \cite[(11e)--(11l)]{seidelbook} (also reviewed in \cite[Sec.\ 3.2]{gpssectorsoc}) how such data determines graded orientation lines associated to transverse intersections of ordered pairs of Lagrangians, as well as a recipe for orienting moduli spaces of pseudo-holomorphic disks relative to these orientation lines.

For our purposes in this paper, we will induce grading/orientation data from Lagrangian polarizations.
Recall that a (Lagrangian) \emph{polarization} of a symplectic manifold $X$ is a global section of $\LGr(X)$; equivalently (up to homotopy) it is a real vector bundle $B$ with an isomorphism $B\otimes_\RR\CC=TX$.
Given such a polarization, we obtain a map $\LGr(X)\to K(\ZZ,1)$ using the section as the fiberwise basepoint, and we obtain a map $X\to K(\ZZ/2,2)$ by pulling back $w_2:BO\to K(\ZZ/2,2)$ under the map classifying $B$.
By this very definition, any Lagrangian which is everywhere tangent to the polarization admits canonical grading/orientation data (i.e.\ section of $\LGr(X)^\#|_L$).
A stable polarization (a global section of $\LGr(TX\oplus\CC^k)$ for some $k<\infty$) also induces grading/orientation data by restriction from $\LGr(TX\oplus\CC^k)$ to $\LGr(X)=\LGr(TX)$).

In the specific case of cotangent bundles $T^*M$, there is a tautological polarization given by (the tangent space of) the tautological foliation by Lagrangian fibers of the projection $T^*M\to M$; the fibers are thus equipped with canonical grading/orientation data with respect to the grading/orientation data on $T^*M$ induced by this polarization.
Conormals to open sets with smooth (or cornered) boundary also have canonical grading/orientation data, see \S\ref{conormalssec}.
We will see in Remark \ref{whytwist} and Lemma \ref{continuationballinsomething} the point in the proof of Theorem \ref{sheaffukayaequivalence} where it matters to have chosen this particular grading/orientation data on $T^*M$.

\begin{remark}
The notion of grading/orientation data given above may be reformulated as follows, which connects it to the corresponding discussion of coefficient twisting in microlocal sheaf categories as it appears in \cite{guillermou,jin}.
The stable $J$-homomorphism sends a (stable) vector bundle to (the suspension spectrum of) its Thom space, which is a family of invertible modules over the sphere spectrum.
Applying cochains, we may obtain a family of invertible dg $\ZZ$-modules.
We thus have an infinite loop map
\begin{equation}
\ZZ\times BO\xrightarrow J\Pic\mathbb S\to\Pic\ZZ
\end{equation}
sending a vector bundle $V$ to the local system $C^*(V,V\setminus 0)$ (where $\Pic$ denotes the space of invertible modules).
The invertible module $\ZZ[1]$ and the automorphism $-1$ of the invertible module $\ZZ$ together define an isomorphism of infinite loop spaces $\ZZ\times B(\ZZ/2)\xrightarrow\sim\Pic\ZZ$.
The map $\ZZ\times BO\to\Pic\ZZ=\ZZ\times B(\ZZ/2)$ is then the evident projection to $\ZZ$ times the Stiefel--Whitney class $w_1$, as considered above.
Applying $B$ as before, we obtain a map
\begin{equation}\label{stablemuwtwo}
U/O=B(\ZZ\times BO)\to B\Pic\ZZ=B\ZZ\times B^2(\ZZ/2).
\end{equation}
Now the tangent bundle of a symplectic manifold $X$ is classified by a map $X\to BU$, which we may compose with $BU\to B(U/O)$ to obtain a map $X\to B(U/O)$ which classifies the (stable) Lagrangian Grassmannian of $X$.
Composing this with $B\eqref{stablemuwtwo}$ yields a map
\begin{equation}\label{Xglobaltwist}
X\to B^2\ZZ\times B^3(\ZZ/2).
\end{equation}
Now grading/orientation data on $X$ is equivalently a null-homotopy of this map.
Indeed, the map to the second factor is canonically null-homotopic (since it by definition factors through $BU\to B(U/O)\to B^2O$ which is canonically null-homotopic), so a choice of null-homotopy of it is the same as a choice of map $X\to\Omega B^3(\ZZ/2)=B^2(\ZZ/2)$.
The map to the first factor by definition classifies $(\wedge_\CC^\mathrm{top}TX)^{\otimes 2}$, a trivialization of which is the same as a map $\LGr(X)\to K(\ZZ,1)$ whose restriction to each fiber represents $\mu$.

On a Lagrangian $L \subseteq X$, there is a tautological section of $\LGr(X)|_L$ given by 
the tangent space to the Lagrangian.  That is, the restricted map $L \to B(U/O)$ has 
a canonical null-homotopy, inducing in turn a null-homotopy of the map $L \to B^2 \ZZ \times B^3(\ZZ/2)$.
Now given grading/orientation data for $X$, grading/orientation data on $L$ is equivalently a homotopy between this null-homotopy and the restriction to $L$ of the chosen null-homotopy of \eqref{Xglobaltwist}.
Note that the space of such null-homotopies has the homotopy type of maps from $L$ to $\Omega(B^2 \ZZ \times B^3(\ZZ/2))=B\ZZ\times B^2(\ZZ/2)$, the
component group of which is $H^1(L, \ZZ)\oplus H^2(L,\ZZ/2)$.  The obstruction to the existence of grading/orientation data for $L$ thus lies in $H^1(L,\ZZ)\oplus H^2(L,\ZZ/2)$, and if this obstruction vanishes, the homotopy classes of choices of grading/orientation data for $L$ form a torsor over $H^0(L, \ZZ)\oplus H^1(L,\ZZ/2)$.

A stable polarization of $X$ gives a global section of the stable Lagrangian Grassmannian, hence a null-homotopy of $X\to B(U/O)$, hence of \eqref{Xglobaltwist}, which by definition agrees with the canonical homotopy of its restriction to any Lagrangian $L\subseteq X$ everywhere tangent to the polarization.
\end{remark}

\subsection{Wrapping exact triangle, stop removal, generation}

The fundamental ingredients underlying our work in this section are the wrapping exact triangle and its consequence stop removal, both proved in \cite{gpsdescent}.
The wrapping exact triangle can be thought of as quantifying the price of wrapping through a stop; it should be compared with Theorem \ref{thm:sheafwex}.

\begin{theorem}[{Wrapping exact triangle \cite[Thm.\ 1.10]{gpsdescent}}]\label{wrapcone}
Let $(X,\Lambda)$ be a stopped Liouville sector, and let $p\in\Lambda$ be a point near which $\Lambda$ is a Legendrian submanifold.
If $L\subseteq X$ is an exact Lagrangian submanifold and $L^w\subseteq X$ is obtained from $L$ by passing $\partial_\infty L$ through $\Lambda$ transversally at $p$ in the positive direction, then there is an exact triangle
\begin{equation}
L^w\to L\to D_p\xrightarrow{[1]}
\end{equation}
in $\W(X,\Lambda)$, where $D_p\subseteq X$ denotes the small Lagrangian disk linking $\Lambda$ at $p$ and the map $L^w\to L$ is the continuation map.
\end{theorem}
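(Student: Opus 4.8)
The plan is to localize the whole statement near $p$ and then prove it in a standard model. Since the isotopy $L\leadsto L^w$ is supported in an arbitrarily small neighborhood of $p$, one first passes to a neighborhood in which $(X,\Lambda)$ is standard: a stopped Liouville sector $(X_0,\Lambda_0)$ with $\Lambda_0$ (a component of) the conormal at infinity of a smooth hypersurface, the one-dimensional model being a single point of $\partial_\infty T^*\RR$, arranged so that $L$ and $L^w$ agree outside $X_0$ and the linking disk $D_p$ lies in $X_0$. One then argues that the three objects $L^w,L,D_p$, the continuation map $L^w\to L$, and the functors $\hom_{\W(X,\Lambda)}(-,K)$ that they corepresent are all governed by this neighborhood, using the covariance of $\W$ under inclusions of stopped Liouville sectors from \cite{gpssectorsoc,gpsstructural} together with an energy/confinement argument (holomorphic curves cannot cross $\partial X$, so those relevant to the triangle stay near $X_0$), thereby reducing matters to computing the would-be triangle inside $\W(X_0,\Lambda_0)$. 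I expect \emph{this reduction} to be the main obstacle: $L$ and $L^w$ are honest global Lagrangians, not compact-core objects of $X_0$, so one cannot merely push forward an exact triangle from the local category; instead one must work at the level of corepresented functors, verify compatibility with the continuation maps, and control escaping holomorphic curves --- precisely the geometry-of-wrapping input highlighted in the introduction as the real content of \cite{gpsstructural}.

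Granting the reduction, I would establish $\cone(L^w\to L)\cong D_p$ in the model by a Lagrangian cobordism argument. Passing $\partial_\infty L$ once, transversally, through $\Lambda_0$ at $p$ is modeled on the standard Lagrangian surgery cobordism, so inside $X_0\times\CC$ (suitably stopped at infinity so that wrapped Lagrangian-cobordism arguments apply) one builds an embedded exact Lagrangian cobordism with ends $L^w$ and the pair $(L,D_p)$; one then checks, by a neck-stretching/degeneration argument in the spirit of Biran--Cornea adapted to the wrapped setting, that this cobordism yields an exact triangle relating the three objects, which one identifies with $L^w\to L\to D_p\xrightarrow{[1]}$ and whose connecting map is the continuation map. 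As a fallback avoiding cobordisms, one computes directly in the model: $\hom_\W(D_p,K)$ versus $\cone(\hom_\W(L,K)\to\hom_\W(L^w,K))$ for a generating family of test Lagrangians $K$; since wrapping $L$ past $p$ creates exactly one new transverse intersection with any $K$ meeting $D_p$, a short holomorphic-disk count shows the cone on the continuation map is supported there and agrees with $\hom_\W(D_p,K)$, naturally in $K$. This is essentially the only nontrivial Floer count in the argument.

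Throughout, gradings and orientations must be tracked: the shift implicit in the triangle is pinned down by the index data of the triple $(L,L^w,D_p)$ near $p$, parallel to the way the shift in the microstalk is fixed in Theorem~\ref{thm:sheafwex}, and the cobordism (or the disk count) must be checked compatible with the chosen grading and orientation data --- for $T^*M$, the canonical data of Section~\ref{gradorsec}. Finally one verifies independence of the contractible auxiliary choices (the small pushoffs) entering the continuation map, a cofinality argument of the type underlying Lemma~\ref{cofinalitycriterion}, so that the phrase ``the map $L^w\to L$ is the continuation map'' is unambiguous.
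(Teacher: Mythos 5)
This theorem is not proved in the present paper; it is imported verbatim from \cite[Theorem 1.7]{gpsstructural}, so there is no ``paper's own proof'' to match word for word. That said, your outline diverges from the actual argument in \cite{gpsstructural} in ways worth flagging, and it contains one step that is simply false as written.

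The false step is the confinement claim: ``holomorphic curves cannot cross $\partial X$, so those relevant to the triangle stay near $X_0$.'' Curves cannot cross $\partial X$, but they certainly can cross the boundary of a subsector $X_0\subsetneq X$ — that boundary is not a boundary condition for anything in the relevant moduli problems. The maximum-principle confinement to a subsector $X_0$ only applies to Floer complexes between Lagrangians \emph{whose cores lie inside $X_0$}; here $L$, $L^w$, and the test Lagrangians $K$ all run off to infinity outside $X_0$. So the reduction as you state it does not go through, and merely acknowledging it as ``the main obstacle'' and writing ``granting the reduction'' leaves the hard part undone. The proof in \cite{gpsstructural} does not proceed by shrinking the sector; locality at $p$ is extracted by a more delicate analysis of the positive isotopy $L\leadsto L^w$ and of how the generators of $CF^*(L_t,K)$ and the continuation data change as $\partial_\infty L_t$ crosses $\Lambda$ — action-filtration and positivity-of-wrapping arguments rather than sector confinement.

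The Lagrangian-cobordism route you propose as the main mechanism is also not what \cite{gpsstructural} does, and invoking a Biran--Cornea-style machine ``adapted to the wrapped setting'' would itself be a substantial project (one must control noncompact ends, stops, and exactness in the cobordism direction), not a black box one gets to assume. Your fallback — compute $\hom(D_p,K)$ against $\cone(\hom(L,K)\to\hom(L^w,K))$ for enough test objects $K$ and show the cone is carried by the single new intersection created at the crossing — is closer in spirit to the real argument, and your remark about pinning the shift via the index of the triple is in the right direction (and matches how the present paper fixes the microstalk shift). But ``a short holomorphic-disk count shows the cone is supported there and agrees with $\hom(D_p,K)$, naturally in $K$'' is precisely the content of the theorem, not a proof of it; naturality in $K$ and compatibility with the continuation maps under further wrapping is where the genuine geometry-of-wrapping input of \cite{gpsstructural} lives. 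In short: you have correctly identified where the difficulty is, but the two mechanisms you offer to overcome it (subsector confinement; a Biran--Cornea cobordism triangle) are, respectively, incorrect and not available off the shelf, and neither matches the cited proof.
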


The following result about wrapped Fukaya categories is a consequence of the wrapping exact triangle, and can be compared with Theorem \ref{thm:sheafstopremoval}.

\begin{theorem}[{Stop removal \cite[Thm.\ 1.20]{gpsdescent}}]\label{stopremoval}
Let $(X,\Lambda')$ be a stopped Liouville sector, and let $\Lambda\subseteq\Lambda'$ be closed so that its complement $\Lambda'\setminus\Lambda\subseteq(\partial_\infty X)^\circ\setminus\Lambda$ is an isotropic submanifold.
Then pushforward induces an equivalence
\begin{equation}\label{stopremovalqe}
\W(X,\Lambda')/\D\xrightarrow\sim\W(X,\Lambda),
\end{equation}
where $\D$ denotes the collection of small Lagrangian disks linking (Legendrian points of) $\Lambda'\setminus\Lambda$.
\end{theorem}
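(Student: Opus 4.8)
The plan is to realize $\W(X,\f)$ as a localization of $\W(X,\f\cup\Lambda)$, and then to identify that localization with the quotient by $\D$ by means of the wrapping exact triangle. The pushforward functor $F\colon\W(X,\f\cup\Lambda)\to\W(X,\f)$ is already at hand: removing $\Lambda$ from the stopped locus only enlarges the class of admissible positive isotopies, so $F$ is the ``just wrap more'' functor. Since $\Lambda$ is an isotropic submanifold it has positive codimension at infinity, so a generic small isotopy through Lagrangians conical at infinity disjoins $\partial_\infty L$ from $\Lambda$; such an isotopy induces an isomorphism in $\W(X,\f)$, and hence $F$ is essentially surjective. It therefore remains to show (i) that $F$ sends each linking disk $D_p\in\D$ to a zero object, so that $F$ descends to $\overline F\colon\W(X,\f\cup\Lambda)/\D\to\W(X,\f)$, and (ii) that $\overline F$ is fully faithful.

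For (i): once $\Lambda$ no longer obstructs wrapping, the Legendrian $\partial_\infty D_p$ can be positively isotoped inside $(\partial_\infty X)^\circ\setminus\f$ so as to escape every compact set — slide it radially off the former position of $\Lambda$ and out to infinity. By Lemma \ref{cofinalitycriterion} this isotopy is a cofinal wrapping of $D_p$ in $(X,\f)$, and along it $D_p$ becomes disjoint from itself, so the continuation element in $HF^\ast(D_p^{++},D_p)$ — equivalently, the identity of $D_p$ in $HW^\ast(D_p,D_p)_{(X,\f)}$ — vanishes. Thus $D_p\simeq 0$ in $\W(X,\f)$, so $\D\subseteq\ker F$ and $\overline F$ is defined.

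For (ii): let $W$ be the set of continuation morphisms in $\W(X,\f\cup\Lambda)$ coming from positive isotopies that are permitted to cross $\Lambda$ transversally while remaining in the complement of $\f$. By the very definition of wrapped Floer cohomology, $HW^\ast(L,K)_{(X,\f)}$ is the colimit over such wrappings, so $\W(X,\f)=\W(X,\f\cup\Lambda)[W^{-1}]$. One may choose the wrappings so that $\Lambda$ is crossed in finitely many transverse Legendrian points at a time; the wrapping exact triangle (Theorem \ref{wrapcone}) then identifies the cone of each such continuation morphism with an iterated extension of linking disks in $\D$, and conversely every $D_p\in\D$ arises this way. In the pre-triangulated setting, localizing at a set of morphisms agrees with quotienting by the thick subcategory generated by their cones; hence $\W(X,\f\cup\Lambda)[W^{-1}]=\W(X,\f\cup\Lambda)/\D$, and $\overline F$ is an equivalence.

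The main obstacle is the identification $\W(X,\f)=\W(X,\f\cup\Lambda)[W^{-1}]$: one must show that the geometric operation of allowing wrappings to pass through $\Lambda$ is, at the chain level, exactly the categorical localization at the corresponding continuation morphisms, and not merely on cohomology. This is intertwined with the bookkeeping needed to apply Theorem \ref{wrapcone} one transverse crossing at a time along a wrapping sequence that is simultaneously cofinal in $(X,\f)$ and built from the continuation data defining $\W(X,\f\cup\Lambda)$. Granted the wrapping exact triangle and this localization statement, the remaining steps — essential surjectivity, the vanishing of the disks, and the stable-category passage from ``invert $W$'' to ``quotient by $\D$'' — are formal.
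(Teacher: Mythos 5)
The paper does not prove this theorem: it is cited verbatim from \cite[Theorem 1.13]{gpsstructural}, and there is no in-text argument to compare against. Your outline is in the spirit of the argument given there, but two of your steps need attention.

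For the vanishing of $D_p$ in $\W(X,\f)$, the claim that $\partial_\infty D_p$ can be positively isotoped so as to ``escape every compact set'' of $(\partial_\infty X)^\circ\setminus\f$ by sliding off $\Lambda$ is false in general. When $X$ is a Liouville manifold and $\f$ is empty (or sits far from $\Lambda$), the contact manifold in which you are wrapping is compact (or the neighborhood of $\Lambda$ is far from any escape route), and no isotopy supported near the former position of $\Lambda$ can leave every compact subset, so Lemma \ref{cofinalitycriterion} does not apply. Fortunately cofinality is not what the argument needs: it suffices to exhibit \emph{one} positive isotopy of $D_p$ in the complement of $\f$ that disjoins $\partial_\infty D_p$ from itself (pushing the small linking sphere through where $\Lambda$ was accomplishes this), whereupon the continuation element vanishes at that finite stage, hence so does its image, the unit of $HW^*(D_p,D_p)_{(X,\f)}$. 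The deeper issue is the one you flag yourself: the chain-level identification $\W(X,\f)=\W(X,\f\cup\Lambda)[W^{-1}]$ is not a formality but the content of the theorem. In the present paper's framework $\W(X,\g)=\OO[C_\g^{-1}]$, so one would like an iterated-localization identity; but this requires choosing the wrapping poset $\OO$ and the continuation class $C$ compatibly for both stop configurations at once, a general position argument that $W$ may be replaced (up to generating the same thick subcategory of cones) by continuation morphisms for single transverse crossings of $\Lambda$ so that Theorem \ref{wrapcone} applies, and octahedral bookkeeping comparing cones of compositions with iterated extensions of linking disks. Those are precisely the points where the real work lies, and they are not supplied by the proposal.
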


We will also need to know that:

\begin{theorem}\label{generation}
The cotangent fibers split-generate $\W(T^*M)$.
\end{theorem}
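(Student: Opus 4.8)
The plan is to prove Theorem~\ref{generation} by reducing the non-compact case to the compact case and then invoking stop removal together with the concrete calculation of the Fukaya category associated to a triangulation. First I would recall that $T^*M$ is covered by an exhausting union of Liouville sectors $T^*M_\alpha$ over compact codimension zero submanifolds-with-boundary $M_\alpha\subseteq M$, and that the wrapped category $\W(T^*M)$ is the colimit of the $\W(T^*M_\alpha)$ (up to the bookkeeping of Section~\ref{sec:fukaya}). Since a colimit of categories is split-generated by the images of generators of the pieces, and since a cotangent fiber of $T^*M_\alpha$ maps to a cotangent fiber of $T^*M$, it suffices to treat $T^*M_\alpha$, i.e.\ to assume $M$ is a compact manifold with boundary --- equivalently, by passing to the interior as remarked in the text, a non-compact manifold which sits as the interior of a relatively compact constructible subset of a larger closed manifold (Remark~\ref{noncompacttriangulation}). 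So I may as well reduce to the case where $\Lambda=\varnothing$ and there is a finite triangulation $\SSS$ of a compactification.

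Next I would choose a finite subanalytic triangulation $\SSS$ of (a compactification of) $M$ and consider $\Lambda_0 := N^*_\infty\SSS$, the union of conormals at infinity to the strata. By the results of Section~\ref{sec:fukaya} computing $\C(N^*_\infty\SSS)=\W(T^*M,N^*_\infty\SSS)^\op$ (this is what Theorem~\ref{fukcasting}, referenced earlier, provides), the category $\W(T^*M,\Lambda_0)$ is generated by the Lagrangians corresponding to the objects $1_{\star(s)}\in\Perf\SSS$ --- concretely these are the (co)stars of simplices, realized as Lagrangians built from conormal-type pieces over open stars of strata, and in particular the cocores of the Liouville sector $(T^*M,\Lambda_0)$ generate by \cite[Theorem~1.9 or similar]{gpsstructural}. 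Then I apply stop removal (Theorem~\ref{stopremoval}) repeatedly to remove all the Legendrian components of $\Lambda_0$: the quotient functor $\W(T^*M,\Lambda_0)\to\W(T^*M,\varnothing)=\W(T^*M)$ is the (idempotent-completed) quotient by the linking disks $D_p$, hence surjective on objects up to split-generation, so $\W(T^*M)$ is split-generated by the images of the generators of $\W(T^*M,\Lambda_0)$.

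The last step is to identify those images with (iterated cones on) cotangent fibers. Here I would use the wrapping exact triangle (Theorem~\ref{wrapcone}): wrapping a Lagrangian corresponding to the open star of a vertex all the way through the stops $\Lambda_0$ produces, in the limit, a cotangent fiber, and the wrapping exact triangle expresses the cocore/star-Lagrangians as extensions built out of linking disks --- which die in the quotient --- together with the fully wrapped Lagrangian, namely the fiber. Running this over the strata of $\SSS$ by induction on dimension shows every generator of $\W(T^*M,\Lambda_0)$ maps into the thick subcategory generated by cotangent fibers $T^*_xM$ for $x$ ranging over a finite set of points (one in each open simplex). Since a thick subcategory containing one fiber contains all fibers (the fibers are all isotopic, hence isomorphic in $\W$), we conclude that a single cotangent fiber split-generates.

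The main obstacle I expect is the identification in the final step: making precise that the fully-wrapped image of a star-Lagrangian is the cotangent fiber, and that the "error terms" in the iterated wrapping exact triangles are exactly the linking disks $D_p$ that become zero after stop removal. This is essentially the content of the explicit Fukaya-side computation of Section~\ref{sec:fukaya} (Theorem~\ref{fukcasting}), so in practice the proof will read "combine the computation of $\W(T^*M,N^*_\infty\SSS)$ in Section~\ref{sec:fukaya} with Theorem~\ref{stopremoval}", with the fiber appearing as the image of the top-dimensional or suitably-wrapped generator; the geometric input is the cofinality criterion Lemma~\ref{cofinalitycriterion} guaranteeing that wrapping a cocore indefinitely escapes to a cotangent fiber's isotopy class.
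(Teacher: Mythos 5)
Your reduction from non-compact $M$ to a compact codimension-zero piece matches the paper's (the paper phrases it as: any $L\in\W(T^*M)$ lies in the essential image of some $\W(T^*M_L)\to\W(T^*M)$, and split-generation pushes forward), so that part is fine.

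The compact case, however, as you have written it is \emph{circular}. You propose to invoke the computation of $\W(T^*M,N^*_\infty\SSS)\simeq\Perf\SSS^\op$ from Section~\ref{sec:fukaya} (you cite Theorem~\ref{fukcasting}) and then apply stop removal. But look at the logical dependencies: Theorem~\ref{fukcasting} relies on Theorem~\ref{Fequivalence}, which relies on Proposition~\ref{smallstarsgen} (that the $L_s$ split-generate $\W(T^*M,N^*_\infty\SSS)$), and the proof of Proposition~\ref{smallstarsgen} explicitly invokes Theorem~\ref{generation} --- the very statement you are trying to prove --- in its last step (``By Theorem~\ref{generation} above, these $L_s$ also split-generate the final category $\W(T^*M)$\dots''). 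So the $1_{\star(s)}\leftrightarrow L_{\star(s)}$ identification you lean on is downstream of what you are proving. The paper avoids this entirely: for compact $M$ (with boundary) it simply cites \cite[Theorem~1.9 and Example~1.10]{gpsstructural}, an external result proved there by Floer-theoretic methods, and does not re-derive generation via the triangulation computation. Your observation that cocores generate the stopped category $\W(T^*M,N^*_\infty\SSS)$ by \cite[Theorem~1.9]{gpsstructural} and that stop removal then gives generation of $\W(T^*M)$ is a legitimate-looking alternative route --- but to make it non-circular you would need to show directly (by wrapping cocores through the stops and tracking the wrapping exact triangles) that the images of cocores are built from fibers, which is precisely the content of the cited external theorem, not something you can extract from Section~\ref{sec:fukaya} of this paper.
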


\begin{proof}
    When $M$ is compact (including the case with boundary), this
    is \cite[Thm.\ 1.14 and Ex.\ 1.15]{gpsdescent}. For a
    general possibly non-compact $M$, we observe that any Lagrangian $L \in \W(T^*M)$ 
    is in the essential image of the pushforward functor $\W(T^*M_L) \to
    \W(T^*M)$, for some compact codimension zero submanifold-with-boundary $M_L
    \subseteq M$.  Now push foward the fact that $L$ is split-generated by a fiber in $\W(T^*M_L)$.
\end{proof}

\begin{remark}
In fact, the argument above shows that the fibers generate $\W(T^*M)$, however we only need split-generation.
\end{remark}

Another ingredient which proves useful in our computations is the K\"unneth theorem for Floer cohomology and wrapped Fukaya categories, also proved in \cite{gpsdescent}.

\subsection{Conormals and corners}\label{conormalssec}

Let $U\subseteq M$ be a relatively compact open set.
When $U$ has smooth boundary, we write $L_U \subseteq T^*M$ for (a smoothing of) the union of $U\subseteq M \subseteq T^* M$ with the \emph{outward} conormal along its boundary.
More generally, if $\overline{U}$ is a compact manifold-with-corners with interior $U$, then $L_{U}$ shall mean $L_{\tilde U}$ where $\tilde U$ is obtained from $U$ by smoothing out its boundary.
We also allow the degenerate case that $U$ is a point $p$ (hence, in particular, not open), in which case $L_p$ denotes the cotangent fiber over $p$.
In all of the above cases, we could also equivalently say that $L_U$ is a rounding of $\ss(\ZZ_U)$ (compare Section \ref{subsec:microsupport}).

\begin{remark}\label{cornerconventions}
Various natural constructions, such as the cornering operations of Section \ref{microlocalapprox}--\ref{constructibleconormals} and taking products, introduce corners.
By convention, we conflate such cornered objects with their smoothings, usually without comment.
The choice of this smoothing is always a contractible choice which is ultimately irrelevant.
\end{remark}

Recall that for a Lagrangian $L$, we write $L^+$ for an unspecified small positive Reeb pushoff of $L$, and $L^-$ for a negative pushoff.
Thus if $U$ is an relatively compact open set with smooth boundary and $U^+$ denotes its $\epsilon$ neighborhood in some metric, then $L_{U^+} = L_U^+$.
That is, positive Reeb flow pushes outward conormals out.
In particular, $(T^*_p M)^+ = L_{B_{\epsilon}(p)}$.

Each $L_U$ is exact and possesses canonical grading/orientation data: the codimension zero inclusion $U\subseteq L_U$ is a
homotopy equivalence, and $U$ is a codimension zero submanifold of, and
thereby inherits all of this data from, the zero section.
The grading/orientation data for the zero section arises from the canonical homotopy from its tangent bundle $TM\subseteq T(T^*M)|_M$ to the family of tangent spaces of the cotangent fibers $T^*M\subseteq T(T^*M)|_M$ (which is the chosen polarization of $T^*M$) given by $e^{i\theta}T^*M$ for $\theta\in[0,\pi/2]$ (where $J$ is chosen so that $J(T^*M)=TM$).

\subsection{Floer cohomology between conormals of balls and stable balls}\label{secstableballs}

We study here the Floer cohomology between conormals of open sets with smooth boundary (though recall Remark \ref{cornerconventions} about implicit smoothing of corners).
The assertion that an open set with smooth boundary is a ball shall mean that its \emph{closure} is diffeomorphic to the standard closed unit ball.
Note that a small positive pushoff of the cotangent fiber $L_p$ over a point $p$ is the conormal of a small open ball around $p$, so we may substitute `point' in place of `open ball' in many of the statements below.

\begin{lemma} \label{continuationballs}
Let $U,V\subseteq M$ be balls with $\overline U\subseteq V$.
Then $HF^*(L_V, L_U) = \ZZ$, and is canonically generated by the continuation element, lying in degree zero.
\end{lemma}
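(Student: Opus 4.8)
The plan is to compute $HF^\ast(L_V, L_U)$ by choosing a good geometric model for the two conormal Lagrangians and then directly identifying the intersection points and the differential. The essential point is that when $\overline U \subseteq V$ are both balls, the Lagrangians $L_V$ and $L_U$ (roundings of $\ss(\ZZ_V)$ and $\ss(\ZZ_U)$) can be arranged to intersect in exactly one point, so that $CF^\ast(L_V,L_U)$ is already $\ZZ$ on the nose with no differential, and the only remaining issue is to check that the generator is the continuation element and that it sits in degree $0$.

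\medskip

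First I would set up coordinates: since $U,V$ are balls with $\overline U\subseteq V$, I would choose a Morse function (or just an affine coordinate) exhibiting $\overline V$ as a sublevel set and $\overline U$ as a smaller sublevel set of the \emph{same} function, so that $\partial V$ and $\partial U$ are concentric spheres. Then $L_V$ is (a smoothing of) the zero section over $V$ together with the outward conormal along $\partial V$, and similarly for $L_U$. Over the annular region $\overline V \setminus U$, the outward conormal directions to $\partial U$ and $\partial V$ are \emph{opposite} as one approaches from inside versus outside, and the zero-section parts $\overline{U^-}\subset L_U$ and $\overline{V^-}\subset L_V$ overlap in $\overline{U^-}$; a small generic perturbation (a positive pushoff of $L_V$, say, as in the statement $L_{V^+}=L_V^+$) makes the intersection transverse. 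The claim I need is that after such a perturbation $L_V^+ \cap L_U$ is a single point. This is believable because $\ss(\ZZ_V)$ and $\ss(\ZZ_U)$ are the microsupports of $\ZZ_V,\ZZ_U$, and $\Hom(\ZZ_{V^+},\ZZ_U)=\Hom(\ZZ_V,\ZZ_U)$ computes $\cone(H^\ast(V)\to H^\ast(V\setminus U))$ in the sense of Lemma \ref{lem:posetsheaf}; since $\overline U\subseteq V$ are balls, $V\setminus U$ is homotopy equivalent to a sphere $S^{n-1}$, wait — more simply $V\setminus \overline U \simeq S^{n-1}$ and the cone on $H^\ast(\mathrm{pt})\to H^\ast(S^{n-1})$ is one-dimensional. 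But I must not invoke the sheaf-Fukaya equivalence here (that would be circular, since this Lemma is a step toward proving it), so instead I would verify the single-intersection-point claim by a direct Morse-theoretic/hands-on count in the chosen coordinates.

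\medskip

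Concretely, the intersection $L_V^+\cap L_U$ in the model above: away from a neighborhood of $\partial V$ and $\partial U$, $L_V^+$ is a small pushoff of the zero section and $L_U$ is the zero section over $\overline U$ glued to its outward conormal, so on $\overline U$ the two are parallel sheets with no intersection after pushing off in a suitable direction; near $\partial V$, $L_V^+$ has left the region where $L_U$ lives (which is supported over $\overline{U^+}\Subset V$); near $\partial U$, the outward conormal of $U$ (part of $L_U$) meets the pushed-off zero section $L_V^+$ — and one arranges exactly one transverse intersection here by choosing the pushoff direction to be an outward-pointing covector, so that $L_V^+$ crosses the conormal ray of $L_U$ once. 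So $CF^\ast(L_V,L_U)$ is rank one. Its grading is computed from the canonical grading data on each $L_U$ (inherited from the zero section, as recalled in the Conormals subsection): since both reduce to the zero section over $\overline{U^-}$ and the relevant covector used for the pushoff is small, the Maslov index of the unique intersection point is $0$; I would compute this via the index formula for the transverse triple $(T L_V^+, T_x M, TL_U)$ at the intersection point, as in the microstalk shift convention. Hence $HF^\ast(L_V,L_U)=\ZZ$ concentrated in degree $0$.

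\medskip

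Finally, to see that the generator is \emph{the} continuation element: the continuation element in $HF^\ast(L_V^{++},L_V)$ for a positive wrapping is by definition the image of the unit under the continuation map, and $HF^\ast(L_V,L_U)$ receives a continuation element because $L_U$ is obtained from (a pushoff of) $L_V$ by a positive isotopy supported in $V$ — indeed $L_U = $ the conormal of the \emph{smaller} ball, which is a negative wrapping of $L_V$ away from infinity, or dually $L_V$ wraps positively onto $L_U$; either way there is a canonical continuation element, and since the Floer complex is rank one in degree $0$ with vanishing differential, this continuation element is nonzero (it corresponds to the unique intersection point, the continuation being defined by a count of index-zero strips which here is the obvious constant-like strip) and hence generates. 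The main obstacle I anticipate is the bookkeeping in the single-intersection-point claim — making the rounding of $\ss(\ZZ_U)$, the pushoff direction, and the concentric-sphere coordinates all compatible so that exactly one transverse intersection survives, and then pinning down its Maslov grading to be $0$ via the correct index convention; everything after that (nonvanishing, identification with the continuation element) is formal given the rank-one Floer complex.
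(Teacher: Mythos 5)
The paper's proof is a one-sentence isotopy argument: there is a positive isotopy from $L_U^+$ to $L_V$ taking place in the complement of $\partial_\infty L_U$, and then by the invariance recalled in the Notation subsection this gives $HF^*(L_V,L_U) \cong HF^*(L_U^+,L_U) = H^*(L_U)=\ZZ$ (since $L_U$ is diffeomorphic to a ball, hence contractible), with the continuation element carried to the unit. Your approach is genuinely different: you attempt a direct model computation of the Floer complex, arranging $L_V$ and $L_U$ to intersect transversely in a single point and then identifying the generator with the continuation element. In principle this works, and it has the virtue of being very concrete, but it is substantially more delicate than the isotopy argument, and your sketch has a sign/direction slip in the key step.

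Specifically, you claim one can "arrange exactly one transverse intersection \dots by choosing the pushoff direction to be an outward-pointing covector, so that $L_V^+$ crosses the conormal ray of $L_U$ once.'' If the perturbing covector field $dg$ points outward along $\partial U^-$, then the graph $\Gamma_{dg}$ does meet the outward conormal ray of $L_U$, but $g$ restricted to the ball $\overline{U^-}$ must also have an interior critical point (by compactness), producing an additional intersection with the zero-section part of $L_U$. So with that choice you get at least two intersection points, not one, and you would have to analyze a differential. The clean version of your idea is the opposite one: take the perturbation $dg$ to point \emph{inward} near $\partial U^-$ (i.e.\ $g$ has a unique interior maximum and no other critical points), so $\Gamma_{dg}$ misses the outward conormal entirely and meets the zero section exactly once. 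After that fix your argument for the grading and the identification with the continuation element is reasonable, though you should note that the nonvanishing of the continuation element ultimately rests on the same isotopy invariance the paper invokes, so the direct computation doesn't really avoid that input — it only replaces the cohomology computation $H^*(L_U)=\ZZ$ with a hand count of intersection points.
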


\begin{proof}
There is a positive isotopy from $L_U^+$ to $L_V$ in the complement of $\partial_\infty L_U$. Hence $HF^*(L_V, L_U) = HF^*(L_U, L_U)$ (compare \cite[Lem.\ 3.21]{gpssectorsoc}), but this latter group (which is isomorphic to $H^*(L_U) = H^*(U)$) is generated by its identity element.
\end{proof}

\begin{lemma}\label{continuationiso}
Let $U,V\subseteq M$ be balls with $\overline U\subseteq V$, and let $\overline V\subseteq W\subseteq M$.
The continuation map $HF^*(L_W, L_V) \to HF^*(L_W, L_U)$ (i.e.\ multiplication by the continuation element in $HF^*(L_V, L_U)$) is an isomorphism.
\end{lemma}

\begin{proof}
The positive isotopy $L_U \leadsto L_V$ takes place in the complement of $\partial_{\infty} L_W$, hence induces an isomorphism $HF^*(L_W, L_V)=HF^*(L_W, L_U)$ which agrees with multiplication by the continuation element by \cite[Lem.\ 3.26]{gpssectorsoc}.
\end{proof}

\begin{lemma} \label{continuationballinsomething}
Let $U\subseteq M$ be a ball, and let $\overline U\subseteq W\subseteq M$.
There is a canonical isomorphism $HF^*(L_W, L_U) = \ZZ$, with respect to which the continuation maps from Lemma \ref{continuationiso} act as the identity on $\ZZ$.
\end{lemma}

\begin{proof}
The groups $HF^*(L_W,L_p)$ form a local system of $p\in W$ \cite[Lem.\ 3.21]{gpssectorsoc}.
It suffices to show that this local system is canonically isomorphic to the constant local system $\ZZ$.
Indeed, by Lemma \ref{continuationiso} we have a canonical isomorphism $HF^*(L_W,L_U)=HF^*(L_W,L_p)$ for any $p\in U$, which is compatible with the local system structure of $HF^*(L_W,L_p)$ by \cite[Lem.\ 3.26]{gpssectorsoc}.

The assertion that the local system $p\mapsto HF^*(L_W,L_p)$ is canonically trivialized is local on $W$, so let us consider $p$ varying only in a small ball $U\subseteq W$.
Now the isomorphism $HF^*(L_W,L_p)=HF^*(L_W,L_{p'})$ for nearby points $p$ and $p'$ from \cite[Lem.\ 3.21]{gpssectorsoc} is induced by a diffeomorphism supported in $U$ sending $p$ to $p'$.
As such, it is sent under the identifications $HF^*(L_W,L_p)=HF^*(L_U,L_p)$ and $HF^*(L_W,L_{p'})=HF^*(L_U,L_{p'})$ (coming from the fact that, in both cases, they are generated by the `same' Lagrangian intersection and have no Floer differential, and we have chosen the `same' grading/orientation data on $L_W$ and $L_U$ in \S\ref{conormalssec}) to the corresponding isomorphism $HF^*(L_U,L_p)=HF^*(L_U,L_{p'})$.
Now $HF^*(L_U,L_p)$ and $HF^*(L_U,L_{p'})$ are canonically $\ZZ$ by Lemma \ref{continuationiso}, and the isomorphism between them acts as the identity on $\ZZ$ by \cite[Lem.\ 3.26]{gpssectorsoc}.
\end{proof}

\begin{lemma} \label{deformtozero}
Let $V$ be an open set with smooth boundary, and let $U$ be a $\epsilon$-ball centered at a point on $\partial V$. 
Then  $HF^*(L_U, L_V) = 0 = HF^*(L_V, L_U)$.
\end{lemma}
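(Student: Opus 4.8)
The plan is to exhibit, in each case, a positive (or negative) isotopy that sweeps one conormal entirely off the other at infinity, so that the continuation map forces both Floer groups to vanish.

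First I would recall the setup: $V$ is an open set with smooth boundary, $U$ is a small $\epsilon$-ball centered at a point $q\in\partial V$. At infinity, $\partial_\infty L_U$ is the cosphere over $\partial U$ consisting of the outward conormals to the sphere $\partial U$, and $\partial_\infty L_V$ is the set of outward conormals to $\partial V$. The key geometric observation is that if $\epsilon$ is small enough, $\partial_\infty L_U$ and $\partial_\infty L_V$ are \emph{disjoint} — the outward conormal to a small sphere around $q$ points in every direction near $q$, but the outward conormal to $\partial V$ at $q$ points in only one direction, and a small sphere crosses $\partial V$ transversally, so (after the standard rounding/smoothing) the Legendrians at infinity can be taken disjoint. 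Since $HF^\ast$ only depends on the Lagrangians up to isotopy in the complement of the other's boundary at infinity (as recalled in the Notation subsection, citing \cite[Lem.\ 3.21]{gpssectorsoc}), I am free to move each of $L_U$, $L_V$ by any isotopy disjoint at infinity from the other.

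Next I would produce the vanishing. For $HF^\ast(L_U, L_V)=0$: shrink the ball $U$ toward the point $q$ — this is a compactly supported isotopy of $L_U$ (supported near the zero section), hence takes place in the complement of $\partial_\infty L_V$, and in the limit $L_U$ is the conormal to a point, i.e.\ (a pushoff of) a cotangent fiber $T^*_qM$, which can then be displaced off $L_V$ by a further small isotopy. Alternatively, and perhaps cleaner: translate the ball $U$ slightly in the outward-normal direction to $\partial V$ at $q$, so that $\overline U$ becomes disjoint from $\overline V$; this is again a compactly supported Hamiltonian isotopy, disjoint from $L_V$ at infinity, after which $L_U$ and $L_V$ have disjoint projections to $M$ away from infinity and disjoint loci at infinity, so the Floer complex is literally zero. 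For $HF^\ast(L_V, L_U)=0$ one runs the symmetric argument, translating $U$ \emph{inward} (into $V$), which again separates the supports. In both cases one must check the isotopy can be taken to avoid the other Lagrangian's boundary at infinity throughout, which follows from the transversality of $\partial U$ to $\partial V$ once $\epsilon$ is small; compare Lemma \ref{continuationballs} and Lemma \ref{deformtozero}'s companion results, where the same style of cofinal/compactly-supported wrapping argument is used.

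The main obstacle — really the only subtlety — is the bookkeeping at infinity: one must verify that after the rounding conventions of the preceding subsection, $\partial_\infty L_U$ and $\partial_\infty L_V$ genuinely are (or can be made) disjoint Legendrians in $S^*M$, and that the separating isotopy never drags one through the other's conormal at infinity. This is where the hypothesis that $U$ is a \emph{small} ball centered \emph{on} $\partial V$ is used: near $q$, $\partial V$ looks like a hyperplane and $\partial U$ like a small round sphere meeting it transversally, so their conormals at infinity point in visibly different directions (the sphere realizes all codirections, but the relevant ones near the two intersection points of $\partial U\cap\partial V$ are transverse to the single codirection of $\partial V$). Once disjointness at infinity is in hand, the compactly supported translation of $U$ off of $\overline V$ (outward for one group, inward for the other) makes the intersection, hence the Floer complex, empty, and the result follows.
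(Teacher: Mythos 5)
Your main argument — translate $U$ outward along the normal to $\partial V$ until $\overline U$ is disjoint from $\overline V$, and observe that the conormals at infinity stay disjoint throughout — is exactly the paper's proof, and it is correct. But your ``symmetric argument'' for $HF^*(L_V,L_U)$ via inward translation is wrong, in two ways. First, moving $U$ inward does \emph{not} separate the supports: it ends with $\overline U\subseteq V$, where the zero-section pieces of $L_U$ and $L_V$ overlap, and by Lemma \ref{continuationballs} the Floer cohomology there is $\ZZ$, not $0$. Second, the inward isotopy genuinely crosses the stop at infinity: at the moment $\partial U$ becomes internally tangent to $\partial V$, the outward conormal to $\partial U$ at the tangency point agrees with the outward conormal to $\partial V$, so $\partial_\infty L_U$ and $\partial_\infty L_V$ collide and the isotopy invariance you are appealing to does not apply. (This is precisely the asymmetry that makes Lemma \ref{continuationballs} and Lemma \ref{deformtozero} different.)

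The fix is that no separate argument is needed: the single outward isotopy keeps the boundaries at infinity disjoint, and its endpoint has $L_U$, $L_V$ with disjoint projections and disjoint ideal boundaries, so \emph{both} Floer complexes $CF^*(L_U,L_V)$ and $CF^*(L_V,L_U)$ are already zero. One isotopy kills both groups at once, which is how the paper phrases its one-line proof.
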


\begin{proof}
During the obvious isotopy of $U$ outward to become disjoint from $V$, their conormals never intersect at infinity.
\end{proof}

By a \emph{stable ball}, we mean a compact manifold-with-boundary which is contractible; the statement that an open set with smooth boundary is a stable ball shall mean its closure is a stable ball.
The reason we study stable balls is that we do not know how to prove that for a subanalytic Whitney triangulation, the `inward cornering' in the sense of Section \ref{constructibleconormals} of an open star is a ball; it is, however, obviously a stable ball.

To compute Floer cohomology between conormals of stable balls, we reduce to the case of conormals to balls by stabilizing (i.e.\ taking their product with conormals to standard balls in $\RR^k$) and appealing to the K\"unneth theorem for Floer cohomology.
We begin by showing that the stabilization of a stable ball is indeed a ball, thus justifying the name.
This uses the following famous corollary of the $h$-cobordism theorem:

\begin{theorem}
A stable ball of dimension $\ge 6$ with simply connected boundary is a ball.\qed
\end{theorem}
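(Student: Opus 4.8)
The plan is to reduce to the $h$-cobordism theorem by the classical ``drill out a ball'' construction (this is, in effect, the corollary of the $h$-cobordism theorem recorded in Milnor's lectures). Write $n=\dim M\ge 6$, and let $W\subseteq M$ be the stable ball; since $W$ is relatively compact (as it is in our application), $\overline W$ is a compact smooth $n$-manifold with boundary, with interior $W$ and boundary $\Sigma:=\partial W$ a closed smooth $(n-1)$-manifold. The inclusion $W\hookrightarrow\overline W$ is a homotopy equivalence (boundary collar), so $\overline W$ is contractible, hence in particular orientable; and $\Sigma$ is simply connected by hypothesis. We must produce a diffeomorphism $\overline W\cong D^n$. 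Choose a smoothly embedded closed $n$-ball $B\subseteq W$ and set $W_0:=\overline W\setminus B^\circ$, a compact $n$-manifold with $\partial W_0=\Sigma\sqcup\partial B$, where $\partial B\cong S^{n-1}$. The claim is that $W_0$ is an $h$-cobordism from $\Sigma$ to $S^{n-1}$.

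\textbf{Verifying the $h$-cobordism conditions.} First, $\pi_1(W_0)=1$: removing the interior of a ball from the simply connected $n$-manifold $\overline W$ does not change $\pi_1$ since $n-1\ge 2$ (van Kampen), and $\pi_1(\Sigma)=1$ by hypothesis, $\pi_1(S^{n-1})=1$ since $n-1\ge 2$. Next, from the Mayer--Vietoris sequence for $\overline W=W_0\cup_{S^{n-1}}B$ together with contractibility of $\overline W$ and $B$, one reads off that the inclusion $S^{n-1}=\partial B\hookrightarrow W_0$ induces an isomorphism on homology; as both spaces are simply connected, Whitehead's theorem makes it a homotopy equivalence. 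Poincar\'e--Lefschetz duality for the manifold $W_0$ with boundary $\Sigma\sqcup S^{n-1}$ then gives $H_k(W_0,\Sigma)\cong H^{\,n-k}(W_0,S^{n-1})$, and the right-hand side vanishes because $S^{n-1}\hookrightarrow W_0$ is a homotopy equivalence (use the long exact sequence of the pair). Hence $\Sigma\hookrightarrow W_0$ is also a homology isomorphism of simply connected spaces, so a homotopy equivalence. This establishes the claim.

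\textbf{Applying the $h$-cobordism theorem and regluing.} Since $\dim W_0=n\ge 6$ and $W_0$ is a simply connected $h$-cobordism, the $h$-cobordism theorem of Smale yields a diffeomorphism $W_0\cong S^{n-1}\times[0,1]$, which (by composing with a collar of the trivialization) we may arrange to carry $\partial B$ to $S^{n-1}\times\{0\}$. Regluing $B$ along $\partial B$ then identifies $\overline W=W_0\cup_{\partial B}B$ with $(S^{n-1}\times[0,1])\cup_{S^{n-1}\times\{0\}}D^n$, which is just $D^n$ with an exterior collar attached to its boundary sphere, hence diffeomorphic to $D^n$. Thus $\overline W$ is a ball. (As a byproduct one recovers $\Sigma\cong S^{n-1}$, i.e.\ the smooth Poincar\'e conjecture in this dimension range, but this is not needed.)

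\textbf{Main obstacle.} There is essentially no serious obstacle beyond invoking the $h$-cobordism theorem itself, which we use as a black box; the remaining content --- recognizing $W_0$ as an $h$-cobordism via the drilling construction and a short duality-plus-Whitehead argument --- is routine. The one point requiring care is the hypothesis that $\Sigma$ be simply connected: without it the conclusion genuinely fails (there exist compact contractible manifolds of every dimension $\ge 4$ with non-simply-connected boundary), and it is precisely this hypothesis that licenses the use of the \emph{simply connected} $h$-cobordism theorem.
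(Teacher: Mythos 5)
Your proof is correct, and it is exactly the standard argument (drill a ball out of the interior, verify by Mayer--Vietoris, Whitehead, and Poincar\'e--Lefschetz duality that the complement is a simply connected $h$-cobordism between $\partial B \cong S^{n-1}$ and $\Sigma$, apply the $h$-cobordism theorem, reglue). The paper states this result with a \verb|\qed| and no proof, simply labeling it a ``famous corollary of the $h$-cobordism theorem''; your argument supplies precisely the details that the paper is implicitly invoking from Milnor's lectures, so the approach is the same.
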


\begin{corollary}\label{stableballisstableball}
Let $M$ be a stable ball.  Then $M\times I^k$ is a ball provided $\dim M+k\ge 6$ and $k\geq 1$.
\end{corollary}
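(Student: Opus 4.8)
The plan is to reduce to the preceding theorem (a stable ball of dimension $\ge 6$ with simply connected boundary is a ball) by checking that $M\times I^k$ is itself a stable ball with simply connected boundary of the correct dimension. First I would observe that $M\times I^k$ is an open set with smooth boundary (rounding the corners of $\overline M\times I^k$), and it is contractible since both $M$ and $I^k$ are, so it is a stable ball; its dimension is $\dim M+k\ge 6$ by hypothesis. It therefore remains only to check that $\partial(M\times I^k)$ is simply connected, after which the cited theorem applies verbatim.

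The key step is the simple-connectivity of the boundary. The boundary of $\overline M\times I^k$ is $(\partial\overline M\times I^k)\cup(\overline M\times\partial I^k)$, glued along $\partial\overline M\times\partial I^k$; after smoothing the corner this is homeomorphic to $\partial(M\times I^k)$. Since $k\ge 1$, the piece $\overline M\times\partial I^k$ contains $\overline M\times\{0,1,\dots\}$, i.e.\ at least two disjoint copies of the contractible manifold-with-boundary $\overline M$, and the piece $\partial\overline M\times I^k$ is a mapping cylinder-like collar interpolating between them. Concretely, $\partial I^k$ is a simply connected $(k-1)$-sphere's worth of faces when $k\ge 2$, so $\overline M\times\partial I^k$ is simply connected; and $\partial\overline M\times I^k$ deformation retracts onto $\partial\overline M\times\partial I^k\subseteq\overline M\times\partial I^k$. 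A van Kampen argument then shows $\pi_1(\partial(M\times I^k))=1$: the two pieces are simply connected (the first because $\overline M$ is contractible, the second because it retracts to the first when $k\ge 2$, and when $k=1$ because $\overline M\times\{0\}$ and $\overline M\times\{1\}$ together with the collar $\partial\overline M\times I$ assemble to the double of $\overline M$ along $\partial\overline M$, which is simply connected as $\overline M$ is contractible), and their intersection is connected.

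I expect the only subtlety—hardly an obstacle—to be the bookkeeping in the case $k=1$, where $\partial I$ is disconnected and one must argue that the double $\overline M\cup_{\partial\overline M}\overline M$ is simply connected. This follows from van Kampen since $\overline M$ is contractible and $\partial\overline M$ is connected (being the boundary of a contractible manifold of dimension $\ge 2$; note $\dim M=\dim(M\times I^k)-k\ge 6-k\ge 5$). With $\partial(M\times I^k)$ simply connected and $\dim(M\times I^k)\ge 6$, the $h$-cobordism corollary quoted just above gives that $M\times I^k$ is a ball, completing the proof.
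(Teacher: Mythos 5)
Your overall shape is right, and your $k=1$ case is essentially the paper's actual argument (realize $\partial(\overline M\times I)$ as the double of $\overline M$ along $\partial\overline M$ and apply van Kampen). However, you miss the paper's clean reduction to $k=1$: since $M\times I^{k-1}$ is itself a stable ball of dimension $\dim M + k - 1 \geq 5$, it suffices to show that for \emph{any} stable ball $N$ of dimension $\geq 2$, $\partial(N\times I)$ is simply connected — one never needs to touch $\partial(\overline M\times I^k)$ for $k\geq 2$ at all. By not making this reduction, you take on a harder decomposition, and that is where errors creep in.

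Specifically, in the $k\geq 2$ case your claim that ``the two pieces are simply connected'' does not hold. The piece $\partial\overline M\times I^k$ is homotopy equivalent to $\partial\overline M$, which a priori need \emph{not} be simply connected — the boundary of a compact contractible manifold is only a homology sphere, and in high dimensions it can have a nontrivial (perfect) fundamental group. Moreover $\overline M\times\partial I^k\simeq S^{k-1}$ is not simply connected when $k=2$. The stated deformation retraction of $\partial\overline M\times I^k$ onto $\partial\overline M\times\partial I^k$ is also false (the target has the larger homotopy type $\partial\overline M\times S^{k-1}$). The van Kampen pushout does turn out to be trivial — the two inclusion maps on $\pi_1$ of the intersection $\partial\overline M\times\partial I^k$ are the two product projections, which jointly kill everything — but this is a different and subtler reason than the one you give. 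Finally, you assert but do not prove that $\partial\overline M$ is connected. The paper supplies the argument via Poincar\'e duality: if $\partial\overline M$ were disconnected then $H^{\dim M-1}(M)$ would be nonzero, contradicting contractibility when $\dim M\geq 2$. This step is where the dimension hypothesis is genuinely used and should not be left to citation.
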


\begin{proof}
We just need to check that the boundary of $M\times I^k$ is simply connected.
It suffices to show that for any stable ball $N$ of dimension $\ge 2$, the boundary of $N\times I$ is simply connected.
The boundary of $N\times I$ is, up to homotopy, two copies of $N$ glued along their common boundary.
Since $N$ is contractible, the fundamental group of this gluing vanishes provided $\partial N$ is connected.
If $\partial N$ were disconnected, then by Poincar\'e duality, the cohomology group $H^{\dim N-1}(N)$ would be nonzero, which contradicts contractibility as $\dim N\ge 2$.
\end{proof}

\begin{proposition} \label{fakecontinuation} 
Let $U,V\subseteq M$ be stable balls with $\overline U\subseteq V$.
Then $HF^*(L_V, L_U) = \ZZ$, and it is equipped with a canonical generator $1_{VU}$ which we call the \emph{pseudo-continuation element} (it coincides with the usual continuation map when $U$ and $V$ are balls).
The pseudo-continuation elements are closed under composition: for any triple of stable balls $U,V,W\subseteq M$ with $\overline U\subseteq V$ and $\overline V\subseteq W$, we have $1_{WV} 1_{VU} = 1_{WU}$.
\end{proposition}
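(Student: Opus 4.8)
The plan is to reduce everything to Lemma \ref{continuationballs} (the case of honest balls) via stabilization and the Künneth theorem. First I would fix a large integer $k$ so that $\dim M + k \ge 6$ and $k \ge 1$; then by Corollary \ref{stableballisstableball}, $U \times I^k$ and $V \times I^k$ are honest balls inside $M \times I^k$, and clearly $\overline{U \times I^k} \subseteq V \times I^k$. The Künneth theorem for Floer cohomology (cited at the end of the ``Wrapping exact triangle'' subsection) gives
\begin{equation}
HF^\ast(L_{V \times I^k},\, L_{U \times I^k}) = HF^\ast(L_V, L_U) \otimes HF^\ast(L_{I^k}, L_{I^k})^{\otimes} \cdots,
\end{equation}
more precisely $HF^\ast(L_{V}\times L_{I^k},\, L_U \times L_{I^k}) = HF^\ast(L_V,L_U) \otimes HF^\ast(L_{I^k}, L_{I^k})$, using that $L_{U\times I^k}$ is (a smoothing of) $L_U \times L_{I^k}$ and similarly for $V$. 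Since $I^k \subseteq \RR^k$ is itself a ball, $HF^\ast(L_{I^k}, L_{I^k}) = HF^\ast(L_{I^k}^+, L_{I^k}) = \ZZ$ with the continuation element as canonical generator (this is the self-Floer case, computed by a small pushoff, and $L_{I^k}$ is the conormal rounding of $\ZZ_{I^k}$ on a contractible set so its wrapped self-hom is $\ZZ$ concentrated in degree $0$; alternatively shrink $I^k$ slightly and apply Lemma \ref{continuationballs}). On the other hand $L_{V\times I^k}, L_{U \times I^k}$ are conormals to nested honest balls, so by Lemma \ref{continuationballs} their Floer cohomology is $\ZZ$, generated by the continuation element. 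Comparing the two computations forces $HF^\ast(L_V, L_U) = \ZZ$, concentrated in the degree dictated by the gradings of \S\ref{gradorsec} (which are inherited from the zero section, so the relevant degree is $0$ regardless of $k$). I then \emph{define} $1_{VU} \in HF^\ast(L_V, L_U)$ to be the unique class whose image under Künneth (tensored with the continuation generator of $HF^\ast(L_{I^k}, L_{I^k})$) is the continuation element $1_{(V\times I^k)(U\times I^k)}$.

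The next step is to check this is well-defined independent of $k$. Stabilizing further by one more interval factor, $(V \times I^k) \times I \subseteq M \times I^k \times I$, and using associativity/compatibility of the Künneth isomorphism together with the fact that continuation elements are compatible under stabilization (the continuation element for a product of the stabilizing Lagrangian with itself is the tensor of continuation elements, since a positive pushoff of a product is a product of positive pushoffs), one sees that the class extracted at level $k$ and at level $k+1$ agree. A colimit/consistency argument over all sufficiently large $k$ then pins down a single $1_{VU}$. When $U$ and $V$ happen to be honest balls, Lemma \ref{continuationballs} already identifies $HF^\ast(L_V, L_U) = \ZZ$ with the continuation element as generator, and the Künneth-compatibility of continuation elements shows that $1_{VU}$ as just defined \emph{is} that continuation element, giving the parenthetical claim.

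Finally, for the composition law $1_{WV}\, 1_{VU} = 1_{WU}$ for nested stable balls $\overline U \subseteq V$, $\overline V \subseteq W$: stabilize all three by the same $I^k$ to get nested honest balls $U \times I^k \subseteq V \times I^k \subseteq W \times I^k$. The Künneth isomorphism is compatible with the triangle product $\mu^2$ (this is part of the Künneth package from \cite{gpsstructural}), so under Künneth the product $1_{WV}\,1_{VU}$ maps to $(1_{(W\times I^k)(V\times I^k)}\,1_{(V\times I^k)(U\times I^k)}) $ tensored with the self-product of the continuation generator for $L_{I^k}$. For honest nested balls the continuation elements compose to the continuation element — this is the standard functoriality of continuation maps under concatenation of positive isotopies (as recalled in the ``Notation'' subsection: composition of continuation elements is a continuation element, and here all three live in the stable range where $HF = \ZZ$, so there is no room for a discrepancy) — hence the product equals $1_{(W\times I^k)(U\times I^k)}$ tensored with the continuation self-product. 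By the defining property of $1_{WU}$, this is exactly the Künneth image of $1_{WU}$, and since Künneth is an isomorphism we conclude $1_{WV}\,1_{VU} = 1_{WU}$.

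The main obstacle is bookkeeping the compatibility of the Künneth isomorphism with (a) stabilization by an extra factor, (b) the degree/grading and orientation conventions of \S\ref{gradorsec} so that the tensor decomposition is an honest isomorphism of graded groups with the expected signs, and (c) the product $\mu^2$. None of these is conceptually deep — they are all part of the Künneth formalism of \cite{gpsstructural} together with the observation that $L_{A \times B}$ is a smoothing of $L_A \times L_B$ and that continuation elements for products are products of continuation elements — but assembling them carefully is where the real work lies. A secondary point to handle with care is that $L_{U \times I^k}$ is only a \emph{smoothing} of the product $L_U \times L_{I^k}$, so one must note (as in Section \ref{subsec:microsupport} and the ``Conormals'' subsection, where $L_U$ is a rounding of $\ss(\ZZ_U)$) that different smoothings are canonically isotopic through Lagrangians conical at infinity and disjoint at infinity from the relevant stops, hence induce canonical identifications on Floer cohomology.
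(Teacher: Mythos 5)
Your overall strategy — stabilize by $\RR^k$ to turn stable balls into honest balls (Corollary \ref{stableballisstableball}), use the K\"unneth theorem to split the Floer groups, and define $1_{VU}$ by comparing against the honest continuation generator — is exactly the paper's approach, and your treatment of the composition law and the independence-of-$k$ bookkeeping are in the right spirit.

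However, there is a genuine gap in the stabilization step. You multiply $L_U$ and $L_V$ by the \emph{same} Lagrangian $L_{I^k}$, and claim that $U \times I^k$ and $V \times I^k$ are nested balls with $\overline{U \times I^k} \subseteq V \times I^k$, to which Lemma \ref{continuationballs} applies. This is false. If $I$ is open, then $\overline{U \times I^k} = \overline U \times \overline{I^k} \not\subseteq V \times I^k$; if $I$ is closed, then $V \times I^k$ is not an open subset, hence not a ball in the sense of the paper. More concretely, the (unrounded) conormal cones of $U \times I^k$ and $V \times I^k$ share an entire noncompact stratum at infinity, namely the conormals over $U \times \partial(I^k)$, so the two Lagrangians $L_{U \times I^k}$ and $L_{V \times I^k}$ are not disjoint at infinity and the Floer group is not computed by a single transverse intersection point — Lemma \ref{continuationballs} simply does not apply to this pair. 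The K\"unneth side of your argument has the same issue lurking: you use $HF^*(L_{I^k}, L_{I^k})$, which by the paper's convention means $HF^*(L_{I^k}^+, L_{I^k})$, i.e.\ you have implicitly introduced a pushoff in the $\RR^k$ direction, but that pushoff never appears in your geometric picture of ``the same $I^k$ on both sides.''

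The fix is exactly what the paper does: stabilize $L_U$ by $L_{B_1(0)}$ and $L_V$ by $L_{B_2(0)}$ for two concentric balls $\overline{B_1(0)} \subseteq B_2(0) \subseteq \RR^k$. Then $\overline{U \times B_1(0)} = \overline{U} \times \overline{B_1(0)} \subseteq V \times B_2(0)$ is a genuine strict nesting of open sets with smooth boundary (after rounding corners), Lemma \ref{continuationballs} applies to give $HF^*(L_{V \times B_2(0)}, L_{U \times B_1(0)}) = \ZZ$ generated by a continuation element, and the K\"unneth formula cleanly reads $HF^*(L_V, L_U) \otimes HF^*(L_{B_2(0)}, L_{B_1(0)})$ with the second factor already $\ZZ$ generated by a continuation element — no hidden pushoff needed. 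With that replacement, the rest of your argument (the comparison, the independence of $k$, and the composition law via the same stabilization) goes through as in the paper.
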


\begin{proof}
We multiply by $L_U,L_V$ by $L_{B_{1}(0)},L_{B_{2}(0)} \subseteq T^*\RR^k$ where $k$ is sufficiently large to guarantee that $U\times B_1(0)$ and $V\times B_2(0)$ are balls by Corollary \ref{stableballisstableball}.
By the K\"unneth formula for Floer cohomology (see e.g.\ \cite[Lem.\ 8.3]{gpsdescent}) and freeness of $HF^*(L_{B_{2}(0)}, L_{B_{1}(0)})$ (compare \cite[Rmk.\ 8.4]{gpsdescent}), we have
$$HF^*(L_V \times L_{B_{2}(0)}, L_U \times L_{B_{1}(0)}) = HF^*(L_V, L_U) \otimes HF^*(L_{B_{2}(0)}, L_{B_{1}(0)}) = HF^*(L_V, L_U).$$
On the other hand, by the result for balls Lemma \ref{continuationballs}, we have
$$HF^*(L_V \times L_{B_{2}(0)}, L_U \times L_{B_{1}(0)})=HF^*(L_{V \times B_{2}(0)}, L_{U \times B_{1}(0)})=\ZZ.$$
After arguing that the above identification is compatible with rounding of corners, this defines the canonical generator $1_{VU}\in HF^*(L_V, L_U)$.
The proof that $1_{WV} 1_{VU} = 1_{WU}$ is the same: stabilize to reduce to the corresponding fact for honest continuation maps.
\end{proof}

In order to make sense of the next corollary, recall that Lemma \ref{continuationballs} and Proposition \ref{fakecontinuation} continue to apply in the limiting situation in which $L_U$ is replaced by a cotangent fibre $L_p = T_p^*M$ for some $p \in V$.

\begin{corollary} \label{squish}
Let $U \subseteq M$ be any stable ball.  
Then the pseudo-continuation element $L_U\to T^*_pM$ is an isomorphism in $\W(T^*M)$ for any point $p\in U$. 
\end{corollary}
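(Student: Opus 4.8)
The plan is to exhibit $L_U$ as an iterated colimit of cotangent fibers using the ``continuation maps'' $1_{VU}$ constructed in Proposition \ref{fakecontinuation}, and to recognize this colimit as a single cotangent fiber. Concretely, fix $p \in U$. First I would choose a cofinal decreasing sequence of stable balls $U = U_0 \supseteq \overline{U_1} \supseteq U_1 \supseteq \cdots$ with $\bigcap_i U_i = \{p\}$ and with $U_i$ shrinking to $p$; by Corollary \ref{fakecontinuation} the structure maps $1_{U_i U_{i+1}} \colon L_{U_i} \to L_{U_{i+1}}$ (really in the sense of $HF^*(L_{U_i}, L_{U_{i+1}}) = \ZZ$, reversing the direction as appropriate for $\W$) compose correctly, so $\{L_{U_i}\}$ forms a directed system in $\W(T^*M)$. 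The key geometric input is cofinality: the isotopy $L_{U_i} \leadsto L_{U_{i+1}}$ is a positive wrapping supported near $\partial_\infty L_{U_i}$, and as $i \to \infty$ the Legendrians $\partial_\infty L_{U_i}$ escape every compact subset of $S^*M$ (they concentrate over $p$ and their conormal directions spread out to fill the whole cosphere $S^*_p M$). By Lemma \ref{cofinalitycriterion} this sequence is a cofinal wrapping of $L_U$, hence $\colim_i L_{U_i} = L_U$ in the wrapped category — more precisely, $HW^*(L_U, K) = \varinjlim_i HF^*(L_{U_i}, K)$ for any test object $K$.

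Next I would identify this colimit with the cotangent fiber. As $U_i$ shrinks to $p$ while its conormal sweeps out all codirections, $L_{U_i}$ limits (as an exact Lagrangian, after the wrapping) to $T^*_p M$; said differently, a small positive pushoff of $L_{U_i}$ for $i$ large agrees with $T^*_p M$ up to a compactly supported exact isotopy in the complement of $\partial_\infty K$ for any fixed $K$, so the continuation maps $HF^*(L_{U_i}, K) \to HF^*(T^*_p M, K)$ are eventually isomorphisms. This is exactly the situation described after Lemma \ref{cofinalitycriterion}: one has a cofinal sequence along which the maps $HF^*(L_{U_i}, K) \to HF^*(L_{U_{i+1}}, K)$ stabilize to $HW^*(L_U, K) = HF^*(T^*_p M, K) = HW^*(T^*_p M, K)$. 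Since this holds for all $K$ in a generating collection (the cotangent fibers themselves, by Theorem \ref{generation}), the induced map $L_U \to T^*_p M$ is an isomorphism in $\W(T^*M)$; one checks it is the map of Proposition \ref{fakecontinuation} by noting that map is precisely the composite continuation element.

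I expect the main obstacle to be making the limiting identification ``$L_{U_i} \to T^*_p M$'' precise and checking it is compatible with the canonical generators of Proposition \ref{fakecontinuation}. The cleanest route is again to stabilize: multiply everything by $L_{B_r(0)} \subseteq T^*\RR^k$ for $k$ large so that the $U_i \times B_{r_i}(0)$ are honest balls (Corollary \ref{stableballisstableball}), apply the K\"unneth formula (as in the proof of Proposition \ref{fakecontinuation}) to reduce to the case of genuine balls, and for genuine balls observe directly that a nested cofinal sequence of conormals $L_{B_{\epsilon_i}(p)}$ wraps to $T^*_p M$ via honest continuation maps — this last fact is geometrically transparent since one can literally isotope the rounded conormal of a small ball about $p$ to the fiber $T^*_p M$ by a positive wrapping escaping to infinity. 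Pulling back along the K\"unneth equivalence then yields the unstabilized statement, including the identification of the generator.
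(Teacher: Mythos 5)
Your first two paragraphs propose a colimit argument that does not work, for two separate reasons, both tied to the same sign confusion.

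First, shrinking the stable balls $U_i$ to a point is a \emph{negative}, not positive, wrapping of $L_U$. The paper's conventions state explicitly that ``positive Reeb flow pushes outward conormals out,'' i.e.\ $L_{U^+} = L_U^+$. So the isotopy $L_{U_i}\leadsto L_{U_{i+1}}$ with $U_{i+1}\subset U_i$ moves the Legendrian at infinity in the negative Reeb direction. Consequently the sequence $\{L_{U_i}\}$ does not exhaust the positive wrapping of $L_U$, and the claimed identity $HW^*(L_U,K)=\varinjlim_i HF^*(L_{U_i},K)$ is computing the wrong thing (positively wrapping the first argument means \emph{expanding} the ball).

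Second, even setting aside the sign, the hypothesis of Lemma \ref{cofinalitycriterion} fails: as $U_i\to\{p\}$ the Legendrians $\partial_\infty L_{U_i}$ do not escape every compact subset of $S^*M$ — they converge to the cosphere fiber $S^*_pM$, which is compact. So the lemma gives no cofinality statement here. (Your parenthetical ``their conormal directions spread out to fill the whole cosphere $S^*_pM$'' is true, but it is exactly the opposite of escaping to infinity.) Thus the colimit identification and the reduction to a single fiber do not follow from the stated ingredients, and I do not see how to salvage the argument along these lines without essentially re-deriving the answer by another route.

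Your final paragraph is much closer to the mark and is, in spirit, the paper's proof: stabilize by $L_{B_r(0)}\subseteq T^*\RR^k$ so that $U\times B_r(0)$ is an honest ball (Corollary \ref{stableballisstableball}), embed fully faithfully via K\"unneth, and observe that after stabilization the map of Proposition \ref{fakecontinuation} is by definition an honest continuation map, which is an isomorphism in the wrapped category of the ball. But even there the direction is reversed: $L_{B_\epsilon(p)}$ is a small \emph{positive} pushoff of $T^*_pM$, so the wrapping from $T^*_pM$ to $L_{B_\epsilon(p)}$ is positive, not a ``positive wrapping escaping to infinity'' from $L_{B_\epsilon(p)}$ down to the fiber. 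One additional step the paper takes, which you omit, is to first push forward to reduce to $M = U^+$ before invoking K\"unneth; this is needed to land in a category where K\"unneth and generation statements apply cleanly.
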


\begin{proof}
Note that this corollary is tautologically true if $U$ is a ball, as genuine continuation elements by definition are isomorphisms in the wrapped Fukaya category.
By pushing forward, it suffices to treat the case $M=U^+$. 
Appealing to the fully faithful K\"unneth embedding 
(see \cite[Thm.\ 1.5]{gpsdescent}) 
$\W(T^*U^+) \otimes \W(T^*I^k) \hookrightarrow\W(T^*(U^+\times I^k))$, it further suffices to show the result after taking (the image under this embedding of) the product of this pseudo-continuation element with the continuation element $L_{I^k} \to \fiber$ (which is an isomorphism in $\W(T^* I^k)$).
The pseudo-continuation element $L_U\to\fiber$ is, by definition, sent by this stabilization to the continuation element $L_{U\times I^k}\to\fiber$ (which is defined since the stabilized stable ball $U^+\times I^k$ is a ball). This latter map is an isomorphism so we are done.
\end{proof}

Here is an improved version of Lemma \ref{continuationballinsomething}:

\begin{lemma} \label{fakecontinuationballinsomething}
For any stable ball $U\subseteq M$ whose closure is contained in $W\subseteq M$, there is canonical isomorphism $HF^*(L_W, L_U) = \ZZ$, such that for an inclusion $\overline U\subseteq V$ of such stable balls, the pseudo-continuation map $\ZZ=HF^*(L_W,L_V)\to HF^*(L_W,L_U)=\ZZ$ (multiplication by the pseudo-continuation element in $HF^*(L_V,L_U)$) is the identity on $\ZZ$.
\end{lemma}

\begin{proof}
Stabilize to reduce to Lemma \ref{continuationballinsomething}.
\end{proof}

There is similarly an improved version of Lemma \ref{deformtozero}:

\begin{lemma} \label{fakedeformtozero}
Let $V$ be an open set with smooth boundary, and let $U$ be stable ball such that
$U \cap \partial V$ is also a stable ball. 
Then  $HF^*(L_U, L_V) = 0 = HF^*(L_V, L_U)$.
\end{lemma}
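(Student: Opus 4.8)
The plan is to run the stabilization argument already used for Proposition~\ref{fakecontinuation} and Corollary~\ref{squish}, thereby reducing to the case of honest balls, and then to reduce further to the local situation of Lemma~\ref{deformtozero}. I would begin with a purely topological observation: under the hypotheses, $A:=U\cap V$ and $B:=U\setminus\overline V$ are \emph{also} stable balls. Indeed $D:=U\cap\partial V$ is a bicollared codimension-one submanifold-with-boundary of $U$ (restrict a bicollar of the closed hypersurface $\partial V\subseteq M$) separating $U$ into the open sets $A$ and $B$, which are nonempty since otherwise $U$ would lie in $V$ or in $M\setminus\overline V$, contradicting $D\ne\emptyset$. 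A Mayer--Vietoris and van~Kampen computation, using that $U$ and $D$ are contractible, then shows $A$ and $B$ are acyclic and simply connected, hence contractible by Hurewicz and Whitehead; being open with smooth boundary they are stable balls.

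Next I would stabilize. Choose $k$ with $\dim U-1+k$ large, let $I^k\subseteq\RR^k$ be the open unit cube, and set $U':=U\times I^k$ and $V':=V\times I^k$ in $M\times\RR^k$. By the K\"unneth theorem for Floer cohomology (\cite[Lemma~6.3]{gpsstructural}), after the usual identification $L_U\times L_{I^k}=L_{U\times I^k}$ up to rounding of corners (as in Proposition~\ref{fakecontinuation}),
\[
HF^\ast(L_{U'},L_{V'})=HF^\ast(L_U,L_V)\otimes HF^\ast(L_{I^k},L_{I^k})=HF^\ast(L_U,L_V),
\]
using $HF^\ast(L_{I^k},L_{I^k})=HF^\ast(L_{(I^k)^+},L_{I^k})=\ZZ$ from Lemma~\ref{continuationballs}, and symmetrically $HF^\ast(L_{V'},L_{U'})=HF^\ast(L_V,L_U)$. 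Since no point of $U'$ has cube-coordinate in $\partial I^k$, one has $U'\cap\partial V'=D\times I^k$, $U'\cap V'=A\times I^k$ and $U'\setminus\overline{V'}=B\times I^k$, so by Corollary~\ref{stableballisstableball} and the observation above, $U'$ and all three of these sets are honest balls (after rounding corners); moreover, perturbing $U'$ slightly we may assume $\partial U'\pitchfork\partial V'$ without disturbing any of this. Thus it suffices to prove: if $U'$ is a ball, $V'$ is open with smooth boundary, $\partial U'\pitchfork\partial V'$, and each of $U'\cap\partial V'$, $U'\cap V'$, $U'\setminus\overline{V'}$ is a ball, then $HF^\ast(L_{U'},L_{V'})=0=HF^\ast(L_{V'},L_{U'})$.

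For this I would produce an ambient isotopy $\psi_t$ of $M\times\RR^k$ with $\psi_0=\id$, fixing $\partial V'$ setwise, and with $\psi_1(U')$ an $\epsilon$-ball centered at some point of $\partial V'$. Since $\psi_t$ fixes $\partial V'$ and $\partial U'\pitchfork\partial V'$ initially, $\partial\psi_t(U')\pitchfork\partial V'$ for all $t$, so the Legendrians $\partial_\infty L_{\psi_t(U')}$ and $\partial_\infty L_{V'}$ remain disjoint throughout the isotopy; hence (compare \cite[Lem.~3.21]{gpssectorsoc}) $HF^\ast(L_{U'},L_{V'})=HF^\ast(L_{\psi_1(U')},L_{V'})$ and likewise with the two Lagrangians interchanged. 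As $\psi_1(U')$ is an $\epsilon$-ball centered on $\partial V'$, Lemma~\ref{deformtozero} shows both groups vanish; combined with the previous paragraph this proves the lemma.

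The main obstacle is the construction of $\psi_t$: the geometric fact that, in a sufficiently high dimension, a ball $U'$ straddling the hypersurface $\partial V'$ and cut by the ball $U'\cap\partial V'$ into the two balls $U'\cap V'$ and $U'\setminus\overline{V'}$ is \emph{standard}, meaning that a neighborhood of $\overline{U'}$ admits a diffeomorphism onto a neighborhood of a round ball in $\RR^n$ carrying $\partial V'$ onto a linear hyperplane; this follows from the usual consequences of the $h$-cobordism and smooth Schoenflies theorems in that dimension range. Isotopy extension then promotes this to the required ambient $\psi_t$, supported near $\overline{U'}$ and preserving $\partial V'$, and in the standard model one simply shrinks the ball toward a point of the hyperplane. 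This is exactly why one must stabilize first: there is no such clean normalization in low dimensions, which is also the reason Lemma~\ref{deformtozero} is stated only for small balls. Everything else --- the topological observation, the stabilization bookkeeping, rounding of corners, and the Floer-theoretic invariance --- is routine or already available.
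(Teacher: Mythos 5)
Your argument is correct and reconstructs what the paper's one-sentence proof leaves implicit: the paper just says ``Stabilization reduces to Lemma~\ref{deformtozero} (note that $U \cap \partial V$ necessarily divides $U$ into two stable balls).'' Your topological observation matches that parenthetical, and the stabilization, the $h$-cobordism standardization of the straddling configuration, and the ambient isotopy to an $\epsilon$-ball preserving transversality with $\partial V'$ are exactly the steps needed to carry out the reduction to Lemma~\ref{deformtozero}.
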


\begin{proof}
Stabilization (multiplying both $U$ and $V$ by $I^k$) and appealing to the K\"{u}nneth formula for Floer cohomology (as in the proof of Lemma \ref{fakecontinuation}) reduces this proof to Lemma \ref{deformtozero} (note that $U \cap \partial V$ necessarily divides $U$ into two stable balls).
\end{proof}

A more subtle result about stable balls is the following, which will be important later:

\begin{proposition} \label{relsquish}
Let $X^m \subseteq Y^n$ be an inclusion of stable balls, with $\partial X \subseteq \partial Y$.
Assume there exists another stable ball (with corners) $Z^{m +1} \subseteq Y^n$ such that $\partial Z$ is the union of $X$ with a smooth submanifold
of $\partial Y$.
Then the pseudo-continuation element $L_Y \to L_{B_\epsilon(x)}$ is an isomorphism in $\W(T^*Y, N^*_\infty X)$ for any $x \in X$.
\end{proposition}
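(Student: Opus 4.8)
The plan is to produce an explicit positive isotopy exhibiting the map of Proposition \ref{fakecontinuation} as an isomorphism, after first reducing to the case where $X$, $Y$, $Z$ are honest balls. Stabilizing by $I^k$ for $k$ large (so that $\dim X+k$, $\dim Z + k$, $\dim Y+k\ge 6$ and $k\ge 2$), the triples $(Y\times I^k,X\times I^k,Z\times I^k)$ satisfy the same hypotheses after smoothing corners, and now $X\times I^k$, $Z\times I^k$, $Y\times I^k$ are all balls by Corollary \ref{stableballisstableball}. As in the proofs of Proposition \ref{fakecontinuation} and Corollary \ref{squish}, the K\"unneth embedding $\W(T^*Y,N^*_\infty X)\hookrightarrow\W(T^*(Y\times I^k),N^*_\infty(X\times I^k))$ (tensoring with $L_{B_1(0)}\subseteq T^*\RR^k$, cf.\ \cite[Theorem 1.5]{gpsstructural}) is fully faithful and carries $L_Y$, $L_{B_\epsilon(x)}$, and the morphism $L_Y\to L_{B_\epsilon(x)}$ to the stabilized data; since fully faithful functors reflect isomorphisms, it suffices to treat the stabilized problem. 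So henceforth assume $X$, $Y$, $Z$ are balls. Note that since $X$ meets $\partial Y$ transversally (the meaning of $\partial X\subseteq\partial Y$), both $L_Y$ and $L_{B_\epsilon(x)}$ (for $x\in X^\circ$) are disjoint at infinity from $N^*_\infty X$, hence are objects of $\W(T^*Y,N^*_\infty X)$.

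The heart of the matter is to build a proper function $f:Y^\circ\to\RR$, bounded below, with a single nondegenerate critical point at $x$, such that moreover $f|_{X^\circ}$ has a single critical point at $x$, and with $f^{-1}(-\infty,0)=B_\epsilon(x)$ and $f^{-1}(-\infty,1)=Y^{-\delta}$. The hypothesis on $Z$ is exactly what makes this possible: in general it is obstructed (a knotted arc $X\subseteq Y=B^3$ admits no function on $Y$ with a single critical point whose restriction to $X$ has a single critical point). To construct $f$: take a tubular neighborhood $N(Z)\cong\tilde Z\times D^{n-m-1}$ of the smoothed ball $\tilde Z$ inside $Y$, in which $X$ is identified with (the $X$-face of $\partial\tilde Z$)$\times\{0\}$. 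Using that $(\tilde Z,X)$ is diffeomorphic to a standard half-disk in the stabilized dimension ($h$-cobordism/Schoenflies), choose $g:\tilde Z\to\RR$ with a single critical point at $x$ whose restriction $g|_X$ also has a single critical point at $x$, and set $f=g+\|w\|^2$ on $N(Z)$. A direct computation shows the critical points of $f|_X$, as a submanifold of $Y$, are exactly those of $g|_X$. Since $X\subseteq N(Z)$, extending $f$ over $Y\setminus N(Z)$ (a collar-type cobordism onto $\partial Y$, by contractibility and $h$-cobordism) without new critical points, and arranging the prescribed sublevel sets near $x$ and near $\partial Y$, completes the construction.

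With $f$ in hand, set $U_t:=f^{-1}(-\infty,t)$ for $t\in[0,1]$: an increasing family of relatively compact open sets with smooth boundary (every $t\in[0,1]$ is a regular value, the only critical value being $f(x)<0$), with $U_0=B_\epsilon(x)$ and $U_1=Y^{-\delta}$. For each such $t$, $f^{-1}(t)$ meets $X$ only inside $X^\circ$ and transversally, since $t$ is not a critical value of $f|_{X^\circ}$; transversality forces the outward conormal of $f^{-1}(t)$ to be non-conormal to $X$ at each intersection point, so $\partial_\infty L_{U_t}\cap N^*_\infty X=\emptyset$. Since the $U_t$ increase, $\{L_{U_t}\}_{t\in[0,1]}$ is a positive isotopy of exact conical Lagrangians in $T^*Y$ lying in the complement of $N^*_\infty X$; hence the associated continuation map $L_{U_1}\to L_{U_0}=L_{B_\epsilon(x)}$ is an isomorphism in $\W(T^*Y,N^*_\infty X)$. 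Composing with the isomorphism $L_Y\xrightarrow{\sim}L_{Y^{-\delta}}=L_{U_1}$ given by a small negative pushoff (again a positive isotopy in the complement of $N^*_\infty X$, since $L_Y$ and its small pushoffs are disjoint at infinity from $N^*_\infty X$ by transversality of $X$ and $\partial Y$) yields an isomorphism $L_Y\xrightarrow{\sim}L_{B_\epsilon(x)}$. This composite is a continuation map $L_Y\to L_{B_\epsilon(x)}$, so since $Y$ and $B_\epsilon(x)$ are balls it agrees with the map of Proposition \ref{fakecontinuation} (there $HF^*(L_Y,L_{B_\epsilon(x)})=\ZZ$ and the generator \emph{is} the continuation map for balls). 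This proves the stabilized statement, and hence the proposition.

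The hard part will be the construction of $f$ in the second step — equivalently, the differential-topological input that the filling $Z$ makes $X$ sufficiently unknotted inside $Y$ (after stabilizing) for an adapted Morse function, with a single critical point on $Y$ \emph{and} on $X$, to exist; it is here, and only here, that the hypothesis on $Z$ is genuinely used. The remaining, Fukaya-theoretic, steps — the K\"unneth reduction, the passage from positive isotopies in the complement of a stop to isomorphisms in the partially wrapped category, and the identification of the resulting map with the one of Proposition \ref{fakecontinuation} — are routine given Proposition \ref{fakecontinuation}, Corollary \ref{squish}, and the structural results of \cite{gpsstructural}.
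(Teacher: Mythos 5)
Your proof is correct and follows essentially the same route as the paper: stabilize via K\"unneth so that $X$, $Y$, $Z$ all become honest balls (using Corollary~\ref{stableballisstableball}), note that under stabilization the map from Proposition~\ref{fakecontinuation} becomes an honest continuation map, then use $Z$ to show the triple is topologically standard (Schoenflies/$h$-cobordism) and exhibit a positive isotopy from $L_{B_\epsilon(x)}$ to $L_Y$ disjoint at infinity from $N^*_\infty X$. The one place where you differ is cosmetic but worth noting: the paper, once it has reduced to the standard configuration ($Y$ = unit ball, $X$ = linear slice, $Z$ = linear half-disk, obtained by pushing $X$ across $Z$ to $Z\cap\partial Y$), simply observes that the required positive isotopy is manifestly disjoint from $N^*_\infty X$; you instead build a Morse function $f$ on $Y^\circ$ with a single critical point whose restriction to $X$ also has a single critical point, and take sublevel sets. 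Both arguments are using the same topological input — the standardness of $(\tilde Z,X)$ and of $Y\setminus N(Z)$ after stabilization — and your Morse-function packaging is slightly heavier (one must verify properness, that the extension over $Y\setminus N(Z)$ introduces no critical points, and that the level sets have the prescribed form at the endpoints), but it does have the virtue of making the isotopy, and the role of the hypothesis on $Z$, completely explicit. In particular your side remark that without $Z$ the statement can fail (knotted arcs in $B^3$) is a useful observation that the paper leaves implicit.
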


\begin{proof}
By stabilization, we reduce to the case that $X$, $Y$, and $Z$ are all balls.
This implies that, up to diffeomorphism, everything is standard: $Y$ is the unit ball, $X$ is the intersection of $Y$ with a linear subspace, and $Z$ is the intersection of $Y$ with a linear halfspace.
Indeed, since $X$ and $Z$ are balls, we can use $Z$ to push $X$ to $Z\cap\partial Y$, thus showing that $X$ is simply a slight inward pushoff of the ball $Z\cap\partial Y\subseteq\partial Y$.

By definition, the pseudo-continuation element becomes the continuation element under stabilization (i.e., after multiplying by the continuation isomorphism from the conormal of a large ball to that of a small ball as in the proof of Corollary \ref{squish}). Once everything is standard, the continuation map $L_Y \to L_{B_\epsilon(x)}$ is an isomorphism, since there is a positive isotopy $L_{B_\epsilon(x)}\leadsto L_Y$ disjoint from $N^*_\infty X$ at infinity.
\end{proof}

We will apply Proposition \ref{relsquish} when (before rounding) $X$ is a simplex in a triangulation, $Y$ is its star, and $Z$ is any simplex containing $X$ of dimension one larger.

\subsection{Fukaya categories of conormals to stars}\label{fukayacalculationssec}

Let $\SSS$ be a Whitney stratification of $M$ by locally closed smooth submanifolds.
For an $\SSS$-constructible open set $U$, we abuse notation and denote by $L_U$ the conormal of the inward cornering of $U$ with respect to $\SSS$ in the sense of Section \ref{constructibleconormals}.
More precisely, we have $L_U:=L_{U^{-\underline\epsilon}}$ for $\underline\epsilon\in\RR^\SSS_{>0}$ satisfying $\epsilon_\alpha\leq f((\epsilon_\beta)_{\beta\subsetneqq\alpha})$.
Recall that $L_{U^{-\underline\epsilon}}$ is disjoint from $N^*\SSS$ at infinity but limits to it as $\underline\epsilon\to 0$.

\begin{lemma} \label{corneringstopped}
Let $L$ be any Lagrangian disjoint at infinity from $N^* \SSS$.  Then for all $\underline \epsilon>0$ sufficiently small, 
$CF^*(L_{U^{- \underline \epsilon}}, L) \xrightarrow{\sim} CW^*(L_{U^{- \underline \epsilon}}, L)_{N^*_\infty\SSS}$.
\end{lemma}

\begin{proof}
Taking $\underline\epsilon\to 0$ is a positive wrapping of $L_{U^{- \underline \epsilon}}$ (compare Section \ref{conormalssec}) which converges to (while remaining disjoint from) $N^*_\infty\SSS$.
It is thus cofinal by Lemma \ref{cofinalitycriterion}.
\end{proof}

Now assume further that $\SSS$ is a triangulation, and let us consider the conormals to (the inward cornernings of) open stars $L_{\star(s)}\in\W(T^*M,N^*_\infty\SSS)$.
Since $\star(s)$ is contractible, $L_{\star(s)}$ is the conormal to a stable ball (Lemma \ref{perturbationdiffeo}), and hence the results about stable balls from Section \ref{secstableballs} above apply, allowing us to deduce the following:

\begin{proposition}\label{homsbetweenstars}
We have
\begin{equation}
HW^*(L_{\star(s)},L_{\star(t)})_{N^*_\infty\SSS}=\begin{cases}\ZZ&t\to s\\0&\textrm{otherwise}\end{cases}
\end{equation}
generated in the former case by the pseudo-continuation element.
\end{proposition}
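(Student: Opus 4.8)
The plan is to reduce the computation to an ordinary Floer cohomology and then settle it using the results of this subsection on conormals of stable balls, organized by the combinatorics of stars. First I would apply Corollary~\ref{corneringstopped}. Fix, for each stratum $u$, a sufficiently small perturbation vector $\underline\varepsilon_u$: small enough that $L_{\star(u)^{-\underline\varepsilon_u}}$ is disjoint at infinity from $N^*_\infty\SSS$ (Proposition~\ref{conormalconvergence}) and from every other $L_{\star(u')^-}$, and with $\underline\varepsilon_u$ componentwise much smaller than $\underline\varepsilon_{u'}$ whenever $u'\to u$ (possible since $\to$ is a locally finite partial order of height $\le\dim M+1$). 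Then Corollary~\ref{corneringstopped} identifies $HW^*(L_{\star(s)},L_{\star(t)})_{N^*_\infty\SSS}$ with the ordinary Floer cohomology $HF^*(L_{\star(s)^-},L_{\star(t)^-})$ of the conormals of the stable balls $\star(s)^-$ and $\star(t)^-$; in computing the latter I am free to isotope either conormal in the complement of the boundary at infinity of the other.

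Recall that for a triangulation $t\to s$ is equivalent to $\star(t)\subseteq\star(s)$. If $t\to s$, then every stratum deleted in forming $\star(s)^-$ is deleted, to a strictly larger radius, in forming $\star(t)^-$, so $\overline{\star(t)^-}\subseteq\star(s)^-$; Proposition~\ref{fakecontinuation} then gives $HF^*(L_{\star(s)^-},L_{\star(t)^-})=\ZZ$, generated by $1_{\star(s)^-\star(t)^-}$. Transported through the continuation isomorphism of Corollary~\ref{corneringstopped}, this is the asserted generator, which is well defined independently of the chosen parameters by the composition law $1_{WV}1_{VU}=1_{WU}$ of Proposition~\ref{fakecontinuation}.

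If $t\not\to s$, i.e.\ $\star(t)\not\subseteq\star(s)$, pick $q\in\star(t)\setminus\star(s)$; since $\overline{\star(s)^-}\subseteq\star(s)$ we have $q\notin\overline{\star(s)^-}$. Inside the open set $\star(t)$ I would isotope the (rounded) inward perturbation $\star(t)^-$ — through open sets with smooth boundary, keeping the moving boundary transverse to $\partial(\star(s)^-)$ — so as first to engulf $q$ and then to shrink onto a small stable ball $W\ni q$ with $\overline W\cap\overline{\star(s)^-}=\emptyset$. The resulting isotopy of $L_{\star(t)^-}$ to $L_W$ lies in the complement of $\partial_\infty L_{\star(s)^-}$: wherever the moving base hypersurface meets $\partial(\star(s)^-)$ the two are transverse, so their outward conormals differ and the Legendrians stay disjoint at infinity. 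Hence $HF^*(L_{\star(s)^-},L_{\star(t)^-})=HF^*(L_{\star(s)^-},L_W)=0$, the last vanishing because $L_W$ and $L_{\star(s)^-}$ are disjoint. (In the disjoint-star subcase one can instead note $\overline{\star(s)^-}\cap\overline{\star(t)^-}=\emptyset$ directly; and the whole case can be packaged as an instance of Lemma~\ref{fakedeformtozero}.)

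The step I expect to be the main obstacle is this last case: verifying that the displacing isotopy really can be carried out while keeping the boundary at infinity away from $\partial_\infty L_{\star(s)^-}$ — equivalently, that one is never forced into a Floer computation between Lagrangians meeting in more than one point. This is exactly where the stable-ball technology of the previous subsection, and behind it Whitney~(b) as used in Proposition~\ref{conormalconvergence} to control the boundaries of the inward perturbations, is doing the real work; once the passage to ordinary $HF^*$ is in hand, the rest is bookkeeping of perturbation parameters.
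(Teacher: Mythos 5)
Your reduction to ordinary Floer cohomology via Corollary~\ref{corneringstopped}, and your treatment of the case $t\to s$ via Proposition~\ref{fakecontinuation}, match the paper. In the vanishing case $t\not\to s$, however, there is a real gap: you propose an explicit isotopy of $\star(t)^-$ onto a small ball $W$ disjoint from $\star(s)^-$, asserting that it can be carried out ``keeping the moving boundary transverse to $\partial(\star(s)^-)$.'' That assertion is precisely what needs to be proved, not assumed. Transversality of the boundaries does guarantee the outward conormals differ pointwise, but there is no a priori reason a path of open sets shrinking $\star(t)^-$ down onto $W$ can avoid moments of same-cooriented tangency with $\partial(\star(s)^-)$; generically such tangencies are unavoidable, and at such a moment $\partial_\infty L$ would cross $\partial_\infty L_{\star(s)^-}$, invalidating the Floer-invariance step. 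You correctly flag this as the main obstacle and correctly point at Lemma~\ref{fakedeformtozero} as the tool, but you never verify its hypothesis.

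The paper closes exactly this gap. It applies Lemma~\ref{fakedeformtozero} with $V=\star(s)^{-\underline\delta}$, $U=\star(t)^{-\underline\varepsilon}$, and checks the hypothesis that $U\cap\partial V$ is a stable ball: after noting (in the non-disjoint subcase) that $\star(s)\cap\star(t)=\star(r)$ for the simplex $r$ spanned by the vertices of $s$ and $t$, it identifies $\star(t)^{-\underline\varepsilon}\cap\partial\star(s)^{-\underline\delta}$ up to homotopy with the star of $t$ in the link of $s$, which is contractible. This is the combinatorial input your isotopy argument is implicitly relying on; the stabilization in Lemma~\ref{fakedeformtozero} then manufactures the honest non-characteristic isotopy you wanted. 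So the missing step in your proof is the verification that $\star(t)^-\cap\partial(\star(s)^-)$ is contractible, and the route to it is the simplicial-complex observation about stars and links, not a direct geometric displacement argument in $M$.
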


\begin{proof}
Fix a small $\underline\epsilon>0$ and let $\underline\delta\to 0$.
By Lemma \ref{corneringstopped}, the wrapped Floer cohomology $HW^*(L_{\star(s)},L_{\star(t)})$ is calculated by $HF^*(L_{\star(s)^{-\underline\delta}},L_{\star(t)^{-\underline\epsilon}})$.

Now if $t\to s$, then $\star(t)^{-\underline\epsilon}\subseteq \star(s)^{-\underline\delta}$ is an inclusion of stable balls, so by Proposition \ref{fakecontinuation} $HF^*(L_{\star(s)^{-\underline\delta}},L_{\star(t)^{-\underline\epsilon}})=\ZZ$ is generated by the pseudo-continuation element.

Now suppose that $t\nrightarrow s$.
If $\star(s)\cap\star(t)=\emptyset$, then the desired vanishing is trivial.
Otherwise, we have $\star(s)\cap\star(t)=\star(r)$ where $r$ is the simplex spanned by the union of the vertices of $s$ and $t$.
To show the desired vanishing, it suffices by Proposition \ref{fakedeformtozero} to show that $\star(t)^{-\underline\epsilon}\cap\partial\star(s)^{-\underline\delta}$ is a stable ball, which is the content of Lemma \ref{strangesimplexintersectionisstableball}. 
\end{proof}

It will be convenient to have another perspective on the objects $L_{\star(s)}$.
Let $L_s$ denote the conormal to a small ball centered at any point on the stratum $s$ (this conormal is disjoint from $N^*_\infty\SSS$ at infinity by Whitney's condition (b), compare with the proof of Lemma \ref{tubetransverse}); this is well defined up to Lagrangian isotopy.
One reason the $L_s$ are nice to consider is the following calculation:

\begin{lemma}\label{Lshoms}
For any $\SSS$-constructible open set $U$, we have
\begin{equation}
HW^*(L_U,L_s)_{N^*_\infty\SSS}=\begin{cases}\ZZ&\star(s)\subseteq U\\ 0&\textrm{otherwise.}\end{cases}
\end{equation}
\end{lemma}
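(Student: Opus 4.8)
The plan is to use the cofinality results of this subsection to replace the wrapped Floer cohomology by an ordinary Floer cohomology of two transverse Lagrangians, and then to carry out that computation by hand, distinguishing the cases $\star(s)\subseteq U$ and $\star(s)\not\subseteq U$. Recall that $L_U$ stands for the conormal $L_{U^{-\underline\varepsilon}}$ to the inward perturbation of $U$ (for $\underline\varepsilon$ sufficiently small) and $L_s$ for the conormal $L_B$ to a small ball $B=B_\rho(p)$ about a chosen point $p$ of the stratum $s$, which is disjoint at infinity from $N^*_\infty\SSS$ by Whitney's condition (b). Applying Corollary \ref{corneringstopped} with $L=L_s$ gives $HW^*(L_U,L_s)_{N^*_\infty\SSS}=HF^*(L_{U^{-\underline\varepsilon}},L_s)$ for $\underline\varepsilon$ sufficiently small; since $HW^*$ is independent of all these choices, we are free to shrink $\rho$ and $\underline\varepsilon$ as convenient, and it remains to compute this Floer group. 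Note finally that, $U$ being open and $\SSS$-constructible, $\star(s)\subseteq U$ is equivalent to $s\subseteq U$ and hence to $p\in U$; as $s$ is a single stratum, the only alternative is $s\subseteq M\setminus U$.

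Suppose first that $\star(s)\subseteq U$, so $p\in U$. Choose $B$ with $\overline B\subseteq U$ and then $\underline\varepsilon$ small enough that $\overline B$ lies in the interior of $U^{-\underline\varepsilon}$; this is possible because $\overline B$ is a compact subset of the open set $U$, hence disjoint from $M\setminus U$, while $\overline{U^{-\underline\varepsilon}}\setminus U^{-\underline\varepsilon}$ is contained in the $(\max_i\varepsilon_i)$-neighborhood of $M\setminus U$. I would then compute $HF^*(L_{U^{-\underline\varepsilon}},L_B)$ by moving $L_B$. Shrinking the ball exhibits $L_B$ as Lagrangian isotopic, through exact conical (graded) Lagrangians whose trace at infinity lies over $\overline B$, to the cotangent fiber $T^*_pM$; that trace is disjoint from $\partial_\infty L_{U^{-\underline\varepsilon}}$, which lies over $\partial U^{-\underline\varepsilon}$, so $HF^*(L_{U^{-\underline\varepsilon}},L_B)=HF^*(L_{U^{-\underline\varepsilon}},T^*_pM)$. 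Since $p$ is an interior point of $U^{-\underline\varepsilon}$, the conormal $L_{U^{-\underline\varepsilon}}$ coincides with the zero section near $p$, and therefore meets $T^*_pM$ transversally in the single point $(p,0)$, which lies in degree $0$ for the grading conventions of \S\ref{gradorsec}; hence $HF^*(L_{U^{-\underline\varepsilon}},T^*_pM)=\ZZ$.

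Suppose instead that $\star(s)\not\subseteq U$, so $s\subseteq M\setminus U$ and hence $s$ is one of the strata deleted in forming $U^{-\underline\varepsilon}$; thus $\overline{U^{-\underline\varepsilon}}$ is disjoint from the $\varepsilon_{\dim s}$-neighborhood of $s$, and a fortiori from the $\varepsilon_{\dim s}$-neighborhood of $p$. Choosing $\rho<\varepsilon_{\dim s}$, the closed ball $\overline B=\overline{B_\rho(p)}$ lies in that neighborhood and so is disjoint from $\overline{U^{-\underline\varepsilon}}$, whence $L_{U^{-\underline\varepsilon}}$ and $L_B$ are supported over disjoint compact subsets of $M$; being disjoint Lagrangians, $HF^*(L_{U^{-\underline\varepsilon}},L_B)=0$. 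I expect the only real obstacle to be the first case: because $U$, hence $U^{-\underline\varepsilon}$, need not be a ball, one cannot invoke Lemma \ref{continuationballs} directly, and the point is instead to move the \emph{other} Lagrangian $L_s$ onto a cotangent fiber and to observe that a conormal meets a fiber over an interior point of its base region in exactly one transverse point. (Alternatively, one can expand $B$ to $U^{-\underline\varepsilon}$ along a wavefront-propagation positive isotopy that stays disjoint at infinity from $\partial_\infty L_B$ and argue as in Lemma \ref{continuationballs}.)
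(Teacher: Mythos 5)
The gap is in your second case. Your dichotomy ``$s\subseteq U$ or $s\subseteq M\setminus U$'' is correct, but $s\subseteq M\setminus U$ splits into two genuinely different situations, $s\cap\overline{U}=\emptyset$ and $s\subseteq\overline{U}\setminus U$, and your disjointness argument only handles the first. The paper treats these as separate cases, and devotes essentially all of its effort to the boundary case, which requires Whitney's condition (b).

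Concretely, your argument has a quantifier reversal. You want to pick $\rho<\varepsilon_{\dim s}$ so that $\overline{B_\rho(p)}\subseteq N_{\varepsilon_{\dim s}}(s)$, hence $\overline{B_\rho(p)}\cap\overline{U^{-\underline\varepsilon}}=\emptyset$. But Corollary \ref{corneringstopped} only identifies $HW^*(L_U,L_s)$ with $HF^*(L_{U^{-\underline\varepsilon}},L_s)$ once $\underline\varepsilon$ is small enough \emph{depending on $L_s$}, i.e.\ small relative to $\rho$. You cannot also demand $\rho<\varepsilon_{\dim s}$; that is circular. And indeed if $s\subseteq\partial U$ (so $p\in\partial U$) and you fix $\rho$ and let $\underline\varepsilon\to 0$, then $U^{-\underline\varepsilon}\to U$ and $\overline{U^{-\underline\varepsilon}}$ will eventually meet $\overline{B_\rho(p)}$ (since $B_\rho(p)\cap U\neq\emptyset$), so the two Lagrangians do \emph{not} stay disjoint. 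Fixing one $\underline\varepsilon_0$ large enough that they are disjoint and declaring $HF^*=0$ there does not compute the colimit unless you separately show the continuation maps from $\underline\varepsilon_0$ onward are isomorphisms. That is exactly what the paper's argument supplies: it shows, using Whitney's condition (b), that as $\underline\varepsilon$ shrinks the conormal of $\partial U^{-\underline\varepsilon}$ near $p$ never passes through the conormal of $\partial B_\rho(p)$ at infinity (the piece from $\partial N_\varepsilon(s)$ is tangent but with opposite coorientation; the pieces from $\partial N_\varepsilon(t)$ for $t\to s$ are transverse by Whitney (b)), so the wrapping is non-characteristic and the Floer group stabilizes at zero. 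Your closing sentence that ``the only real obstacle'' is the first case has it backwards: the first case (which you handle adequately, and which is also short in the paper) is routine; the boundary subcase of the second case is where the content lies, and your proposal does not address it.
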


\begin{proof}
We calculate using Lemma \ref{corneringstopped}.
If $s$ is a stratum in the interior of $U$, then the ball centered at $s$ is contained in $U$, and in paticular as $L_s$ is a small positive pushoff of a cotangent fiber to a point in $s$, one can arrange for there to be a single intersection point between $L_{U^{- \underline \epsilon}}$ (for small $\underline \epsilon$) and $L_s$. Hence $HW^*(L_U,L_s)_{N^*_\infty\SSS} = HF^*(L_{U^{- \underline \epsilon}}, L_s)  = \ZZ$.
If $s$ is a stratum not contained in the closure of $U$, then the morphism space obviously vanishes since the two Lagrangians ($L_{U^{- \underline \epsilon}}$ for any $\epsilon$ and $L_s$) are disjoint.


\usetikzlibrary{intersections}
\begin{figure}[htbp]
\centering
\begin{tikzpicture}[scale=\textwidth/10cm]
\begin{scope}
\clip(1,0)--(0.8,0)arc(0:180:0.8)--(-1,0)--(-1,1)--(1,1)--cycle;
\fill[lightgray!30!white](1,0.2)--(-1,0.2)--(-1,1)--(1,1)--cycle;
\end{scope}
\path[name path = LUhoriz](1,0.2)--(-1,0.2);
\path[name path = LUcircleright](0.8,0)arc(0:90:0.8);
\path[name path = LUcircleleft]((-0.8,0)arc(180:90:0.8);
\path[name intersections={of = LUhoriz and LUcircleright, by=a}];
\path[name intersections={of = LUhoriz and LUcircleleft, by=b}];
\draw[fill=lightgray!30!white](0,0)circle(0.7);
\draw(-1,0)to(1,0);
\draw(a)--(1,0.2);
\draw(b)--(-1,0.2);
\begin{scope}
\clip(-1,0.2)--(1,0.2)--(1,1)--(-1,1)--cycle;
\draw(0,0)circle(0.8);
\end{scope}
\draw[fill](0,0)circle(0.03);
\node at(0.1,-0.15){$q$};
\node at(-0.9,-0.5){$B_\delta(q)$};
\end{tikzpicture}
\quad
\begin{tikzpicture}[scale=\textwidth/10cm]
\begin{scope}
\clip(1,0)--(0.6,0)arc(0:180:0.6)--(-1,0)--(-1,1)--(1,1)--cycle;
\fill[lightgray!30!white](1,0.2)--(-1,0.2)--(-1,1)--(1,1)--cycle;
\end{scope}
\path[name path = LUhoriz](1,0.2)--(-1,0.2);
\path[name path = LUcircleright](0.6,0)arc(0:90:0.6);
\path[name path = LUcircleleft]((-0.6,0)arc(180:90:0.6);
\path[name intersections={of = LUhoriz and LUcircleright, by=a}];
\path[name intersections={of = LUhoriz and LUcircleleft, by=b}];
\draw[fill=lightgray!30!white](0,0)circle(0.7);
\draw(-1,0)to(1,0);
\draw(a)--(1,0.2);
\draw(b)--(-1,0.2);
\begin{scope}
\clip(-1,0.2)--(1,0.2)--(1,1)--(-1,1)--cycle;
\draw(0,0)circle(0.6);
\end{scope}
\end{tikzpicture}
\quad
\begin{tikzpicture}[scale=\textwidth/10cm]
\begin{scope}
\clip(1,0)--(0.35,0)arc(0:180:0.35)--(-1,0)--(-1,1)--(1,1)--cycle;
\fill[lightgray!30!white](1,0.2)--(-1,0.2)--(-1,1)--(1,1)--cycle;
\end{scope}
\path[name path = LUhoriz](1,0.2)--(-1,0.2);
\path[name path = LUcircleright](0.35,0)arc(0:90:0.35);
\path[name path = LUcircleleft]((-0.35,0)arc(180:90:0.35);
\path[name intersections={of = LUhoriz and LUcircleright, by=a}];
\path[name intersections={of = LUhoriz and LUcircleleft, by=b}];
\draw[fill=lightgray!30!white](0,0)circle(0.7);
\draw(-1,0)to(1,0);
\draw(a)--(1,0.2);
\draw(b)--(-1,0.2);
\begin{scope}
\clip(-1,0.2)--(1,0.2)--(1,1)--(-1,1)--cycle;
\draw(0,0)circle(0.35);
\end{scope}
\end{tikzpicture}
\quad
\begin{tikzpicture}[scale=\textwidth/10cm]
\fill[lightgray!30!white](1,0.2)--(-1,0.2)--(-1,1)--(1,1)--cycle;
\draw[fill=lightgray!30!white](0,0)circle(0.7);
\draw(-1,0)to(1,0);
\draw(-1,0.2)to(1,0.2);
\end{tikzpicture}
\caption{The isotopy from $U^{-\underline\epsilon,\delta+\eta}$ to $U^{-\underline\epsilon,\delta-\eta}$ to $U^{-\underline\epsilon}$.}\label{figureLshoms}
\end{figure}

Finally, we claim that if $s$ is a stratum on the boundary of $U$, the morphism space still vanishes.
To prove this, it suffices to construct a cofinal wrapping of $L_U$ which begins disjoint from $L_s$ and remains forever disjoint from $\partial_\infty L_s$ (see \cite[Lem.\ 3.26]{gpssectorsoc}).
Such an isotopy is illustrated in Figure \ref{figureLshoms}, which we now define precisely.
Fix a point $q\in s$, and consider the stratification $\SSS_q$ obtained from $\SSS$ by declaring $\{q\}$ to be its own stratum.
Now the inward cornering with respect to $\SSS_q$ may be denoted $U^{-\underline\epsilon,\delta}$ for $\underline\epsilon\in\RR^\SSS_{>0}$ and $\delta>0$ the parameter associated to the new stratum $\{q\}$.
Fix $\underline\epsilon$ and take $\delta$ to zero, and note that Proposition \ref{wrapnothitballnew} implies this gives an isotopy of $U^{-\underline\epsilon,\delta}$ whose conormals remain disjoint at infinity from $N^*\SSS$.
Once $\delta=0$, we just have $U^{-\underline\epsilon}$, whose conormal has cofinal wrapping by taking $\underline\epsilon\to 0$.
Now take $L_s$ to be the conormal of $B_\delta(q)$ and take the isotopy of $L_U$ to be given the conormal of the isotopy
\begin{equation}
U^{-\underline\epsilon,\delta+\eta}\leadsto U^{-\underline\epsilon,\delta-\eta}\leadsto U^{-\underline\epsilon}
\end{equation}
illustrated in Figure \ref{figureLshoms}, followed by isotoping $U^{-\underline\epsilon}$ by taking $\underline\epsilon\to 0$.
Corollary \ref{neighborhoodcorners} implies that this isotopy remains disjoint at infinity from $L_s=N^*B_\delta(q)$ except possibly at $U^{-\underline\epsilon,\delta}$, but these do not intersect at infinity due to their coorientations being opposite.
\end{proof}

Another reason that the $L_s$ are nice to consider is that we can show using the wrapping exact triangle and stop removal that they (split-)generate:

\begin{proposition}\label{smallstarsgen}
The objects $L_s$ for strata $s$ split-generate $\W(T^*M,N^*_\infty\SSS)$.
\end{proposition}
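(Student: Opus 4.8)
The plan is to use stop removal to reduce to the known generation of $\W(T^*M)$ by cotangent fibers, and to account for the difference by the linking disks. By iterated stop removal (Theorem~\ref{stopremoval}, applied to the smooth Legendrian strata of the singular isotropic $N^*_\infty\SSS$), the pushforward $\rho\colon\W(T^*M,N^*_\infty\SSS)\to\W(T^*M)$ is a quotient whose kernel is split-generated by the linking disks $D_p$ at smooth Legendrian points $p\in N^*_\infty\SSS$. When $s$ is a top-dimensional stratum, $L_s$ is the conormal of a small ball lying in an open stratum, hence by Corollary~\ref{squish} becomes isomorphic to a cotangent fiber after applying $\rho$; since cotangent fibers split-generate $\W(T^*M)$ (Theorem~\ref{generation}), $\rho$ carries $\{L_s\}$ to a split-generating family. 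By the standard fact that in a Verdier quotient a split-generating family of the kernel together with the preimage of a split-generating family of the target split-generates the source, it therefore suffices to prove that every linking disk $D_p$ lies in $\langle\{L_s\}\rangle$, the split-closed subcategory generated by the $L_s$.

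\textbf{Linking disks as cones of conormals of sublevel sets.} To see $D_p\in\langle\{L_s\}\rangle$, I would pick casting directors at $p$, i.e.\ a proper (exhausting) analytic $\SSS$-Morse function $f$ whose only $\SSS$-critical value in $[-\epsilon,\epsilon]$ is $0$, realized by a single Morse $\SSS$-critical point at $p$. Then the isotopy of conormals $L_{f^{-1}(-\infty,-\epsilon)}\leadsto L_{f^{-1}(-\infty,\epsilon)}$ (using inward perturbations à la Proposition~\ref{conormalconvergence} only where strata are nearly tangent to the level sets) crosses $N^*_\infty\SSS$ exactly once and transversally, at $p$, so the wrapping exact triangle (Theorem~\ref{wrapcone}) identifies $D_p$, up to shift, with $\cone\!\big(L_{f^{-1}(-\infty,\epsilon)}\to L_{f^{-1}(-\infty,-\epsilon)}\big)$. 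This reduces matters to showing that the conormal $L_U$ of a relatively compact $\SSS$-constructible open set $U$ lies in $\langle\{L_s\}\rangle$; one is free to refine $\SSS$ so that $f^{-1}(-\infty,\pm\epsilon)$ become $\SSS$-constructible, because under a refinement $\T$ of $\SSS$ the objects $L_t$ ($t\in\T$) and $L_s$ ($s\in\SSS$) coincide in $\W(T^*M,N^*_\infty\SSS)$ — the finer strata being invisible to the coarser stop — and $\W(T^*M,N^*_\infty\SSS)$ is a quotient of $\W(T^*M,N^*_\infty\T)$.

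\textbf{Dévissage of $L_U$ into the $L_s$.} Here I would first establish $\langle\{L_s\}\rangle=\langle\{L_{\star(t)}\}\rangle$ for a triangulation, the Fukaya counterpart of the sheaf dévissage $\ZZ_{\star(t)\setminus t}\to\ZZ_{\star(t)}\to\ZZ_t$: the relevant exact triangles and the identification of their terms come from the Floer computations of Lemma~\ref{Lshoms} and Proposition~\ref{homsbetweenstars}, the cornering statement Corollary~\ref{corneringstopped}, and the relative squishing Proposition~\ref{relsquish} (with $X$ a simplex, $Y=\star(X)$, and $Z$ a simplex one dimension larger). Then, writing a constructible open $U$ as the finite union $\bigcup_{\star(t)\subseteq U}\star(t)$ and using the triangulation identity $\star(t)\cap\star(t')=\star(t'')$, one resolves $L_U$ by the finite Čech-type complex of this cover in the conormals $L_{\star(t)}$, so that $L_U\in\langle\{L_{\star(t)}\}\rangle=\langle\{L_s\}\rangle$.

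\textbf{Conclusion and the main difficulty.} Combining the three steps, $\langle\{L_s\}\rangle$ contains every linking disk, hence the kernel of $\rho$, and $\rho(\langle\{L_s\}\rangle)$ split-generates $\W(T^*M)$; therefore $\langle\{L_s\}\rangle=\W(T^*M,N^*_\infty\SSS)$, which is Proposition~\ref{smallstarsgen}. I expect the main obstacle to be the third step — the dévissage of conormals of constructible opens into the small-ball conormals $L_s$, together with the bookkeeping of refinements — since that is where the combinatorics of stars and links, the Floer computations of the preceding subsection, and the wrapping exact triangle all have to be combined coherently; by contrast, the stop-removal reduction and the identification of $L_s$ with a cotangent fiber are formal once those ingredients are in hand.
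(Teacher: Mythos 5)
Your high-level strategy — strip the stop, identify the kernel with linking disks, exhibit the linking disks as cones of continuation maps via the wrapping exact triangle, and feed in generation of $\W(T^*M)$ by cotangent fibers — is essentially the same as the paper's, and the stop-removal and generation ingredients are invoked correctly.  But the way you propose to show the linking disks lie in $\langle\{L_s\}\rangle$ diverges from the paper's, and that step has a real gap.

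The paper identifies the linking disk \emph{locally}: at a point $(x,\xi)$ of a $k$-dimensional stratum's conormal, it takes a small ball $B_a$ centered at $x$ and a slightly smaller ball $B_b\subseteq B_a$ disjoint from that stratum, and observes that the family of conormals shrinking $B_a$ to $B_b$ passes transversally through the stop exactly once at $(x,\xi)$.  Theorem~\ref{wrapcone} then gives $D_{(x,\xi)}\simeq\cone(L_{B_a}\to L_{B_b})$, and both $L_{B_a}$ and $L_{B_b}$ are literally small balls centered on strata, i.e.\ objects $L_s$.  (The paper does this one dimension at a time, via the filtration $N^*_\infty\SSS_{\leq k}$, but that is a bookkeeping device.)  No dévissage of constructible opens is needed.

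You instead use the \emph{global} Morse-theoretic picture: $D_p\simeq\cone(L_{f^{-1}(-\infty,\epsilon)}\to L_{f^{-1}(-\infty,-\epsilon)})$ for a proper $\SSS$-Morse function $f$, which reduces the problem to showing $L_U\in\langle\{L_s\}\rangle$ for constructible opens $U$.  That reduction is the gap.  The proposed dévissage has two problems.  First, your sheaf analogy $\ZZ_{\star(t)\setminus t}\to\ZZ_{\star(t)}\to\ZZ_t$ is not the right one: $L_s$ corresponds to $\ZZ_{B_\epsilon(x)}$, not to the extension by zero $\ZZ_s$ from the stratum, so there is no ready-made Fukaya triangle relating $L_{\star(s)}$ and $L_s$ to be produced by this devissage.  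The statement $L_{\star(s)}\cong L_s$ is Proposition~\ref{bigsmallstars}, which the paper proves \emph{after} and \emph{using} Proposition~\ref{smallstarsgen}, so invoking it here is circular.  Second, the ``Čech-type complex'' resolution of $L_U$ by the $L_{\star(t)}$ is not established anywhere in this part of the paper — there is no Mayer--Vietoris-type triangle for conormals in $\W(T^*M,N^*_\infty\SSS)$ available at this stage; proving such a resolution is essentially equivalent to proving the equivalence $\Perf\SSS\simeq\W(T^*M,N^*_\infty\SSS)$, which is downstream of this proposition (Theorem~\ref{Fequivalence}).  In short: your global route makes the statement depend on the very equivalence it is meant to help establish, whereas the paper's local cone computation proves it outright with the wrapping exact triangle alone.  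You also self-diagnosed the dévissage as the main obstacle — it is in fact a circularity, not merely a hard step.
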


\begin{proof}
Denote by $\SSS_{\leq k}$ the stratification where we keep all strata of dimension $\leq k$ and combine all other strata into a single top stratum.
We consider the sequence of categories
\begin{multline}
\W(T^*M,N^*_\infty\SSS)=\W(T^*M,N^*_\infty\SSS_{\leq n-1})\to\W(T^*M,N^*_\infty\SSS_{\leq n-2})\to\cdots\\\cdots\to\W(T^*M,N^*_\infty\SSS_{\leq 0})\to\W(T^*M).
\end{multline}
Each of these functors removes a locally closed Legendrian submanifold $N^*_\infty\SSS_{\leq k}\setminus N^*_\infty\SSS_{\leq k-1}$, and thus by stop removal Theorem \ref{stopremoval}, is the quotient by the corresponding linking disks.

The linking disk at a point on $N^*_\infty\SSS_{\leq k}\setminus N^*_\infty\SSS_{\leq k-1}$ can be described as follows.
A point on $N^*_\infty\SSS_{\leq k}\setminus N^*_\infty\SSS_{\leq k-1}$ is simply a point $x$ on a $k$-dimensional stratum together with a covector $\xi$ at $x$ conormal to the stratum.
Consider a small ball $B_a$ centered at $x$, and consider a smaller ball $B_b\subseteq B_a$ disjoint from the stratum containing $x$.
There is a family of balls starting at $B_a$ and shrinking down to $B_b$ whose boundaries are tangent to the stratum containing $x$ only at $(x,\xi)$.
It follows from the wrapping exact triangle Theorem \ref{wrapcone} that the cone on the resulting continuation map $L_{B_a}\to L_{B_b}$ is precisely the linking disk at $(x,\xi)$.

We have thus shown that the linking disks to each locally closed Legendrian $N^*_\infty\SSS_{\leq k}\setminus N^*_\infty\SSS_{\leq k-1}$ are generated by the objects $L_s$.
By Theorem \ref{generation} above, these $L_s$ also split-generate the final category $\W(T^*M)$.
We conclude that the $L_s$ split-generate $\W(T^*M,N^*_\infty\SSS)$, as the quotient by all of them vanishes.
\end{proof}

\begin{remark}
A small variation on the above proof and an appeal to \cite[Thm.\ 1.14]{gpsdescent} shows that the objects $L_s$ in fact generate $\W(T^*M,N^*_\infty\SSS)$.
We give the weaker argument above to minimize the results we need to appeal to.
\end{remark}

\begin{proposition}\label{bigsmallstars}
The pseudo-continuation element $L_{\star(s)}\to L_s$ is an isomorphism in $\W(T^*M,N^*_\infty\SSS)$.
\end{proposition}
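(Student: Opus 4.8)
The plan is to deduce the statement from Proposition \ref{relsquish}, applied exactly as indicated in the remark following it, and then to transport the resulting isomorphism into $\W(T^*M,N^*_\infty\SSS)$ via a sectorial pushforward. First dispose of the case in which $s$ is top-dimensional: then $\star(s)$ is a single open simplex, $L_{\star(s)}=L_{\star(s)^{-\underline\varepsilon}}$ is the conormal of a ball contained in the open stratum $\mathrm{int}(s)$, and the map $L_{\star(s)}\to L_s$ of Proposition \ref{fakecontinuation} is an honest continuation map between conormals of nested balls lying well inside $\mathrm{int}(s)$; by Proposition \ref{conormalconvergence} and Lemma \ref{cofinalitycriterion} the positive wrapping of $L_s$ sweeping it out to $L_{\star(s)^{-\underline\varepsilon}}$ and then shrinking $\underline\varepsilon\to 0$ is cofinal in $T^*M\setminus N^*_\infty\SSS$, so this map is already an isomorphism in $\W(T^*M,N^*_\infty\SSS)$.

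Now suppose $s$ is not top-dimensional, and choose a simplex $Z$ of dimension $\dim s+1$ having $s$ as a facet. Apply Proposition \ref{relsquish} with $X$, $Y$, $Z$ equal to suitable inward perturbations (with corners rounded) of $s$, $\star(s)$, and $Z$: the hypotheses hold because the only faces of $Z$ having $s$ as a face are $s$ and $Z$ itself, so the remaining facets of $Z$ lie in the frontier $\overline{\star(s)}\setminus\star(s)$, exhibiting $\partial Z$ as the union of $X$ with a smooth piece of $\partial Y$. Proposition \ref{relsquish} then produces a positive isotopy sweeping $L_s$ out to $L_{\star(s)}$ inside $T^*\star(s)$, disjoint at infinity from $N^*_\infty s$ and realizing the canonical map $L_{\star(s)}\to L_s$ of Proposition \ref{fakecontinuation} as the associated continuation map. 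The extra input we need is that this positive isotopy is in fact disjoint at infinity from all of $N^*_\infty\SSS$, not merely from $N^*_\infty s$: it takes place inside $\star(s)$, hence meets only strata $t$ with $t\to s$, and along each such $t$ the boundary hypersurfaces being swept out remain disjoint at infinity from $N^*_\infty t$ by Whitney's condition (b), exactly as in the proofs of Proposition \ref{conormalconvergence} and Lemma \ref{Lshoms} (one may have to perturb the isotopy, keeping it positive, to achieve this transversality, which is possible by general position). Consequently, fixing an open manifold-with-corners $P$ with $\overline{\star(s)^{-\underline\varepsilon}}\subseteq P\subseteq\star(s)$ and boundary transverse to $\SSS$, the map $L_{\star(s)}\to L_s$ is already an isomorphism in $\W(T^*P,N^*_\infty\SSS|_P)$, where $\SSS|_P$ denotes the stratification of $P$ by the sets $t\cap P$ with $t\to s$.

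Finally, apply the sectorial pushforward functor $\W(T^*P,N^*_\infty\SSS|_P)\to\W(T^*M,N^*_\infty\SSS)$ attached to the inclusion $T^*P\hookrightarrow T^*M$ of (ind-)Liouville sectors (cf.\ \cite{gpssectorsoc,gpsstructural} and Remark \ref{indliouvillesectors}). Since $L_{\star(s)}$, $L_s$, and the canonical map between them are all defined by data supported inside $T^*P$, and since this functor preserves continuation elements and hence the canonical generators of Proposition \ref{fakecontinuation}, the image of $L_{\star(s)}\to L_s$ is the map of the same name in $\W(T^*M,N^*_\infty\SSS)$, which is therefore an isomorphism.

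The crux is the middle paragraph: verifying that the positive isotopy supplied by Proposition \ref{relsquish} (in its standardized local form, the growth of a round ball) can be taken disjoint at infinity from the conormals of all the higher strata $t\to s$, not only from $N^*_\infty s$ --- equivalently, that $L_s$ can be swept monotonically out to $L_{\star(s)}$ through hypersurfaces transverse to the entire stratification. This is a stratified general-position statement of the same nature as Proposition \ref{conormalconvergence}, to be handled via Whitney's condition (b) together with the stabilization already used in the proof of Proposition \ref{relsquish}; the remaining points (the choice of $P$, and the bookkeeping identifying the relevant continuation element with the canonical generator of Proposition \ref{fakecontinuation} through the stabilization and K\"unneth) are routine.
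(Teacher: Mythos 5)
Your proof hinges entirely on the claim in the middle paragraph: that the positive isotopy sweeping $L_s$ out to $L_{\star(s)}$ can be arranged to be disjoint at infinity from the conormals of \emph{all} the strata $t\to s$, i.e.\ from $N^*_\infty\SSS$ itself (or its restriction to $P$), not merely from $N^*_\infty s$. This is precisely the assertion that the authors state, in the remark immediately following Proposition \ref{bigsmallstars}, that they do \emph{not} know how to prove for subanalytic triangulations. For a smooth triangulation one can indeed grow a round ball centered on $s$ to fill $\star(s)$ keeping the boundary transverse to all strata; but for a subanalytic triangulation the strata near $\partial\star(s)$ need not be smooth, and Whitney's condition (b) only gives control of conormals in the limit $\underline\varepsilon\to 0$ of the inward perturbation. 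It does not produce a one-parameter family of hypersurfaces interpolating between a small ball and $\partial\star(s)^-$ transverse to all strata throughout. Appealing to ``general position'' does not resolve this, because the obstruction is not generic but stratified and global: once the moving hypersurface approaches a higher stratum $t$, there is no freedom to perturb it off $N^*_\infty t$ while keeping the isotopy positive and eventually reaching $L_{\star(s)}$.

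There is also a structural problem with the pushforward step. Proposition \ref{relsquish} gives an isomorphism only in $\W(T^*Y, N^*_\infty X)$, a category stopped by the single conormal $N^*_\infty X$; it does not supply a geometric positive isotopy in $T^*Y$ disjoint from anything larger (the ``standard'' isotopy lives only in the stabilization $Y\times I^k$, and is transported back via K\"unneth full faithfulness, which is an algebraic, not geometric, step). Since $N^*_\infty X \subsetneq N^*_\infty\SSS|_P$, the category $\W(T^*Y,N^*_\infty X)$ has \emph{more} wrapping than $\W(T^*P,N^*_\infty\SSS|_P)$, and there is no covariant functor in the direction of adding stops. So one cannot transport the isomorphism of Proposition \ref{relsquish} into $\W(T^*P,N^*_\infty\SSS|_P)$ and then push forward, unless one independently produces the isotopy transverse to the full stop --- which is again the unknown assertion.

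The paper circumvents both difficulties with an induction on codimension that you should compare with: after the base case (Corollary \ref{squish}), one passes to $\W(T^*M,N^*_\infty\SSS_{\leq\dim s})$ via the stop-removal quotient, observes that the objects being quotiented by (cones of $L_t$, $\mathrm{codim}\,t<\mathrm{codim}\,s$, handled by the inductive hypothesis) are \emph{left}-orthogonal to both $L_s$ and $L_{\star(s)}$ (Proposition \ref{homsbetweenstars} and Lemma \ref{Lshoms}), so that it suffices to verify the isomorphism after the quotient --- and \emph{there} the stop intersected with $T^*\star(s)^-$ is only $N^*_\infty s$, so Proposition \ref{relsquish} applies directly, with no need for an isotopy avoiding the higher strata's conormals.
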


\begin{proof}
We proceed by induction on the codimension of $s$.
When $s$ has codimension zero, the desired statement follows from Corollary \ref{squish}.

Now suppose that $s$ has positive codimension.
For any $t$ of strictly smaller codimension than $s$, we have $\Hom(L_{\star(t)},L_{\star(s)})=0$ by Proposition \ref{homsbetweenstars} and $\Hom(L_{\star(t)},L_s)=0$ by Lemma \ref{Lshoms}.

Now by the discussion in the proof of Proposition \ref{smallstarsgen}, the functor
\begin{equation}
\W(T^*M,N^*_\infty\SSS)\to\W(T^*M,N^*_\infty\SSS_{\leq\dim s})
\end{equation}
quotients by cones of $L_t$ for $t$ of strictly smaller codimension than $s$.
By the induction hypothesis and the calculations of the previous paragraph, such cones are left-orthogonal to $L_s$ and $L_{\star(s)}$.
Hence it suffices to check that $L_{\star(s)}\to L_s$ is an isomorphism in $\W(T^*M,N^*_\infty\SSS_{\leq\dim s})$.

Finally, we observe that $L_{\star(s)}\to L_s$ is an isomorphism in $\W(T^*M,N^*_\infty\SSS_{\leq\dim s})$ by Proposition \ref{relsquish}.
Namely, we take $Y=\star(s)^-$, $X=s\cap\star(s)^-$, and $Z=t\cap\star(s)^-$ for any simplex $t$ containing $s$ and of one higher dimension.
\end{proof}

\begin{remark}
For a `smooth triangulation' $\SSS$, there is an obvious positive isotopy from $L_s$ to $L_{\star(s)}$ disjoint from $N^*_\infty\SSS$ (thus proving Proposition \ref{bigsmallstars} in this case), obtained by expanding a small ball centered at a point on $s$ to $\star(s)$, keeping the boundary transverse to the strata of $\SSS$.
We do not know whether this proof can be generalized from smooth triangulations to subanalytic Whitney triangulations.
\end{remark}

\subsection{Functors from poset categories to Fukaya categories}

\begin{definition}\label{posettocohofukaya}
Let $M$ be a manifold with Whitney stratification $\SSS$, and let
$U: \Pi \to \Op_\SSS(M)$ be a map from a poset $\Pi$ to the poset of $\SSS$-constructible open subsets of $M$.
Suppose further that each $U(\pi)^-$ (from Section \ref{constructibleconormals}) is a stable ball.
Define a functor on cohomology categories
\begin{equation}
H^*F_U:\ZZ[\Pi] \to H^*\W(T^*M, N^*_\infty\SSS)^\op
\end{equation}
by $H^*F_U(\pi):=L_{U(\pi)}$ and $H^*F_U(1_{\pi,\pi'})=1_{U(\pi'),U(\pi)}\in HW^*(L_{U(\pi')}, L_{U(\pi)})$ is the pseudo-continuation element.
\end{definition}

\begin{remark}\label{whytwist}
Note that the definition of $H^*F_U$ depends on having defined pseudo-continuation elements with the compatibility properties from Proposition \ref{fakecontinuation}, which in turn depends on having equipped the cotangent fibers of $T^*M$ with continuously varying grading/orientation data (compare Section \ref{gradorsec}).
For general grading/orientation data on $T^*M$, it may not be possible to define continuously varying grading/oriention data on the cotangent fibers, in which case we could only define $H^*F_U$ to respect composition up to sign.
The resulting $2$-cocycle, or rather its class in $H^2(N\Pi,\ZZ/2)$, would represent (the pullback of) the obstruction in $H^2(M,\ZZ/2)$ to choosing continuously varying relative $\Pin$-structures on the cotangent fibers.
\end{remark}

\begin{proposition}\label{ffhenceainf}
For any functor $f:\ZZ[\Pi]\to H^*\C$ such that $H^*\C(f(x),f(y))$ is concentrated in degree zero for every pair $x\leq y\in\Pi$, there exists an $\ainf$ functor $F:\ZZ[\Pi]\to\C$ with $H^*F=f$.
Moreover, given any two $\ainf$ functors $F,G:\ZZ[\Pi]\to\C$ such that $H^*\C(F(x),G(y))$ is concentrated in degree zero for every pair $x\leq y\in\Pi$ and a natural transformation $t:f\to g$, the space of $\ainf$ natural transformations $T:F\to G$ with $H^*T=t$ is contractible.
\end{proposition}

\begin{proof}
We show existence of a lift $F$ by induction.
Lift the action on objects arbitrarily.
Take $F^1$ to be any map in the correct cohomology class $H^*F^1=f$.
Having chosen $F^1,\ldots,F^{k-1}$, the existence of an $F^k$ satisfying the $\ainf$ functor equations is equivalent to a certain element of
\begin{equation}\label{functorobstructiongroup}
\prod_{\pi_0,\ldots,\pi_k\in\Pi}\Hom(\ZZ[\Pi](\pi_0,\pi_1)\otimes\cdots\otimes\ZZ[\Pi](\pi_{k-1},\pi_k),\C(F(\pi_0),F(\pi_k))),
\end{equation}
(namely the sum of all the terms of the $\ainf$ functor equations with $k$ inputs except for those involving $F^k$) being a coboundary.
This element is always a cocycle due to $F^1,\ldots,F^{k-1}$ satisfying the $\ainf$ functor equations, so it suffices to show that its class in cohomology vanishes.
The cohomology of \eqref{functorobstructiongroup} is of course simply
\begin{equation}
\prod_{\pi_0\leq\cdots\leq\pi_k\in\Pi}H^*\C(F(\pi_0),F(\pi_k)),
\end{equation}
which is concentrated in degree zero by hypothesis.
The obstruction class thus vanishes for degree reasons for $k\geq 3$.
For $k=2$, the obstruction class measures the failure of $H^*F$ to respect composition, so by hypothesis the obstruction vanishes in this case as well.
We conclude that there always exists an $F^k$ compatible with the previously chosen $F^1,\ldots,F^{k-1}$.
(Compare \cite[Lem.\ 1.9]{seidelbook}, where a variant on this obstruction theory argument is explained in more detail.)

To construct a natural transformation \cite[(1d)]{seidelbook} $T:F\to G$ with $H^*T=t$, first pick some $T^0$ lifting $t$.
Given $T^0,\ldots,T^{k-1}$, the obstruction to the existence of $T^k$ is a degree $1-k$ cohomology class in
\begin{equation}\label{nattransobstructiongroup}
\prod_{\pi_0\leq\cdots\leq\pi_k\in\Pi}\C(F(\pi_0),G(\pi_k)).
\end{equation}
It hence vanishes for degree reasons for $k\geq 2$, and for $k=1$ it measures the failure of $H^*T$ to respect morphisms, hence vanishes in this case as well.

Contractibility of the space of natural transformations $T$ with $H^*T=t$ is, concretely, the assertion that the complex of pre-natural transformations $\Hom(F,G)$ is acyclic in negative (cohomological) degree and that any two natural transformations (i.e.\ degree zero cocycles) $T$ and $T'$ with $H^*T=t=H^*T'$ are cohomologous.
In both cases, we should produce a pre-natural transformation $Q$ of degree $-g<0$ with prescribed value of $dQ$.
Such $Q=(Q^0,Q^1,\ldots)$ is again constructed by induction.
The existence of $Q^k$ then comes down to the vanishing in degree $1-k-g$ of the cohomology of \eqref{nattransobstructiongroup} for all $g\geq 1$ and $k\geq 0$ (except for $g=1$ and $k=0$, for which the relevant obstruction measures the failure of the desired value $T-T'$ of $dQ$ to vanish on $H^*\C$, hence vanishes by the assumption $H^*T=t=H^*T'$).
\end{proof}

\begin{remark}
The assertion of Proposition \ref{ffhenceainf} over a field $k$ (instead of over $\ZZ$) is a straightforward consequence of the fact that any $\ainf$ category $\C$ over $k$ is quasi-isomorphic to an $\ainf$ category $\tilde{\C}$ with vanishing differential, sometimes called a \emph{minimal model} of $\C$.
In this case, the essential image of $f$ inside $\tilde{\C}$ would be necessarily concentrated in degree zero on the chain level, hence have vanishing higher order $\ainf$ structure maps for degree reasons (and hence this essential image inside $\tilde{\C}$, equivalently $\C$, is \emph{formal}).
It follows that any functor on cohomology categories $k[\Pi]\to H^*\tilde{\C}$ would lift tautologically to an $\ainf$ functor by taking all higher operations to vanish.
The proof above bypasses the question of the existence of minimal models over $\ZZ$.
\end{remark}

\begin{corollary}\label{posettofukaya}
There is a unique up to contractible choice $\ainf$ functor
\begin{equation}
F_U:\ZZ[\Pi] \to \W(T^*M,N^*_\infty\SSS)^\op
\end{equation}
lifting the functor on cohomology categories from Defintion \ref{posettocohofukaya}.
\end{corollary}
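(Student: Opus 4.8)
The plan is to obtain $F_U$ directly from Proposition \ref{ffhenceainf}, applied to the cohomology-level functor $H^*F_U$ of Definition \ref{posettocohofukaya} with target dg-category $\C=\W(T^\ast M,N^*_\infty\SSS)^\op$. The only hypothesis of Proposition \ref{ffhenceainf} that needs checking is that
\[
H^*\C(L_{U(x)},L_{U(y)})=HW^*(L_{U(y)},L_{U(x)})_{N^*_\infty\SSS}
\]
is free and concentrated in degree zero for every pair $x\le y$ in $\Pi$. Granting this, Proposition \ref{ffhenceainf} immediately yields an $\ainf$ lift $F_U$ of $H^*F_U$ together with the assertion that the space of natural isomorphisms between any two such lifts is contractible, which is exactly the statement to be proved.

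To verify the hypothesis I would argue as follows. Since $x\le y$ forces $U(x)\subseteq U(y)$, one can choose the inward perturbations of Section \ref{constructibleconormals} so that $\overline{U(x)^{-\underline\delta}}\subseteq U(y)^{-\underline\varepsilon}$: first fix $\underline\delta$ small enough that $U(x)^{-\underline\delta}$ is a stable ball, then take $\underline\varepsilon$ componentwise smaller than $\underline\delta$, using that $M\setminus U(y)\subseteq M\setminus U(x)$ is a constructible closed set from which $\overline{U(x)^{-\underline\delta}}$ stays uniformly bounded away (each such stratum $X$ already has its open $\underline\delta$-neighborhood deleted from $U(x)^{-\underline\delta}$), so that $\overline{U(x)^{-\underline\delta}}$ lands in the interior of $U(y)^{-\underline\varepsilon}$. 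By Proposition \ref{conormalconvergence} the Lagrangian $L_{U(x)^{-\underline\delta}}$ is disjoint at infinity from $N^*_\infty\SSS$, so Corollary \ref{corneringstopped} identifies $HW^*(L_{U(y)},L_{U(x)})_{N^*_\infty\SSS}$ with $HF^*(L_{U(y)^{-\underline\varepsilon}},L_{U(x)^{-\underline\delta}})$ for $\underline\varepsilon$ sufficiently small; Proposition \ref{fakecontinuation} then evaluates the latter as $\ZZ$ concentrated in degree zero with generator $1_{U(y),U(x)}$, which is precisely the class $H^*F_U$ assigns to the morphism $x\le y$. (The already established composition law $1_{WV}1_{VU}=1_{WU}$ of Proposition \ref{fakecontinuation} is what makes $H^*F_U$ a functor in the first place, so nothing further is needed there.)

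I expect the only genuinely delicate point to be the perturbation bookkeeping in the previous paragraph — arranging the two inward perturbations so that Proposition \ref{fakecontinuation} applies on the nose, and in particular handling the endomorphism case $x=y$, where one needs $\overline{U(x)^{-\underline\delta}}\subseteq U(x)^{-\underline\varepsilon}$ for $\underline\varepsilon$ componentwise smaller than $\underline\delta$. This is, however, the same subtlety already implicit in checking that Definition \ref{posettocohofukaya} describes a functor on cohomology categories at all; everything else is a formal consequence of Proposition \ref{ffhenceainf} together with the cited computations.
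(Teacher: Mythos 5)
Your proposal is correct and takes essentially the same route as the paper: both proofs verify the degree-zero freeness hypothesis of Proposition \ref{ffhenceainf} by combining Corollary \ref{corneringstopped} with Proposition \ref{fakecontinuation}, and then invoke Proposition \ref{ffhenceainf} for both existence and uniqueness up to contractible choice. You spell out the inward-perturbation bookkeeping in considerably more detail than the paper does, but this is extra care within the same argument rather than a different approach.
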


\begin{proof}
By Corollary \ref{corneringstopped}, the wrapped Floer cohomology group $HW^*(L_{U(\pi)}, L_{U(\pi')})$ is simply the Floer cohomology of two nested stable balls, which is $\ZZ$ by Proposition \ref{fakecontinuation}.
Thus Proposition \ref{ffhenceainf} is applicable.
\end{proof}

\begin{remark}
To extend Corollary \ref{posettofukaya} to the Fukaya category with a $\ZZ/N$-grading, we would need to add to the requirement that $F$ (and the natural transformations $F_1\to F_2$) must lift to $\ZZ$-graded categories locally (the $\ZZ$-grading is only defined locally, over any contractible open subset of $M$).
\end{remark}

For the next corollary, let us denote by $\HHH^*$ the functor from $\Mod\ZZ$ to the category of graded abelian groups given by taking the cohomology of \emph{objects} of $\Mod\ZZ$.
The functor $\HHH^*$ factors through, but does not coincide with, the functor $H^*:\Mod\ZZ\to H^*\Mod\ZZ$ which exists for any $\ainf$ category in place of $\Mod\ZZ$ (and which takes cohomology of \emph{morphisms}).

\begin{corollary}\label{minimalmodelsformodules}
Consider functors $F:\ZZ[\Pi]\to\Mod\ZZ$ such that $\HHH^*F(x)$ is free and concentrated in degree zero for all $x\in\Pi$.
Given any two such functors $F$ and $G$ and a natural transformation $t:\HHH^*F\to\HHH^*G$, the space of natural transformations $T:F\to G$ with $\HHH^*T=t$ is contractible.
In particular, any such functor $F$ is quasi-isomorphic to $i\HHH^*F:\ZZ[\Pi]\to\Mod\ZZ$, namely the composition of $\HHH^*F$ with the inclusion $i$ of free abelian groups into $\Mod\ZZ$ (as complexes concentrated in degree zero).
\end{corollary}

\begin{proof}
We first argue that if $P,Q\in\Mod\ZZ$ are such that $\HHH^*P$ and $\HHH^*Q$ are free and concentrated in degree zero, then the natural map
\begin{equation}\label{homologyhom}
H^*\Hom(P,Q)\to\Hom(\HHH^*P,\HHH^*Q)
\end{equation}
is an isomorphism.
Since $\HHH^*P$ is projective, there is a quasi-isomorphism $\HHH^*P\to P$ (and the same for $Q$).
It follows that there is a quasi-isomorphism of chain complexes $\Hom(P,Q)=\Hom(\HHH^*P,\HHH^*Q)$ (homomorphisms in $\Mod\ZZ$).
Since $\HHH^*P$ and $\HHH^*Q$ are projective and concentrated in degree zero, the complex $\Hom(\HHH^*P,\HHH^*Q)$ is (quasi-isomorphic to) homomorphisms of abelian $\ZZ$ modules $\HHH^*P\to\HHH^*Q$ concentrated in degree zero; thus \eqref{homologyhom} is an isomorphism as desired.

Now suppose $F$ and $G$ are as in the statement and a natural transformation $t:\HHH^*F\to\HHH^*G$ is given.
Since $\HHH^*F$ and $\HHH^*G$ are free and concentrated in degree zero, we see from \eqref{homologyhom} that the data of $t$ is equivalent to the data of a natural transformation $\bar t:H^*F\to H^*G$.
Now \eqref{homologyhom} also implies that the hypotheses of Proposition \ref{ffhenceainf} are satisfied, so the space of natural transformations $T$ with $H^*T=\bar t$ (which is, as just noted, equivalent to $\HHH^*T=t$) is contractible.

It is immediate from the definition that $\HHH^*F=\HHH^*i\HHH^*F$, so the final statement follows from the first.
\end{proof}

\begin{definition}\label{Fdefn}
For a Whitney triangulation $\SSS$, let
\begin{equation}
F_\SSS:\ZZ[\SSS]\to\W(T^*M,N^*_\infty \SSS)^\op
\end{equation}
denote the functor induced from Definition \ref{posettocohofukaya} and Corollary \ref{posettofukaya} by the map associating to each simplex of $\SSS$ its open star.
\end{definition}

\begin{theorem}\label{Fequivalence}
The functor $F_\SSS$ is a Morita equivalence.
\end{theorem}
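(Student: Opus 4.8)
The plan is to unpack ``Morita equivalence'' into the two standard conditions — that $F_\SSS$ is cohomologically fully faithful, and that its essential image split-generates $\W(T^*M,N^*_\infty\SSS)^\op$ — and to verify each by invoking the Floer-theoretic computations already established. Granting these, $\Perf(F_\SSS)\colon\Perf\SSS\to\Perf\W(T^*M,N^*_\infty\SSS)^\op$ is fully faithful with split-generating essential image, hence an equivalence, which is the assertion.

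For full faithfulness: by Definition~\ref{posettocohofukaya} and Corollary~\ref{posettofukaya}, $F_\SSS$ is an $A_\infty$ lift of the cohomology-level functor $s\mapsto L_{\star(s)}$ which sends the generator of $\Hom_{\ZZ[\SSS]}(s,t)$ to the canonical element $1_{\star(t),\star(s)}$ of Proposition~\ref{fakecontinuation}. Now $\Hom_{\ZZ[\SSS]}(s,t)$ is $\ZZ$ in degree $0$ exactly when $\star(s)\subseteq\star(t)$ and is zero otherwise, while $H^*\Hom_{\W^\op}(L_{\star(s)},L_{\star(t)})=HW^*(L_{\star(t)},L_{\star(s)})$ is, by Proposition~\ref{homsbetweenstars}, $\ZZ$ in degree $0$ precisely in the same case — this is where one must keep track of the $(-)^\op$ and of the identity $\star(s)\subseteq\star(t)\iff X_t\subseteq\overline{X_s}$. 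The two generators match by construction, so $F_\SSS$ induces an isomorphism on all morphism cohomologies; since these cohomologies are concentrated in a single degree, $F_\SSS$ is a quasi-fully-faithful $A_\infty$ functor, and therefore $\Perf(F_\SSS)$ is fully faithful.

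For split-generation, I would show that the objects $L_{\star(s)}$, $s\in\SSS$, split-generate $\W(T^*M,N^*_\infty\SSS)$; this is equivalent to their split-generating the opposite category, since the class of objects built from a given set by shifts, cones and retracts is unchanged by reversing arrows. By Proposition~\ref{bigsmallstars} each $L_{\star(s)}$ is isomorphic in $\W(T^*M,N^*_\infty\SSS)$ to the conormal $L_s$ of a small ball at a point of the stratum $s$, and by Proposition~\ref{smallstarsgen} the collection $\{L_s\}_{s\in\SSS}$ split-generates $\W(T^*M,N^*_\infty\SSS)$; hence so does $\{L_{\star(s)}\}_{s\in\SSS}$, i.e.\ $\Perf(F_\SSS)$ is essentially surjective and $F_\SSS$ is a Morita equivalence. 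All of the substance here is imported from Propositions~\ref{homsbetweenstars}, \ref{fakecontinuation}, \ref{smallstarsgen} and \ref{bigsmallstars}; accordingly the only thing requiring care — and the closest thing to an obstacle — is the purely formal bookkeeping: reconciling the poset and wrapping direction conventions under $(-)^\op$, and invoking the standard facts that an $H^*$-isomorphism between degree-concentrated hom-complexes is a quasi-isomorphism and that full faithfulness and split-generation each pass to opposites and to $\Perf$.
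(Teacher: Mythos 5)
Your proposal is correct and follows the paper's proof exactly: full faithfulness from Proposition~\ref{homsbetweenstars} (via the degree-concentration argument and Corollary~\ref{posettofukaya}), essential surjectivity after $\Perf$ from Propositions~\ref{bigsmallstars} and~\ref{smallstarsgen}. The paper states this in a single sentence; you have simply spelled out the $(-)^\op$ and poset-direction bookkeeping that the paper leaves implicit, and your checks there are accurate.
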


\begin{proof}
Proposition \ref{homsbetweenstars} shows is full faithfulness of $F_\SSS$, and Propositions \ref{smallstarsgen} and \ref{bigsmallstars} together show essential surjectivity of $F_\SSS$ (after passing to $\Perf$).
\end{proof}

We now show that $F_\SSS$ is compatible with refinement (compare Lemma \ref{posetmeaningpushforward}):

\begin{theorem}\label{Frefinement}
For $\SSS'$ a refinement of $\SSS$, the following diagram commutes:
\begin{equation}
\begin{tikzcd}
\ZZ[\SSS']\ar{r}{F_{\SSS'}} \ar{d}[swap]{r} & \W(T^*M,N^*_\infty \SSS')^\op\ar{d}{\rho}\\
\ZZ[\SSS]\ar{r}{F_\SSS} &  \W(T^*M,N^*_\infty \SSS)^\op
\end{tikzcd}
\end{equation}
up to contractible choice.
\end{theorem}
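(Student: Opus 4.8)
The plan is to reduce the claim to a statement about the underlying cohomology-level functors and then invoke the obstruction theory of Proposition~\ref{ffhenceainf}. I will use two elementary properties of the refinement $\SSS'$ of $\SSS$: the induced map $r:\SSS'\to\SSS$ on posets is order-preserving, and $\star_{\SSS'}(s')\subseteq\star_\SSS(r(s'))$ for every $s'\in\SSS'$ (both because, for every $\SSS'$-stratum $\sigma'$, the set $\overline{X_{r(\sigma')}}$ is a union of $\SSS$-strata containing $\overline{X_{\sigma'}}$). Since $N^*_\infty\SSS\subseteq N^*_\infty\SSS'$, the ``wrap more'' pushforward $\rho:\W(T^*M,N^*_\infty\SSS')^\op\to\W(T^*M,N^*_\infty\SSS)^\op$ of Section~\ref{sec:fukaya} is defined, and both $\rho\circ F_{\SSS'}$ and $F_\SSS\circ r$ are honest $\ainf$-functors $\ZZ[\SSS']\to\W(T^*M,N^*_\infty\SSS)^\op$.

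First I would check that both composites satisfy the freeness hypothesis of Proposition~\ref{ffhenceainf}. For $F_\SSS\circ r$ this follows from Proposition~\ref{homsbetweenstars}. For $\rho\circ F_{\SSS'}$, the point is that Proposition~\ref{bigsmallstars} applied to $\SSS'$ gives an isomorphism $L_{\star_{\SSS'}(s')}\xrightarrow{\sim}L_{s'}$ in $\W(T^*M,N^*_\infty\SSS')$, where $L_{s'}$ is the conormal of a small ball about a point $p'$ of the open simplex $s'$; applying $\rho$ gives $\rho F_{\SSS'}(s')\cong L_{s'}$ in $\W(T^*M,N^*_\infty\SSS)$. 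As $p'$ lies on the $\SSS$-stratum $r(s')$, the object $L_{s'}$ is, up to isomorphism in the coarser category, the small-ball object $L_{r(s')}$ associated to $r(s')$ by $\SSS$, so by Lemma~\ref{Lshoms} and Proposition~\ref{bigsmallstars} for $\SSS$ the relevant Hom-spaces are free and concentrated in degree zero. This discussion also records, for each $s'$, an isomorphism $\rho F_{\SSS'}(s')\cong L_{r(s')}\cong F_\SSS r(s')$ in $\W(T^*M,N^*_\infty\SSS)$.

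It then remains to promote these object-level isomorphisms to a natural isomorphism between the cohomology-level functors; since both composites satisfy the freeness hypothesis, the obstruction argument of Proposition~\ref{ffhenceainf} (which applies equally well to natural transformations between two $\ainf$-functors whose cohomology-level functors are related by a natural isomorphism) then lifts any such cohomology-level natural isomorphism, uniquely up to contractible choice, to the asserted $\rho\circ F_{\SSS'}\simeq F_\SSS\circ r$. For the cohomology-level natural isomorphism I would fix inward-perturbation parameters for $\SSS'$ first, and then much smaller ones for $\SSS$, so that $\overline{\star_{\SSS'}(s')^-}\subseteq\star_\SSS(r(s'))^-$ for all of the (finitely many) simplices $s'$; this is possible because the left-hand closure is a compact subset of the open set $\star_\SSS(r(s'))$. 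Proposition~\ref{fakecontinuation} then supplies canonical morphisms $\eta_{s'}:L_{\star_\SSS(r(s'))}\to L_{\star_{\SSS'}(s')}$, i.e.\ morphisms $\rho F_{\SSS'}(s')\to F_\SSS r(s')$ in $\W(T^*M,N^*_\infty\SSS)^\op$, and the composition law $1_{WV}1_{VU}=1_{WU}$ of Proposition~\ref{fakecontinuation} makes $\eta=(\eta_{s'})$ natural, by the same computation that renders $F_\SSS$ functorial on cohomology in Definition~\ref{posettocohofukaya}. Each $\eta_{s'}$ is an isomorphism: factoring it through a small ball $B\subseteq\star_{\SSS'}(s')^-$ about $p'$, it is the composite of the isomorphism $L_{\star_\SSS(r(s'))}\xrightarrow{\sim}L_B$ (Proposition~\ref{bigsmallstars} for $\SSS$) with the inverse of $L_{\star_{\SSS'}(s')}\xrightarrow{\sim}L_B$ (Proposition~\ref{bigsmallstars} for $\SSS'$, together with the fact that $\rho$ preserves isomorphisms).

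The step I expect to be the main obstacle is this last one: one must take genuine care that the various ``canonical'' maps in play (the continuation-type elements $1_{VU}$ of Proposition~\ref{fakecontinuation} for nested stable balls coming from either stratification, the squishing isomorphisms of Proposition~\ref{bigsmallstars} for $\SSS$ and for $\SSS'$, and the image of the latter under $\rho$) really do fit together, so that the composition law can be applied and $\eta$ is natural on the nose up to contractible choice. There is, however, no genuinely new geometric input beyond Corollary~\ref{corneringstopped} and Propositions~\ref{fakecontinuation} and~\ref{bigsmallstars}; the content lies in organizing these compatibilities, exactly parallel to the way Lemma~\ref{posetmeaningpushforward} makes the analogous statement on the sheaf side essentially formal.
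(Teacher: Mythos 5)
Your proposal is correct and follows essentially the same route as the paper's proof: reduce to the cohomology level via Proposition~\ref{ffhenceainf}, build the comparison morphism from the canonical classes of Proposition~\ref{fakecontinuation} for the nested stable balls $\star_{\SSS'}(s')\subseteq\star_\SSS(r(s'))$, verify naturality via the composition law $1_{WV}1_{VU}=1_{WU}$, and prove it is an isomorphism by factoring both sides through the small-ball object $L_{s'}=L_{r(s')}$ and applying Proposition~\ref{bigsmallstars} (for $\SSS'$, pushed forward by $\rho$, and for $\SSS$). The extra care you take in verifying the freeness hypothesis of Proposition~\ref{ffhenceainf} for $\rho\circ F_{\SSS'}$ and in remarking that the obstruction argument applies to lifts of an isomorphism (rather than an equality) of cohomology-level functors is implicit in the paper but worth making explicit.
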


\begin{proof}
There are two functors $\rho\circ F_{\SSS'}$ and $F_{\SSS}\circ r$ from $\ZZ[\SSS']$ to $\W(T^*M,N^*_\infty \SSS)$.
By Proposition \ref{ffhenceainf}, it suffices to define a canonical natural isomorphism between the induced functors on cohomology categories.
It is most natural to define this canonical natural isomorphism in the direction $F_{\SSS}\circ r\Longrightarrow\rho\circ F_{\SSS'}$.

To a stratum $s$ of $\SSS'$, the composition $F_{\SSS}\circ r$ associates the conormal of $\star_\SSS(r(s))$, and the composition $\rho\circ F_{\SSS'}$ associates the conormal of $\star_{\SSS'}(s)$.
Since $\star_\SSS(r(s))\supseteq\star_{\SSS'}(s)$ is an inclusion of stable balls, we may consider by Proposition \ref{fakecontinuation} the pseudo-continuation element from one to the other.
Since pseudo-continuation elements are closed under composition by Proposition \ref{fakecontinuation}, it is easy to check that this defines a natural transformation $H^*(F_{\SSS}\circ r)\Longrightarrow H^*(\rho\circ F_{\SSS'})$.

This natural transformation is in fact a natural isomorphism since the natural maps from both $L_{\star_\SSS(r(s))}$ and $L_{\star_{\SSS'}(s)}$ to $L_s=L_{r(s)}$ are isomorphisms by Proposition \ref{bigsmallstars}.
\end{proof}

\subsection{In conclusion}

\begin{theorem}\label{fukcasting}
The functor $\Lambda\mapsto\Perf\W(T^*M,\Lambda)^\op$ is a microlocal Morse theatre in the sense of Definition \ref{microlocalmorsetheatre}, which casts the linking disks at smooth points of $\Lambda$ as the Morse characters.
\end{theorem}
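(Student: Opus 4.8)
The functor meant is $\C(\Lambda):=\Perf\W(T^*M,\Lambda)^\op$ (the statement abbreviates this, as does Theorem \ref{sheaffukayaequivalence}); the plan is to check that $\C$ is a microlocal Morse pre-theatre in the sense of Definition \ref{categorysystemextension} and that its structure maps are the quotients demanded by Definition \ref{microlocalmorsetheatre}, while along the way matching the Morse characters with linking disks.

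\emph{The pre-theatre structure.} The assignment $\Lambda\mapsto\W(T^*M,\Lambda)$ was constructed in \S\ref{sec:fukaya} as a strict functor on the poset of closed subsets of $S^*M$ under reverse inclusion; composing with $(-)^\op$ and $\Perf$ and restricting to subanalytic singular isotropics gives the desired functor to dg-categories. To identify it with $\SSS\mapsto\Perf\SSS$ on $\mu$-triangulations, I would use Theorem \ref{Fequivalence}, which makes $F_\SSS\colon\ZZ[\SSS]\to\W(T^*M,N^*_\infty\SSS)^\op$ a Morita equivalence and hence induces $\Perf\SSS\xrightarrow{\sim}\C(N^*_\infty\SSS)$ for every triangulation, together with Theorem \ref{Frefinement}, whose canonical natural isomorphisms make these equivalences compatible with refinement. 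On cohomology categories this data assembles into an isomorphism of functors $(\SSS\mapsto H^*\Perf\SSS)\cong(\SSS\mapsto H^*\C(N^*_\infty\SSS))$ over the category of $\mu$-triangulations and refinements, which by the remark following Definition \ref{categorysystemextension} (an application of Proposition \ref{ffhenceainf}) lifts to an isomorphism of the dg-level functors. This exhibits $\C$ as a pre-theatre.

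\emph{The quotient axiom.} Fix subanalytic singular isotropics $\Lambda\subseteq\Lambda'$, and choose a subanalytic Whitney stratification of $\Lambda'$ refining the partition $\Lambda'=\Lambda\sqcup(\Lambda'\setminus\Lambda)$. I would remove the strata of $\Lambda'\setminus\Lambda$ from the stop one at a time, in order of decreasing dimension. At each stage the stratum removed is a locally closed isotropic submanifold that is closed in the complement of the (still closed) remaining stop, so stop removal Theorem \ref{stopremoval} applies: a stratum of the top dimension $\dim M-1$ lies in the smooth Legendrian locus of $\Lambda'\setminus\Lambda$ and its removal is the quotient by the linking disks along it, while a stratum of dimension $<\dim M-1$ has no Legendrian points, so $\D=\emptyset$ and its removal is an equivalence. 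Composing, the functor $\C(\Lambda')\to\C(\Lambda)$ is (the idempotent completion of) the quotient by the linking disks along the smooth Legendrian locus of $\Lambda'\setminus\Lambda$. Finally, within a component of that locus the linking disks are carried into one another by Lagrangian isotopies disjoint at infinity from $\Lambda'$ (slide the small linking disk along the Legendrian), so they are canonically isomorphic and form a local system; hence quotienting by one disk per component already realizes the same quotient, which is exactly the condition of Definition \ref{microlocalmorsetheatre}, granted the casting statement below. I would also note that these structural theorems, stated for Liouville sectors, apply to the ind-Liouville sector $T^*M$ via the pushforward formalism, as in the proof of Theorem \ref{generation}.

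\emph{Casting.} Let $p$ be a smooth Legendrian point of $\Lambda$ with casting directors $(f,\epsilon,\SSS)$. Through the equivalence $F_\SSS$ one checks that the indicator module $1_U\in\Perf\SSS$ of an $\SSS$-constructible open set $U$ corresponds to the conormal $L_U$: on generators $F_\SSS$ sends $1_{\star(s)}$ to $L_{\star(s)}\cong L_s$ (Proposition \ref{bigsmallstars}), and the morphism spaces agree by Proposition \ref{homsbetweenstars} and Lemma \ref{Lshoms}, so the identification propagates to all indicator modules; moreover the restriction map $1_{f^{-1}(-\infty,-\epsilon)}\to 1_{f^{-1}(-\infty,\epsilon)}$ goes to the canonical continuation map $L_{f^{-1}(-\infty,\epsilon)}\to L_{f^{-1}(-\infty,-\epsilon)}$ of Proposition \ref{fakecontinuation}. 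Since $f$ is $\Lambda$-Morse with its only critical value in $[-\epsilon,\epsilon]$ occurring at $p$, the conormals $L_{f^{-1}(-\infty,t)}$ for $t\in[-\epsilon,\epsilon]$ form a positive isotopy meeting the cone over $\Lambda$ transversally exactly once, at $p$; so the wrapping exact triangle Theorem \ref{wrapcone} identifies the cone of this continuation map with the linking disk $D_p$, up to a shift fixed by the grading and orientation conventions of \S\ref{gradorsec}. Transporting along $F_\SSS$ and pushing forward along $\C(N^*_\infty\SSS)\to\C(\Lambda)$, the Morse character $\X_{\Lambda,p}(f,\epsilon,\SSS)$ becomes $D_p$; in particular it depends only on $p$, and the Morse characters form the local system of linking disks. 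Together with the previous paragraph this proves the theorem.

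\emph{Main obstacle.} The two genuinely delicate points are the coherence needed to upgrade the ``up to contractible choice'' assertions of Theorems \ref{Fequivalence} and \ref{Frefinement} into an honest isomorphism of functors --- which I would circumvent by descending to cohomology categories and appealing to Proposition \ref{ffhenceainf}, as anticipated by the remark after Definition \ref{categorysystemextension} --- and the bookkeeping in the casting step: checking $1_U\leftrightarrow L_U$, that the sublevel-set restriction map is sent to the continuation map in the correct direction inside $\W^\op$, and that the shift produced by passing to the opposite category is precisely the normalization that makes Theorem \ref{wrapcone} exact. The iterated stop-removal argument is a light repackaging of the proof of Proposition \ref{smallstarsgen}; the only new observation there is that strata of dimension below $\dim M-1$ carry no linking disks and so may be removed for free.
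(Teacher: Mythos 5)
Your proof follows essentially the same route as the paper's: identify $\Perf\SSS$ with $\W(T^*M,N^*_\infty\SSS)^\op$ via $F_\SSS$ (Theorems \ref{Fequivalence}, \ref{Frefinement}), deduce the quotient axiom from stop removal, and match the Morse character with a linking disk via the wrapping exact triangle. Two small remarks. First, your explicit stratify-and-peel argument for the quotient axiom (removing $\Lambda'\setminus\Lambda$ one stratum at a time, noting strata of dimension $<\dim M-1$ carry no linking disks) is a careful rendering of something the paper invokes implicitly by citing Theorem \ref{stopremoval} directly; it is the same mechanism as in the proof of Proposition \ref{smallstarsgen}. Second, in the casting step you assert without proof that the module morphism $1_{f^{-1}(-\infty,-\epsilon)}\to 1_{f^{-1}(-\infty,\epsilon)}$ corresponds under $F_\SSS$ to the continuation element of Proposition \ref{fakecontinuation} — the paper actually verifies this by testing against the generators $L_s$ (using Lemma \ref{Lshoms} and \cite[Lemma~3.26]{gpssectorsoc}), so you should acknowledge that this identification requires an argument rather than being automatic (though since the relevant $HW^*$ group is $\ZZ$ and the map is visibly nonzero, the cone is determined up to isomorphism in any case, so this does not threaten the conclusion).
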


\begin{proof}
Definition \ref{Fdefn} and Theorems \ref{Fequivalence} and \ref{Frefinement} give the identification between $\SSS\mapsto\Perf\SSS$ and $\SSS\mapsto\Perf\W(T^*M,N^*_\infty\SSS)^\op$ via the functors $F_\SSS$.

Stop removal Theorem \ref{stopremoval} says that $\W(T^*M,\Lambda')\to\W(T^*M,\Lambda)$ is the quotient by the linking disks at the smooth points of $\Lambda'\setminus\Lambda$.
It therefore suffices to show that the Morse characters are precisely (isomorphic to) these linking disks.

Recall from Definition \ref{morsecharacter} that a Morse character at a smooth point $p\in\Lambda$ is defined as follows.
We choose a function $f:M\to\RR$ and an $\epsilon>0$ such that $f^{-1}(-\infty, \epsilon)$ is relatively compact, $f$ has no critical values in $[-\epsilon,\epsilon]$ and $df$ is transverse to $\RR_{>0} \times \Lambda$ over $f^{-1}[-\epsilon,\epsilon]$, intersecting it only at $p$ (where $f$ vanishes).
We also choose a subanalytic Whitney triangulation $\SSS$ such that $\Lambda\subseteq N^*_\infty\SSS$ and $f^{-1}(-\infty,-\epsilon)$ and $f^{-1}(-\infty,\epsilon)$ are constructible.
The Morse character associated to these choices is then defined as the image in $\W(T^*M,\Lambda)$ of
\begin{equation}
\cone(1_{f^{-1}(-\infty, -\epsilon)} \to 1_{f^{-1}(-\infty, \epsilon)})\in\Perf\SSS=\Perf\W(T^*M,N^*_\infty\SSS)^\op
\end{equation}
where the morphism $1_{f^{-1}(-\infty, -\epsilon)} \to 1_{f^{-1}(-\infty, \epsilon)}$ is (the linearization of) the canonical one from \eqref{uniquenattrans}.
To show that this cone is indeed
sent to the linking disk at $p$ in $\W(T^*M,\Lambda)$, we will make use of the wrapping exact triangle Theorem \ref{wrapcone}, which says that the linking disk at $p$ is the cone of the continuation map associated to any positive isotopy of Lagrangians in $T^*M$ which crosses $\Lambda$ exactly once transversely at $p$.
Specifically, there is an obvious positive isotopy from the conormal of $f^{-1}(-\infty, -\epsilon)$ to the conormal of $f^{-1}(-\infty, \epsilon)$, namely $f^{-1}(-\infty,t)$ for $t\in[-\epsilon,\epsilon]$, since $f$ has no critical values in the interval $[-\epsilon,\epsilon]$; the cone of the associated continuation element in $\W(T^*M, \Lambda)$ is thus the desired linking disk.

The conormals of $f^{-1}(-\infty, \pm \epsilon)$ are not themselves objects of the wrapped Fukaya category associated to the triangulation $\W(T^*M,N^*_\infty\SSS)$, seeing as they by definition touch the stop. However, in Section \ref{fukayacalculationssec} we studied the conormals of the inward cornerings of $\SSS$-constructible open sets $U$, which we denoted $L_U:= L_{U^{-\underline\epsilon}}$. These Lagrangians $L_{f^{-1}(-\infty, \pm \epsilon)}$ are thus, in particular, objects of $\W(T^*M, N^*_\infty \SSS)$, and Lemma \ref{smoothcornering} provides an isotopy between them and the (usual) conormals of $f^{-1}(-\infty, \pm \epsilon)$ which takes place in the complement of $\Lambda$, thus inducing an isomorphism in $\W(T^*M,\Lambda)$.
Therefore to complete the argument, it suffices to show that:
\begin{enumerate}
\item\label{oldA}$F_{\SSS}(1_{f^{-1}(-\infty, \pm\epsilon)}) \in\Perf\W(T^*M,N^*_\infty\SSS)$ is isomorphic to $L_{f^{-1}(-\infty, \pm\epsilon)}$, and 
\item\label{oldB}the canonical morphism $1_{f^{-1}(-\infty, -\epsilon)} \to 1_{f^{-1}(-\infty, \epsilon)}$ is sent by $F_{\SSS}$ and the isomorphisms \ref{oldA} to an element in $HW^*(L_{f^{-1}(-\infty, \epsilon)}, L_{f^{-1}(-\infty, -\epsilon)} )_{N^* \SSS}$ whose image in $\W(T^*M, \Lambda)$ is the continuation element associated to the natural positive isotopy between the conormals of $f^{-1}(-\infty, \pm \epsilon)$.
\end{enumerate}

Regarding \ref{oldA}, let us establish the more general assertion that $F_{\SSS}(1_W)$ is isomorphic to $L_W$ for any relatively compact open $\SSS$-constructible set $W$.
Since $F_\SSS$ is a Morita equivalence, it suffices by Yoneda to show that the pullback $F_\SSS^*L_W=CW^*(L_W,F_\SSS(-))$ is isomorphic in $\Mod\SSS$ to $1_W$.
Using Lemma \ref{corneringstopped} and Lemma \ref{fakecontinuationballinsomething} (for the case $\star(s)\subseteq W$) and Proposition \ref{bigsmallstars} and Lemma \ref{Lshoms} (for the case $\star(s)\nsubseteq W$), we have canonical isomorphisms
\begin{equation}\label{indicatorobjectwiseiso}
HW^*(L_W,L_{\star(s)})_{N^*_\infty\SSS}=\begin{cases}\ZZ& \star(s)\subseteq W\\
0&\textrm{otherwise}\end{cases}=1_W(s).
\end{equation}
This identifies $F_\SSS^*L_W=1_W$ objectwise (i.e.\ identifies their evaluations at every $s\in\SSS$).
To identify $F_\SSS^*L_W=1_W$ as modules, it suffices by Corollary \ref{minimalmodelsformodules} to compare the action of morphisms $t\to s$ for $\star(t)\subseteq\star(s)\subseteq W$ at the level of cohomology (since the category $\ZZ[\SSS]$ and both modules are cohomologically concentrated in degree zero, there is no room for any higher homotopies).
In other words, we should show that the map
\begin{equation}
HW^*(L_W,L_{\star(s)})_{N^*_\infty\SSS}\to HW^*(L_W,L_{\star(t)})_{N^*_\infty\SSS}
\end{equation}
given by multiplication with the pseudo-continuation element in $HW^*(L_{\star(s)},L_{\star(t)})_{N^*_\infty\SSS}$ acts as the identity map on $\ZZ$ under the isomorphism \eqref{indicatorobjectwiseiso}, and this is precisely what is stated in Lemma \ref{fakecontinuationballinsomething}.
This completes the proof of \ref{oldA}.

Turning to \ref{oldB}, first we note that, as $\partial_\infty L_{f^{-1}(-\infty, \epsilon)}$ falls immediately into the stop $N^*_\infty\SSS$ (Lemma \ref{corneringstopped}), we have
\begin{equation}\label{placewhereneedtoagree}
HW^*(L_{f^{-1}(-\infty, \epsilon)},L_{f^{-1}(-\infty,-\epsilon)})_{N^*_\infty\SSS} = HF^*(L_{f^{-1}(-\infty, \epsilon)},L_{f^{-1}(-\infty,-\epsilon)}).
\end{equation}
In turn the isotopies of Lemma \ref{smoothcornering} between the (usual) conormals of $f^{-1}(-\infty, \pm \epsilon)$ and the inward cornerings $L_{f^{-1}(-\infty, \pm \epsilon)}$ do not cross each other at infinity, hence induce an isomorphism between $HF^*$ of the conormals with the above $HF^*$ group of their inward cornerings. These
transfer the continuation element associated to the isotopy of conormals from $f^{-1}(-\infty,-\epsilon)$ to $f^{-1}(-\infty,\epsilon)$ to an element of \eqref{placewhereneedtoagree} which we will call the \emph{cornered continuation element} (multiplication by which is called the cornered continuation map).
The image of the cornered continuation element in $\W(T^*M, \Lambda)$ is, by definition, the continuation element between the conormals of $f^{-1}(-\infty, \pm \epsilon)$ (as the isomorphisms between $HF^*$ are compatible with the map to $HW^*$).
It thus suffices to show that the cornered continuation element in \eqref{placewhereneedtoagree} is the image of the canonical map $1_{f^{-1}(-\infty, -\epsilon)} \to 1_{f^{-1}(-\infty, \epsilon)}$ under $F_{\SSS}$ and the isomorphisms of \ref{oldA}.
Equivalently, we are to show that the canonical map $1_{f^{-1}(-\infty, -\epsilon)} \to 1_{f^{-1}(-\infty, \epsilon)}$ agrees under the isomorphisms $HW^*(L_{f^{-1}(-\infty,\pm \epsilon)}, F_{\SSS}(-)) = 1_{f^{-1}(-\infty,\pm \epsilon)}(-)$ from \ref{oldA} with the pulled back cornered continuation map.
Again by Corollary \ref{minimalmodelsformodules}, it suffices to make this comparison at the level of cohomology.
That is, we are to show that multiplication by the cornered continuation map
\begin{equation}
HW^*(L_{f^{-1}(-\infty,-\epsilon)},L_{\star(s)})_{N^*_\infty\SSS}\to HW^*(L_{f^{-1}(-\infty,\epsilon)},L_{\star(s)})_{N^*_\infty\SSS}
\end{equation}
acts as the identity on $\ZZ$ under the isomorphisms \eqref{indicatorobjectwiseiso} (for $\star(s)\subseteq f^{-1}((-\infty,-\epsilon))$).
Now, the isomorphisms of both sides with $\ZZ$ coming from Lemma \ref{fakecontinuationballinsomething} are compatible with pseudo-continuation elements, so by Proposition \ref{bigsmallstars} it is equivalent to show that multiplication by the cornered continuation map
\begin{equation}\label{twomapshere}
HF^*(L_{f^{-1}(-\infty,-\epsilon)},L_s)\to HF^*(L_{f^{-1}(-\infty,\epsilon)},L_s)
\end{equation}
acts as the identity on $\ZZ$ under the isomorphisms of Lemma \ref{continuationballinsomething} (for $s\subseteq f^{-1}((-\infty,-\epsilon))$) (we replaced $HW^*$ with $HF^*$ using Lemma \ref{corneringstopped}).
By the definition given in Lemma \ref{continuationballinsomething}, the `identity on $\ZZ$' map \eqref{twomapshere} is simply the identity map on the single Floer generator we get when unperturbing $L_s$ back to be the cotangent fiber of a point on $s$.
Now the cornered continuation map \eqref{twomapshere} agrees by the last part of \cite[Lem.\ 3.26]{gpssectorsoc} with the isomorphism \eqref{twomapshere} from \cite[Lem.\ 3.21]{gpssectorsoc} associated to the isotopy from $L_{f^{-1}(-\infty,-\epsilon)}$ to $L_{f^{-1}(-\infty,\epsilon)}$.
Since this isotopy takes place far away from $L_s$, it also by definition acts as the identity on the single Floer generator of both sides.
\end{proof}

\begin{proposition}\label{wfunctorial}
For any analytic open inclusion of analytic manifolds $M'\hookrightarrow M$, the pushforward functor $\W(T^*M',\Lambda')\to\W(T^*M,\Lambda)$ for subanalytic singular isotropics with $\Lambda'\supseteq\Lambda\cap S^*M'$ defines a morphism of microlocal Morse theatres.
\end{proposition}

\begin{proof}
The reasoning of Theorem \ref{Frefinement} applies without change.
\end{proof}

\section{Examples} \label{sec:corollaries}

\subsection{Cotangent bundles} 
\label{cotangent}

Let $M$ be a smooth manifold (assumed connected for sake of notation).
The cotangent fiber $F_q\in\W(T^*M)$ generates by Abouzaid \cite{abouzaidcriterion,abouzaidcotangent} when $M$ is closed and by \cite[Thm.\ 1.14]{gpsdescent} in general.

When $M$ is closed, Abbondandolo--Schwarz \cite{abbondandoloschwarz} and Abouzaid \cite{abouzaidtwisted} calculated the endomorphism algebra of the fiber as $CW^*(F_q,F_q)=C_{-*}(\Omega_q M)$ (using relative $\Pin$ structures as in Section \ref{gradorsec}).
The present Theorem \ref{sheaffukayaequivalence} (which does not depend on any of \cite{abbondandoloschwarz, abouzaidtwisted, abouzaidcotangent,abouzaidcriterion}) gives a proof of this fact for all (not necessarily closed) $M$:

\begin{corollary}
There is a quasi-isomorphism $CW^*(F_q, F_q) = C_{-*}(\Omega_q M)$.
Moreover if $M \subseteq N$ is a codimension zero inclusion, there is a commutative diagram
\begin{equation}
\begin{tikzcd}
CW^*(F_q, F_q)_{T^*M} \ar{r} \ar{d} & C_{-*}(\Omega_q M) \ar{d} \\
CW^*(F_q, F_q)_{T^*N} \ar{r} & C_{-*}(\Omega_q N)
\end{tikzcd}
\end{equation}
where the left hand vertical arrow is covariant inclusion and the right hand vertical arrow is induced by pushforward of loops.
\end{corollary}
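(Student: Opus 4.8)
The plan is to apply Theorem \ref{sheaffukayaequivalence} with $\Lambda=\emptyset$. Fix $q\in M$; replacing $M$ by the component containing $q$ changes neither $CW^\ast(F_q,F_q)_{T^\ast M}$ nor $C_{-\ast}(\Omega_qM)$, so we may assume $M$ connected. Then $\Sh_\emptyset(M)$ is the category of local systems on $M$, and it is classical that for connected $M$ this is $\Mod_{C_{-\ast}(\Omega_qM)}$: the corepresentative $P_q$ of the stalk functor $\F\mapsto\F_q$ is a compact generator by Corollary \ref{microstalksgenerate}, and $\End_{\Sh_\emptyset(M)}(P_q)=(P_q)_q\simeq C_{-\ast}(\Omega_qM)$ as $A_\infty$-algebras (this last identification, with the Pontryagin product, being exactly the statement $\Sh_\emptyset(M)\simeq\Mod_{C_{-\ast}(\Omega_qM)}$). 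Thus Theorem \ref{sheaffukayaequivalence} gives $\Perf\W(T^\ast M)^{\op}\simeq\Perf_{C_{-\ast}(\Omega_qM)}$. By Theorem \ref{generation} the cotangent fiber $F_q$ split-generates $\W(T^\ast M)$, so it suffices to check that this equivalence carries $F_q$ to $P_q$; granting that,
\[
CW^\ast(F_q,F_q)^{\op}=\End_{\W(T^\ast M)^{\op}}(F_q)\simeq\End_{\Sh_\emptyset(M)}(P_q)=(P_q)_q\simeq C_{-\ast}(\Omega_qM),
\]
and hence $CW^\ast(F_q,F_q)\simeq C_{-\ast}(\Omega_qM)^{\op}\simeq C_{-\ast}(\Omega_qM)$, the final step being the loop-reversal anti-automorphism of the Pontryagin dg-algebra.

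To see that $F_q$ corresponds to $P_q$, I would fix a subanalytic triangulation $\SSS$ of $M$ with $q$ in the interior of a top-dimensional simplex $\sigma$. Then $\partial_\infty T^\ast_q M=S^\ast_q M$ is disjoint from $N^\ast_\infty\SSS$, so $F_q$ is an object of $\W(T^\ast M,N^\ast_\infty\SSS)$, and there $F_q\simeq L_{\star(\sigma)}$ by Proposition \ref{bigsmallstars} together with Corollary \ref{squish} (the latter applied in a chart $T^\ast U$ with $U\subseteq\sigma$, where there is no stop); note $\star(\sigma)=\sigma$. Under the functor $F_\SSS$ of Definition \ref{Fdefn} and the sheaf-side equivalence of Theorem \ref{sheafrealization}, the object $L_{\star(\sigma)}$ corresponds to $\ZZ_\sigma\in\Sh_\SSS(M)^c$. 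Shrinking the stop from $N^\ast_\infty\SSS$ to $\emptyset$, and using that both realizations are maps of microlocal Morse theatres (Theorems \ref{fukcasting} and \ref{sheafrealization}), $F_q$ is sent to its image in $\W(T^\ast M)$ while $\ZZ_\sigma$ is sent to its image $\iota^\ast\ZZ_\sigma$ in $\Sh_\emptyset(M)^c$, where $\iota^\ast$ is the left adjoint to the inclusion $\iota\colon\Sh_\emptyset(M)\hookrightarrow\Sh_\SSS(M)$. Finally $\iota^\ast\ZZ_\sigma=P_q$, since for locally constant $\F$ one has $\Hom(\iota^\ast\ZZ_\sigma,\F)=\Hom(\ZZ_\sigma,\iota\F)=(\iota\F)(\sigma)=\F_q$, the simplex $\sigma$ being contractible.

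For the commuting square, a codimension zero inclusion $M\subseteq N$ induces the covariant functor $\W(T^\ast M)\to\W(T^\ast N)$ of \cite{gpssectorsoc,gpsstructural}, carrying the fiber $F_q$ to the fiber $F_q$; I would match it, under the equivalences of the first part, with the left adjoint $\iota^{\mathrm{Loc}}_!\colon\Sh_\emptyset(M)^c\to\Sh_\emptyset(N)^c$ to restriction of local systems. Since $\iota^{\mathrm{Loc}}_!$ is left Kan extension along $\mathrm{Sing}(M)\to\mathrm{Sing}(N)$ (under $\Sh_\emptyset(X)\simeq\Fun(\mathrm{Sing}(X),\Mod\ZZ)$), it sends $P_q^M$ to $P_q^N$ and induces on endomorphisms precisely the map $C_{-\ast}(\Omega_qM)\to C_{-\ast}(\Omega_qN)$ coming from $\Omega_qM\hookrightarrow\Omega_qN$, which is the desired right-hand arrow. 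To establish the matching I would choose a triangulation $\SSS_N$ of $N$ in which $\overline M$ is a subcomplex, set $\SSS_M:=\SSS_N|_M$ (so $N^\ast_\infty\SSS_N\cap S^\ast M=N^\ast_\infty\SSS_M$), use that \cite{gpssectorsoc,gpsstructural} provides a covariant inclusion $\W(T^\ast M,N^\ast_\infty\SSS_M)\to\W(T^\ast N,N^\ast_\infty\SSS_N)$ compatible with stop removal and corresponding on the sheaf side to $\iota_!\colon\Sh_{\SSS_M}(M)^c\to\Sh_{\SSS_N}(N)^c$, and note that $\iota_!$ is compatible with the localization functors to $\Sh_\emptyset(-)^c$ because the square of right adjoints — restriction of sheaves and inclusion of the subcategory of local systems — commutes on the nose; tracing $F_q\simeq L_\sigma\leftrightarrow\ZZ_\sigma\mapsto P_q$ through the $M$- and $N$-rows then yields the square. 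I expect this last compatibility to be the main obstacle: there is no single hard idea, but one must simultaneously track the stops, the \cite{gpssectorsoc,gpsstructural} inclusion functor, stop removal, and the poset-level descriptions, being especially careful that the poset-level functor underlying $\iota_!$ is \emph{not} the naive inclusion $\ZZ[\SSS_M]\hookrightarrow\ZZ[\SSS_N]$, since the $\SSS_M$- and $\SSS_N$-stars of a simplex (hence the conormal Lagrangians representing them) differ near $\partial M$.
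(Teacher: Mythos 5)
Your argument for the first assertion follows the same route as the paper's (terse) proof: take $\Lambda=\emptyset$ in Theorem \ref{sheaffukayaequivalence}, use the classical identification of $\Sh_\emptyset(M)$ with $\Mod C_{-\ast}(\Omega_qM)$, and then follow the cotangent fiber through the equivalence. The paper's hint for the last step is ``introduce a stop along the conormal of the boundary of a disk and then remove it''; your version (take a $\mu$-triangulation $\SSS$ with $q$ interior to a top simplex $\sigma$, identify $F_q\simeq L_{\star(\sigma)}$ in $\W(T^*M,N^*_\infty\SSS)$ via Corollary \ref{squish} pushed forward and Proposition \ref{bigsmallstars}, match $L_{\star(\sigma)}$ with $\ZZ_\sigma$, then apply stop removal and compute $\iota^*\ZZ_\sigma$) is the same idea carried out explicitly, and is correct. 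Minor note: you should say ``$\mu$-triangulation,'' since that is what the microlocal Morse pre-theatre axiom and $\Sh_\SSS(M)^c=\Sh_{N^*_\infty\SSS}(M)^c$ require. Your care with the $\op$ (using that $\End_{\Sh_\emptyset(M)}(P_q)$ matches $CW^*(F_q,F_q)^\op$, then invoking the loop-reversal anti-automorphism) is a correct refinement which the paper elides.

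For the commuting square, the paper's proof says nothing at all, so there is nothing to compare against; your sketch is a reasonable attempt, but it does contain a genuine gap which you yourself flag: nothing in Theorem \ref{sheaffukayaequivalence} as stated (nor in Theorem \ref{sheafrealization}/Theorem \ref{fukcasting}, which are formulated for a fixed $M$) tells you that the covariant inclusion functor $\W(T^*M,N^*_\infty\SSS_M)\to\W(T^*N,N^*_\infty\SSS_N)$ is intertwined with any particular sheaf-theoretic functor $\Sh_{\SSS_M}(M)^c\to\Sh_{\SSS_N}(N)^c$. Establishing that intertwining is additional work (one plausible route: compare both with the poset-level convolution functor determined by $HW^*(L_{\star_{\SSS_M}(s)},L_{\star_{\SSS_N}(t)})_{N^*_\infty\SSS_N}$, computed by Proposition \ref{fakecontinuation} and Lemma \ref{fakedeformtozero}, and check it agrees with $\Hom_N(\ZZ_{\star_{\SSS_N}(t)},\iota_!\ZZ_{\star_{\SSS_M}(s)})$), and your observation that the underlying poset map is not the naive inclusion $\ZZ[\SSS_M]\hookrightarrow\ZZ[\SSS_N]$ near $\partial M$ is exactly the right thing to worry about. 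Until that matching is carried out, the second half of the corollary is not proved.
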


\begin{proof}
Note that there exists a real analytic structure on $M$ whose induced smooth structure agrees with the given one.
Taking $\Lambda = \emptyset$ in Theorem \ref{sheaffukayaequivalence} gives $\Perf \W(T^*M) = \Sh_\emptyset(M)^c$.
It is well known that $\Sh_\emptyset(M)^c=\Perf C_{-*}(\Omega_qM)$, for example because both are the global sections of the constant cosheaf of linear categories with costalk $\Perf\ZZ$.
Indeed, $U\mapsto U$ is a cosheaf of spaces, equivalently of $\infty$-groupoids, which upon linearizing yields $U\mapsto\Perf C_{-*}(\Omega U)$, and $U\mapsto\Sh_\emptyset(U)^c$ is a cosheaf since $U\mapsto\Mod\Sh_\emptyset(U)^c=\Sh_\emptyset(U)$ is a sheaf.

We may derive the more precise assertion that $C_{-*}(\Omega_qM)$ is endomorphisms of the cotangent fiber by following a fiber through the equivalence, e.g.\ by considering the inclusion of the cotangent bundle of a disk, or equivalently by introducing a stop along the conormal of the boundary of a disk and then removing it.
\end{proof}

\subsection{Plumbings}

Many authors have studied Fukaya categories of plumbings \cite{abouzaidplumbing,abouzaidsmith, etgu-lekili} and their sheaf counterparts \cite{bezrukavnikov-kapranov}.
Here we compute the wrapped category of a plumbing.

Let $\Pi_{2n}$ be the Liouville pair $(\CC^n, \partial_\infty (\RR^n \cup i\RR^n))$; we term it the plumbing sector.
Plumbings are formed by taking a manifold $M$ (usually disconnected) with spherical boundary $\partial M = \coprod S^{n-1}$, 
and gluing the Liouville pair $(T^*M, \partial M)$ to some number of plumbing sectors along the spheres.

One can model the wrapped Fukaya category of the plumbing sector directly in sheaf theory: we can view it as the pair 
$(T^*\RR^n, N^*_\infty\{0\})$, and the category $\Sh_{N^*_\infty\{0\}}(\RR^n)$ has a well known description in terms of the Fourier
transform as described in \cite{bezrukavnikov-kapranov}.
This category is equivalent to $\W(\Pi_{2n})$ by Theorem \ref{sheaffukayaequivalence}.
To apply the gluing results of \cite{gpsdescent}, however, we need to know how the wrapped Fukaya categories of the two boundary sectors include, which is slightly more than what Theorem \ref{sheaffukayaequivalence} tells us.
Hence we give a direct computation of the wrapped Fukaya category of the plumbing sector.
Take a positive Reeb pushoff of the boundary of a cotangent fiber in $T^*\RR^n$, so it is now the outward conormal of a small ball.
Deleting the original cotangent fiber, we obtain the Liouville sector $T^* S^{n-1}  \times \A_2$ where $\A_2$ denotes the Liouville sector $(\CC,\{e^{2\pi ik/3}\}_{k=0,1,2}\infty)$.
We can get back to the plumbing sector $\Pi_{2n}$ by adding back the missing fiber, which amounts to attaching a Weinstein handle along one of the boundary sectors $T^*(S^{n-1} \times I)$.
We may thus deduce from \cite[Thm.\ 1.28, Thm.\ 1.5, and Cor.\ 1.18]{gpsdescent} that:

\begin{lemma}
$\W(\Pi_{2n})$ is Morita equivalent to
\begin{equation}
\colim(\Perf(\bullet) \leftarrow \Perf C_*(\Omega S^{n-1}) \to \Perf(\bullet \to \bullet) \otimes \Perf C_*(\Omega S^{n-1})).
\end{equation}
\end{lemma}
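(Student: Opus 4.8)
The plan is to carry out rigorously the geometric reduction sketched in the paragraph above and then invoke the structural machinery of \cite{gpsstructural}. First I would make precise the claim that deleting a cotangent fiber turns $\Pi_{2n}$ into $T^*S^{n-1}\times\A_2$. Writing $\Pi_{2n}=(T^*\RR^n,\partial_\infty 0_{\RR^n}\cup\partial_\infty F_0)$ with $F_0=T^*_0\RR^n$, I would first isotope the stop $\partial_\infty F_0$ to the ideal boundary of the outward conormal $L_B$ of a small ball $B\ni 0$ (a positive Reeb pushoff, supported away from the other stop), so that $F_0$ itself becomes an honest Lagrangian disjoint at infinity from both stops. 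Removing a Weinstein neighborhood of $F_0$ leaves a Liouville sector with underlying manifold $T^*(\RR^n\setminus 0)\cong T^*(S^{n-1}\times\RR)\cong T^*S^{n-1}\times T^*\RR$, and a careful inspection of where the two old stops and the new boundary face land identifies, after a Weinstein deformation, the $T^*\RR$ factor (with its decorations) with $\A_2$, giving $T^*S^{n-1}\times\A_2$.

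Second, I would compute $\Perf\W(T^*S^{n-1}\times\A_2)$ by the K\"unneth theorem \cite[Thm.~1.5]{gpsstructural} (together with \cite[Cor.~1.11]{gpsstructural}), obtaining $\Perf\W(T^*S^{n-1})\otimes\Perf\W(\A_2)$. Here $\Perf\W(T^*S^{n-1})=\Perf C_*(\Omega S^{n-1})$: the cotangent fiber split-generates by Theorem \ref{generation} and its endomorphism dga is $C_*(\Omega S^{n-1})$ (as computed in \S\ref{cotangent}); and $\Perf\W(\A_2)=\Perf(\bullet\to\bullet)$ is the standard computation for a disk with three boundary stops, the three stops giving rise to the $A_2$ quiver. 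This matches the right-hand vertex $\Perf(\bullet\to\bullet)\otimes\Perf C_*(\Omega S^{n-1})$ of the asserted span.

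Third, the inverse of the first step re-attaches $F_0$, and I would present this as attaching a Weinstein handle --- the cotangent bundle $T^*D^n$ of an $n$-disk, with $\Omega D^n\simeq\ast$, hence $\Perf\W(T^*D^n)=\Perf\ZZ=\Perf(\bullet)$ --- along one of the boundary sectors $T^*(S^{n-1}\times I)\subseteq\partial(T^*S^{n-1}\times\A_2)$, whose category is $\Perf C_*(\Omega(S^{n-1}\times I))=\Perf C_*(\Omega S^{n-1})$. By the descent/gluing theorem \cite[Thm.~1.20]{gpsstructural}, $\Perf$ of the resulting sector is $\Perf$ of the homotopy pushout of $\Perf\W(T^*D^n)\leftarrow\Perf\W(T^*(S^{n-1}\times I))\to\Perf\W(T^*S^{n-1}\times\A_2)$. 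It then remains to identify the two structure maps out of $\Perf C_*(\Omega S^{n-1})$: the map to $\Perf\W(T^*D^n)=\Perf\ZZ$ is extension of scalars along the augmentation $C_*(\Omega S^{n-1})\to\ZZ$ induced by $S^{n-1}\hookrightarrow D^n$, and the map to $\Perf\W(T^*S^{n-1}\times\A_2)$ is the identity on the $\Perf C_*(\Omega S^{n-1})$ tensor factor tensored with the map $\Perf\ZZ\to\Perf(\bullet\to\bullet)$ recording the boundary arc of $\A_2$ along which we glue. Since the glued sector is Weinstein-deformation equivalent to $\Pi_{2n}$ by the first step, this yields the statement.

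The Floer-theoretic content is entirely imported from \cite{gpsstructural}; the part I expect to be the real obstacle is the Liouville-sector bookkeeping in the first and third steps --- checking that deleting the pushed-off fiber from $\Pi_{2n}$ yields $T^*S^{n-1}\times\A_2$ up to Weinstein deformation (in particular, accounting for why the two stops of $\Pi_{2n}$ together with the removal produce exactly three stops in the $\A_2$ factor), and pinning down which boundary sector the handle is glued to, so that the three arrows of the pushout are precisely the ones above rather than some other functors into $\Perf(\bullet\to\bullet)$ and $\Perf\ZZ$.
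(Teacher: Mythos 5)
Your proposal follows exactly the same route as the paper's (one-paragraph) argument: push off the stop coming from the cotangent fiber so that the fiber becomes a Lagrangian in the stopped sector, identify its complement as $T^*S^{n-1}\times\A_2$, apply K\"unneth and the computation of $\W(\A_2)$, and then re-attach the fiber as a Weinstein handle along $T^*(S^{n-1}\times I)$ and invoke the gluing/descent theorem, with the same citations \cite[Thm.\ 1.20, Thm.\ 1.5, Cor.\ 1.11]{gpsstructural}. Your version is actually more careful than the paper's sketch, in particular in spelling out the two structure maps of the pushout and in flagging the Liouville-sector bookkeeping (matching up the two original stops and the new boundary face with the three stops of $\A_2$, and identifying the gluing sector) as the part requiring genuine care.
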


Gluing in the remaining manifolds, we conclude: 

\newlength\Awidth
\settowidth\Awidth{A}
\begin{corollary}
The wrapped Fukaya category of a plumbing is Morita equivalent to the colimit of the diagram 
\begin{equation}
\hspace{-2\Awidth}
\begin{tikzcd}[column sep = tiny]
\coprod \Perf(\bullet) & & \\
\coprod \Perf C_*(\Omega S^{n-1}) \ar{u} \ar{r} &  \coprod \Perf(\bullet \to \bullet)  \otimes \Perf C_*(\Omega S^{n-1})  \\
& \coprod \Perf C_*(\Omega S^{n-1}) \ar{u} \ar{r} & \coprod \Perf C_*(\Omega M_i)
\end{tikzcd}
\hspace{-2\Awidth}
\end{equation}
where $M_i$ are the components of $M$.
\end{corollary}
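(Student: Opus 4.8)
The plan is to realize the plumbing as an iterated gluing of Liouville sectors along boundary hypersurfaces, apply the descent formula for wrapped Fukaya categories from \cite{gpsstructural} (the same inputs \cite[Thm.~1.20, Thm.~1.5, Cor.~1.11]{gpsstructural} used for the plumbing sector above), and substitute at each piece the local computations already established.

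First I would record the decomposition. By construction a plumbing $P$ is assembled from $\coprod_i(T^*M_i,\partial M_i)$ by gluing in one plumbing sector $\Pi_{2n}$ for each plumbing edge, each such $\Pi_{2n}$ being attached along two boundary hypersurfaces, each a copy of $T^*(S^{n-1}\times I)$. Choosing separating convex hypersurfaces exhibits $P$ as a union of the sectors $\{T^*M_i\}$ and $\{\Pi_{2n}\}$ overlapping in pieces $T^*(S^{n-1}\times I)$, so by the gluing theorem of \cite{gpsstructural} the category $\Perf\W(P)$ is $\Perf$ of the homotopy colimit of the diagram whose vertices are the $\W(T^*M_i)$ and the $\W(\Pi_{2n})$ and whose edges are the boundary-restriction functors out of the various $\W(T^*(S^{n-1}\times I))$. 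Gluing along all the disjoint hypersurfaces at once is justified by iterating the two-piece formula (or by the general descent statement in \cite{gpsstructural}).

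Next I would plug in the local identifications: $\Perf\W(T^*M_i)=\Perf C_{-*}(\Omega M_i)$ by the cotangent-bundle corollary above, a consequence of Theorem~\ref{sheaffukayaequivalence}; since $S^{n-1}\times I\simeq S^{n-1}$, the same corollary gives $\Perf\W(T^*(S^{n-1}\times I))=\Perf C_{-*}(\Omega S^{n-1})=\Perf C_*(\Omega S^{n-1})$, the last identification because $\Omega S^{n-1}$ is a topological group up to homotopy; and for each plumbing sector the Lemma just proved presents $\Perf\W(\Pi_{2n})$ as $\Perf$ of the pushout of $\Perf(\bullet)\leftarrow\Perf C_*(\Omega S^{n-1})\to\Perf(\bullet\to\bullet)\otimes\Perf C_*(\Omega S^{n-1})$, identifying moreover the two plumbing boundary-restriction functors $\W(T^*(S^{n-1}\times I))\to\W(\Pi_{2n})$ with $\id\otimes(\text{inclusion of a vertex of }\bullet\to\bullet)$ followed by the structure map into the pushout. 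Substituting these presentations into the homotopy colimit of the previous paragraph and flattening the nested colimits produces precisely the displayed diagram: the top $\coprod\Perf(\bullet)$, the middle $\coprod\Perf(\bullet\to\bullet)\otimes\Perf C_*(\Omega S^{n-1})$, and the upper copy of $\coprod\Perf C_*(\Omega S^{n-1})$ mapping into both, record the handle-reattachment internal to each plumbing sector, while the lower copy of $\coprod\Perf C_*(\Omega S^{n-1})$ mapping to $\coprod\Perf(\bullet\to\bullet)\otimes\Perf C_*(\Omega S^{n-1})$ and to $\coprod\Perf C_*(\Omega M_i)$ records the external gluing of each $T^*M_i$ onto the ends of the sectors, with the coproducts running over the plumbing edges, the components $M_i$, and the spheres along which sectors are attached, as the geometry dictates.

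The step I expect to be the main obstacle is the bookkeeping: checking that simultaneous gluing along all the hypersurfaces is valid, that the resulting nested homotopy colimits flatten to the single diagram shown, and --- most delicately --- matching the plumbing boundary-restriction functors with the correct objects of $\Perf(\bullet\to\bullet)$ so that the assembled diagram has exactly the displayed edges. No further Floer theory is needed: all of it is contained in \cite{gpsstructural} and in the already-proved computations of $\W(T^*M_i)$ and $\W(\Pi_{2n})$.
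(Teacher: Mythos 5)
Your proposal follows essentially the same route as the paper: take the Lemma computing $\Perf\W(\Pi_{2n})$ as a pushout, identify the boundary pieces $T^*(S^{n-1}\times I)$ and the cotangent bundles $T^*M_i$ via the cotangent-bundle corollary, and assemble everything with the gluing formula from \cite{gpsstructural} (Thm.~1.20, Thm.~1.5, Cor.~1.11). The paper's proof is literally the words ``Gluing in the remaining manifolds, we conclude,'' so yours just supplies the bookkeeping the paper leaves implicit; the ingredients and the shape of the argument are identical. One small correction to a side remark: the fact that $\Omega S^{n-1}$ is a group up to homotopy does not identify $C_{-*}(\Omega S^{n-1})$ with $C_*(\Omega S^{n-1})$ (those are graded in opposite directions, and the antipode only furnishes an algebra isomorphism with the opposite algebra, not a regrading). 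The notation $C_*$ in the Lemma and Corollary is best read as the paper's shorthand for the same $C_{-*}(\Omega_q M)$ appearing in the cotangent-bundle computation; in any case $\Perf$ of the two is equivalent by regrading, so nothing downstream is affected.
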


\subsection{Proper modules and infinitesimal Fukaya categories} \label{NZ}

Recall that for a dg or $A_\infty$ category $\C$, we write $\Prop \C := \Fun(\C, \Perf\ZZ)$ for the category 
of proper (aka pseudo-perfect) modules.  It is immediate from our main result that 
$\Prop \Sh_\Lambda(M)^c = \Prop \W(T^*M, \Lambda)^\op$.

Recall from Corollary \ref{propersheaves} that any proper $\Sh_\Lambda(M)^c$-module is representable by an object of $\Sh_\Lambda(M)$ with perfect stalks, i.e.\ a constructible sheaf in the classical sense.
Let us describe some objects in the Fukaya category $\W(T^*M, \Lambda)$ which necessarily give rise to proper modules (and thus to sheaves on $M$ with perfect stalks, microsupported inside $\Lambda$).

\begin{definition}\label{forwardstoppedsubcategory}
For any stopped Liouville manifold $(X,\Lambda)$, we define the \emph{forward stopped subcategory} $\W^{\epsilon}(X,\Lambda)$ to be 
the full subcategory of $\W(X,\Lambda)$ generated by Lagrangians which admit a positive wrapping into $\Lambda$, meaning $\partial_\infty L$ becomes eventually contained in arbitrarily small neighborhoods of $\Lambda$.
By Lemma \ref{cofinalitycriterion}, such a wrapping is necessarily cofinal.
\end{definition}

\begin{example}
If $\Lambda$ admits a ribbon $F$ (or, alternatively, is itself equal to a Liouville hypersurface $F$) then $\W^{\epsilon}(X,\Lambda)$ contains all Lagrangians whose boundary at infinity is contained in a neighborhood of a small negative Reeb pushoff of $\Lambda$ (or $F$).
\end{example}

\begin{example}
All compact (exact) Lagrangians are contained in $\W^{\epsilon}(X,\Lambda)$, as their boundary at infinity $\emptyset$ is wrapped into $\Lambda$ by the trivial wrapping.
\end{example}

\begin{proposition}
All objects of $\W^{\epsilon}(X,\Lambda)$ co-represent proper modules over $\W(X,\Lambda)$; that is, the restriction of the Yoneda embedding $\W(X,\Lambda)\hookrightarrow\Mod\W(X,\Lambda)^\op$ to $\W^\epsilon(X,\Lambda)$ has image contained in $\Prop\W(X,\Lambda)^\op$.
\end{proposition}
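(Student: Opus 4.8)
The plan is to reduce the statement to a single Lagrangian $L$ which itself admits a positive wrapping into $\f$, and then to show directly that the wrapped Floer group $HW^\ast(L,K)_{(X,\f)}$ is a perfect complex of $\ZZ$-modules for every object $K$ of $\W(X,\f)$, by exhibiting a cofinal wrapping of $L$ which eventually computes $HW^\ast(L,K)$ by ordinary Floer cohomology between Lagrangians disjoint at infinity.

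First I would dispense with the word ``generated'' in the definition of $\W^\epsilon(X,\f)$. The (co-)Yoneda embedding $\W(X,\f)\hookrightarrow\Mod\W(X,\f)^\op$ is exact, so it carries the subcategory of $\W(X,\f)$ generated by any collection of objects into the subcategory of modules generated by their images; and $\Prop\W(X,\f)^\op$ is a thick subcategory of $\Mod\W(X,\f)^\op$, being $\Fun(-,\Perf\ZZ)$ with $\Perf\ZZ\subseteq\Mod\ZZ$ thick. It therefore suffices to prove that the Yoneda module of each \emph{generating} Lagrangian $L$ — i.e.\ each $L$ admitting a positive wrapping into $\f$ — is proper, which is to say that $HW^\ast(L,K)_{(X,\f)}\in\Perf\ZZ$ for every object $K$.

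Now fix such an $L$ and an arbitrary object $K$. Since $K$ is an object of $\W(X,\f)$, its boundary at infinity is disjoint from $\f$, so $N:=\partial_\infty X\setminus\partial_\infty K$ is an open neighborhood of $\f$. Choose a positive wrapping $L=L_0\leadsto L_1\leadsto\cdots$ realizing a wrapping of $L$ into $\f$; this is cofinal by Lemma \ref{cofinalitycriterion}, so $HW^\ast(L,K)=\varinjlim_t HF^\ast(L_t,K)$, and by definition of ``wrapping into $\f$'' there is a $T$ with $\partial_\infty L_t\subseteq N$, hence $\partial_\infty L_t\cap\partial_\infty K=\emptyset$, for all $t\ge T$. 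Moreover for $T\le t\le t'$ the connecting isotopy $L_t\leadsto L_{t'}$ is a sub-isotopy of the wrapping, so its boundary at infinity remains inside $N$ throughout, hence disjoint from $\partial_\infty K$. By invariance of Floer cohomology under isotopies disjoint at infinity from the other factor (\cite[Lem.\ 3.21]{gpssectorsoc}), the continuation maps $HF^\ast(L_t,K)\to HF^\ast(L_{t'},K)$ are isomorphisms for $t'\ge t\ge T$; therefore $HW^\ast(L,K)\cong HF^\ast(L_T,K)$. Finally, because $L_T$ and $K$ are disjoint at infinity, $L_T\cap K$ is compact, hence (after a small perturbation making the intersection transverse) finite, so $CF^\ast(L_T,K)$ is a finitely generated free $\ZZ$-module supported in finitely many degrees, i.e.\ an object of $\Perf\ZZ$. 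This gives $HW^\ast(L,K)\in\Perf\ZZ$ and completes the argument.

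The step needing the most care is the stabilization: one must verify that the wrapping of $L$ into $\f$ can be arranged so that its \emph{tail} — including the connecting isotopies between the $L_t$, not merely the Lagrangians $L_t$ themselves — stays inside the fixed neighborhood $N$ of $\f$, so that the isotopy-invariance statement for Floer cohomology genuinely applies and forces the continuation maps to be isomorphisms past time $T$. The remaining points (exactness of Yoneda, thickness of $\Prop$, finiteness of transverse intersections of Lagrangians disjoint at infinity) are routine.
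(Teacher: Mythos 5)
Your proof is correct and is essentially the same argument as the paper's: wrap $L$ cofinally toward $\f$, note that after finite time the wrapping is confined to a neighborhood of $\f$ disjoint from $\partial_\infty K$ so the continuation maps stabilize, and conclude that $HW^\ast(L,K) = HF^\ast(L_T,K)$ is a finite-rank complex. The two points you flag as needing care --- reduction to generating objects and control of the tail of the isotopy --- are sound and are implicit in the paper's one-line proof (the latter is exactly what ``$\partial_\infty L_t$ becomes contained in arbitrarily small neighborhoods of $\f$'' ensures).
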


\begin{proof}
Morphisms in the wrapped category can be computed by cofinally positively wrapping the first factor.
Any $L \in \W^{\epsilon}(X,\Lambda)$ admits such a wrapping $\{L_t\}_{t\geq 0}$ which converges at infinity to $\Lambda$.
It follows that after some time $t$, its boundary at infinity stays disjoint at infinity from $K$, and hence $CW^*(L,K)=CF^*(L_t,K)$ for sufficiently large $t$.
\end{proof}

\begin{corollary}\label{epsilonproper}
The equivalence $\Perf\W(T^*M, \Lambda)^\op=\Sh_\Lambda(M)^c$ sends $\W^{\epsilon}(T^*M, \Lambda)$ into $\Prop\Sh_\Lambda(M)^c$.
\end{corollary}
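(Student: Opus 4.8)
This will be essentially formal, transporting a statement already proved on the Fukaya side along the equivalence of Theorem \ref{sheaffukayaequivalence}. The two inputs are: (i) the preceding Proposition, which says the Yoneda embedding restricts to a fully faithful functor $\W^\epsilon(T^*M,\Lambda)\hookrightarrow\Prop\W(T^*M,\Lambda)^\op$ --- every forward stopped Lagrangian co-represents a proper module; and (ii) Theorem \ref{sheaffukayaequivalence}, the equivalence $\Perf\W(T^*M,\Lambda)^\op\xrightarrow{\sim}\Sh_\Lambda(M)^c$.

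The plan is first to upgrade (ii) to an equivalence of proper module categories. For this I would use that $\Prop\C$ depends only on the Morita equivalence class of $\C$: a functor $\C\to\Perf\ZZ$ extends uniquely to $\Perf\C$, and since $\Perf\C$ is built from $\C$ under shifts, cones and retracts --- operations under which $\Perf\ZZ$ is closed --- the extension again takes values in $\Perf\ZZ$. Applying this to (ii) gives $\Prop\W(T^*M,\Lambda)^\op\simeq\Prop\Sh_\Lambda(M)^c$, which is precisely the identity noted at the start of this subsection. Then I would compose with the embedding of (i) to obtain
\begin{equation}
\W^\epsilon(T^*M,\Lambda)\hookrightarrow\Prop\W(T^*M,\Lambda)^\op\xrightarrow{\sim}\Prop\Sh_\Lambda(M)^c,
\end{equation}
which is fully faithful, being a fully faithful functor followed by an equivalence. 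If desired, one can then feed this through Corollary \ref{propersheaves} to reinterpret the target as the full subcategory of $\Sh_\Lambda(M)$ on sheaves with perfect stalks, so that forward stopped Lagrangians are matched with honest $\Lambda$-constructible sheaves on $M$.

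I do not expect a substantive obstacle: all the real work sits in the preceding Proposition (resting on Lemma \ref{cofinalitycriterion} and the stabilization of Floer cohomology out of a forward stopped Lagrangian) and in Theorem \ref{sheaffukayaequivalence}. The one place to be careful is bookkeeping of opposite categories: one must check that the module-category equivalence induced by Theorem \ref{sheaffukayaequivalence} restricts to proper modules exactly in the form $\Prop\W(T^*M,\Lambda)^\op\simeq\Prop\Sh_\Lambda(M)^c$, and that under it the proper module co-represented by a forward stopped $L$ is the object being transported --- both of which follow from the general principle that an equivalence of dg categories induces compatible equivalences on all associated (proper) module categories.
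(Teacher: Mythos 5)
Your argument is exactly the paper's: the preceding Proposition embeds $\W^\epsilon(T^*M,\Lambda)$ into $\Prop\W(T^*M,\Lambda)^\op$, the identity $\Prop\Sh_\Lambda(M)^c=\Prop\W(T^*M,\Lambda)^\op$ follows from Theorem \ref{sheaffukayaequivalence} together with the Morita invariance of $\Prop$, and composing gives the claim. The bookkeeping you flag about opposite categories and transporting the co-represented module is indeed the only thing to check, and it goes through as you describe.
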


Recall that for a Whitney triangulation $\SSS$, the category $\W(T^*M, N^*_\infty \SSS)^\op$ is Morita equivalent to $\ZZ[\SSS]$, hence smooth and proper.
The generators $L_{\star(s)}$ of $\W(T^*M, N^*_\infty \SSS)$ used to prove this equivalence were shown in that proof to lie in $\W^\epsilon(T^*M, N^*_\infty \SSS)$, so we have:

\begin{proposition}\label{wrappedisinf}
For a Whitney triangulation $\SSS$, the inclusion $\W^{\epsilon}(T^*M, N^*_\infty \SSS) \subseteq \W(T^*M, N^*_\infty \SSS)$ is a Morita equivalence.\qed
\end{proposition}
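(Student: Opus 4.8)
The plan is to deduce this directly from Theorem \ref{Fequivalence}. Recall that Theorem \ref{Fequivalence} is proved by exhibiting the conormals $L_{\star(s)}$ of the (inward perturbations of the) open stars as a split-generating set for $\W(T^*M, N^*_\infty\SSS)$ --- this is precisely the combination of Propositions \ref{smallstarsgen} and \ref{bigsmallstars}. Since $\W^\epsilon(T^*M, N^*_\infty\SSS)$ is by definition a full subcategory of $\W(T^*M, N^*_\infty\SSS)$, the inclusion automatically induces a fully faithful functor after passing to $\Perf$. So the only thing left to prove is that this particular split-generating set $\{L_{\star(s)}\}_{s\in\SSS}$ already lies in $\W^\epsilon(T^*M, N^*_\infty\SSS)$; essential surjectivity on $\Perf$ is then formal.

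First I would check the membership $L_{\star(s)}\in\W^\epsilon(T^*M, N^*_\infty\SSS)$. Here $L_{\star(s)}$ is, by definition, the conormal $L_{\star(s)^{-\underline\varepsilon}}$ of the inward perturbation of $\star(s)$ for some fixed small $\underline\varepsilon>0$, and I would wrap by letting $\underline\varepsilon\to 0$. By Proposition \ref{conormalconvergence}, the Legendrians $\partial_\infty L_{\star(s)^{-\underline\varepsilon}}$ stay disjoint from $N^*_\infty\SSS$ and become contained in arbitrarily small neighborhoods of $N^*_\infty\SSS$; and by the argument in the proof of Corollary \ref{corneringstopped} (namely Lemma \ref{cofinalitycriterion} applied inside the contact manifold $S^*M\setminus N^*_\infty\SSS$), this $\underline\varepsilon\to 0$ family is a \emph{positive} isotopy which is moreover cofinal. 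Hence $L_{\star(s)}$ admits a positive wrapping whose boundary at infinity converges into $N^*_\infty\SSS$, which is exactly the condition defining $\W^\epsilon(T^*M, N^*_\infty\SSS)$.

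Putting this together: $\W^\epsilon(T^*M, N^*_\infty\SSS)\subseteq\W(T^*M, N^*_\infty\SSS)$ is a full subcategory containing a split-generating set of objects, so the induced functor $\Perf\W^\epsilon(T^*M, N^*_\infty\SSS)\to\Perf\W(T^*M, N^*_\infty\SSS)$ is fully faithful and essentially surjective, i.e.\ the inclusion is a Morita equivalence. I do not anticipate any real obstacle; the one point requiring care is verifying that the $\underline\varepsilon\to 0$ family is genuinely a positive (rather than merely smooth) isotopy escaping every compact subset of the wrapping region $S^*M\setminus N^*_\infty\SSS$, but this is already packaged in Proposition \ref{conormalconvergence} together with the cofinality discussion surrounding Corollary \ref{corneringstopped}.
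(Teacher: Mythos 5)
Your proposal is correct and takes essentially the same approach as the paper: the paper's own (terse) argument is precisely that the split-generators $L_{\star(s)}$ from Propositions \ref{smallstarsgen} and \ref{bigsmallstars} lie in $\W^\epsilon(T^*M, N^*_\infty\SSS)$, with the requisite cofinal positive wrapping into $N^*_\infty\SSS$ supplied by the $\underline\varepsilon\to 0$ family as in Proposition \ref{conormalconvergence} and Corollary \ref{corneringstopped}. Your write-up simply spells out the membership check that the paper leaves implicit.
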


\begin{remark}
Corollary \ref{epsilonproper} is very similar to the original Nadler--Zaslow correspondence \cite{nadlerzaslow}, restricted to Lagrangians with fixed asymptotics.
To be more precise, recall that Nadler--Zaslow wish to consider an infinitesmially wrapped Fukaya category $\W^{\inf}(T^*M,\Lambda)$ of Lagrangians `asymptotic at infinity to $\Lambda$' and then show it is equivalent to a category of sheaves on $M$ with microsupport inside $\Lambda$.

If $\Lambda$ is a smooth Legendrian and $\W^{\inf}(T^*M,\Lambda)$ is defined to consist of Lagrangians which are conical at infinity, ending inside $\Lambda$, then there is a fully faithful embedding $\W^{\inf}(T^*M,\Lambda)\hookrightarrow\W^\epsilon(T^*M,\Lambda)$, sending a Lagrangian ending inside $\Lambda$ to its small negative pushoff, as this pushoff  tautologically wraps positively back into $\Lambda$.
Hence Corollary \ref{epsilonproper} recovers a version of \cite{nadlerzaslow} when $\Lambda$ is a smooth Legendrian.
One can certainly imagine constructing such an embedding $\W^{\inf}(T^*M,\Lambda)\hookrightarrow\W^\epsilon(T^*M,\Lambda)$ for more general (e.g.\ subanalytic isotropic) $\Lambda$, provided one is given a definition of $\W^{\inf}(T^*M,\Lambda)$ for such $\Lambda$.
\end{remark}

\begin{remark}\label{infinitesimalproper}
We \emph{do not} know when $\W^{\epsilon}(T^*M, \Lambda)^\op \hookrightarrow \Prop \Sh_\Lambda(M)^c$ is a Morita equivalence. 
Note that the assertion of such an equivalence (for $\W^{\inf}(T^*M,\Lambda)^\op$) \emph{is not} made in \cite{nadlermicrolocal}, although that work is occasionally misquoted to suggest that it is.
What is actually said is that one can get all objects of $\Prop \Sh_\Lambda(M)^c$ from twisted complexes of objects of $\W^{\inf}(T^*M,\Lambda')^\op$ for a \emph{possibly larger} $\Lambda'$ which, as twisted complexes, pair trivially 
with all Lagrangians contained in a neighborhood of $\Lambda'\setminus\Lambda$.  Such Lagrangians
might be said to be ``Floer theoretically supported away from $\Lambda'\setminus\Lambda$''.
\end{remark}

To make a precise statement along the lines of Remark \ref{infinitesimalproper},
realizing a version of the Nadler--Zaslow equivalence, we have:

\begin{proposition}\label{nzrecovery}
If $\SSS$ is any subanalytic Whitney triangulation of compact $M$ with $\Lambda\subseteq N^*_\infty\SSS$, and $\D$ denotes the collection of linking disks
to smooth points of $N^*_\infty \SSS \backslash \Lambda$, then
\begin{equation}
\Prop\Sh_\Lambda(M)^c = \Prop \W(T^*M, \Lambda)^\op = (\Tw\W^{\epsilon}(T^*M, N^*_\infty \SSS)^\op)_{\Ann(\D)}
\end{equation}
where $\Tw$ denotes twisted complexes (i.e.\ any model for the the
pre-triangulated, non idempotent-completed, hull), and the subscript
$\Ann(\D)$ indicates taking the full subcategory of objects annihilated by $CW^*(-,D)=0$ for all $D\in\D$.
\end{proposition}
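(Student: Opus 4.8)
The plan is to deduce the proposition formally from stop removal, the smoothness and properness of the triangulation category, and Proposition~\ref{wrappedisinf}, using only standard facts about proper modules over dg quotients. The first equality is already recorded above: Theorem~\ref{sheaffukayaequivalence} gives $\Perf\W(T^*M,\Lambda)^\op=\Sh_\Lambda(M)^c$, and $\Prop$ depends only on the Morita equivalence class of a category.

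For the second equality set $\A:=\W(T^*M,N^*_\infty\SSS)^\op$. Since $M$ is compact, $\SSS$ is a finite triangulation, so $\A$ is Morita equivalent to $\ZZ[\SSS]\simeq\Sh_\SSS(M)^c$ (Theorem~\ref{Fequivalence}, Lemma~\ref{sheafposetequivalence}), which is smooth and proper by Proposition~\ref{sheafcompactsmoothproper}; hence $\Prop\A=\Perf\A$ (Lemma~\ref{propperf}). By Theorem~\ref{fukcasting} together with Definition~\ref{microlocalmorsetheatre} --- equivalently by applying stop removal (Theorem~\ref{stopremoval}) inductively over the strata of $\SSS$ as in the proof of Proposition~\ref{smallstarsgen} --- the functor $\W(T^*M,N^*_\infty\SSS)^\op\to\W(T^*M,\Lambda)^\op$ is, up to idempotent completion, the dg quotient $q\colon\A\to\A/\D$; as $\Prop$ is insensitive to idempotent completion, $\Prop\W(T^*M,\Lambda)^\op=\Prop(\A/\D)$. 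Now I would invoke the general fact that restriction of modules along $q$ is a fully faithful embedding $\Mod(\A/\D)\hookrightarrow\Mod\A$ with essential image the modules $M$ satisfying $M|_\D=0$, and that this restricts to an equivalence $\Prop(\A/\D)\xrightarrow{\sim}\{M\in\Perf\A:M|_\D=0\}$. That $q^*$ preserves properness follows from $\Hom_{\Mod\A}(P,q^*\bar M)=\Hom_{\Mod(\A/\D)}(q_!P,\bar M)$ and the fact that the left adjoint $q_!$ preserves perfectness (it sends representables to representables). Conversely, any $M\in\Perf\A$ with $M|_\D=0$ descends to an $\A/\D$-module $\bar M$, which is again proper because every object of $\Perf(\A/\D)$ is a direct summand of some $q_!P$ with $P\in\Perf\A$, so $\Hom(P',\bar M)$ is a summand of $\Hom(q_!P,\bar M)=\Hom(P,M)\in\Perf\ZZ$.

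It remains to transport $\{M\in\Perf\A:M|_\D=0\}$ to the Fukaya side. By Proposition~\ref{wrappedisinf}, $\W^\epsilon(T^*M,N^*_\infty\SSS)\subseteq\W(T^*M,N^*_\infty\SSS)$ is a Morita equivalence, so $\Perf\W^\epsilon(T^*M,N^*_\infty\SSS)^\op=\Perf\A$; moreover the pretriangulated hull of the finite full exceptional collection $\{L_{\star(s)}\}_{s\in\SSS}$ of Proposition~\ref{homsbetweenstars} (whose $\Hom$-complexes are $\ZZ$ in degree $0$ or $0$) already carries a finite semiorthogonal decomposition into copies of the idempotent-complete category $\Perf\ZZ$, hence is itself idempotent complete and equal to $\Perf\A$; being sandwiched between this hull and $\Perf\W^\epsilon(T^*M,N^*_\infty\SSS)^\op=\Perf\A$, the hull $\Tw\W^\epsilon(T^*M,N^*_\infty\SSS)^\op$ coincides with both. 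Under the Yoneda identification of $\A$-modules with functors out of $\W(T^*M,N^*_\infty\SSS)$, an object $L$ of $\Tw\W^\epsilon(T^*M,N^*_\infty\SSS)^\op$ corresponds to the module $K\mapsto CW^*(L,K)_{N^*_\infty\SSS}$, so the condition $M|_\D=0$ becomes $CW^*(L,D)_{N^*_\infty\SSS}=0$ for every $D\in\D$; that is, $\{M\in\Perf\A:M|_\D=0\}=(\Tw\W^\epsilon(T^*M,N^*_\infty\SSS)^\op)_{\operatorname{Ann}(\D)}$. Assembling the three identifications yields the proposition.

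I expect the genuinely delicate points to be categorical bookkeeping rather than geometry: keeping the variance straight so that the $\A$-module attached to $L\in\W^\epsilon$ vanishes on $\D$ exactly when $CW^*(L,D)=0$, and verifying that $\Tw\W^\epsilon(T^*M,N^*_\infty\SSS)^\op$ --- not merely its idempotent completion --- already coincides with $\Perf\A$, for which the exceptional-collection argument above is the essential input. Everything else is a formal consequence of Theorem~\ref{sheaffukayaequivalence}, stop removal, Proposition~\ref{wrappedisinf}, and the smoothness and properness of $\A$.
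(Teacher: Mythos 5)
Your proof is correct and follows the same essential route as the paper's: reduce to the second equality, use stop removal to identify the Fukaya pushforward as the quotient by $\D$, observe that module restriction along a quotient is fully faithful onto the modules annihilating $\D$ and preserves properness, use smoothness/properness of $\W(T^*M,N^*_\infty\SSS)$ (from the finite exceptional collection) to replace $\Prop$ with $\Perf$, apply Proposition~\ref{wrappedisinf}, and finally show $\Tw$ suffices in place of $\Perf$. The only notable divergence is in that last step: the paper simply invokes Lemma~\ref{exceptionalidempotentcomplete} (split-generation by an exceptional collection implies generation), while you sketch an argument that the pretriangulated hull of the exceptional collection $\{L_{\star(s)}\}$ carries a finite semiorthogonal decomposition into copies of $\Perf\ZZ$ and is therefore already idempotent complete; this is a genuine alternative but is essentially the content of (and is most cleanly proved via) that lemma, so you could have just cited it. Your verification that $q^*$ and its inverse preserve properness is likewise more roundabout than needed (properness of $\bar{M}$ vs.\ $q^*\bar{M}$ is immediate from $q$ being essentially surjective, since properness is tested object by object), but it is correct.
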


\begin{proof}
For such an $\SSS$, the functor $j: \W(T^*M, N^*_\infty \SSS) \to \W(T^*M,
\Lambda)$ is the quotient by $\D$ by Theorem \ref{stopremoval}.
Pullback of modules under any localization is a fully faithful embedding, identifying the category of modules over the localized category with the full subcategory of modules over the original category which annihilate the objects quotiented by (see Section \ref{quotients} and \cite[Lem.\ 3.12 and 3.13]{gpssectorsoc}).
Properness of a module is also clearly equivalent to properness of its pullback.
We thus conclude that
\begin{equation}
j^*: \Prop \W(T^*M, \Lambda)^\op \hookrightarrow \Prop \W(T^*M, N^*_\infty \SSS)^\op
\end{equation}
embeds the former as the full subcategory of the latter annihilating $\D$.

Now $\W(T^*M, N^*_\infty \SSS)$ (Morita equivalent to $\Perf\SSS^\op$ by Proposition \ref{Fequivalence}) is
smooth and proper by Lemma \ref{exceptionalpropersmooth} (since $M$ is compact and thus there are finitely many simplices).
Hence $\Prop \W(T^*M, N^*_\infty \SSS)^\op = \Perf  \W(T^*M,
N^*_\infty \SSS)^\op = \Perf \W^{\epsilon}(T^*M, N^*_\infty \SSS)^\op$ (by Proposition \ref{wrappedisinf}).
Finally, we observe that idempotent completion is
unecessary by Lemma \ref{exceptionalidempotentcomplete}, as $\Perf\SSS$ has a generating exceptional collection.
\end{proof}

\begin{remark}\label{noncompactnzrecovery}
For non-compact $M$, the same proof implies that
\begin{equation}
\Prop\Sh_\Lambda(M)^c = (\Prop \W^{\epsilon}(T^*M, N^*_\infty\SSS)^\op)_{\Ann(\D)} \supseteq (\Perf\W^{\epsilon}(T^*M, N^*_\infty\SSS)^\op)_{\Ann(\D)}
\end{equation}
but the inclusion is not generally an equality.
Though, if at least $\Lambda$ is compact, a similar argument should relate proper modules annihilating co-representatives of the stalks at infinity with the annihilator in $\Tw\W^{\epsilon}(T^*M, N^*_\infty \SSS)^\op$ of $\D$ and the fibers at infinity.
\end{remark}

\begin{example}
Let us explain how our `stopped' setup can be used to make ordinary (not wrapped) Floer cohomology calculations using sheaves.
Suppose given two Lagrangians $L, K \subseteq T^*M$ for which $\Lambda:=\partial_\infty L\cup\partial_\infty K$ is subanalytic.
We are interested in computing $HF^*(L^+, K)$.  Thus consider the wrapped category $\W(T^*M, \Lambda)$ and small negative pushoffs $L^-,K^-\in\W(T^*M, \partial_\infty L \cup \partial_\infty K)$, and observe that
$$HF^*(L^+,K)=HW^*(L^-,K^-)_\Lambda.$$
By our main result, the right hand side can be computed as $\Hom(\F_K,\F_L)$ in the sheaf category $\Sh_\Lambda(M)$, provided we can determine the
sheaves $\F_L$ and $\F_K$ to which $L^-$ and $K^-$ are sent by our Theorem \ref{sheaffukayaequivalence}.

Here we make only a few observations regarding how to determine these sheaves.  
Because linking disks go to microstalks and $L^-,K^-$ are forward stopped, we can see immediately
that $\F_L, \F_K$ have microstalk $\ZZ$ along the respective loci $\partial_\infty L,\partial_\infty K\subseteq\Lambda$.
For the same reason, for $p$ away from the front projection of $\Lambda=\partial_\infty L\cup\partial_\infty K$, we have
$$\F_L|_p \cong CF^*(L, T_p^* M) \qquad \qquad \F_K|_p \cong CF^*(K, T_p^* M).$$
In some cases, e.g.\ in case that $L$ intersects every cotangent fiber either once or not at all, this data already suffices to determine 
$\F_L$.   In particular, this situation occurs in \cite{STWZ}, where 
sheaf calculations are made exhibiting cluster transformations arising from comparing different fillings
of Legendrian knots.  The present discussion suffices to translate those calculations into calculations in Lagrangian Floer theory.
\end{example}

\subsection{Legendrians and constructible sheaves}
\label{aas}

\begin{corollary}
Let $\Lambda \subseteq J^1 \RR^n \subseteq S^*\RR^{n+1}$ be a smooth compact Legendrian.
Let $D=D_1\sqcup\cdots\sqcup D_n$ be a disjoint union of linking disks at distinct points of $\Lambda$, at least one on each connected component.
Consider the algebra
\begin{equation}
\A_\Lambda := CW^*(D, D)_{T^* \RR^{n+1}, \Lambda}=\bigoplus_{i,j=1}^nCW^*(D_i,D_j)_{T^*\RR^{n+1},\Lambda}.
\end{equation}
Then $\Mod\A_\Lambda^\op$ is equivalent to the category $\Sh_\Lambda(T^* \RR^{n+1})_0$ of sheaves microsupported inside $\Lambda$ and with vanishing stalk at infinity.

This equivalence identifies the microstalk along $\Lambda$ near $D_i$ with the direct summand of the forgetful functor $\Mod\A_\Lambda^\op\to\Mod\ZZ$ corresponding to the idempotent $e_i:=\1_{D_i}\in CW^*(D_i,D_i)\subseteq\A_\Lambda$ ($e_1,\ldots,e_n$ are orthogonal idempotents summing to the identity).
Hence $\Prop\A_\Lambda^\op$ is equivalent to the subcategory of $\Sh_\Lambda(T^*\RR^{n+1})_0$ of objects with perfect microstalks along $\Lambda$ (or, equivalently, with perfect stalks).
\end{corollary}

\begin{proof}
Our generation results \cite[Thm.\ 1.14]{gpsdescent} imply that $\W(T^*\RR^{n+1}, \Lambda)$ is generated by $D_1,\ldots,D_n$ and a cotangent fiber $F$ near infinity.
Because we assume that $\Lambda \subseteq J^1 \RR^n$, the cotangent fiber at negative (in the last coordinate) infinity 
can be cofinally positively wrapped without intersecting $\Lambda$, and likewise the (isomorphic) cotangent
fiber at positive infinity can be cofinally negatively wrapped without intersecting $\Lambda$.
These large wrappings are conormals to large disks in $\RR^{n+1}$ containing the projection of $\Lambda$;
they thus have vanishing wrapped Floer cohomology (in both directions) with the linking disks $D_i$ to $\Lambda$.
Thus $D=D_1\sqcup\cdots\sqcup D_n$ and $F$ are orthogonal objects of $\W(T^* \RR^{n+1}, \Lambda)$.

Denote by $\mu=\mu_1\oplus\cdots\oplus\mu_n$ and $\sigma\in\Sh_\Lambda(T^*\RR^{n+1})^c$ the objects corresponding to $D=D_1\oplus\cdots\oplus D_n$ and $F$.
They are orthogonal, and have endomorphism algebras $\A_\Lambda^\op$ and $\ZZ$, respectively.

We have $\Sh_\Lambda(T^*\RR^{n+1})=\Mod\Sh_\Lambda(T^*\RR^{n+1})^c=\Mod\W(T^* \RR^{n+1}, \Lambda)^\op=\Mod\A_\Lambda^\op\oplus\Mod\ZZ$, and this equivalence is given concretely by $\F\mapsto\Hom(\mu,\F)\oplus\Hom(\sigma,\F)$.
By Theorem \ref{sheaffukayaequivalence}, $\Hom(\mu,\F)=\Hom(\mu_1,\F)\oplus\cdots\oplus\Hom(\mu_n,\F)$ is the direct sum of microstalks along $\Lambda$ near $D_1,\ldots,D_n$, and $\Hom(\sigma,\F)$ is the stalk at infinity.

To see that perfect stalks is equivalent to perfect microstalks along $\Lambda$ for objects of $\Sh_\Lambda(T^*\RR^{n+1})_0$, argue as follows.
Suppose microstalks are perfect.
Stalks are computed by $\Hom(\ZZ_{B_\epsilon(x)},\F)$ for some sufficiently small $\epsilon>0$ (in terms of $\Lambda$), since changing $\epsilon$ is non-characteristic by Whitney's condition (b) for a subanalytic Whitney stratification $\SSS$ whose conormal contains $\Lambda$.
Now moving $B_\epsilon(x)$ generically to infinity picks up some number of microstalks when its conormal passes through $\Lambda$ (transversally), and eventually gives zero since the stalk of $\F$ near infinity vanishes.
Thus perfect microstalks implies perfect stalks.
Perfect stalks implies perfect microstalks was proven in Corollary \ref{propersheaves}.
\end{proof}

Let us comment on the relation of the above result to the `augmentations are sheaves' statement in \cite{shende-treumann-zaslow, nrssz} (and later
developments such as \cite{rutherford-sullivan, an-bae-su}).  
There is an evident similarity: both relate augmentations of an algebra associated to a Legendrian to categories of sheaves microsupported
in that Legendrian.  But they are not exactly the same: the algebra $\A_\Lambda$ is not by definition the Chekanov--Eliashberg dga, and 
moreover in \cite{nrssz} the category of augmentations is defined by a somewhat complicated procedure, not just as proper modules over a dga.
Also in \cite{nrssz}, the authors restrict attention to augmentations, i.e.\ $1$-dimensional representations of the dga, whereas the above 
result concerns the entire representation category (the underlying $\ZZ$-module of the representation being the microstalk), specializing to a comparison of rank $k$ representations with rank $k$ microstalk sheaves for every $k$.

In fact, $\A_\Lambda$ was conjectured by Sylvan to be a version of the Chekanov--Eliashberg dga with enhanced $C_*(\Omega \Lambda)$ coefficients.
A precise statement comparing $\A_{\Lambda}$ to such a generalized ``loop space
dga'' can be found in \cite[Conj.\ 3]{ekholm-lekili}, where it is explained that
the comparison should follow from a slight variant of the surgery techniques of
\cite{bourgeoisekholmeliashberg,ekholmsurgery}.
The relation between the multiple copy
construction of \cite{nrssz} and the loop space dga can also be extracted from \cite{ekholm-lekili}.

Finally we note that a version of the above discussion serves to translate between the arguments of \cite{shende-conormal} and \cite{ekholmngshende}.

\subsection{Fukaya--Seidel categories of cotangent bundles}

Let $W: T^*M \to \CC$ be an exact symplectic fibration with singularities.
The associated
Fukaya--Seidel category is by (our) definition $\W(T^*M, W^{-1}(-\infty))$.  
According to \cite[Cor.\ 3.9]{gpsdescent}, retracting the stop to its core does not affect the category: 
$\W(T^*M, W^{-1}(-\infty)) = \W(T^*M, \cc_{W^{-1}(-\infty)})$.  
Thus if the fiber is Weinstein, then we may calculate the corresponding Fukaya--Seidel category using Theorem \ref{sheaffukayaequivalence} (provided the core is subanalytic).

In particular, the sheaf theoretic work on mirror symmetry for toric varieties may now be translated into assertions
regarding the wrapped Fukaya category.   Recall that 
\cite{FLTZ-Morelli} introduced for any $n$-dimensional toric variety 
$\TTT$ a certain
Lagrangian $\Lambda_\TTT \subseteq T^*(S^1)^n$.  They conjectured,\footnote{Strictly speaking, they conjectured the proper module version of 
this statement.} and \cite{kuwagaki} proved, that 
$\Sh_{\partial_\infty\Lambda_\TTT}((S^1)^n)^c = \Coh(\TTT)$, where we use $\Coh$ to denote the dg category of coherent complexes.  By Theorem \ref{sheaffukayaequivalence}, we may conclude:

\begin{corollary}\label{wrappedcoh}
$\Perf\W(T^*(S^1)^n, \partial_\infty\Lambda_\TTT)^\op = \Coh(\TTT)$.
\end{corollary}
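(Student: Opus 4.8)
The plan is to obtain Corollary~\ref{wrappedcoh} by concatenating Theorem~\ref{sheaffukayaequivalence} with the coherent--constructible correspondence conjectured in \cite{FLTZ-Morelli} and proved in \cite{kuwagaki}. The only preliminary I need to dispatch is that the hypotheses of Theorem~\ref{sheaffukayaequivalence} are met with $M=(S^1)^n$ and $\Lambda=\partial_\infty\Lambda_{\mathbf T}$: the torus $(S^1)^n$ carries its standard real analytic structure, and the FLTZ Lagrangian $\Lambda_{\mathbf T}\subseteq T^*(S^1)^n$ is a closed conical Lagrangian which is moreover subanalytic --- in the standard angular coordinates it is a finite union of products of rational affine subtori of the base with rational polyhedral cones in the fibers, indexed by the cones of the defining fan, hence semialgebraic. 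Therefore $\partial_\infty\Lambda_{\mathbf T}\subseteq S^*(S^1)^n$ is a subanalytic closed isotropic, and since $\Lambda_{\mathbf T}$ is conical we have $\Sh_{\Lambda_{\mathbf T}}((S^1)^n)=\Sh_{\partial_\infty\Lambda_{\mathbf T}}((S^1)^n)$, so nothing is lost in passing between the two descriptions.

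First, then, I would apply Theorem~\ref{sheaffukayaequivalence} to obtain
\[
\Perf\W(T^*(S^1)^n,\partial_\infty\Lambda_{\mathbf T})^\op \;\simeq\; \Sh_{\partial_\infty\Lambda_{\mathbf T}}((S^1)^n)^c.
\]
Second, I would quote \cite{kuwagaki} for the identification $\Sh_{\partial_\infty\Lambda_{\mathbf T}}((S^1)^n)^c\simeq\Coh(\mathbf T)$, where $\Coh(\mathbf T)$ denotes the idempotent-complete $\ZZ$-linear dg category of coherent complexes on the toric variety $\mathbf T$ produced there. Composing the two equivalences gives the assertion.

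The hard part --- such as it is --- is purely a matter of reconciling conventions, and I expect this bookkeeping to be the main (though still routine) obstacle: one must check that the microsupport and cotangent-bundle sign conventions of \cite{FLTZ-Morelli,kuwagaki} agree with ours while keeping track of the $(-)^\op$ in our statement, which is precisely the discrepancy absorbed by negating the Liouville form, equivalently applying the fiberwise antipodal map, as discussed immediately after Theorem~\ref{sheaffukayaequivalence}; and one must confirm that both sides are understood in their $\ZZ$-linear, idempotent-completed incarnations, so that $\Coh(\mathbf T)$ means literally the same dg category on both ends. For smooth $\mathbf T$ the $(-)^\op$ issue is in any case invisible, since $\Coh(\mathbf T)$ carries a natural anti-autoequivalence (derived duality); for singular $\mathbf T$ one should simply cite the precise form of \cite{kuwagaki}. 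No new geometric input beyond Theorem~\ref{sheaffukayaequivalence} and the cited sheaf-theoretic results is needed.
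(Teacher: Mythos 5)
Your argument is exactly the paper's: apply Theorem~\ref{sheaffukayaequivalence} to reduce the wrapped Fukaya category to $\Sh_{\partial_\infty\Lambda_{\mathbf T}}((S^1)^n)^c$, then invoke the coherent--constructible correspondence of \cite{kuwagaki} (conjectured in \cite{FLTZ-Morelli}). Your additional remarks verifying the subanalyticity hypothesis and flagging the $(-)^\op$ bookkeeping are sensible but don't change the route.
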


When $\TTT$ is smooth and Fano, it was expected that the $\Coh(\TTT)$ should be 
equivalent to the Fukaya--Seidel category of the mirror
Hori--Vafa superpotential \cite{hori-vafa}.  
To compare this expectation with Corollary \ref{wrappedcoh}, it 
suffices to show that $\partial_\infty \Lambda_\TTT$ is in fact the core of the fiber of said superpotential in the Fano case. This is 
shown under certain hypotheses in \cite{gammage-shende} and in general in \cite{pengzhou}. 
We summarize the above discussion in the right column of \eqref{eq:gammage-shende}. 

These results may be compared with \cite{abouzaid-toric}, which for smooth projective $\TTT$ gives
a fully faithful embedding of $\Perf(\TTT)$ into an infinitesimal Fukaya category for the superpotential. 
In the Fano case, this is recovered and upgraded
to an equivalence by taking proper modules of the formulation in \cite{gammage-shende}.  

Note that Corollary \ref{wrappedcoh} gives an equivalence in the general 
(non-Fano, non-compact, singular, and stacky) case, although this equivalence is
not yet formulated in terms of a superpotential.  Such a formulation is known to be 
somewhat subtle, requiring the exclusion of some critical values; see
\cite[Sec.\ 5]{aurouxkatzarkovorlovprojective} or \cite{abouzaid-toric}.  
It may be interesting to explore this using the present methods.

\section{Partially wrapped Fukaya categories and microlocal sheaves}\label{microsection}

The purpose of this section is to prove Theorem \ref{microsheaffukayaequivalence}.
The reasoning in this section depends only on the statement of Theorem \ref{sheaffukayaequivalence}, together with various results from \cite{gpsdescent} and \cite{nadler-shende}; as such, it can be read independently of previous sections of this article.

The main point in the derivation of Theorem \ref{microsheaffukayaequivalence} from Theorem \ref{sheaffukayaequivalence} is to properly exploit, on both the Fukaya side and the sheaf side, the `doubling trick', which allows one to embed the category associated (on either side) to a Liouville manifold (possibly relative a singular isotropic stop) into the category associated to a cotangent bundle relative an appropriate `doubled' stop obtained from an embedding of (a stabilization of) the given Liouville manifold into the co-sphere bundle.
The use of such an embedding to reduce to cotangent bundles was advocated for on the sheaf side in \cite{shende-microlocal}.
The doubling trick has appeared in various forms on the Fukaya side \cite{gpsdescent, sylvanswappable}, and we develop it systematically below.
It has been used on the sheaf side in \cite{guillermou, nadler-shende} to embed categories of \emph{microlocal} sheaves into categories of (usual) sheaves, and we will use it for the same purpose here. 

While our eventual appeal to Theorem \ref{sheaffukayaequivalence} will require the relative core in question to be subanalytic isotropic, most of the intermediate results of this section require much weaker assumptions.
A subset $\Lambda$ of a contact (resp.\ symplectic) manifold will be called \emph{mostly Legendrian} (resp.\ \emph{Lagrangian}) \cite[Def.\ 1.7]{gpsdescent} iff the complement $\Lambda^\subcrit:=\Lambda\setminus\Lambda^\crit$ of the open locus $\Lambda^\crit\subseteq\Lambda$ where $\Lambda$ is a smooth Legendrian (resp.\ Lagrangian) can be covered by the smooth image of a second countable smooth manifold of dimension strictly less than Legendrian (resp.\ Lagrangian).
A conical mostly Lagrangian subset is (locally) the cone over a mostly Legendrian subset (proof: intersect with a generic contact type hypersurface).
A subanalytic isotropic subset is mostly Legendrian/Lagrangian.

\subsection{Homological cocores}\label{hccsec}

We begin by introducing homological cocores, which are a simultaneous generalization of linking disks and cocores.
They are analogous to co-representatives of microstalks in the sheaf theoretic context.
At various points in our arguments below, it will be relevant to assume a given Liouville manifold $X$ `admits homological cocores' in the sense defined below (in fact, admitting homological cocores is most naturally a condition on a pair $(X,\Lambda)$, which turns out to be independent of $\Lambda$ and invariant under deformations of $X$).
Every Weinstein manifold admits homological cocores.

We begin by recalling a special case of the K\"unneth embedding from \cite{gpsdescent}.
Let $(X,\Lambda)$ be a stopped Liouville manifold.
We have a K\"unneth functor \cite[Thm.\ 1.5]{gpsdescent}
\begin{equation}\label{firstkunneth}
\W(X,\Lambda)\hookrightarrow\W\bigl((X,\Lambda)\times(\CC,\infty\cup e^{i[\frac\pi 2,\frac{3\pi}2]}\infty)\bigr)
\end{equation}
given by multiplication by the linking disk $D_\infty\in\W(\CC,\infty\cup e^{i[\frac\pi 2,\frac{3\pi}2]}\infty)$ at $\infty\in\partial_\infty\CC$; it is fully faithful since the endomorphism algebra of $D_\infty$ is $\ZZ$.
We also have a fully faithful embedding
\begin{equation}\label{fftear}
\W((X,\Lambda)\times(\CC_{\Re\geq 0},\infty))\subseteq\W\bigl((X,\Lambda)\times(\CC,\infty\cup e^{i[\frac\pi 2,\frac{3\pi}2]}\infty)\bigr)
\end{equation}
by \cite[Lem.\ 3.7]{gpsdescent} (to be explicit, $(X,\Lambda)\times(\CC_{\Re\geq 0},\infty)=(X\times\CC_{\Re\geq 0},(\cc_X\times\infty)\cup(\Lambda\times\RR_{>0}))$).
The image of the K\"unneth functor \eqref{firstkunneth} is evidently contained in this full subcategory \eqref{fftear}, so we obtain a functor
\begin{equation}\label{secondkunneth}
\W(X,\Lambda)\hookrightarrow\W((X,\Lambda)\times(\CC_{\Re\geq 0},\infty))
\end{equation}
which will be used throughout this section.

\begin{definition}[Homological cocore]\label{hccdef}
Let $(X,\Lambda)$ be a stopped Liouville manifold whose relative core $\cc_{X,\Lambda}$ is mostly Lagrangian.
A \emph{homological cocore} at a smooth Lagrangian point $p\in\cc_{X,\Lambda}$ is an object of $\Perf\W(X,\Lambda)$ whose image under ($\Perf$ of) the K\"unneth embedding \eqref{secondkunneth} is the linking disk at $p\times\infty\in\cc_{X,\Lambda}\times\infty$.
\end{definition}

Recall from \cite[Proof of Thm.\ 1.14]{gpsdescent} that if $L\subseteq X$ is exact, cylindrical at infinity, and intersects $\cc_{X,\Lambda}$ precisely once, transversely, at a smooth Lagrangian point $p\in\cc_{X,\Lambda}$, then $L$ is a homological cocore at $p$.
In particular, (properly embedded) cocores of critical Weinstein handles are homological cocores.
Also recall from \cite[Sec.\ 9.1]{gpsdescent} that the linking disk at a point of $\Lambda$ is a homological cocore at the corresponding point of $\cc_{X,\Lambda}$.

\begin{definition}\label{ahccdef}
We say that $(X,\Lambda)$ (with $\cc_{X,\Lambda}$ mostly Lagrangian) \emph{admits homological cocores} iff every smooth Lagrangian point of $\cc_{X,\Lambda}$ has a homological cocore.
\end{definition}

It follows from stop removal and the vanishing of $\W(X\times\CC_{\Re\geq 0})$ \cite[Lem.\ 9.1]{gpsdescent} that the linking disks to $\cc_{X,\Lambda}\times\infty$ split-generate $\W((X,\Lambda)\times(\CC_{\Re\geq 0},\infty))$.
Hence $(X,\Lambda)$ admits homological cocores iff the K\"unneth embedding \eqref{secondkunneth} is a Morita equivalence (and in this case $\Perf\W(X,\Lambda)$ is split-generated by the homological cocores).
In fact, we have the following equivalent characterizations of admitting homological cocores.

\begin{proposition}\label{enoughprop}
For a stopped Liouville manifold $(X,\Lambda)$ whose relative core $\cc_{X,\Lambda}$ is mostly Lagrangian, the following are equivalent:
\begin{itemize}
\item$(X,\Lambda)$ admits homological cocores.
\item The K\"unneth embedding \eqref{secondkunneth} is a Morita equivalence.
\item The K\"unneth embedding is a Morita equivalence:
\begin{equation}
\label{kunc}\W(X,\Lambda)\hookrightarrow\W((X,\Lambda)\times(\CC,\pm\infty)).
\end{equation}
\item The K\"unneth embedding is a Morita equivalence:
\begin{equation}
\label{kunhalfplanenolambda}\W(X)\hookrightarrow\W(X\times(\CC_{\Re\geq 0},\infty)).
\end{equation}
\item The K\"unneth embedding is a Morita equivalence:
\begin{equation}
\label{kuncwithoutlambda}\W(X)\hookrightarrow\W(X\times(\CC,\pm\infty)).
\end{equation}
\end{itemize}
\end{proposition}

\begin{proof}
The equivalence of admitting homological cocores and the K\"unneth embedding \eqref{secondkunneth} being a Morita equivalence was already argued for above.

We argue that \eqref{secondkunneth} and \eqref{kunhalfplanenolambda} are equivalent.
These are the statements that the categories $\W((X,\Lambda)\times(\CC_{\Re\geq 0},\infty))$ and $\W(X\times(\CC_{\Re\geq 0},\infty))$ (respectively) are split-generated by Lagrangians of the form $L\times[i\RR]$.
Taking $L$ to be a linking disk of $\Lambda$, we see that the linking disks to $\Lambda\times\RR_{>0}$ inside $\W((X,\Lambda)\times(\CC_{\Re\geq 0},\infty))$ are of the desired form.
Therefore it is split-generated by Lagrangians of the form $L\times[i\RR]$ iff its quotient by the linking disks to $\Lambda\times\RR_{>0}$ is split-generated by these objects, and this quotient is precisely $\W(X\times(\CC_{\Re\geq 0},\infty))$ by stop removal.
The same argument shows that \eqref{kunc} and \eqref{kuncwithoutlambda} are equivalent.

We argue that \eqref{kunhalfplanenolambda} and \eqref{kuncwithoutlambda} are equivalent.
They are the statements that the categories $\W(X\times(\CC_{\Re\geq 0},\infty))$ and $\W(X\times(\CC,\pm\infty))$ (respectively) are split-generated by Lagrangians of the form $L\times[i\RR]$.
These statements are equivalent since the natural functor $\W(X\times(\CC_{\Re\geq 0},\infty))\to\W(X\times(\CC,\pm\infty))$ is an equivalence by \cite[Cor.\ 3.9]{gpsdescent}.
\end{proof}

Note that condition \eqref{kuncwithoutlambda} does not involve $\Lambda$ and is invariant under deforming $X$; it holds whenever $X$ is Weinstein by \cite[Cor.\ 1.18]{gpsdescent}.

If $\cc_{X,\Lambda}$ is mostly Lagrangian, the stabilization $(X,\Lambda)\times(\CC,\pm\infty)$ admits homological cocores since every component of the smooth Lagrangian locus of $\cc_{(X,\Lambda)\times(\CC,\pm\infty)}=\cc_{X,\Lambda}\times\RR$ is unbounded.
It follows that the K\"unneth embedding
\begin{equation}
\W((X,\Lambda)\times(\CC,\pm\infty)^k)\hookrightarrow\W((X,\Lambda)\times(\CC,\pm\infty)^{k+1})
\end{equation}
is a Morita equivalence for every $k>0$.

\subsection{Liouville hypersurfaces} \label{polarizedweinstein}

Recall that a \emph{Liouville hypersurface embedding} $X\hookrightarrow Y$ is a codimension one embedding of a Liouville domain $(X,\lambda)$ into a contact manifold $(Y,\xi)$ such that there exists a contact form $\alpha$ on $(Y,\xi)$ whose restriction to $X$ coincides with $\lambda$.  A \emph{Liouville pair} $(Z,X)$ is a Liouville manifold $Z$ together with a Liouville hypersurface embedding $X \hookrightarrow \partial_\infty Z$. 

We will often abuse terminology and speak of a Liouville hypersurface embedding of a Liouville \emph{manifold} into a contact manifold, to mean a Liouville hypersurface embedding of a Liouville domain whose completion is the given Liouville manifold.  

We record here two real analytic approximation results for later use.

\begin{lemma}\label{analyticcontactomorphism}
Any codimension zero smooth embedding of real analytic contact manifolds $(U,\xi_U)\hookrightarrow(Y,\xi_Y)$ can be smoothly approximated over compact subsets of the domain by real analytic embeddings.
\end{lemma}

\begin{proof}
First, we approximate the given embedding by a real analytic map $f$ which does not necessarily respect contact structures.
We now have two real analytic contact structures $f^*\xi_Y$ and $\xi_U$ on $U$ which are $C^\infty$-close.
Interpolating linearly yields a real analytic family of real analytic contact structures $\xi_t$ interpolating between $f^*\xi_Y$ and $\xi_U$.
By Gray's theorem, we obtain a real analytic family of real analytic vector fields $V_t$ defined uniquely by the properties $V_t\in\xi_t$ and $\sL_{V_t}\xi_t=\frac d{dt}\xi_t$.
The total flow of this family $V_t$ thus defines a real analytic diffeomorphism of $U$ (possibly defined only on a large compact subset due to lack of completeness) carrying $\xi_U$ to $f^*\xi_Y$.
Pre-composing $f$ by this diffeomorphism gives the desired real analytic map.
\end{proof}

\begin{corollary}\label{analytichypersurface}
Let $(X,\lambda)$ be a real analytic Liouville domain, and let $(Y,\xi)$ be a real analytic contact manifold.
Any Liouville hypersurface embedding $X\hookrightarrow Y$ can be smoothly approximated by real analytic Liouville hypersurface embeddings.
\end{corollary}

\begin{proof}
Given a Liouville hypersurface embedding $X\hookrightarrow Y$, there is an induced codimension zero inclusion of contact manifolds $X\times[0,1]\hookrightarrow Y$ to which we may apply Lemma \ref{analyticcontactomorphism}.
\end{proof}

We now study the question of when a Liouville manifold $X$ admits a Liouville hypersurface embedding $X\hookrightarrow S^*M$.
Such an embedding determines three pieces of `formal' data\footnote{The term `formal' has a precise meaning in the context of the $h$-principle, see \cite[Sec.\ 5.3]{eliashbergmishachev}.}
\begin{enumerate}
\item A smooth map $f:X\to M$. 
\item A splitting $f^*TM=B\oplus\underline\RR$.
\item An isomorphism of complex vector bundles $TX=B\otimes_\RR\CC$.
\end{enumerate}
The first two pieces of data are equivalent to a homotopy class of smooth maps $X\to S^*M$.
Indeed, up to contractible choices, a lift of $f:X\to M$ to $S^*M$ is the same as a non-vanishing section of $f^*TM$, which is the same as a trivialized subbundle $\underline\RR\subseteq f^*TM$, which is the same as a splitting $f^*TM=\underline\RR\oplus B$.
The isomorphism $TX=B\otimes_\RR\CC$ comes from the derivative of the embedding, which identifies $TX$ with the pullback of the contact distribution on $S^*M$.
There is an existence $h$-principle for Liouville hypersurface embeddings (under a certain `half-dimensional' hypothesis on the core), namely:

\begin{lemma}\label{hprinciple}
Let $X$ be a Liouville manifold whose core $\cc_X$ is contained in a finite union of locally closed submanifolds of dimension at most half the dimension of $X$.
Every triple of formal data as above comes from a Liouville hypersurface embedding $X\hookrightarrow S^*M$.
\end{lemma}

\begin{proof}
The formal data is (homotopy) equivalent to a smooth map $p:X\to S^*M$ together with an isomorphism $q:TX=p^*\xi$ where $\xi$ is the contact distribution of $S^*M$.
Equivalently, it is the data of a smooth map $p:X\times[0,1]\to S^*M$ together with an \emph{isocontact isomorphism} $q:T(X\times[0,1])=p^*TS^*M$ (i.e.\ an isomorphism respecting contact distributions and their conformal symplectic structure).
The $h$-principle \cite[16.1.1]{eliashbergmishachev} now guarantees that the pair $(p,q)$ is homotopic to an \emph{isocontact immersion}, i.e.\ one for which $p$ is an immersion which pulls back the contact structure on $S^*M$ to the contact structure $\lambda+dt$ on $X\times[0,1]$.
The assumption on the core $\cc_X$ ensures that a generic perturbation of $p$ is an embedding in a small neighborhood of $\cc_X\times\{\frac 12\}$.
\end{proof}

\begin{corollary}\label{stableembedding}
Let $X$ be a Liouville manifold.
For any stable polarization $TX=B\otimes_\RR\CC$, there is a Liouville hypersurface embedding of $X\times\CC^k$ (some $k<\infty$) into some $S^*M$, compatible with stable polarizations.
\end{corollary}

\begin{proof}
In view of Lemma \ref{hprinciple} (whose hypothesis is trivially satisfied for $X\times\CC^k$ once $k\geq\frac 12\dim X$), it suffices to show that there exists a manifold $N$ and a map $f:X\to N$ such that $f^*TN$ and $B$ are stably isomorphic (i.e.\ isomorphic after direct summing with some $\underline\RR^m$).
To see that this is true, note that the tangent bundle to the Grassmannian of $n$-planes in $\RR^N$ is (stably) inverse to the tautological vector bundle.
\end{proof}

\begin{remark}\label{stableembeddingwithtwist}
Corollary \ref{stableembedding} concerns the product polarization of $X\times\CC^k$.
By contrast, there always exists a twisted stabilization, i.e.\ the total space of an arbitrary polarized symplectic vector bundle over $X$ (equivalently, 
the complexification of a rank $k$ real vector bundle) which embeds into $S^*\RR^N$ as in \cite{shende-microlocal, nadler-shende}.  In the present article, we need
to restrict to untwisted stabilization because in \cite{gpsdescent} we have only proven an untwisted K\"unneth theorem.  Meanwhile 
in the sheaf-theoretic settings of \cite{shende-microlocal, nadler-shende}, the corresponding twisted
K\"unneth result is a formality, and it is convenient to embed into $S^* \RR^N$ rather than some $S^*M$ in order to have (homotopical) uniqueness of embeddings.
\end{remark}

\subsection{Doubling I: fully faithful embeddings of Fukaya categories}

We first recall the doubling trick in the `absolute' (i.e.\ no stop) setting.
Consider a Liouville pair $(Z,X)$.
Grading/orientation data on $Z$ determines 
such data on $X$ by restriction; when $Z=T^*M$, our primary case of interest, this means that $X$ is equipped with the polarization induced from the Legendrian foliation of $S^*M$ by co-spheres.
With respect to such compatible data, there is a functor 
$\W(X) \to \W(Z, \cc_X)$
obtained by composing the K\"unneth map $\W(X)\to\W(X\times\CC_{\Re\geq 0},\cc_X\times\{\infty\})$ with the canonical neighborhood $X\times\CC_{\Re\geq 0}\hookrightarrow Z$ 
of the Liouville hypersurface $X$ (an embedding of Liouville sectors).

We now consider the double $D(\cc_X):=\cc_X\sqcup\cc_X^\epsilon$, where $\cc_X^\epsilon$ denotes a small positive pushoff of $\cc_X$.
There is a functor
\begin{equation}\label{doubleff}
\W(X)\to\W(Z,\cc_X\sqcup\cc_X^\epsilon)
\end{equation}
defined by including $(X\times\CC_{\Re\geq 0},\cc_X\times\{\infty\})$ into $(T^*M,\cc_X\sqcup\cc_X^\epsilon)$ as the canonical neighborhood of the first copy of $\cc_X$ inside $D(\cc_X)$.
When $\cc_X$ is mostly Lagrangian, this functor evidently sends a homological cocore at a point of $\cc_X$ to the linking disk at the corresponding point of the first copy of $\cc_X$ inside the double.

\begin{proposition}\label{doubleffprop}
The functor \eqref{doubleff} is fully faithful.
\end{proposition}

\begin{proof}
\cite[Ex.\ 10.7]{gpsdescent} asserts that the covariant pushforward $\W(X) \to \W(Z, X \sqcup X^\epsilon)$ is fully faithful, where $X^\epsilon$ denotes a small positive pushoff of $X$.
Combining this with the fact that the functor $\W(Z, X \sqcup X^\epsilon)\to\W(Z,\cc_X\sqcup\cc_X^\epsilon)$ is an equivalence \cite[Cor.\ 3.9]{gpsdescent}, we conclude that \eqref{doubleff} is fully faithful.
\end{proof}

We now explain the doubling trick in the presence of a stop (`relative doubling').

\begin{construction}[Doubling the relative core of a Liouville hypersurface]\label{doubling-gps}
Let $(Z,X)$ be a Liouville pair, and let $\Lambda\subseteq\partial_\infty X$ be a stop.
We will define the double $D(\cc_{X,\Lambda})\subseteq\partial_\infty Z$.
The double is contained in a small neighborhood of $X$, so it suffices to define it as a subset of $\partial_\infty(X\times\CC_{\Re\geq 0})$ (and then push forward under the standard neighborhood $X\times\CC_{\Re\geq 0}\hookrightarrow Z$ of $X$ inside $Z$).

The double $D(\cc_{X,\Lambda})\subseteq\partial_\infty(X\times\CC_{\Re\geq 0})$ is the stop of the product of stopped Liouville manifolds
\begin{equation}\label{firstproduct}
(X,\Lambda)\times(\CC,\{\pm i\infty\})=(X\times\CC,(\cc_X\times\{\pm i\infty\})\cup(\Lambda\times i\RR)),
\end{equation}
which indeed lies inside $\partial_\infty(X\times\CC_{\Re\geq 0})\subseteq\partial_\infty(X\times\CC)$.
The double $D(\cc_{X,\Lambda})$ is evidently comprised of a `first copy' of $\cc_{X,\Lambda}$ namely $(\cc_X\times\{-\infty\})\cup(\Lambda\times i\RR_{<0})$ and a `second copy' of $\cc_{X,\Lambda}$ namely $(\cc_X\times\{+\infty\})\cup(\Lambda\times i\RR_{>0})$, joined along their common boundary $\Lambda=\Lambda\times 0$.
\end{construction}

The fact that $D(\cc_{X,\Lambda})\subseteq\partial_\infty(X\times\CC_{\Re\geq 0})$ lies on the boundary poses no issue for defining $D(\cc_{X,\Lambda})\subseteq\partial_\infty Z$ as its image under (the action on boundaries at infinity of) $X\times\CC_{\Re\geq 0}\hookrightarrow Z$.
We will, however, want to consider $\W(X\times\CC_{\Re\geq 0},D(\cc_{X,\Lambda}))$, and for the purpose of defining this category, we implicitly push $D(\cc_{X,\Lambda})$ inward using a choice of contact vector field transverse to the boundary.
Alternatively, we could use a different Liouville structure on $\CC_{\Re\geq 0}$ which is strictly isomorphic to $T^*[0,\epsilon)$ near the boundary, which makes pushing easy (simple translation).

Let us now generalize the functor \eqref{doubleff} and Proposition \ref{doubleffprop} to the relative setting.
We consider the composition
\begin{multline}\label{doublecompose}
\W(X,\Lambda)\overset{\eqref{secondkunneth}}\hookrightarrow\W((X,\Lambda)\times(\CC_{\Im\leq 0},-i\infty))\\\to\W(X\times\CC,(\cc_X\times\{-\infty,\pm i\infty\})\cup(\Lambda\times i\RR)).
\end{multline}
The target category is identified with $\W(X\times\CC_{\Re\geq 0},D(\cc_{X,\Lambda}))$ by \cite[Cor.\ 3.9]{gpsdescent}, so we obtain a canonical functor
\begin{equation}\label{doubleffrelativespecial}
\W(X,\Lambda)\to\W(X\times\CC_{\Re\geq 0},D(\cc_{X,\Lambda})),
\end{equation}
and hence composing with any inclusion $X\times\CC_{\Re\geq 0}\hookrightarrow Z$, a functor
\begin{equation}\label{doubleffrelative}
\W(X,\Lambda)\to\W(Z,D(\cc_{X,\Lambda})).
\end{equation}
When $\cc_{X,\Lambda}$ is mostly Lagrangian, this functor evidently sends a homological cocore at a point of $\cc_{X,\Lambda}$ to the linking disk at the corresponding point of the first copy of $\cc_{X,\Lambda}$ inside the double.

\begin{proposition}\label{relativedoublingff}
The functor \eqref{doubleffrelative} is fully faithful.
\end{proposition}

\begin{proof}
Appealing to the definition of the functor \eqref{secondkunneth}, the functor \eqref{doubleffrelative} is the composition
\begin{multline}\label{doublecomposesecond}
\W(X,\Lambda)\overset{\eqref{firstkunneth}}\hookrightarrow\W((X,\Lambda)\times(\CC,-i\infty\cup e^{i[0,\pi]}\infty))\\
\to\W(X\times\CC,(\cc_X\times\{-\infty,\pm i\infty\})\cup(\Lambda\times i\RR))\to\W(Z,D(\cc_{X,\Lambda}))
\end{multline}
of K\"unneth, stop removal, and pushforward.
Given that K\"unneth is fully faithful, it suffices to show that the composition of the latter two functors is fully faithful when restricted to product objects $L\times D_{-i\infty}\subseteq(X,\Lambda)\times(\CC,-i\infty\cup e^{i[0,\pi]}\infty)$.
In fact, we will show they are both full faithful on such objects.

To show full faithfulness comes down to understanding cofinal wrappings.
It was shown in \cite[Sec.\ 7.4]{gpsdescent} that products of cofinal wrappings are cofinal (this was the basis for full faithfulness of the K\"unneth functor).
But the results of \cite[Sec.\ 7.4]{gpsdescent} are better: they in fact show that if wrappings of $L\subseteq(X,\Lambda)$ and $D_{-i\infty}\subseteq(\CC,-i\infty\cup e^{i[0,\pi]}\infty)$ satisfy the cofinality criterion \cite[Lem.\ 2.2]{gpsdescent}, then so does their product inside $(X,\Lambda)\times(\CC,-i\infty\cup e^{i[0,\pi]}\infty)$.

Now the cofinality criterion is robust in an important way.
Choose wrappings of $L\subseteq(X,\Lambda)$ and $D_{-i\infty}\subseteq(\CC,\{\pm i\infty\})$ satisfying the cofinality criterion.
Their product satisfies the cofinality criterion in $(X,\Lambda)\times(\CC,\{\pm i\infty\})$, \emph{hence also in} $(X\times\CC,(\cc_X\times\{-\infty,\pm i\infty\})\cup(\Lambda\times i\RR))$, as it stays away from the additional stop $\cc_X\times\{-\infty\}$.
Satisfaction of the cofinality criterion is also preserved under cutting out a neighborhood of this additional stop at $\cc_X\times\{-\infty\}$ and embedding into $\W(Z,D(\cc_{X,\Lambda}))$.

We have thus described cofinal wrappings of product objects in the three categories in \eqref{doublecomposesecond} other than $\W(X,\Lambda)$.
The desired full faithfulness results follow using \cite[Lem.\ 3.20]{gpssectorsoc}.
\end{proof}

\subsection{A first comparison}

We now combine the doubling trick embeddings with Theorem \ref{sheaffukayaequivalence} to arrive at a first sheaf theoretic description of some partially wrapped Fukaya categories.
Note that the doubling construction works real analytically by appealing to Corollary \ref{analyticcontactomorphism} to make the contactomorphisms involved in Construction \ref{doubling-gps} real analytic (and in the below we tacitly assume that doubling takes place real analytically in this sense).

Combining Proposition \ref{relativedoublingff} and Theorem \ref{sheaffukayaequivalence}, we obtain the following.

\begin{corollary} \label{cor:asheafdescriptionrel}
Let $(X,\Lambda)$ be a stopped real analytic Liouville manifold whose relative core $\cc_{X,\Lambda}$ is subanalytic isotropic.
Let $M$ be a real analytic manifold and $X\hookrightarrow S^*M$ an analytic Liouville hypersurface embedding.
There is a fully faithful embedding
\begin{equation}\label{fftosheavesrelative}
\W(X,\Lambda)^\op\hookrightarrow\Sh_{D(\cc_{X,\Lambda})}(M)^c
\end{equation}
which sends homological cocores of $(X,\Lambda)$ to co-representatives of microstalks at the corresponding points of the first copy of $\cc_{X,\Lambda}$ inside $D(\cc_{X,\Lambda})$.
\qed
\end{corollary}

In particular, if $(X,\Lambda)$ admits homological cocores, then \eqref{fftosheavesrelative} is a Morita equivalence onto the full subcategory split-generated by co-representatives of the microstalks at smooth points of the first copy of $\cc_{X,\Lambda}$ inside $D(\cc_{X,\Lambda})$.

In order to bridge the gap between Corollary \ref{cor:asheafdescriptionrel} and Theorem \ref{microsheaffukayaequivalence}, note first that Corollary \ref{stableembedding} implies that there always exists some hypersurface embedding $X\hookrightarrow S^*M$ (which can be assumed real analytic by Corollary \ref{analytichypersurface}).
The remaining work thus concerns only the sheaf side: we must relate the sheaf category in \eqref{fftosheavesrelative} (which is, in particular, not \emph{a priori} independent of the choice 
of Liouville hypersurface embedding) to the microsheaf category defined in \cite{nadler-wrapped, shende-microlocal, nadler-shende}.  This is accomplished in 
\cite{nadler-shende}, which we adapt to our purposes in the next subsection. 

\subsection{Doubling II: microlocal sheaves and antimicrolocalization}

Let us now recall the definition of the microlocal sheaf categories appearing in Theorem \ref{microsheaffukayaequivalence}.
The category which sheaf theorists typically associate to a closed subset of $S^*M$ is 
defined as follows.  One forms the ``Kashiwara--Schapira stack'' by sheafifying the presheaf \emph{of categories}
on $T^*M$ given by the formula 
$\muSh^\pre(\Omega) := \Sh(M) / \Sh_{T^*M \setminus \Omega}(M)$.
The presheaf $\muSh^\pre$ is already discussed in \cite{kashiwara-schapira}; working with its sheafification is a more modern
phenomenon, see e.g.\ \cite{guillermou, nadler-wrapped, nadler-shende}.
The notion of microsupport makes sense for a section of this sheaf, and we write $\muSh_\Lambda$ for the subsheaf of full
subcategories of objects with microsupport inside $\Lambda$.
The subsheaf $\muSh_\Lambda\subseteq\muSh$ is evidently supported on $\Lambda$.

The sheaf $\muSh$ is conic; in particular given $(T^*M \setminus M) \xrightarrow{\pi} S^*M \xrightarrow{\iota} T^*M$
we have canonically $\pi^* \iota^* \muSh = \muSh|_{T^*M \setminus M}$.  We denote also the sheaf $\iota^* \muSh$ on $S^*M$ by $\muSh$. 
Likewise for $\Lambda \subseteq S^*M$ we have $\muSh_\Lambda$.  We will consider this sheaf for $\Lambda$ locally closed, and be
interested in the category $\muSh_\Lambda(\Lambda)$.

By construction there are evident maps $\Sh(M) \to \muSh(\Omega)$ for any open $\Omega \subseteq T^*M$, and similarly $\Sh_\Lambda(M) \to \muSh_{\Lambda}(\Lambda \cap \Omega)$; in particular $\Sh_{\Lambda}(M) \to \muSh_{\Lambda \setminus M}(\Lambda \setminus M) = \muSh_{\Lambda_{\infty}}(\Lambda_\infty)$ (the last of which being
in the cosphere bundle).  We term all such maps `microlocalization functors'. 

In fact, the category $\muSh_\Lambda(\Lambda)$ is defined for any space $\Lambda$ equipped with a germ of closed embedding into a contact manifold carrying
a stable polarization \cite{shende-microlocal}.
Indeed, such a contact manifold admits a homotopically unique isocontact embedding into $S^*\RR^N$ as $N\to\infty$ by the $h$-principle \cite[16.1.2]{eliashbergmishachev}.
The key insight of \cite{shende-microlocal} is that, while the image of $\Lambda$ under such an embedding would have vanishing microsheaf category, one can obtain the correct category by thickening $\Lambda$ along the relevant Lagrangian polarization of the normal bundle.
The role of these polarizations on the sheaf side is entirely parallel to the role of polarizations on the Fukaya side to determine grading/orientation data as discussed in Section \ref{gradorsec}; also compare with Corollary \ref{stableembedding} and Remark \ref{stableembeddingwithtwist}.
Note that we may also define $\muSh_\Lambda(\Lambda)$ by embedding into $S^*M$ for any manifold $M$, since such an $M$ admits an embedding into $\RR^N$.

\begin{remark}
For Liouville manifolds $X$ and $X'$ satisfying the hypotheses of Theorem \ref{microsheaffukayaequivalence}, it follows from Theorem \ref{microsheaffukayaequivalence} that if $X$ and $X'$ are within the same Liouville deformation class, then $\muSh_{\cc_X}(\cc_X)=\muSh_{\cc_{X'}}(\cc_{X'})$.
This equivalence is highly non-obvious from the sheaf theoretic standpoint, but is proven directly in \cite{nadler-shende} under certain assumptions of isotropicity.
\end{remark}

The doubling trick in the sheaf context is developed in \cite{nadler-shende}, resulting in embeddings between categories of (microlocal) sheaves parallel to the embeddings between Fukaya categories discussed above.
Let us now recall the precise definition of the doubling operation which is relevant in the sheaf context \cite{nadler-shende}, so as to compare it with Construction \ref{doubling-gps} from the Fukaya context.

We begin with a discussion of the contact manifold
\begin{equation}\label{prodCcoords}
(\CC\times V,\lambda_\CC+\alpha_V)
\end{equation}
where $\lambda_\CC$ is a Liouville form on $\CC$ (for the standard symplectic structure) and $(V,\alpha_V)$ is a contact manifold with choice of contact form.
First, note that the specific choice of Liouville form on $\CC$ is of no importance, as for $f:\CC\to\RR$ there is a strict contactomorphism
\begin{align}\label{liouvilledoesnotmatter}
(\CC\times V,\lambda_\CC+\alpha_V)&\to(\CC\times V,\lambda_\CC+df+\alpha_V),\\
(x,y)&\mapsto(x,e^{-f(x)R_{\alpha_V}}y),
\end{align}
(at least, provided the Reeb flow on $V$ is complete).
Next, for a subset $\Lambda_0\subseteq V$ and a smooth arc $\gamma$ in $\CC$, define
\begin{equation}\label{tracealonggamma}
\gamma\tildetimes\Lambda_0 := \bigcup_{t\in\gamma}\left(\{t\}\times e^{-g(t)R_{\alpha_V}}\Lambda_0\right),
\end{equation}
where $g:\gamma\to\RR$ is a primitive for $\lambda_\CC|_\gamma$, namely $dg=\lambda_\CC|_\gamma$ (so $g$ is well-defined up to adding a locally constant function).
Note that the meaning of $\gamma\tildetimes\Lambda_0$ does not depend on the choice of Liouville form on $\CC$, as the definition is compatible with the contactomorphisms \eqref{liouvilledoesnotmatter}.
If $\Lambda_0$ is isotropic then so is $\gamma\tildetimes\Lambda_0$.

\begin{construction}[Doubling a subset with boundary cooordinates]\label{doubling-ns}
Begin with a 
contact manifold $Y$ and a locally closed relatively compact
$\Lambda\subseteq Y$.
Also fix, in a neighborhood of $\overline{\Lambda} \setminus \Lambda$, coordinates on $Y$ of the form \eqref{prodCcoords} (regarded as a germ near $\{0\}\times\Lambda_0$ for compact $\Lambda_0\subseteq V$) in which $\Lambda=\RR_{>0}\tildetimes\Lambda_0$ (so $\Lambda_0$ is identified with $\overline\Lambda\setminus\Lambda$); we call these \emph{boundary coordinates} for $\Lambda$.
Now the double $D(\Lambda)$ is, near $\Lambda_0$, defined to be $\gamma\tildetimes\Lambda_0$ where $\gamma$ is the immersed arc obtained from two copies of $\RR_{>0}$
by adding a small loop enclosing a sufficiently small positive area near the origin (a contractible choice).
Away from $\{0\}\times V$, the double is thus $\Lambda\sqcup e^{\epsilon R_{\lambda_\CC+\alpha_V}}\Lambda$ (note that $R_{\lambda_\CC+\alpha_V}=R_{\alpha_V}$), which is extended globally by extending the contact form $\lambda_\CC+\alpha_V$ globally (a contractible choice).
The double $D(\Lambda)$ thus consists of $\Lambda$ (the `first copy') and a positive Reeb pushoff of $\Lambda$ (the `second copy') joined appropriately near their boundary.

Note that when $\Lambda$ is subanalytic with $C^r$ subanalytic
boundary coordinates, then we may ensure that the double $D(\Lambda)$ is subanalytic by choosing both $\gamma$ and the global extension of $\alpha$ to be $C^r$ subanalytic.\footnote{A $C^r$ subanalytic function $\RR^n\to\RR$ is one which is $C^r$ and has subanalytic graph.  This class of functions is closed under composition, hence gives rise to a notion of $C^r$ subanalytic manifolds, etc.}\footnote{The integer $r$ is tacitly assumed to be sufficiently large, and we make no attempt to determine the minimum value of $r$ needed for our constructions to go through (though it will not be particularly large).  Note that the tangent bundle of a $C^r$ subanalytic manifold is
a $C^{r-1}$ subanalytic vector bundle, hence the highest regularity one can impose on a contact form $\alpha$ is $C^{r-1}$ subanalytic.
The exterior derivative $d\alpha$, hence also the Reeb vector field $R_\alpha$, will then be $C^{r-2}$ subanalytic.
This would suggest that at a very minimum we must take $r\geq 3$ to ensure we can integrate $R_\alpha$.}
We will tacitly assume that $D(\Lambda)$ is defined in this way whenever $\Lambda$ is assumed to be subanalytic with $C^r$ subanalytic boundary coordinates.
\end{construction}

The relative core of a Liouville hypersurface always has boundary coordinates (of the same regularity as the hypersurface) in the sense of Construction \ref{doubling-ns}.
Indeed, a neighborhood of the boundary of a Liouville hypersurface is given by $(Y\times\RR_s\times\RR_t,e^s\lambda+dt)$ (a germ near $(s,t)=(0,0)$, with the hypersurface itself being the locus $Y\times\{s\leq 0\}\times\{t=0\}$), and we can scale the contact form to be $\lambda+e^{-s}dt$, which has the desired form \eqref{prodCcoords} near $(s,t)=(0,0)$.
In these coordinates, any relative core will have the desired form $\Lambda_0\times\{s\leq 0\}\times\{t=0\}$.

\begin{proposition}
For the relative core of a Liouville hypersurface, the doubles defined in Constructions \ref{doubling-gps} and \ref{doubling-ns} are canonically isotopic.
\end{proposition}

\begin{proof}
Fix a Liouville manifold $X$ with a stop $\Lambda\subseteq\partial_\infty X$, and consider the stopped Liouville manifold
\begin{equation}
(X,\Lambda)\times(\CC,\pm i\infty)=(X\times\CC,(\Lambda\times i\RR)\cup(\cc_X\times\{\pm i\infty\}).
\end{equation}
The stop, as a subset of $\partial_\infty(X\times\CC_{\Re\geq 0})$, is the double of $\cc_{X,\Lambda}$ defined in Construction \ref{doubling-gps}.
We will exhibit an isotopy from it to the double defined in Construction \ref{doubling-ns}.

We begin with the family of stops
\begin{equation}
(\Lambda\times\{e^{i\theta},e^{-i\theta}\}\RR_{\geq 0})\cup(\cc_X\times\{e^{i\theta},e^{-i\theta}\}\infty)
\end{equation}
for $\theta$ from $\pi/2$ to $0$.
This family may be written as
\begin{equation}\label{stopviagamma}
(\gamma_\theta\tildetimes\Lambda)\cup(\partial_\infty\gamma_\theta\times\cc_X)
\end{equation}
where $\gamma_\theta=\{e^{i\theta},e^{-i\theta}\}\RR_{\geq 0}$ (illustrated in the top row of Figure \ref{figurearms}) and we have fixed a contact form on $V=\partial_\infty X$ to obtain coordinates $(\CC,\lambda_\CC)\times(V,\alpha_V)\subseteq\partial_\infty(X\times\CC)$.


\begin{figure}[htbp]
\centering
\begin{tikzpicture}[scale=\textwidth/10cm]
\begin{scope}[shift={(0,2.2)}]\draw(0,0)circle(1);\draw(0,-1)to(0,0)to(0,1);\end{scope}
\begin{scope}[shift={(2.2,2.2)}]\draw(0,0)circle(1);\draw(5/13,-12/13)to(0,0)to(5/13,12/13);\end{scope}
\begin{scope}[shift={(4.4,2.2)}]\draw(0,0)circle(1);\draw(12/13,-5/13)to(0,0)to(12/13,5/13);\end{scope}
\begin{scope}[shift={(6.6,2.2)}]\draw(0,0)circle(1);\draw(1,0)to(0,0)(1,0);\end{scope}
\draw(0,0)circle(1);\draw(0,-1)to(0,0)to(0,1);
\begin{scope}[shift={(2.2,0)}]\draw(0,0)circle(1);\draw(5/13,-12/13)to(1/13,-2.4/13)..controls(0,0)and(0,0)..(1/13,2.4/13)to(5/13,12/13);\end{scope}
\begin{scope}[shift={(4.4,0)}]\draw(0,0)circle(1);\draw(12/13,-5/13)to(2.4/13,-1/13)..controls(0,0)and(-.2,-.2)..(-.2,0)..controls(-.2,.2)and(0,0)..(2.4/13,1/13)to(12/13,5/13);\end{scope}
\begin{scope}[shift={(6.6,0)}]\draw(0,0)circle(1);\draw(1,0)to(.1,0)..controls(0,0)and(0,-.1)..(-.1,-.1)..controls(-.2,-.1)and(-.2,.1)..(-.1,.1)..controls(0,.1)and(0,0)..(.1,0)to(1,0);\end{scope}
\end{tikzpicture}
\caption{The family of arcs $\gamma_\theta$ (top) and their smoothings $\tilde\gamma_\theta$ near the origin (bottom).}\label{figurearms}
\end{figure}

Now we smooth $\gamma_\theta$ near the origin to obtain a family of immersed arcs $\tilde\gamma_\theta$ (embedded except at $\theta=0$) as in the bottom row of Figure \ref{figurearms}.
We would like to consider \eqref{stopviagamma} with $\tilde\gamma_\theta$ in place of $\gamma_\theta$.
Note, however, that while $\gamma_\theta$ and $\tilde\gamma_\theta$ agree outside a compact subset of $\CC$, the same is not true of $\gamma_\theta\tildetimes\Lambda$ and $\tilde\gamma_\theta\tildetimes\Lambda$, due to the fact that the actions of $\gamma_\theta$ and $\tilde\gamma_\theta$ necessarily differ at $\theta=0$ (this being the difference of areas enclosed).
Thus while $\gamma_\theta\tildetimes\Lambda$ has two `arms' which coincide at $\theta=0$, the twisted product $\tilde\gamma_\theta\tildetimes\Lambda$ has two `arms' which near infinity differ by a small positive Reeb pushoff at $\theta=0$.
This small positive isotopy extends to the ambient contact manifold $\partial_\infty(X\times\CC)$, hence we can, in particular, apply it to the part of \eqref{stopviagamma} lying near infinity in the $\CC$-coordinate.
This defines the desired isotopy from the double in the sense of Construction \ref{doubling-gps} (at $\theta=\pi/2$) to the double in the sense of Construction \ref{doubling-ns} (at $\theta=0$).
\end{proof}

The following `stabilize and then double' construction will be crucial in what follows.
Let $\Lambda\subseteq S^*M$ be equipped with boundary coordinates.
We consider its `stabilization' $\Lambda\times(0,1)$ inside $S^*(M\times\RR)$, where $(0,1)\subseteq\RR\subseteq T^*\RR$ is contained in the zero section.
This stabilization is naturally equipped with two boundary charts, one near $\partial\Lambda\times(0,1)$ (obtained from the boundary coordinates for $\Lambda$ by multiplying by $T^*(0,1)$) and one near $\Lambda\times\partial(0,1)$ (obtained from the trivial chart $(0,1)\subseteq T^*(0,1)$ by multiplying by $\Lambda\subseteq S^*M$).
When $M$ is real analytic and the boundary coordinates for $\Lambda$ are $C^r$ subanalytic, the first chart is also $C^r$ subanalytic; the second chart is always analytic.
These two boundary charts overlap near $\partial\Lambda\times\partial(0,1)$ in a chart of the form $(\CC^2,\RR_{\geq 0}^2)\times W$.
Viewing the first factor as $(T^*\RR^2,\RR_{\geq 0}^2)$, we may simply smooth the corner of $\RR_{\geq 0}^2$ as in \cite[Sec.\ 7.1]{gpsdescent} to obtain the smoothed product $(\Lambda\times(0,1))^\sm$.
This splices together the charts near $\partial\Lambda\times(0,1)$ and $\Lambda\times\partial(0,1)$ to define boundary coordinates for $(\Lambda\times(0,1))^\sm$.
This smoothing and splicing can be done in the $C^r$ subanalytic category.
The double $D((\Lambda\times(0,1))^\sm)$ is thus defined, and we abuse notation by writing it as $D(\Lambda\times(0,1))$.

The doubling trick for sheaf categories from \cite{nadler-shende} concerns $D(\Lambda\times(0,1))$, and its proof relies on just a short list of its properties, which are easier to see from a different
description of it, as a `movie of creation and destruction' of $\Lambda$,
denoted $(\Lambda,\partial\Lambda)^{\prec\succ}$ in 
\cite[Sec.\ 7.4]{nadler-shende}.  Thus to apply 
the results of \cite{nadler-shende} we must give an isotopy
$D(\Lambda\times(0,1)) \sim (\Lambda,\partial\Lambda)^{\prec\succ}$.  
Let us do this now:

\begin{lemma}\label{doublesameasns}
For $\Lambda\subseteq S^*M$ equipped with boundary coordinates, the double of the stabilization $D(\Lambda\times(0,1))\subseteq S^*(M\times\RR)$ is isotopic to the `movie of creation and destruction' $(\Lambda,\partial\Lambda)^{\prec\succ}\subseteq S^*(M\times\RR)$ from \cite[Sec.\ 7.4]{nadler-shende}.
\end{lemma}

\begin{proof}
The main point is to take the picture from Construction \ref{doubling-ns} based on the Lagrangian projection and translate it into the front projection.

First we translate Construction \ref{doubling-ns} itself into the front projection.
We add an imaginary third coordinate $\RR_z$ to $\CC$ to form $(\CC\times\RR_z,\lambda_\CC+dz)$.
A Legendrian curve $\gamma$ in $(\CC\times\RR_z,\lambda_\CC+dz)$ determines $\gamma\tildetimes\Lambda_0\subseteq(\CC\times V,\lambda_\CC+\alpha_V)$ for $\Lambda_0\subseteq V$ via \eqref{tracealonggamma}.
The `front projection' is the projection $(\CC_{x+iy}\times\RR_z,dz-y\,dx)\to\RR_x\times\RR_z$.
The Legendrian curve relevant for Construction \ref{doubling-ns} has front projection given by the two rays $\RR_{x\geq 0}\times\{z=0,\epsilon\}$ joined by a single cusp in the standard way illustrated in Figure \ref{cuspfigure}.


\begin{figure}[htbp]
\centering
\begin{tikzpicture}[scale=\textwidth/60cm]
\draw(0,0)..controls(1,0)and(2,0)..(3,1)..controls(4,2)and(5,2)..(6,2)to(30,2);
\draw(0,0)..controls(1,0)and(2,0)..(3,-1)..controls(4,-2)and(5,-2)..(6,-2)to(30,-2);
\end{tikzpicture}
\caption{A standard cusp.}\label{cuspfigure}
\end{figure}

To understand the double of the stabilization $D(\Lambda\times(0,1))$, we may look in the corner boundary coordinates $T^*\RR_{\geq 0}^2\times W$.
Passing to the front projection and smoothing the corner, we see that in these coordinates, the double is given by two parallel copies of $\RR_{\geq 0}^2$ (with its corner smoothed) joined by cusps (i.e.\ the standard cusp in Figure \ref{cuspfigure} times a smoothing of the boundary of $\RR_{\geq 0}^2$).
Up to fixing a standard model of this cusped object (which some readers might call a `square-ish quarter of a flying saucer'), this is exactly the definition of $(\Lambda,\partial\Lambda)^{\prec\succ}\subseteq S^*(M\times\RR)$ from \cite[Sec.\ 7.4]{nadler-shende}.
\end{proof}

We now state the doubling trick for sheaf categories from \cite{nadler-shende} (substituting $D(\Lambda\times(0,1))$ in place of $(\Lambda,\partial\Lambda)^{\prec\succ}$ in accordance with Lemma \ref{doublesameasns} and the preceding discussion).
The crucial point for us is that it realizes a given category of \emph{microlocal} sheaves as (a full subcategory of) a certain category of \emph{sheaves} with a given singular support condition; this process is termed `antimicrolocalization' in \cite{nadler-shende}.

\begin{theorem}[{\cite[Thm.\ 7.30]{nadler-shende}}]  \label{microsheavesassheaves}
Let $\Lambda$ be a Whitney stratifiable isotropic inside $S^*M$ with $C^r$ boundary coordinates.
The category $\Sh_{D(\Lambda\times(0,1))}(M\times\RR)$ is the orthogonal direct sum of its full subcategories $\Sh_\emptyset(M\times\RR)$ (local systems) and $\Sh_{D(\Lambda\times(0,1))}(M\times\RR)_0$ (objects with vanishing stalk at infinity), and the microlocalization functor
\begin{equation}
\Sh_{D(\Lambda\times(0,1))}(M\times\RR)_0\xrightarrow\sim\muSh_{\Lambda\times(0,1)}(\Lambda\times(0,1))=\muSh_\Lambda(\Lambda)
\end{equation}
is an equivalence.
\qed
\end{theorem}

(The K\"unneth equivalence $\muSh_{\Lambda\times(0,1)}(\Lambda\times(0,1))=\muSh_\Lambda(\Lambda)$ is standard.)

\begin{remark}
The actual hypothesis of Theorem \ref{microsheavesassheaves} in \cite{nadler-shende} (``sufficiently isotropic'') is somewhat weaker than being Whitney stratifiable isotropic.
In our applications, we will in fact always have subanalyticity of $\Lambda$, hence in particular Whitney stratifiability.
\end{remark}

\begin{corollary}\label{mucompactlygenerated}
Let $\Lambda$ be a locally closed relatively compact subanalytic isotropic inside $S^*M$ with $C^r$ subanalytic boundary coordinates.
The category $\muSh_\Lambda(\Lambda)$ is compactly generated by co-representatives of microstalks at the smooth points of $\Lambda$.
\end{corollary}

\begin{proof}
By Theorem \ref{microsheavesassheaves}, we have $\muSh_\Lambda(\Lambda)=\Sh_{D(\Lambda\times(0,1))}(M\times\RR)_0$.
It thus suffices to show that the microstalk co-representatives $\mu_\xi\in\Sh_{D(\Lambda\times(0,1))}(M\times\RR)$ for smooth points $\xi$ of the first copy of $\Lambda\times(0,1)$ inside $D(\Lambda\times(0,1))$ lie in the full subcategory $\Sh_{D(\Lambda\times(0,1))}(M\times\RR)_0$ and compactly generate it.

The category $\Sh_{D(\Lambda\times(0,1))}(M\times\RR)$ is compactly generated by Corollary \ref{microstalksgenerate} (which immediately implies its orthogonal full subcategories from Theorem \ref{microsheavesassheaves} are also compactly generated).
The $\mu_\xi$ (which are compact) are by definition left orthogonal to local systems, so they lie in $\Sh_{D(\Lambda\times(0,1))}(M\times\RR)_0$ by Theorem \ref{microsheavesassheaves}.
An object of $\Sh_{D(\Lambda\times(0,1))}(M\times\RR)$ right orthogonal to all these $\mu_\xi$ must have microsupport contained in $\overline\Lambda\times[0,1]$ (the closure of the second copy), but this implies empty microsupport since $\overline\Lambda\times[0,1]$ is isotropic and every smooth Legendrian component has boundary (so the relevant microstalk always vanishes, which is enough by Proposition \ref{removemicrosupport}).
It follows that the $\mu_\xi$ compactly generate $\Sh_{D(\Lambda\times(0,1))}(M\times\RR)_0$ as claimed.
\end{proof}

\begin{remark}
One may eliminate the subanalyticity hypotheses from Corollary \ref{mucompactlygenerated} at the cost of appealing to more general representability theorems as in Lemma \ref{adjoints}.
For example, for $\Lambda$ stratifiable by isotropics,
$\muSh_\Lambda(\Lambda)$ is compactly generated by co-representatives of microstalks at the smooth points of $\Lambda$.
Indeed, the arguments of Lemma \ref{adjoints} imply that for any open $U\subseteq\Lambda$, the restriction $\muSh_\Lambda(\Lambda)\to\muSh_U(U)$ has a left adjoint which preserves compact objects.
Taking $U$ to be a contractible open subset near a given smooth Legendrian point of $\Lambda$ produces a compact co-representative of the microstalk.
These then compactly generate by Proposition \ref{removemicrosupport}.
\end{remark}

\subsection{Proof of Theorem \ref{microsheaffukayaequivalence}}

We begin by deriving from Theorem \ref{microsheavesassheaves} a sheaf theoretic analogue of Proposition \ref{relativedoublingff}.
Although it is a purely sheaf theoretic statement, the proof we give passes through the Fukaya category and the results of \cite{gpsdescent}.

\begin{corollary}\label{mustarff} 
Let $(X,\Lambda)$ be a stopped Liouville manifold.
Let $M$ be a real analytic manifold and $X\hookrightarrow S^*M$ a
Liouville hypersurface embedding such that the image of $\cc_{X, \Lambda}$ is subanalytic isotropic with $C^r$ subanalytic boundary coordinates.
The left adjoint $\mu^*$ of the microlocalization functor $\mu:\Sh_{D(\cc_{X,\Lambda})}(M)\to\muSh_{\cc_{X,\Lambda}}(\cc_{X,\Lambda})$ is fully faithful.
\end{corollary}

We will see in the proof that $\mu^*$ exists and preserves compact objects for formal reasons.

\begin{proof}
We consider the commuting diagram
\begin{equation}\label{sheaffunctors}
\begin{tikzcd}[column sep = scriptsize]
\Sh_{D(\cc_{X,\Lambda})}(M)\ar{d}{\mu}&\ar{l}{\sim}[swap]{r}\Sh_{D(\cc_{X,\Lambda})\times(0,1)}(M\times(0,1))\ar{d}{\mu}&\Sh_{D(\cc_{X,\Lambda}\times(0,1))}(M\times\RR)\ar{l}[swap]{r}\ar{dl}{\mu}\\
\muSh_{\cc_{X,\Lambda}}(\cc_{X,\Lambda})&\ar{l}{\sim}[swap]{r}\muSh_{\cc_{X,\Lambda}\times(0,1)}(\cc_{X,\Lambda}\times(0,1))
\end{tikzcd}
\end{equation}
where the functors are restriction $r$ and microlocalization $\mu$.
It is a standard result of microlocal sheaf theory that the two leftmost restriction functors $r$ are equivalences.
We note that we may indeed choose the double of the stabilization $D(\cc_{X,\Lambda}\times(0,1))$ so that over $M\times(0,1)$ it coincides with $D(\cc_{X,\Lambda}) \times (0,1)$ (compare the picture from Lemma \ref{doublesameasns}), so the upper right restriction functor $r$ is defined.

Because microsupport respects limits and colimits, so too do the microlocalization functors $\mu$ (see e.g.\ \cite[Rem.\ 6.1]{nadler-shende} for more details) and restriction functors $r$.
The domain sheaf categories are compactly generated by Corollary \ref{microstalksgenerate}, and Brown representability holds for the opposites of compactly generated categories by \cite{neeman-book,krause}, so all functors $r$ and $\mu$ in \eqref{sheaffunctors} admit left adjoints $r^*$ and $\mu^*$.\footnote{In fact, $r^*$ and $\mu^*$ exist in general by arguing as in Lemma \ref{adjoints}.}
Being left adjoint to co-continuous functors, each $r^*$ and $\mu^*$ preserves compact objects.

The microlocal sheaf categories appearing in \eqref{sheaffunctors} are compactly generated by co-representatives of microstalk functors by Corollary \ref{mucompactlygenerated}.
The images of these compact generators under $\mu^*$ are again co-representatives of the same microstalk functors.
To show that a given $\mu^*$ is fully faithful, it suffices to check on compact objects.

By Theorem \ref{microsheavesassheaves}, the diagonal $\mu$ in \eqref{sheaffunctors} is the projection onto an orthogonal direct summand of the domain.
Its left adjoint $\mu^*$ is thus the inclusion of this orthogonal direct summand, hence, in particular, is fully faithful.
Thus to prove full faithfulness of the other vertical $\mu^*$ functors, it suffices to show full faithfulness of
\begin{equation}
r^*:\Sh_{D(\cc_{X,\Lambda})\times(0,1)}(M\times(0,1))\to\Sh_{D(\cc_{X,\Lambda}\times(0,1))}(M\times\RR)
\end{equation}
restricted to co-representatives of the microstalk functors at the first copy of $\cc_{X,\Lambda}\times(0,1)$.
By Proposition \ref{sfecompatibility}, this functor (restricted to compact objects) corresponds under Theorem \ref{sheaffukayaequivalence} to the pushforward functor
\begin{equation}\label{Wside}
\W(T^*(M\times(0,1)),D(\cc_{X,\Lambda})\times(0,1))\to\W(T^*(M\times\RR),D(\cc_{X,\Lambda}\times(0,1))).
\end{equation}
It thus suffices to show that the restriction of this functor to the linking disks of the first copy of $\cc_{X,\Lambda}\times(0,1)$ is fully faithful.

The inclusion
\begin{equation}
(X,\Lambda)\times(\CC,\pm\infty)\times(\CC_{\Re\geq 0},\infty)\hookrightarrow(T^*(M\times\RR),D(\cc_{X,\Lambda}\times(0,1)))
\end{equation}
around the first copy of $\cc_{X,\Lambda}\times(0,1)=\cc_{(X,\Lambda)\times(\CC,\pm\infty)}$ induces a fully faithful functor on $\W$ by Proposition \ref{relativedoublingff} and the fact that $(X,\Lambda)\times(\CC,\pm\infty)$ admits homological cocores.
Now, after a deformation, the above inclusion factors through $(T^*(M\times(0,1)),D(\cc_{X,\Lambda})\times(0,1))$ as the identity map on $T^*(0,1)$ times the canonical inclusion
\begin{equation}
(X,\Lambda)\times(\CC_{\Re\geq 0},\infty)\hookrightarrow(T^*M,D(\cc_{X,\Lambda}))
\end{equation}
around the first copy of $\cc_{X,\Lambda}$.
It thus suffices to show that the induced map on wrapped Fukaya categories is also fully faithful.
To do this, we multiply the proof of Proposition \ref{relativedoublingff} by $T^*(0,1)$.
Namely, we consider Lagrangians inside $T^*(0,1)\times(X,\Lambda)$ times the linking disk of $(\CC_{\Re\geq 0},\infty)$ and consider product wrappings inside $T^*(0,1)\times(X,\Lambda)\times(\CC,\{\pm i\infty\})$ which we conclude satisfy the cofinality criterion, hence remain cofinal after removing a neighborhood of the additional stop at $\cc_X\times\{-\infty\}$ and gluing onto $T^*M$.
\end{proof}

\begin{corollary}\label{sheafdescmicrolocal} 
Let $(X,\Lambda)$ be a stopped Liouville manifold.
Let $M$ be a real analytic manifold and $X\hookrightarrow S^*M$ a
Liouville hypersurface embedding such that the image of $\cc_{X, \Lambda}$ is subanalytic isotropic with $C^r$ subanalytic boundary coordinates.
There is a fully faithful functor $\W(X,\Lambda)^\op\hookrightarrow\muSh_{\cc_{X,\Lambda}}(\cc_{X,\Lambda})^c$ characterized uniquely by commutativity of the diagram
\begin{equation}\label{sheafdescmicrolocaleqn}
\begin{tikzcd}
\W(X,\Lambda)^\op\ar[hook]{r}{\eqref{doubleffrelative}}\ar[hook]{d}&\Perf\W(T^*M,D(\cc_{X,\Lambda}))^\op\ar[equal]{d}{\text{Thm \ref{sheaffukayaequivalence}}}\\
\muSh_{\cc_{X,\Lambda}}(\cc_{X,\Lambda})^c\ar[hook]{r}{\mu^*}&\Sh_{D(\cc_{X,\Lambda})}(M)^c
\end{tikzcd}
\end{equation}
where $\mu^*$ denotes the restriction to compact objects of the left adjoint of the microlocalization functor.
\end{corollary}

\begin{proof}
Proposition \ref{relativedoublingff} and Corollary \ref{mustarff} ensure that the horizontal arrows in \eqref{sheafdescmicrolocaleqn} are fully faithful.
The essential image of \eqref{doubleffrelative} is contained in the subcategory generated by linking disks of the first copy of $\cc_{X,\Lambda}$ since it factors through $\W((X,\Lambda)\times(\CC_{\Re\geq 0},\infty))$ which is generated by linking disks.
The functor $\mu^*$ obviously sends co-representatives of microstalks (which exist by Corollary \ref{mucompactlygenerated}) to co-representatives of microstalks, which are identified with linking disks under Theorem \ref{sheaffukayaequivalence}.
\end{proof}

\begin{remark}
The hypotheses of Corollaries \ref{mustarff} and \ref{sheafdescmicrolocal} may be ensured by assuming $(X,\lambda)$ is analytic, $\cc_{X,\Lambda}$ is subanalytic isotropic, and the embedding $X\hookrightarrow S^*M$ is analytic.
This is how we proceed to prove Theorem \ref{microsheaffukayaequivalence}, relying on the abstract analytic approximation result of Corollary \ref{analytichypersurface}.
However when applying Corollary \ref{analytichypersurface} in practice (and in particular in \cite{gammage-shende}), it can be more convenient to simply check subanalyticity of the core inside $S^*M$ and existence of analytic boundary coordinates (which holds vacuously if $\Lambda=\emptyset$).
\end{remark}

Now the embedding of Theorem \ref{microsheaffukayaequivalence} is simply defined to be that of Corollary \ref{sheafdescmicrolocal} for a choice of Liouville hypersurface embedding, which is guaranteed to exist by Corollary \ref{stableembedding}.

\begin{proof}[Proof of Theorem \ref{microsheaffukayaequivalence}]
By Corollary \ref{stableembedding}, there is a Liouville hypersurface embedding $X\times\CC^k\hookrightarrow S^*M$ compatible with polarizations for some manifold $M$.
Equip $M$ with a real analytic structure, and use Corollary \ref{analytichypersurface} to perturb the embedding to be analytic.
Now apply Corollary \ref{sheafdescmicrolocal} to $(X,\Lambda)\times(\CC,\pm\infty)^k$ and the embedding $X\times\CC^k\hookrightarrow S^*M$ to obtain an embedding
\begin{equation}
\W((X,\Lambda)\times(\CC,\pm\infty)^k)\hookrightarrow\muSh_{\cc_{X,\Lambda}\times\RR^k}(\cc_{X,\Lambda}\times\RR^k),
\end{equation}
which sends homological cocores to co-representatives of microstalks since Theorem \ref{sheaffukayaequivalence} sends linking disks to co-representatives of microstalks.
Finally, combine this with the K\"unneth embedding $\W(X,\Lambda)\hookrightarrow\W((X,\Lambda)\times(\CC,\pm\infty)^k)$ and the equivalence $\muSh_{\cc_{X,\Lambda}}(\cc_{X,\Lambda})=\muSh_{\cc_{X,\Lambda}\times\RR^k}(\cc_{X,\Lambda}\times\RR^k)$.
\end{proof}

While the equivalence of Theorem \ref{sheaffukayaequivalence} is canonical, the embedding of Theorem \ref{microsheaffukayaequivalence} depends \emph{a priori} on a choice of analytic hypersurface embedding $X\times\CC^k\hookrightarrow S^*M$ compatible with polarizations.
We do strongly expect that it is independent of these choices, and moreover that pursuing the present methods a bit further would show this.

In some instances, there is a particularly natural choice of Liouville hypersurface embedding for which the category $\Sh_{\overline{\cc_{X,\Lambda}}}(M)$ is of interest.
It is then of interest to know that the embedding of Theorem \ref{microsheaffukayaequivalence} (associated to this particular hypersurface embedding) and the equivalence of Theorem \ref{sheaffukayaequivalence} intertwine pushforward on Fukaya categories and (the left adjoint of) microlocalization.
We stated this compatibility in the introduction as \eqref{theoremscommute}.
Here we make a stronger statement, relevant in applications, with Corollary \ref{sheafdescmicrolocal} in place of Theorem \ref{microsheaffukayaequivalence}.
Corollary \ref{sheafdescmicrolocal} requires only that the image of the core be subanalytic and have subanalytic boundary coordinates, rather than requiring the hypersurface itself to be analytic.

\begin{proposition}\label{sheafdescmicrolocalwithremoval}
In the notation and assuming the hypotheses of Corollary \ref{sheafdescmicrolocal}, the following diagram commutes:
\begin{equation}\label{sheafdescmicrolocalwithremovaleqn}
\begin{tikzcd}
\W(X,\Lambda)^\op\ar{r}\ar[hook]{d}[swap]{\text{Cor \ref{sheafdescmicrolocal}}}&\Perf\W(T^*M,\overline{\cc_{X,\Lambda}})^\op\ar[equal]{d}{\text{Thm \ref{sheaffukayaequivalence}}}\\
\muSh_{\cc_{X,\Lambda}}(\cc_{X,\Lambda})^c\ar{r}{\mu^*}&\Sh_{\overline{\cc_{X,\Lambda}}}(M)^c
\end{tikzcd}
\end{equation}
\end{proposition}

\begin{proof}
Append to the right side of \eqref{sheafdescmicrolocaleqn} a square diagram forgetting down from $D(\cc_{X,\Lambda})$ to the first copy $\overline{\cc_{X,\Lambda}}\subseteq D(\cc_{X,\Lambda})$.
\end{proof}

\begin{example}[Mirror symmetry for very affine hypersurfaces]\label{veryaffine}
Let $W_\TTT: (\CC^*)^n \to \CC^*$ be the Hori--Vafa 
mirror superpotential to a smooth toric stack $\TTT$.  In Corollary \ref{wrappedcoh}, 
we discussed how the results of the present article allows to translate the sheaf theoretic work of 
\cite{FLTZ-Morelli, kuwagaki} into a mirror symmetry statement equating the Fukaya--Seidel category 
of $W_\TTT$ with $\Coh(\TTT)$.  This also depended on certain calculations of skeleta in \cite{gammage-shende, pengzhou}. 

The main purpose of \cite{gammage-shende} was to provide the relevant skeletal calculations and microlocal
sheaf theoretic results to prove the expected mirror symmetry 
between the wrapped Fukaya category of a generic fiber 
(which we denote $W_\TTT^{-1}(-\infty)$) and the category of coherent sheaves on the toric boundary 
$\Coh(\partial \TTT)$.  Theorem \ref{sheafdescmicrolocal} provides the translation
between microlocal sheaf theory and wrapped Fukaya categories.
To summarize, we have the following commutative diagram 
\begin{equation}\label{eq:gammage-shende}
\begin{tikzcd}
\Coh(\partial \TTT) \ar{r} \ar[equals]{d}[swap]{\text{\cite{gammage-shende}}} & \Coh{\TTT}  \ar[equals]{d}{\text{\cite{FLTZ-Morelli, kuwagaki}}} \\ 
\muSh_{\Lambda_\TTT}(\Lambda_\TTT)^c \ar{r}{\mu^*} \ar[equal]{d}[swap]{\text{Thm \ref{sheafdescmicrolocal}}} & \Sh_{\Lambda_\TTT}((S^1)^n)^c \ar[equals]{d}{\text{Thm \ref{sheaffukayaequivalence}}} \\
\Perf \W(W_\TTT^{-1}(-\infty)) \ar{r} & \Perf \W((\CC^*)^n, W_\TTT^{-1}(-\infty)) 
\end{tikzcd}
\end{equation}
in which the bottom square is \eqref{sheafdescmicrolocalwithremovaleqn}, using the fact that $-\Lambda_\TTT$ is the core of $W_\TTT^{-1}(-\infty)$ from \cite{gammage-shende,pengzhou}.
(The absence of an `op' is due to the appearance of the minus sign in $-\Lambda_\TTT$, and a corresponding use of an antipodal map.) 
\end{example}

\subsection{Making the core subanalytic}\label{secsubanskel}

The goal of this subsection is to show that every Weinstein sector may be perturbed to be real analytic and have subanalytic relative core (and hence satisfy the hypotheses of Theorem \ref{microsheaffukayaequivalence}).

\begin{proposition}\label{analyticstablemflds}
Let $M$ be a real analytic manifold, and let $V$ be a real analytic vector field on $M$ which is convex and complete at infinity and which is gradient-like with respect to a proper Morse function with finitely many critical points.
Suppose that a neighborhood of every zero of $V$ has local analytic coordinates in which $V=\sum_ia_ix_i\frac\partial{\partial x_i}$ for some $a_i\in\QQ\setminus\{0\}$.
Then the union of all stable manifolds $C\subseteq M$ is a subanalytic subset of $M$.
In fact, for any subanalytic subset $\Lambda\subseteq\partial_\infty M$, the union $C\cup(\Lambda\times\RR)\subseteq M$ is subanalytic.
\end{proposition}

\begin{proof}
Fix a proper smooth Morse function $\phi:M\to\RR$ with respect to which $V$ is gradient-like.
There is no real need to make $\phi$ real analytic, though the usual real analytic approximation results allow us to do so if we like.

The core $C$ is compact, so it is vaccuously true that $C_\Lambda:=C\cup(\Lambda\times\RR)$ is subanalytic over $\{\phi>T\}$ for some large $T<\infty$.
By $V$-invariance of $C_\Lambda$, if an interval $[T',T]$ contains no critical values of $\phi$, then $C_\Lambda$ subanalytic over $\{\phi>T\}$ implies $C_\Lambda$ is subanalytic over $\{\phi>T'\}$.
Thus the point is to understand what happens when we cross a critical value of $\phi$.
We may assume the critical values of $\phi$ are distinct.

Fix a critical value of $\phi$, which by translating $\phi$ we may assume is zero. Supposing that $C_\Lambda$ is subanalytic over $\{\phi>\epsilon\}$, let us show that $C_\Lambda$ is subanalytic over $\{\phi>-\epsilon\}$.
It is trivial that $C_\Lambda$ is subanalytic away from the stable manifold of the critical point of $\phi$ in question.
Thus let us work in local analytic coordinates $[-1,1]^{n+m}$ near this critical point in which $V=\sum_ia_ix_i\frac\partial{\partial x_i}-\sum_jb_jy_j\frac\partial{\partial y_j}$ for $a_i,b_i\in\QQ_{>0}$.
We now consider the proper map
\begin{equation}\label{flowmap}
\left\{\begin{matrix}\hfill x_1^2+\cdots+x_n^2=1\\\hfill y_1^2+\cdots+y_m^2=1\\\hfill s,t\geq 0\end{matrix}\right\}\xrightarrow{\textstyle(s^{a_1}x_1,\ldots,s^{a_n}x_n,t^{b_1}y_1,\ldots,t^{b_m}y_m)}\RR^{n+m},
\end{equation}
which is analytically defineable since $a_i,b_i\in\QQ$.
Note that for fixed values of $(x_1,\ldots,x_n,y_1,\ldots,y_m)$ and of the product $st$, the image is a flow line of $V$; in fact, this identifies the space of broken flow lines of $\sum_ia_ix_i\frac\partial{\partial x_i}-\sum_jb_jy_j\frac\partial{\partial y_j}$ on $\RR^{n+m}$ with
\begin{equation}\label{flowspace}
\{x_1^2+\cdots+x_n^2=1\}\times\{y_1^2+\cdots+y_m^2=1\}\times\RR_{\geq 0}.
\end{equation}
We may now show that $C_\Lambda$ is subanalytic in a neighborhood of the stable manifold $\{x_1=\cdots=x_n=0\}\times\RR^m_y$ as follows.
Choose a small real analytic hypersurface $H$ transverse to $V$ near $\{x_1^2+\cdots+x_n^2=1\}\times\{y_1=\cdots=y_m=0\}$.
Since $H$ lies in the locus where $\phi$ is positive, the intersection $C_\Lambda\cap H$ is subanalytic.
Now the image of $C_\Lambda\cap H$ under the backward flow of $V$ may be described by projecting it to \eqref{flowspace}, taking its inverse image in the domain of \eqref{flowmap}, and taking its image under \eqref{flowmap}; the result is subanalytic since \eqref{flowmap} is proper.
Near the stable manifold, $C_\Lambda$ is the union of this subanalytic set (the image of $C_\Lambda\cap H$ under the backward flow of $V$) with the stable manifold, hence is subanalytic.
\end{proof}

\begin{corollary}\label{analyticskeleton}
Every Weinstein manifold can be perturbed to admit a real analytic structure such that for every subanalytic subset at infinity, the associated relative core is subanalytic.
\end{corollary}

\begin{proof}
The standard Weinstein handle
\begin{equation}\label{weinsteinhandle}
\left(\RR^{2k}\times\RR^{2(n-k)},\sum_{i=1}^ndx_i\wedge dy_i,\sum_{i=1}^k\frac 12(-x_i\partial_{x_i}+3y_i\partial_{y_i})+\sum_{i=k+1}^n\frac 12(x_i\partial_{x_i}+y_i\partial_{y_i})\right)
\end{equation}
is real analytic.
Any critical point of a Weinstein manifold may be perturbed so as to coincide locally with \eqref{weinsteinhandle} (see \cite{cieliebakeliashberg} and \cite[Lemma 6.6]{girouxpardon}).
We may thus construct (after perturbation) any Weinstein manifold by iteratively attaching such standard handles.
Now the attaching maps may be perturbed to be real analytic by Lemma \ref{analyticcontactomorphism}.
We therefore obtain a real analytic Weinstein manifold $(X,\omega,Z)$ to which Proposition \ref{analyticstablemflds} applies.
\end{proof}

\begin{corollary}\label{analyticrelativeskeleton}
Every Weinstein sector is equivalent to a stopped Weinstein manifold with subanalytic isotropic relative core.
\end{corollary}

\begin{proof}
A Weinstein sector is (equivalent to) a Liouville pair $(X,F)$ where $X$ and $F$ are both Weinstein.
Apply Corollary \ref{analyticskeleton} to $X$ and $F$ individually, and apply Corollary \ref{analytichypersurface} to the embedding $F\hookrightarrow\partial_\infty X$.
\end{proof}

\appendix

\section{Review of categorical notions}\label{categoricalsection}

We will assume the reader is familiar with the basic definitions of differential graded (dg) and/or
$\ainf$ categories, functors between them, modules, and bimodules, for which there are many references.
In this section we review notation, assumptions, and relevant notions/results.

All of our dg or $\ainf$ categories $\C$ 
have morphism cochain complexes linear over a fixed commutative ring
(which we take for simplicity of notation to be $\ZZ$), which are $\ZZ$-graded
and cofibrant in the sense of \cite[Sec.\ 3.1]{gpssectorsoc} (an assumption which
is vacuous if working over a field). We further assume that all such $\C$ are
at least cohomologically unital, meaning that the underlying cohomology-level
category $H^*(\C)$ has identity morphisms (this follows if $\C$ itself is strictly unital, as is the case in the dg setting).
We say objects in $\C$ are isomorphic if they are isomorphic in $H^*(\C)$.

\subsection{Functors, modules, and bimodules}\label{functormodulebimodulesec}

For two ($\ainf$ or dg) categories $\C$ and $\D$, we use the notation
\begin{equation}\label{functors}
    \Fun(\C, \D)
\end{equation}
to refer to the ($\ainf$) category of $\ainf$ functors from $\C$ to $\D$ (compare
\cite[Sec.\ (1d)]{seidelbook}, noting that we consider here homologically unital functors).
Note that $\Fun(\C,\D)$ is in fact a dg
category whenever $\D$ is.
The morphism space between $f,g \in \Fun(\C,\D)$ is the \emph{derived} space of natural transformations (as opposed to the space of strict natural transformations, which can be defined in the dg setting but not in the more general $\ainf$ setting).

An $\ainf$ functor $f: \C \to \D$ is called fully faithful (essentially
surjective, an equivalence) if the induced functor on cohomology categories
$H^*(f): H^*(\C) \to H^*(\D)$ is. We use freely the similar notion of a
bilinear $\ainf$ functor $\C \times \D \to \E$ (see
\cite{lyubashenkomultilinear}), which are themselves objects of an $\ainf$
category which is dg if $\E$ is.

Denote by $\Mod\ZZ$ the dg category of dg $\ZZ$-modules, i.e.\ the category of (implicitly $\ZZ$-graded) unbounded complexes of $\ZZ$-modules localized at acyclic complexes.
When relevant, we take as our model of this category cofibrant complexes of $\ZZ$-modules.

A left (respectively right) module over a category $\C$ is, by definition a
functor from $\C^\op$ (respectively $\C$) to $\Mod\ZZ$.  More generally, a $(\C,
\D)$ bimodule is a bilinear functor $\C^\op \times \D \to \Mod\ZZ$; this
notion specializes to the previous two notions by taking $\C$ or $\D = \ZZ$
(meaning the category with one object $*$ and endomorphism algebra $\ZZ$),
see \cite[Sec.\ 3.1]{gpssectorsoc}. By the above discussion, left modules, right
modules, and bimodules are each objects of dg categories, denoted
\begin{align}
\Mod\C&=\Fun(\C^\op,\Mod\ZZ)\\
\Mod\C^\op&=\Fun(\C,\Mod\ZZ)\\
[\C,\D]&=\Fun(\C^\op\times\D,\Mod\ZZ)
\end{align}
respectively. We will most frequently discuss left modules, which we simply
call modules. There are canonical fully faithful Yoneda embeddings (see e.g., \cite[Sec.\ (1l)]{seidelbook} for a more detailed description on morphism spaces):
\begin{align}
    \C \hookrightarrow \Mod \C\quad & X \mapsto \hom_{\C}(-, X)\\
    \C^\op \hookrightarrow \Mod\C^\op\quad & Y \mapsto \hom_{\C}(Y, -)\\
    \C \times \D^\op \hookrightarrow [\C, \D]\quad & (X,Y) \mapsto \hom_{\C}(-,X) \otimes_\ZZ \hom_{\D}(Y,-)
\end{align}
and we call any (bi)module in the essential image of these embeddings
representable.
Recall that any $\C$ possesses a canonical (not necessarily representable)
$(\C, \C)$ bimodule, the diagonal bimodule $\C_{\Delta}$ (defined on the level of objects by
$\C_{\Delta}(-,-) = \hom_{\C}(-,-)$).  

A $(\D, \C)$ bimodule $\B$ induces, via convolution (aka tensor product), a functor
\begin{align}
    \B \otimes_{\C} - : \Mod \C &\to \Mod \D\\
    \M&\mapsto\B(-,-)\otimes_\C\M(-)
\end{align}
(note that this is a version of the derived tensor product), and more generally
a functor $[\C,\E] \to [\D,\E]$ for any category $\E$.  This functor
always has a right adjoint, given by $\N\mapsto\hom_{\Mod\D}(\B,\N)$.\footnote{We say $f: \C \to \D$ has right adjoint (or is the left adjoint of) $g: \D \to \C$ if there is in isomorphism in $[\C, \D]$ between $\hom_{\D}(f(-),-)$ and $\hom_{\C}(-, g(-))$.}
As one might
expect, convolving with the diagonal bimodule is (isomorphic to) the identity.
Not every functor $\Mod \C \to \Mod \D$ comes from a bimodule, however there is
a characterization of those that do:

\begin{theorem}[{compare \cite[Thm.\ 1.4]{toenmorita}}]\label{morita}
    The convolution map $[\D, \C] \to\Fun(\Mod \C, \Mod \D)$ is fully faithful,
    and its essential image is precisely the co-continuous functors, i.e.\ those that preserve small direct sums.
\end{theorem}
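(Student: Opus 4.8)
The plan is to identify the convolution map with the inverse of ``restriction along the Yoneda embedding,'' and thereby reduce the statement to the \emph{derived} universal property of $\Mod\C$ as the free cocompletion of $\C$ under homotopy colimits (this is essentially the content of \cite{toenmorita}, Theorem 1.4, and I would either invoke it or reprove it in the present dg/$\ainf$ conventions). Write $y_\C\colon \C\hookrightarrow\Mod\C$, $X\mapsto\hom_\C(-,X)$, for the Yoneda embedding, and let $\kappa\colon[\D,\C]\to\Fun(\Mod\C,\Mod\D)$ be the convolution map $\B\mapsto\B\otimes_\C-$. There are two assertions: that the essential image of $\kappa$ is exactly the co-continuous functors, and that $\kappa$ is fully faithful. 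For the first, one inclusion is immediate: $\kappa(\B)=\B\otimes_\C-$ is a left adjoint, with right adjoint $\N\mapsto\hom_{\Mod\D}(\B,\N)$ as already recorded, so it preserves all colimits and in particular is co-continuous.

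For the converse inclusion I would, given a co-continuous $G\colon\Mod\C\to\Mod\D$, define the $(\D,\C)$-bimodule $\B_G$ by $\B_G(d,c):=(G(y_\C(c)))(d)$, which is manifestly functorial in $d$ and $c$. By the (derived) Yoneda lemma $\B\otimes_\C\hom_\C(-,c)\cong\B(-,c)$ — the special case, recorded in the excerpt, that convolution with the diagonal bimodule is the identity — one gets $\kappa(\B_G)(y_\C(c))\cong G(y_\C(c))$ naturally in $c$, and I would check this assembles to a natural transformation $\kappa(\B_G)\Rightarrow G$ that is an isomorphism on all representable modules. Since every $\M\in\Mod\C$ is a homotopy colimit of representables (e.g.\ via the two-sided bar resolution, whose terms are direct sums of representables) and both $\kappa(\B_G)$ and $G$ are co-continuous, the transformation is then an isomorphism on all of $\Mod\C$, so $G\cong\kappa(\B_G)$.

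For full faithfulness I would upgrade the object-level bijection just obtained to a dg-equivalence. Restriction along $y_\C$ (followed by currying) gives $\rho\colon\Fun(\Mod\C,\Mod\D)\to[\D,\C]$, $G\mapsto((d,c)\mapsto(G(y_\C(c)))(d))$; the derived universal property of $\Mod\C$ states that $\rho$ restricts to a quasi-equivalence from the full subcategory of co-continuous functors onto $[\D,\C]$. Granting this, the previous paragraph shows $\rho\circ\kappa\cong\id_{[\D,\C]}$ while $\kappa$ lands in the co-continuous functors, so $\kappa$ is the inverse quasi-equivalence and in particular fully faithful as a functor into all of $\Fun(\Mod\C,\Mod\D)$. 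To prove the universal property directly in this setting rather than quoting it, I would again use the functorial bar resolution $\M\simeq|B_\bullet(\C,\C,\M)|$: for co-continuous $G,G'$ the complex of derived natural transformations $\hom_{\Fun(\Mod\C,\Mod\D)}(G,G')$ is the homotopy limit over $\Delta$ of the natural transformations between the levelwise (coproducts of) representables, which by Yoneda is computed by $\hom_{[\D,\C]}(\rho G,\rho G')$; specializing $G=\kappa(\B)$, $G'=\kappa(\B')$ then identifies it with $\hom_{[\D,\C]}(\B,\B')$.

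The hard part is precisely this last point: the \emph{derived} (Hom-complex, not merely $H^0$) form of the free-cocompletion property, i.e.\ controlling all higher $\ainf$ coherences of natural transformations between co-continuous functors. In a model-categorical treatment this is handled, as in \cite{toenmorita}, by using the projective model structure on $\Mod\C$ — for which the representables form a set of cofibrant (homotopy) generators — and recognizing each $\kappa(\B)$ as a left Quillen functor; in an $\infty$-categorical treatment it is the statement that presheaf categories are free cocompletions. Since the paper works over $\ZZ$ under the cofibrancy hypotheses of \cite{gpssectorsoc}, the cleanest exposition is to cite \cite{toenmorita} and observe that the only adaptations required are the (harmless) passage between dg and $\ainf$ models and the bookkeeping of those cofibrancy assumptions, neither of which affects the argument.
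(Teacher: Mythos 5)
Your proposal follows essentially the same route as the paper's proof sketch: identify restriction along the Yoneda embedding as the inverse of convolution, and appeal to the derived free-cocompletion property of $\Mod\C$ (the content of \cite{toenmorita}). The paper compresses this into ``restriction to $\C$ induces tautologically a map (which is an equivalence)'' and ``one checks that this is a two-sided inverse,'' whereas you spell out the verification via the bar resolution and flag honestly that the derived (Hom-complex, not merely $H^0$) form of the universal property is the nontrivial input — but the underlying argument is the same.
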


(By `$F$ preserves small direct sums' we mean `the natural map $\bigoplus_\alpha F(X_\alpha)\to F(\bigoplus_\alpha X_\alpha)$ is an isomorphism'.)

\mathchardef\mhyphen="2D
\begin{proof}[Proof Sketch]
    If $\Fun_{\mathrm{co\mhyphen cont}}(\Mod \C, \Mod \D)$ denotes the co-continuous functors,
    observe that restriction to (the Yoneda image of) $\C$ induces
    tautologically a map (which is an equivalence) $\Fun_{\mathrm{co\mhyphen cont}}(\Mod \C, \Mod
    \D) \to \Fun(\C, \Mod \D) = \Fun(\C, \Fun(\D^\op, \Mod\ZZ)) = [\D, \C]$; in other words co-continuous
    functors from $\Mod \C$ are determined by what they do on $\C$. One checks
    that this is a two-sided inverse to the convolution map, up to homotopy.
\end{proof}

Given an $\ainf$ functor $f: \C \to \D$, there is a pair of (adjoint) induced
functors on module categories: first, there is an induced restriction map 
\begin{equation}\label{restriction}
    f^*: \Mod \D \to \Mod \C
\end{equation}
given by pre-composing with $f^\op$; one can show this is isomorphic
to tensoring with the graph $(\C, \D)$ bimodule $(f^\op,\id)^*\D_{\Delta}=\D_\Delta(f(-),-)$ (see \cite[Lem.\ 3.7]{gpssectorsoc}).
In particular, there is a natural functor $\D \to \Mod \C$ given by composing
\eqref{restriction} with the Yoneda embedding for $\D$.  There is also (left
adjoint to $f^*$) an induction map
\begin{equation}\label{induction}
    f_!: \Mod \C \to \Mod \D
\end{equation}
given by tensoring with the graph $(\D, \C)$ bimodule $(\id,f)^*\D_{\Delta}=\D_\Delta(-,f(-))$.
One can directly compute that $f_!$ sends a representable
over $X \in \C$ to an object isomorphic to the representable over $f(X)$.
Conversely, we have:

\begin{lemma}
If a $(\D,\C)$ bimodule $\B$ has the property that $\B(-,c)$ is representable
by an object $f(c)\in\D$ each $c \in \C$, then convolving with $\B$ is isomorphic in $\Fun(\Mod\C,\Mod\D)$
to the induction of a (unique up to isomorphism)
$\ainf$ functor $f: \C \to \D$ sending $c$ to $f(c)$. In particular,
$f_! = \B\otimes_\C-$ admits a right adjoint, namely $f^*$. \qed
\end{lemma}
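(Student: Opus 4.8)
The plan is to realize $\B$ as the graph bimodule of the sought functor $f$, after which both assertions become formal. First I would curry: by the exponential law $[\D,\C]=\Fun(\D^\op\times\C,\Mod\ZZ)=\Fun\bigl(\C,\Fun(\D^\op,\Mod\ZZ)\bigr)=\Fun(\C,\Mod\D)$ (the identity already used in the proof of Theorem \ref{morita}), the bimodule $\B$ corresponds to an $\ainf$ functor $\hat\B\colon\C\to\Mod\D$ with $\hat\B(c)=\B(-,c)$. The hypothesis says precisely that each $\hat\B(c)$ is isomorphic to a representable $\D$-module, so $\hat\B$ factors through the full subcategory $(\Mod\D)^{\mathrm{rep}}\subseteq\Mod\D$ of modules isomorphic to representables. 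Since the Yoneda embedding $y\colon\D\hookrightarrow\Mod\D$ is fully faithful, its corestriction $\D\to(\Mod\D)^{\mathrm{rep}}$ is fully faithful and essentially surjective, hence an $\ainf$ equivalence; composing $\hat\B$ with a quasi-inverse produces $f\colon\C\to\D$ together with an isomorphism $y\circ f\cong\hat\B$ in $\Fun(\C,\Mod\D)$, with $f(c)$ isomorphic to the given object for each $c$.

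Next I would identify $\B$ with a graph bimodule. By definition $f_*$ is convolution with $(\id,f)^*\D_\Delta$, the $(\D,\C)$-bimodule $(d,c)\mapsto\hom_\D(d,f(c))$, whose curry in $\Fun(\C,\Mod\D)$ is exactly $y\circ f$. Combined with the previous paragraph this gives an isomorphism of $(\D,\C)$-bimodules $\B\cong(\id,f)^*\D_\Delta$. Pushing this through the (fully faithful, in particular functorial) convolution map $[\D,\C]\to\Fun(\Mod\C,\Mod\D)$ of Theorem \ref{morita} yields $\B\otimes_\C-\cong f_*$ in $\Fun(\Mod\C,\Mod\D)$. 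The last sentence of the lemma is then immediate, since $f_*$ is left adjoint to $f^*$ as recalled just above, so the isomorphic functor $\B\otimes_\C-$ has right adjoint $f^*$.

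For uniqueness, suppose $g\colon\C\to\D$ also satisfies $g_*\cong\B\otimes_\C-$. Then $g_*\cong f_*$, so by the full faithfulness part of Theorem \ref{morita} the bimodules agree, $(\id,g)^*\D_\Delta\cong(\id,f)^*\D_\Delta$; currying and using that $y$ is fully faithful, hence reflects isomorphisms between functors out of $\C$, gives $g\cong f$ (which in particular forces $g(c)\cong f(c)$).

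The one nonformal ingredient, and the step I expect to be the main obstacle, is the lifting in the first paragraph: that a homologically unital $\ainf$ functor into $\Mod\D$ all of whose values are isomorphic to representables factors coherently through $\D$. This is exactly the assertion that $\D\to(\Mod\D)^{\mathrm{rep}}$ is an $\ainf$ equivalence, together with the existence of quasi-inverses to $\ainf$ equivalences; one can invoke this general $\ainf$ machinery directly, or, preferring an elementary route, construct $f$ order by order by a standard obstruction argument in the spirit of Proposition \ref{ffhenceainf}. Everything downstream is bookkeeping with bimodules and Theorem \ref{morita}.
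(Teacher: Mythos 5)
Your proof is correct. The paper itself supplies no argument for this lemma (it is marked with \qed as an immediate consequence of the surrounding discussion), and your write-up supplies the missing details in what is surely the intended way: curry $\B$ into a functor $\hat\B\colon\C\to\Mod\D$, use the pointwise-representability hypothesis together with the fact that the corestricted Yoneda embedding $\D\to(\Mod\D)^{\mathrm{rep}}$ is a quasi-equivalence (hence invertible up to homotopy) to descend $\hat\B$ to $f\colon\C\to\D$, observe that the curry of $(\id,f)^*\D_\Delta$ is $y\circ f\cong\hat\B$, and conclude via the functoriality and full faithfulness of the convolution map of Theorem \ref{morita}. The one nonformal input, which you correctly isolate, is the invertibility of $\ainf$ quasi-equivalences; your uniqueness argument via full faithfulness of convolution, followed by the fact that post-composition with a quasi-equivalence induces a quasi-equivalence on functor categories, is also sound.
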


Note that $f^*$ also admits a right adjoint $f_*$, called co-induction, induced
by taking hom from $(f^\op,\id)^*\D_\Delta$, by the earlier discussion.

\subsection{Pre-triangulated, idempotent complete, and co-complete categories}\label{twperfcocompletesec}

A category $\C$ is called pre-triangulated iff it closed under taking mapping cones; in this case $H^0\C$ is triangulated in the usual sense.
Every category has a well defined pre-triangulated closure $\Tw\C$,\footnote{The notation $\Tw\C$ is usually taken to mean the specific model of the pre-triangulated closure of $\C$ given by the category of so-called ``twisted complexes'' of objects of $\C$.}
which can be defined as the closure of the image of $\C$ in $\Mod\C$ under taking mapping cones (see \cite[Sec.\ 3]{seidelbook}).
An object of $\Mod\C$ which is in the closure of $\A\subseteq\C$ under taking mapping cones is said to be generated by $\A$.

A category $\C$ is called idempotent complete iff $H^0\C$ is closed under retracts (usually this property is only considered when $\C$ is already pre-triangulated).
Every category has a well defined idempotent completion $\C^\pi$, which can be defined similarly as a full subcategory of $\Mod\C$ (see \cite[Sec.\ 4]{seidelbook}).
An object of $\Mod\C$ which is in the closure of $\A\subseteq\C$ under taking mapping cones and retracts is said to be split-generated by $\A$.
The category of perfect modules $\Perf\C\subseteq\Mod\C$ is by definition the full subcategory spanned by objects split-generated by $\C$; in other words, $\Perf\C=(\Tw\C)^\pi$.

A category $\CCC$ is called co-complete iff it is pre-triangulated and has all (small) direct sums.
Equivalently, $\CCC$ is co-complete iff it has all small colimits.
In particular, a co-complete category is idempotent complete (compare \cite[Prop.\ 1.6.8]{neeman-book}).
We will also call co-complete categories \emph{large categories}.

\subsection{Compactly generated categories}

Any category of modules $\Mod\C$ (or more generally bimodules, etc.)\ over a small category $\C$ inherits from $\Mod\ZZ$ the property of being co-complete.
Large (i.e.\ co-complete) categories of the form $\Mod\C$ may be characterized intrinsically as follows.

Let $\CCC$ be a large category.
We say an object
$X \in \CCC$ is compact if $\hom_\CCC(X,-)$ commutes with arbitrary
direct sums (i.e.\ is co-continuous). Denoting by $\CCC^c\subseteq\CCC$ the full subcategory of compact objects,
we say that a co-complete category $\CCC$ is compactly generated if there is a small
collection (i.e., a set) of compact objects $\C\subseteq \CCC^c$ satisfying the following equivalent conditions:
\begin{itemize}
\item An object $X\in\CCC$ is zero if and only if it is right-orthogonal to $\C$ (meaning $\hom_\CCC(-,X)$ annihilates $\C$).
\item The natural map $\CCC\to\Mod\C$ sending $Y \mapsto \hom_\CCC(-, Y)$ is an equivalence.
\end{itemize}
Thus compactly generated categories $\CCC$ are precisely those of the form $\Mod\C$ for some small category $\C$.

It is natural to ask to what extent $\Mod\C$ determines $\C$.
This is answered by the following well known fact:

\begin{lemma}\label{compactperf}
The compact objects of $\Mod\C$ are precisely $\Perf\C$.
\end{lemma}

\begin{proof}
Let $\M\in\Mod\C$ be compact.
There is a natural quasi-isomorphism $\C_\Delta\otimes_\C\M\xrightarrow\sim\M$ \cite[Lem.\ 3.7]{gpssectorsoc}\cite[Lem.\ A.1]{gpsdescent} which expresses $\M$ as an infinite twisted complex of Yoneda modules.
Since $\M$ is compact, the inverse quasi-isomorphism factors through some finite subcomplex, so $\M$ is a retract of a finite twisted complex of Yoneda modules.
\end{proof}

In particular, the inclusion $\C \subseteq \Perf \C$ induces an equivalence $\Mod \Perf \C = \Mod \C$.

\subsection{Morita equivalence}\label{moritasec}

We say categories $\C$ and $\D$ are Morita equivalent iff there exists a $(\C,
\D)$ bimodule $\PPP$ and a $(\D, \C)$ bimodule $\Q$ inducing, via
convolution, an inverse pair of equivalences
\begin{equation}\label{moritaeq}
    \Mod\C \stackrel{\simeq}{\longleftrightarrow} \Mod \D.
\end{equation}
Actually, every equivalence $\Mod\C\xrightarrow\sim\Mod\D$ is isomorphic to convolution by a bimodule by Theorem \ref{morita} (since an equivalence is necessarily co-continuous), so $\C$ and $\D$ are Morita equivalent iff there is an equivalence $\Mod\C=\Mod\D$.

\begin{lemma}
$\C$ and $\D$ are Morita equivalent iff there is an equivalence
$\Perf\C = \Perf\D$.
\end{lemma}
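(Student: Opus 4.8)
The plan is to pass back and forth between the bimodule-theoretic definition of Morita equivalence and its functor-theoretic reformulation, using Theorem \ref{morita} (convolution identifies $(\D,\C)$-bimodules with co-continuous functors $\Mod\C\to\Mod\D$) together with the two basic facts recorded above: $\Perf\C=(\Mod\C)^c$ and $\Mod\Perf\C=\Mod\C$, and likewise with $\D$. With these in hand the equivalence becomes a short formal argument in each direction.

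First I would treat the forward implication. Assume $\C$ and $\D$ are Morita equivalent, so that convolution with $\B$ and with $\mathcal P$ furnishes an inverse pair of equivalences $F\colon\Mod\C\leftrightarrow\Mod\D\colon G$. Any equivalence preserves direct sums, so $F$ and $G$ are both co-continuous; moreover $G$, being a quasi-inverse of $F$, is in particular a (co-continuous) right adjoint of $F$. Hence Lemma \ref{preservecompactobjects} applies to show $F$ preserves compact objects, and symmetrically $G$ does too. Therefore $F$ and $G$ restrict to an inverse pair of equivalences $(\Mod\C)^c\leftrightarrow(\Mod\D)^c$, which under the identification $\Perf\C=(\Mod\C)^c$ is precisely an equivalence $\Perf\C\xrightarrow{\sim}\Perf\D$.

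For the converse, suppose given an equivalence $\Psi\colon\Perf\C\xrightarrow{\sim}\Perf\D$. Composing with the identifications $\Mod\C=\Mod\Perf\C$ and $\Mod\D=\Mod\Perf\D$ yields an equivalence $\Mod\C\xrightarrow{\sim}\Mod\D$ (induced by $\Psi$, with quasi-inverse induced by $\Psi^{-1}$). This equivalence and its inverse are co-continuous, so by Theorem \ref{morita} they are isomorphic to convolution with, respectively, a $(\D,\C)$-bimodule $\mathcal P$ and a $(\C,\D)$-bimodule $\B$; these are exactly the data exhibiting $\C$ and $\D$ as Morita equivalent. I do not expect a genuine obstacle here: the whole content is carried by Theorem \ref{morita} and Lemma \ref{preservecompactobjects}, and the only points requiring care are the two bookkeeping identifications above and the observation that an equivalence is automatically a co-continuous functor admitting a co-continuous right adjoint (namely its quasi-inverse).
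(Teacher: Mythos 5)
Your proof is correct and follows essentially the same route as the paper's: both directions rest on Theorem \ref{morita}, Lemma \ref{preservecompactobjects}, and the identifications $\Perf\C=(\Mod\C)^c$ and $\Mod\Perf\C=\Mod\C$. The only cosmetic difference is that the paper is more terse in noting that inverse equivalences are mutually adjoint.
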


\begin{proof}
An equivalence preserves compact objects, so an equivalence between $\Mod\C$ and $\Mod\D$ restricts to an equivalence $\Perf\C=\Perf\D$.
Conversely, any equivalence $\Perf \C=\Perf \D$ induces an equivalence $\Mod\C=\Mod \Perf \C = \Mod \Perf \D=\Mod\D$.
\end{proof}

In particular, the canonical inclusion $\C \hookrightarrow \Perf\C$ is a Morita
equivalence.  In light of the above Lemma, we will also refer to an equivalence $\Perf\C=\Perf\D$ as a Morita equivalence between $\C$ and $\D$.
We say a property of $\C$ is ``a Morita-invariant notion'' if its
validity only depends on $\Perf\C$ up to equivalence.

\subsection{Adjoints and compact objects}

The following is a useful criterion for when a functor preserves compact objects.

\begin{lemma}\label{preservecompactobjects}
If a functor $f: \CCC \to \DDD$ has a co-continuous right adjoint $g$, then $f$ sends compact objects to compact objects.
\end{lemma}

\begin{proof}
For $c \in \CCC$ a compact object, we have 
\begin{multline}
\hom_\DDD\Bigl(f(c), \bigoplus_\alpha d_\alpha\Bigr) = \hom_\CCC\Bigl(c, g\bigl(\bigoplus_\alpha d_\alpha\bigr)\Bigr) = \hom_\CCC\Bigl(c, \bigoplus_\alpha g(d_\alpha)\Bigr)\\
 =  \bigoplus_\alpha \hom_\CCC(c, g(d_\alpha)) = \bigoplus_\alpha \hom_\DDD(f(c), d_\alpha)
\end{multline}
as desired.
\end{proof}

For example, if $f:\C\to\D$ is a functor of small categories, the pullback on module categories $f^*:\Mod\D\to\Mod\C$ is co-continuous and has a left adjoint $f_!:\Mod\C\to\Mod\D$ extending $f$ (see Section \ref{functormodulebimodulesec}) which thus preserves compact objects (a fact which can also be seen from Lemma \ref{compactperf}).

\subsection{Brown representability}

On the level of large categories, a version of Brown
representability gives effective criteria for deducing the existence of
adjoints to functors.

\begin{theorem}[{Compare \cite[Thm.\ 8.4.4]{neeman-book} or \cite[Cor.\ 5.5.2.9]{luriehttpub}}]\label{hasrightadjoint}
    Let $\CCC$ and $\DDD$ be large categories with $\CCC$ compactly generated.
    If an $\ainf$ functor $f: \CCC \to \DDD$ is co-continuous, then $f$ admits a right adjoint.
\end{theorem}

\begin{proof}[Proof Sketch]
We suppose that $\DDD$ is also compactly generated, so one can write $\CCC = \Mod \C$, $\DDD = \Mod \D$ with $\C = \CCC^c$ and $\D = \DDD^c$.
Then we observe that if $f$ is co-continuous, it comes (by Theorem \ref{morita}) from convolving with a bimodule, which always has a right adjoint as described above.
\end{proof}

Theorem \ref{hasrightadjoint} also holds under the
weaker hypothesis that $\CCC$ is \emph{well generated} rather than compactly
generated, by work of Neeman adapted to the dg/$\ainf$ case 
(for a definition of this notion see \cite[Sec.\ 8] {neeman-book}, and
for a proof of Theorem \ref{hasrightadjoint} in that setting, see \cite[Prop.\ 8.4.2 and Thm.\ 8.4.4]{neeman-book}).

\subsection{Quotients and localization}\label{quotients}

Given a (small) $\ainf$ (or dg) category $\C$ and a full subcategory $\D
\subseteq \C$, there is a well-defined notion of the quotient (dg or $\ainf$)
category $\C / \D$
which comes equipped with a functor
\begin{equation}
q: \C \to \C / \D
\end{equation}
(see
\cite{drinfelddgquotient, lyubashenkoovsienko} for an explicit model in the dg
and $\ainf$ cases respectively, also discussed in \cite[Sec.\ 3.1.3]{gpssectorsoc}). The pair $\C/\D$ and $q$ satisfy the following
universal property: any functor $\C \to \E$ which sends $\D$ to $0$ factors
essentially uniquely through $\C/\D$ via $q$; more precisely, the pre-composition
$q^*: \Fun(\C/\D, \E) \hookrightarrow \Fun(\C, \E)$ fully
faithfully embeds the former category as the full subcategory $\Fun_{\Ann(\D)}(\C, \E)$ of the latter consisting of functors from
$\C$ to $\E$ which annihilate $\D$.
Taking $\E$ to be $(\Mod\ZZ)^\op$, we note in
particular that the pullback map
\begin{equation}\label{fullyfaithfulrightadjoint}
    q^*: \Mod(\C/\D) \to \Mod\C
\end{equation}
is a fully faithful embedding whose essential image is the $\C$ modules which annihilate $\D$ (see \cite[Lem.\ 3.12 and 3.13]{gpssectorsoc}).
It follows from these universal properties that the quotient $\C/\D$ depends only on the full subcategory of $\C$ split-generated by $\D$.
If $\C$ is pre-triangulated then so is $\C/\D$, however be warned that $\C/\D$ need not be idempotent complete even if $\C$ is.

In light of \eqref{fullyfaithfulrightadjoint}, we have the following equivalent perspective on localization in terms of large categories.
Let $\CCC$ be a compactly generated large category, and let $\DDD \subseteq \CCC$ be a full subcategory closed under cones and arbitrary direct sums (hence itself a large category) which is compactly generated by a
subset of $\CCC$'s compact objects $\D \subseteq \C:=\CCC^c$ (conversely, any full subcategory $\D\subseteq\C$ gives rise to such a $\DDD\subseteq\CCC$, namely the image of the induced functor $\Mod\D\to\Mod\C$, which is co-continuous since it is left adjoint to pullback of modules, and is fully faithful because the unit of this adjunction is an isomorphism \cite[Lem.\ 3.7]{gpssectorsoc}).
The quotient of $\CCC$ by $\DDD$, denoted $\CCC/\DDD$, is by definition the full subcategory of $\CCC$ which is right-orthogonal to $\DDD$ (we may also write this as $\CCC/\D$; note that the universal property of direct sum implies that the right-orthogonal of $\DDD$ is the same as the right-orthogonal of $\D$).
According to \eqref{fullyfaithfulrightadjoint}, this quotient $\CCC/\DDD$ is precisely $\Mod(\C/\D)$.
Thus the large quotient $\CCC/\DDD$ is compactly generated, and passing to compact objects recovers the quotient of categories of compact objects, up to Morita equivalence.
By the discussion in Section \ref{functormodulebimodulesec}, the embedding $q^*:\CCC/\DDD\hookrightarrow\CCC$ is right adjoint to a functor $q_!:\CCC\to\CCC/\DDD$ extending $q:\C\to\C/\D$.
We thus conclude:

\begin{lemma}[{Compatibility of large quotients with compact objects, compare \cite[Thm.\ 2.1]{neeman-smashing}}] \label{compactinquotient}
Let $\DDD\subseteq\CCC$ be a co-continuous inclusion of compactly generated large categories which sends compact objects to compact objects.
The large quotient $\CCC/\DDD$ (by definition the right-orthogonal to $\DDD\subseteq\CCC$) is also a compactly generated large category with co-continuous inclusion into $\CCC$.
The fully faithful inclusion $q^*:\CCC/\DDD\to\CCC$ is right adjoint to a `quotient functor' $q_!:\CCC\to\CCC/\DDD$ whose restriction to compact objects is the corresponding quotient functor on small categories $q:\C\to(\C/\D)^\pi$ (with idempotent-completed target).
\qed
\end{lemma}

If $\C$ is a pre-triangulated dg/$\ainf$ category and $Z$ is a set of
morphisms in $H^0(\C)$, one can form the localization of $\C$ with respect to
$Z$ by taking the quotient
\begin{equation}
    \C[Z^{-1}] := \C/{\cones Z}
\end{equation}
where $\cones Z$ denotes any set of cones of morphisms in $\C$ representing
the elements in $Z$ (regardless of how one chooses such a subset, one
notices that $\cones Z$ is a well-defined full subcategory of $\C$, and in
particular, $\C[Z^{-1}]$ is unaffected by the choice). If $\C$ is not
pre-triangulated, one can still define this localization by taking the
essential image of $\C$ under 
\begin{equation}
    \C \to \Tw \C \to \Tw\C / (\cones Z).
\end{equation}
The tautological localization map $\C \to \C[Z^{-1}]$ possesses a host of nice
properties, simply as a special case of the properties of quotients discussed above; we leave it to the
reader to spell out the details.

\subsection{Proper modules}\label{propsec}

Recall that $\Perf\ZZ \subseteq\Mod\ZZ$ is
the subcategory of perfect $\ZZ$-linear chain complexes, namely those chain complexes
which are quasi-isomorphic to a bounded complex of finite projective
$\ZZ$-modules.

We say a module or bimodule is proper (sometimes called pseudo-perfect in the
literature) if as a functor to $\Mod\ZZ$, it takes values in the full subcategory
$\Perf\ZZ$ (i.e.\ for a module $\M$ if
$\M(X)$ is a perfect chain complex for every $X \in \C$). Denote by 
\begin{equation}
    \Prop\C := \Fun(\C^\op, \Perf\ZZ) \subseteq \Mod \C
\end{equation}
the full subcategory of proper modules.

\subsection{Smooth and proper categories}

We say  a category $\C$ is smooth (sometimes called homologically smooth) if its diagonal bimodule $\C_{\Delta}$ is perfect
(a $(\C,\D)$ bimodule is called perfect if it is split-generated by tensor products of representable bimodules $\hom_\C(-,X)\otimes\hom_\D(Y,-)$).

We say $\C$ is proper
(sometimes called compact) if its diagonal bimodule $\C_{\Delta}$ is proper, or
if equivalently 
$\hom_{\C}(X,Y)$ is a perfect $\ZZ$-module for any two
objects $X,Y \in \C$. Smoothness and properness are Morita-invariant notions;
in particular $\C$ is smooth (resp.\ proper) if and only if $\Perf\C$ is.

In general, the subcategories of modules $\Perf\C$ and $\Prop\C$ do not coincide,\footnote{Rather, they are in some sense `Morita dual' in that $\Prop\C=\Fun(\Perf\C^\op,\Perf\ZZ)$.}
however they are related under the above finiteness assumptions on $\C$: 
\begin{lemma}\label{propperf}
    If $\C$ is proper then $\Perf \C \subseteq \Prop \C$, and if $\C$ is smooth
    then $\Prop \C \subseteq \Perf \C$.
    In particular, if $\C$ is smooth and proper, then $\Prop\C=\Perf\C$.\qed 
\end{lemma}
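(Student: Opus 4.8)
The plan is to establish the two displayed inclusions separately; the first is formal, while the second carries all the content, and the last sentence is then immediate.

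\emph{The inclusion $\Perf\C\subseteq\Prop\C$ when $\C$ is proper.} I would first note that every representable module $\hom_\C(-,X)$ is proper, since its value on an object $Y$ is $\hom_\C(Y,X)$, which lies in $\Perf\ZZ$ by the definition of properness of $\C$. Because $\Perf\ZZ\subseteq\Mod\ZZ$ is closed under shifts, cones and retracts, and colimits/limits/retracts in $\Mod\C=\Fun(\C^\op,\Mod\ZZ)$ are computed objectwise, the full subcategory $\Prop\C\subseteq\Mod\C$ is pre-triangulated and idempotent complete. It therefore contains the idempotent-completed pre-triangulated hull of the representables, which is by definition $\Perf\C$.

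\emph{The inclusion $\Prop\C\subseteq\Perf\C$ when $\C$ is smooth.} Fix $\M\in\Prop\C$. The key input is that convolution with the diagonal bimodule is (isomorphic to) the identity, so $\M\cong\C_\Delta\otimes_\C\M$. I would then study the convolution functor $-\otimes_\C\M\colon[\C,\C]\to\Mod\C$: by Theorem \ref{morita} (and exactness of the derived tensor product over $\ZZ$) it is co-continuous and exact, hence in particular preserves cones and retracts. Since $\C$ is smooth, $\C_\Delta$ is a perfect bimodule, i.e.\ a retract of a finite iterated extension of representable bimodules of the form $\hom_\C(-,X)\otimes_\ZZ\hom_\C(Y,-)$. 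It thus suffices to check that $-\otimes_\C\M$ sends each such representable bimodule into $\Perf\C$. Here a direct computation with the (derived) tensor product over $\C$, using that tensoring a corepresentable over $\C$ with a module recovers its value (co-Yoneda), gives
\[
\bigl(\hom_\C(-,X)\otimes_\ZZ\hom_\C(Y,-)\bigr)\otimes_\C\M\;\cong\;\hom_\C(-,X)\otimes_\ZZ\M(Y).
\]
Since $\M$ is proper, $\M(Y)\in\Perf\ZZ$ is built from $\ZZ$ by finitely many shifts, finite direct sums and cones, so the right-hand side is built from the representable module $\hom_\C(-,X)$ by the same operations, hence is perfect. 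Feeding this back through the decomposition of $\C_\Delta$, and using that $-\otimes_\C\M$ preserves cones and retracts, we conclude $\M\cong\C_\Delta\otimes_\C\M\in\Perf\C$.

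\emph{Conclusion and main obstacle.} The final assertion is then immediate: if $\C$ is both smooth and proper, both inclusions hold, so $\Prop\C=\Perf\C$. The genuine content sits entirely in the second step, and within it the only non-formal ingredients are the two (co)Yoneda-type identities $\C_\Delta\otimes_\C\M\cong\M$ and $\hom_\C(Y,-)\otimes_\C\M\cong\M(Y)$, which are standard coend manipulations valid in the dg/$A_\infty$ setting of this paper; the remaining facts (exactness and co-continuity of convolution, and closure of $\Perf\ZZ$ and of $\Perf\C$ under shifts, cones, finite sums and retracts) have already been set up above. Accordingly I expect no serious obstacle, only careful bookkeeping with the bimodule formalism.
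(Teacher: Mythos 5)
Your proof is correct and is the standard argument for this well-known fact; the paper itself omits the proof (the \qed appears directly in the statement), so there is nothing in the paper to compare against, but what you have written is almost certainly the argument the authors had in mind. Both halves are sound: the first inclusion follows from $\Prop\C$ being closed under shifts, cones, sums and retracts (all computed objectwise, with $\Perf\ZZ$ closed under these), and the second uses $\M\cong\C_\Delta\otimes_\C\M$, the decomposition of $\C_\Delta$ as a retract of a finite iterated extension of bimodules $\hom_\C(-,X)\otimes_\ZZ\hom_\C(Y,-)$, exactness of convolution, and the co-Yoneda computation $\bigl(\hom_\C(-,X)\otimes_\ZZ\hom_\C(Y,-)\bigr)\otimes_\C\M\cong\hom_\C(-,X)\otimes_\ZZ\M(Y)$. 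One cosmetic remark: you cite Theorem~\ref{morita} for co-continuity and exactness of $-\otimes_\C\M\colon[\C,\C]\to\Mod\C$, but that theorem concerns the other convolution variable $\B\otimes_\C-\colon\Mod\C\to\Mod\D$; the functor you actually need is exact for the same reason (it is a derived tensor product), so the conclusion stands but the citation is slightly off.
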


\begin{lemma} \label{smoothquotient}
    Properness is inherited by full subcategories, and smoothness passes to quotients/localizations.
    \qed 
\end{lemma}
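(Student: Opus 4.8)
The plan is to prove the two assertions separately, in each case unwinding the definition of smooth/proper in terms of the diagonal bimodule and exploiting functoriality of bimodules under restriction and induction. For properness: let $\D \subseteq \C$ be a full subcategory and suppose $\C$ is proper, i.e.\ $\hom_\C(X,Y) \in \Perf\ZZ$ for all $X,Y \in \C$. Then for $X, Y \in \D$ we have $\hom_\D(X,Y) = \hom_\C(X,Y)$ since $\D$ is full, so this is perfect; hence $\D$ is proper. This is essentially immediate from the characterization of properness via perfectness of all morphism complexes, and requires no homological input beyond that reformulation (which is stated in the excerpt).

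For the smoothness assertion, suppose $\C$ is smooth, so $\C_\Delta \in \Perf[\C,\C]$ (i.e.\ $\C_\Delta$ is split-generated by representable bimodules $\hom_\C(-,X)\otimes_\ZZ\hom_\C(Y,-)$), and let $q \colon \C \to \C/\D$ be a quotient (the localization case is the special case $\C[Z^{-1}] = \C/\cones Z$, so it suffices to treat quotients). The key point is that the quotient functor $q$, together with its behavior on modules described in \S\ref{quotients}, lets us transport the diagonal bimodule. Concretely, $q$ induces $q_* \colon \Mod\C \to \Mod(\C/\D)$ (induction) sending the representable over $X$ to the representable over $q(X)$, and likewise a pushforward on bimodules $[\C,\C] \to [\C/\D, \C/\D]$ via $(q^\op \times q)$-induction on both sides; this functor is co-continuous (it is convolution with a bimodule) hence preserves split-generation, and it sends representable $(\C,\C)$-bimodules to representable $(\C/\D,\C/\D)$-bimodules. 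Therefore it carries the perfect bimodule $\C_\Delta$ to a perfect $(\C/\D,\C/\D)$-bimodule. The remaining step — the main obstacle — is to identify this pushforward of $\C_\Delta$ with the diagonal bimodule $(\C/\D)_\Delta$. This is where one must use that $q$ is a quotient (an epimorphism of categories in the relevant sense), not an arbitrary functor: the point is that $\hom_{\C/\D}(q(-), q(-))$ is computed as a colimit over zigzags through $\D$, and the induction-from-$\C$ of $\C_\Delta$ realizes exactly this colimit, i.e.\ $(q^\op \times q)_* \C_\Delta \simeq (\C/\D)_\Delta$. One clean way to see this: $(\C/\D)_\Delta$ restricted along $q$ on both sides is the graph bimodule of $q$ composed with its counit, and the unit/counit of the induction-restriction adjunction together with the fact (from \eqref{fullyfaithfulrightadjoint}) that $q^*$ is fully faithful on module categories forces the identification after applying the fully faithful $q^*$; alternatively one checks it objectwise using that $\Perf(\C/\D) = (\Mod(\C/\D))^c$ and that $q_*$ sends a set of compact generators of $\Perf\C$ to a set of compact generators of $\Perf(\C/\D)$.

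I expect the honest content to be this last identification $(q^\op\times q)_*\C_\Delta \simeq (\C/\D)_\Delta$; everything else is formal manipulation of the Morita-theoretic dictionary (Theorem \ref{morita}, Lemma \ref{preservecompactobjects}) already set up in the appendix. An alternative, perhaps slicker route avoiding bimodule pushforwards entirely: use that smoothness of $\C$ is equivalent to the identity functor $\id_{\Mod\C}$ preserving compact objects being "continuous enough" — more precisely, $\C$ is smooth iff every object of $\Perf\C$ is compact when regarded in $\Mod\C$ and the evaluation/coevaluation for $\C_\Delta$ exist — and then observe that $\Perf(\C/\D)$ is a quotient of $\Perf\C$ (Lemma \ref{compactinquotient}), so a compact generator's diagonal data descends. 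I would present the bimodule-pushforward argument as the main line since it is the most direct, and relegate the identification of the pushforward diagonal to a short separate paragraph citing \eqref{fullyfaithfulrightadjoint} and \cite[Lemmas 3.12 and 3.13]{gpssectorsoc}.
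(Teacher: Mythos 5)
The paper records this lemma with no proof (it is simply \qed'd as standard), so there is no proof to compare against. Your argument is correct and is the standard one. The properness half is indeed immediate from fullness. For the smoothness half, your bimodule-pushforward plan is sound, and you correctly isolate the crux, namely the identification $(q^\op\times q)_*\C_\Delta \simeq (\C/\D)_\Delta$. That identification does follow from \eqref{fullyfaithfulrightadjoint}: full faithfulness of $q^*$ is equivalent to the counit $q_*q^*\to\id$ being an isomorphism, and applying $q_*$ in one variable to $q^*(\C/\D)_\Delta(-,Y)=\hom_{\C/\D}(q(-),Y)$ then gives exactly $\hom_{\C/\D}(-,Y)\otimes_\C\hom_{\C/\D}(q(-),-)\simeq(\C/\D)_\Delta$ naturally in $Y$, as you want. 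Two small caveats: the phrase ``computed as a colimit over zigzags through $\D$'' is not how $\hom$ in a dg/$\ainf$ quotient is actually presented (Drinfeld's construction is a bar-type resolution, not a zigzag localization), though this does not affect your real argument; and your ``slicker route'' via compactness of the identity is too vague to be checkable as written, so I would keep the bimodule-pushforward argument as the main line, as you do.
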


\subsection{Exceptional collections}

We say a (full) subcategory of finitely many objects $\A \subseteq \C$ is an exceptional collection
if there exists a partial ordering of the objects of $\A$ such
that
\begin{align}
    \hom(X,X) &= \ZZ\langle \id_X \rangle, \\
    \hom(X,Y) &= 0\quad\text{unless }X \leq Y.
\end{align}

\begin{lemma}\label{exceptionalidempotentcomplete}
    If $\N \in \Mod\C$ is split-generated by an exceptional collection
    $\A \subseteq \C$, then $\N$ is generated by $\A$ (i.e.\ it is not necessary to add direct
    summands).
\end{lemma}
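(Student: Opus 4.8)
The plan is to prove the equivalent statement that if $X\in\Mod\C$ is split-generated by an exceptional collection $\A$, then $X$ already lies in the (non idempotent-completed) pre-triangulated hull $\Tw\A$ — i.e.\ $X$ is an iterated cone of shifts of objects of $\A$, with no retracts needed. I would argue by induction on $|\A|$, the case $|\A|=0$ (so $X\cong 0$) being trivial. The one input specific to the coefficients is the elementary fact that \emph{every perfect complex of $\ZZ$-modules is a finite iterated cone of shifts of $\ZZ$}: such a complex is quasi-isomorphic to a bounded complex of finitely generated free $\ZZ$-modules (finitely generated projectives over $\ZZ$ being free), whose brutal filtration exhibits it as an iterated extension of its shifted terms $\ZZ^{a_i}[-i]$, and $\ZZ^{a_i}$ is visibly an iterated cone of copies of $\ZZ$. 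Equivalently $\Perf\ZZ=\Tw\{\ZZ\}$, and this is where all the ``no retracts'' content of the lemma resides.

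For the inductive step I would choose a \emph{maximal} element $E\in\A$ and put $\A':=\A\setminus\{E\}$, still an exceptional collection; maximality and antisymmetry give $\hom(E,E)=\ZZ$ and $\hom(E,F)=0$ for every $F\in\A'$. On the thick subcategory $\Perf\A\subseteq\Mod\C$, the functor $\hom_{\Mod\C}(E,-)$ lands in $\Perf\ZZ$ — it is assembled from copies of $\ZZ$ and $0$ under cones, shifts and retracts — so $r:=\bigl(\hom_{\Mod\C}(E,-)\bigr)\otimes_\ZZ E$ is a well-defined triangulated endofunctor of $\Perf\A$, the evaluation maps $\mathrm{ev}\colon r\Rightarrow\id$ form a natural transformation of triangulated functors, and its cofiber $\mathbb{L}:=\cone(\mathrm{ev})$ (``left mutation past $E$'') is again triangulated. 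A one-line check using $\hom(E,E)=\ZZ\cdot\id_E$ gives $\mathbb{L}(E)=\cone(E\xrightarrow{\id}E)=0$, while $\hom(E,F)=0$ gives $\mathbb{L}(F)=F$ for $F\in\A'$.

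Now, given $X$ split-generated by $\A$, I would write $X$ as a retract of an iterated cone of shifts $Y$ of objects of $\A$ and feed it into the triangle $r(X)\to X\to\mathbb{L}(X)\xrightarrow{[1]}$. On one side $\hom_{\Mod\C}(E,X)$ is a retract of $\hom_{\Mod\C}(E,Y)$, which is an iterated cone of shifts of $\ZZ$; hence it is perfect, so by the first step lies in $\Tw\{\ZZ\}$, and therefore $r(X)=\hom_{\Mod\C}(E,X)\otimes_\ZZ E$ lies in $\Tw\{E\}\subseteq\Tw\A$. On the other side $\mathbb{L}$ is triangulated and kills $E$ while fixing each $F\in\A'$, so $\mathbb{L}(Y)\in\Tw\A'$, and $\mathbb{L}(X)$, being a retract of $\mathbb{L}(Y)$, is split-generated by $\A'$; the inductive hypothesis then gives $\mathbb{L}(X)\in\Tw\A'$. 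Since $X$ is thus the cone of a map $\mathbb{L}(X)[-1]\to r(X)$ between two objects of $\Tw\A$, we conclude $X\in\Tw\A$, which completes the induction; the base case $|\A|=1$ is subsumed, since there $\A'=\emptyset$ forces $\mathbb{L}(X)=0$ and $X\cong r(X)$.

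The hard part will not be any single step but the ambient bookkeeping: verifying that $\hom_{\Mod\C}(E,-)$ really does stay inside $\Perf\ZZ$ on the whole thick hull $\Perf\A$ and that the evaluation maps assemble into a natural transformation of triangulated functors, so that $\mathbb{L}=\cone(\mathrm{ev})$ is a bona fide triangulated endofunctor (commuting with cones, shifts and retracts). Granting that, the lemma collapses to $\Perf\ZZ=\Tw\{\ZZ\}$, which is elementary over $\ZZ$ — and more generally over any base ring over which finitely generated projective modules are free, such as a field or a PID — so the coefficients enter only there.
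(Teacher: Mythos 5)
Your proposal is correct and is essentially the same argument as the paper's: you pick a maximal exceptional object $E$, factor $X$ through the triangle $r(X)\to X\to\mathbb L(X)$, and reduce to $\Perf\ZZ=\Tw\{\ZZ\}$; the paper's $\M|_{\A\setminus\{X\}}$ is precisely your $\mathbb L(\M)$, and where you induct on $|\A|$ the paper unrolls the same induction into an explicit sequence of functors $F_Z$. Your worry about $\mathbb L$ being a bona fide triangulated functor is resolved for free in the dg/$\ainf$ setting (cones of natural transformations are functorial there), and the paper indeed relies on that implicitly.
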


\begin{proof}
Let $X\in\A$ be any maximal (with respect to the given partial order) object.
We consider the functor
\begin{align}
F_X:\Mod\C&\to\Mod\ZZ\\
\M&\mapsto\M(X).
\end{align}
Certainly if $\M$ is generated by $\A$ (i.e.\ by the Yoneda modules $\hom_\C(-,A)$ for $A\in\A$), then $F_X(\M)\in\Perf\ZZ$ by maximality of $X$, as all of the Yoneda modules except $\hom_{\C}(-,X)$, contribute trivially to $F_X$, and each $\hom_{\C}(-,X)$ contributes a perfect $\ZZ$-module.

There is a tautological map of $\C$ modules $\hom_\C(-,X)\otimes F_X(\M)\to\M(-)$; denote its cone by $\M|_{\A-\{X\}}$.
Now given any maximal object $Y$ of $\A-\{X\}$, we may define a functor
\begin{align}
F_Y:\Mod\C&\to\Mod\ZZ\\
\M&\mapsto\M|_{A-\{X\}}(Y).
\end{align}
Again, if $\M$ is generated by $\A$ then $F_Y(\M)\in\Perf\ZZ$.
To see this, simply note that given a twisted complex $\M$ of objects of $\A$, the object $\M|_{\A-\{X\}}$ is just the same twisted complex but with all instances of $X$ deleted.
We may now similarly define $\M|_{\A-\{X,Y\}}$ to be the cone of $\hom_\C(-,Y)\otimes F_Y(\M)\to\M|_{\A-\{X\}}(-)$.

Iterating this procedure defines a sequence of functors $F_X:\Mod\C\to\Mod\ZZ$ for all $X\in\A$ (in fact, these are independent of the order in which we pick off maximal elements, however we won't use this).
The above arguments show that for any $\M$ generated by $\A$, all $F_X(\M)$ are in $\Perf\ZZ$ (and hence the same holds for $\M$ split-generated by $\A$).
They also show (for arbitrary $\M$) that if all $F_X(\M)$ are in $\Perf\ZZ$, then there exists $\M'\in\Mod\C$ generated by $\A$ and a map $\M'\to\M$ which is an isomorphism in $\Mod\A$.

We may now conclude: if $\N$ is split-generated by $\A$, then $F_X(\N)\in\Perf\ZZ$, so there is $\N'\in\Mod\C$ generated by $\A$ and a map $\N'\to\N$ which is an isomorphism in $\Mod\A$, and since $\N'$ and $\N$ are split-generated by $\A$, an isomorphism in $\Mod\A$ is an isomorphism in $\Mod\C$.
\end{proof}

\begin{lemma}\label{exceptionalpropersmooth}
If $\A$ is an exceptional collection which is proper, then it is smooth.
\end{lemma}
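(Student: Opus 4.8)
The plan is to produce a finite resolution of the diagonal bimodule $\A_\Delta$ whose terms are perfect bimodules. Since the perfect bimodules form a pre-triangulated idempotent-complete subcategory of $[\A,\A]$ (closed under cones, shifts, retracts, and finite direct sums), the totalization of such a finite complex is again perfect, so this will show $\A_\Delta$ is perfect, i.e.\ that $\A$ (equivalently $\Perf\A$) is smooth. The resolution will be the normalized bar resolution of the diagonal relative to the ``semisimple'' subalgebra of identity endomorphisms; the directed structure of an exceptional collection forces this bar resolution to terminate, and properness forces each term to be perfect.

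First I would set up notation. Refine the given partial order to a total order $X_1<\dots<X_n$ on the finitely many objects of $\A$, so $\hom(X_i,X_j)=0$ for $i>j$ while $\hom(X_i,X_i)=\ZZ\langle\id_{X_i}\rangle$ is the complex $\ZZ$ in degree $0$. Working with the Morita-equivalent algebra $B:=\bigoplus_{i,j}\hom_\A(X_i,X_j)$ (so $\Mod\A\simeq\Mod B$, and smoothness is Morita-invariant), note that $R:=\bigoplus_i\hom(X_i,X_i)\cong\ZZ^{\,n}$ is an honest subalgebra carrying no higher $A_\infty$ operations for degree reasons, and that $B$ is unital with unit in $R$. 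Let $\bar B:=B/R=\bigoplus_{i<j}\hom_\A(X_i,X_j)$ be the reduced part; it suffices to show the diagonal $B$-bimodule $B_\Delta$ is perfect.

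Next I would write down the normalized $R$-relative bar resolution of $B_\Delta$, whose degree-$m$ term $C_m=B\otimes_R\bar B^{\otimes_R m}\otimes_R B$ decomposes as a bimodule into
\begin{equation*}
C_m=\bigoplus_{i_0<i_1<\dots<i_m}\hom(-,X_{i_0})\otimes_\ZZ\hom(X_{i_0},X_{i_1})\otimes_\ZZ\cdots\otimes_\ZZ\hom(X_{i_{m-1}},X_{i_m})\otimes_\ZZ\hom(X_{i_m},-),
\end{equation*}
the sum running over strictly increasing chains of length $m+1$ (strict because the $\bar B$ factors are reduced). Since the poset has only $n$ elements, there are no such chains for $m\ge n$, so $C_m=0$ for $m\ge n$ and the bar resolution $0\to C_{n-1}\to\cdots\to C_1\to C_0$ is a \emph{finite} complex with totalization quasi-isomorphic to $B_\Delta$. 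Finally I would observe that each $C_m$ is a perfect bimodule: the summand attached to a chain $i_0<\dots<i_m$ is the representable bimodule $\hom(-,X_{i_0})\otimes_\ZZ\hom(X_{i_m},-)$ tensored over $\ZZ$ with the complex $\hom(X_{i_0},X_{i_1})\otimes_\ZZ\cdots\otimes_\ZZ\hom(X_{i_{m-1}},X_{i_m})$, which lies in $\Perf\ZZ$ because $\A$ is proper (a finite tensor product of perfect complexes over $\ZZ$ is perfect); hence that summand is a retract of a finite complex of copies of a representable bimodule, so perfect, and a finite direct sum of perfect bimodules is perfect. Thus $B_\Delta$ is quasi-isomorphic to a finite complex of perfect bimodules, hence perfect, which is what we wanted.

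I expect the only genuine care is the $A_\infty$ bookkeeping: confirming that $R$ may be taken strictly (which is automatic once $\hom(X_i,X_i)$ is literally $\ZZ$), that the two-sided relative bar complex really resolves $B_\Delta$, and that termination survives the higher products $\mu^k$ ($k\ge3$) appearing in the differential. The last point is harmless, since any $\mu^k$ only ever shortens a bar word, so it cannot obstruct the vanishing $C_m=0$ for $m\ge n$; the differential preserves the (finite, length-$n$) filtration by bar length, whose associated graded is $\bigoplus_m C_m$, and the argument goes through verbatim. (As a sanity check, the case $n=2$ is the familiar ``Euler sequence'' $0\to\bigl(\hom(-,X_1)\otimes_\ZZ\hom(X_2,-)\bigr)\otimes_\ZZ\hom(X_1,X_2)\to\bigoplus_i\hom(-,X_i)\otimes_\ZZ\hom(X_i,-)\to B_\Delta\to0$, which already makes clear that properness of $\A$ is exactly the hypothesis that is needed.)
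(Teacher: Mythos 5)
Your proof is correct, but it takes a genuinely different route from the paper's. The paper argues by induction on the number of exceptional objects, invoking the gluing theorem of Lunts and Lunts--Schn\"urer: if $\C$ and $\D$ are smooth and $\B$ is a perfect $(\C,\D)$-bimodule, then the semi-orthogonal gluing of $\C$ and $\D$ along $\B$ is smooth. Combined with the observation that a proper bimodule over smooth categories is automatically perfect (the bimodule version of Lemma~\ref{propperf}), this reduces the lemma to the one-object case, which is the tautology that $\ZZ$ is smooth. You instead prove smoothness directly by writing down an explicit finite resolution of the diagonal: the $R$-relative two-sided bar complex, where $R=\ZZ^n$ is the subalgebra of identity morphisms. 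Your key observations are that directedness forces the reduced bar words to have length at most $n-1$ (so the complex terminates), and that properness makes each term a $\Perf\ZZ$-twist of a representable, hence a perfect bimodule. In effect, you are unwinding the Lunts/Lunts--Schn\"urer induction into a single explicit resolution specialized to the case where the semi-orthogonal pieces are all copies of $\Perf\ZZ$. What your route buys is self-containedness (no appeal to external gluing results) and a very concrete picture of why the hypotheses are what they are; the paper's route is shorter given the cited literature and foregrounds the semi-orthogonal-decomposition structure. One technical remark you rightly flag but could make sharper: to have $R$ act strictly and the bar differential close up, you implicitly need $\A$ strictly unital over $R$. Since $\hom(X_i,X_i)=\ZZ\langle\id_{X_i}\rangle$ on the nose, this follows from the standard strictification of a cohomologically unital $A_\infty$ structure, which leaves the underlying morphism complexes unchanged and in particular preserves properness and the directed structure; the bar-length filtration argument you give then handles the $\mu^{k\ge3}$ contributions exactly as you say.
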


\begin{proof}
    In the case $\A$ has one object, this is true because $\ZZ$ is trivially
    smooth.  Now inductively apply the following assertion: If $\C$ and $\D$ are both smooth, and
    $\E$ denotes the semi-orthogonal gluing of $\C$ with $\D$ along a
    $(\C,\D)$ bimodule $\B$ which is perfect, then $\E$ is smooth as well (see \cite[Prop.\ 3.11]{lunts} and \cite[Thm.\ 3.24]{luntsschnurer} for the dg case, which
    immediately extends to this setting). In the assertion observe it suffices
    that $\B$ be proper, since proper bimodules over smooth categories
    are automatically perfect (by the bimodule version of Lemma \ref{propperf}).
    Hence, one can induct from $\ZZ$ to any proper exceptional collection $\A$.
\end{proof}

\bibliographystyle{amsplain}
\bibliography{wrappedconstructible}
\addcontentsline{toc}{section}{References}

\end{document}